\newtheorem{theorem}{\sc Theorem}[section]
\newtheorem{proposition}[theorem]{\sc Proposition}
\newtheorem{lemma}[theorem]{\sc Lemma}
\newtheorem{corollary}[theorem]{\sc Corollary}
\theoremstyle{definition}
\newtheorem{definition}[theorem]{\sc Definition}
\newtheorem{example}[theorem]{\sc Example}
\theoremstyle{remark}
\newtheorem{remark}[theorem]{\sc Remark}
\newtheorem{claim}[theorem]{}
\def\yd{_{H}^{H}\mathcal{YD}}
\newcommand{\ydg}{^{\Bbbk\Gamma}_{\Bbbk\Gamma}\mathcal{YD}}
\def\cB{\mathcal{B}}
\def\cD{\mathcal{D}}
\newcommand\N{\mathbb{N}}
\newcommand\ad{\operatorname{ad}}
\newcommand\Gr{\operatorname{Gr}}
\def\bq{\mathfrak{q}}
\def\bx{\mathbf{x}}
\newenvironment{invisible}{{\noindent\sc \underline{\color{Blue}Invisible (To be hiden)}:\quad}\color{Red}}{\medskip}
\begin{document}
\title{Cohomology and Coquasi-bialgebras in the category of Yetter-Drinfeld
Modules}
\author{Iv\'{a}n Angiono}
\address{%
\parbox[b]{\linewidth}{FaMAF-CIEM (CONICET), Universidad Nacional
de C\'ordoba, Medina A\-llen\-de s/n, Ciudad Universitaria, 5000 C\' ordoba,
Argentina.}}
\email{angiono@famaf.unc.edu.ar}
\urladdr{\url{www.mate.uncor.edu/~angiono}}
\author{Alessandro Ardizzoni}
\address{%
\parbox[b]{\linewidth}{University of Turin, Department of Mathematics ``G. Peano'', via
Carlo Alberto 10, I-10123 Torino, Italy}}
\email{alessandro.ardizzoni@unito.it}
\urladdr{\url{sites.google.com/site/aleardizzonihome}}
\author{Claudia Menini}
\address{%
\parbox[b]{\linewidth}{University of Ferrara, Department of Mathematics and Computer Science, Via Machiavelli
35, Ferrara, I-44121, Italy}}
\email{men@unife.it}
\urladdr{\url{sites.google.com/a/unife.it/claudia-menini}}
\subjclass[2010]{Primary 16W30; Secondary 16E40}
\thanks{This paper was started during the visit, supported by INdAM, of the
first author to the University of Ferrara in November 2012. It was written
while both the second and the third authors were members of GNSAGA. The
first author's work was partially supported by CONICET, FONCyT-ANPCyT, Secyt
(UNC). The second author was partially supported by the research grant
``Progetti di Eccellenza 2011/2012'' from the ``Fondazione Cassa di
Risparmio di Padova e Rovigo''.}

\begin{abstract}
We prove that a finite-dimensional Hopf algebra with the dual Chevalley Property over a field of characteristic zero is quasi-isomorphic to a Radford-Majid bosonization whenever the third Hochschild cohomology group in the category of Yetter-Drinfeld modules of its diagram with coefficients in the base field vanishes. Moreover we show that this vanishing occurs in meaningful examples where the diagram is a Nichols algebra.
\end{abstract}
\keywords{Hopf Algebras, Coquasi-bialgebras, Bosonizations, Cocycle Deformations, Hochschild Cohomology.}
\maketitle
\tableofcontents

\section*{Introduction}

Let $A$ be a finite-dimensional Hopf algebra over a field $\Bbbk $ of characteristic zero such that the coradical $H$ of $A$ is a
sub-Hopf algebra (i.e. $A$ has the dual Chevalley Property). Denote by $\mathcal{D}\left( A\right)$ the diagram of $A$. The main aim of this paper (see Theorem \ref{teo:main}) is to prove that, if the third
Hochschild cohomology group in ${_{H}^{H}\mathcal{YD}}$ of the algebra $\mathcal{D}\left( A\right)$
with coefficients in $\Bbbk $ vanishes, in symbols $\mathrm{H}_{{\mathcal{YD}}}^{3}\left( \mathcal{D}\left( A\right) ,\Bbbk \right) =0$,
then $A$ is quasi-isomorphic to the Radford-Majid bosonization $E\#H$ of
some connected bialgebra $E$ in ${_{H}^{H}\mathcal{YD}}$ with $%
\mathrm{gr}\left( E\right) \cong \mathcal{D}\left( A\right) $ as bialgebras
in ${_{H}^{H}\mathcal{YD}}$.

The paper is organized as follows.
Let $H$ be a Hopf algebra over a field $\Bbbk$. In Section \ref{sec:1} we investigate the properties of coalgebras with multiplication and unit in the category ${_{H}^{H}\mathcal{YD}}$ (in particular of coquasi-bialgebras) and their associated graded coalgebra. The main result of this section, Theorem \ref{teo:grHopf}, establishes  that the associated graded coalgebra $\mathrm{gr}Q$ of a connected coquasi-bialgebra in ${_{H}^{H}%
\mathcal{YD}}$ is  a connected bialgebra in ${_{H}^{H}%
\mathcal{YD}}$.

In Section \ref{sec:2} we study the deformation of coquasi-bialgebras in ${_{H}^{H}\mathcal{YD}}$ by means of gauge transformations. In Proposition \ref{pro:deformSmash} we investigate its behaviour with respect to bosonization while in Proposition \ref{pro:grgaugeYD} with respect to the associated graded coalgebra.

In Section \ref{sec:3} we consider the associated graded coalgebra in case the Hopf algebra $H$ is semisimple and cosemisimple (e.g. $H$ is finite-dimensional cosemisimple over a field of characteristic zero). In particular, in Theorem \ref{teo:GelakiYD}, we prove that  a f.d. connected coquasi-bialgebra $Q$ in ${_{H}^{H}\mathcal{YD}}$ is gauge equivalent to a connected bialgebra in ${_{H}^{H}\mathcal{YD}}$ whenever $\mathrm{H}_{{\mathcal{YD}}}^{3}\left( \mathrm{gr}Q,\Bbbk \right) =0$. This result is inspired by \cite[Proposition 2.3]{EG}.

In Section \ref{sec:4}, we focus on the link between $\mathrm{H}_{{\mathcal{YD}}}^{n}\left( B,\Bbbk \right)$ and the invariants of $\mathrm{H} ^{n}\left( B,\Bbbk \right)$, where $B$ is a bialgebra in $\mathrm{H}_{{\mathcal{YD}}}^{n}\left( B,\Bbbk \right)$. In particular, in Proposition \ref{pro:D(H)} we show that $
\mathrm{H}_{{\mathcal{YD}}}^{n}\left( B,\Bbbk \right)$ is isomorphic to $\mathrm{H}^{n}\left( B,\Bbbk \right) ^{D(H)}$, which is a subspace of $\mathrm{H}^{n}\left( B,\Bbbk \right) ^{H}\cong\mathrm{H}^{n}\left( B\#H,\Bbbk \right),$ see Corollary \ref{coro:K}.

Section \ref{sec:5} is devoted to the proof of the main result of the paper, the aforementioned Theorem \ref{teo:main}.

In Section \ref{sec:6} we provide examples where $
\mathrm{H}_{{\mathcal{YD}}}^{n}\left( B,\Bbbk \right)=0$ in case $B$ is the Nichols algebra $\cB(V)$ of a Yetter-Drinfeld module $V$. In particular we show that that $\mathrm{H}_{{\mathcal{YD}}}^{3}\left( \cB(V),\Bbbk \right)$ can be zero
although $\mathrm{H}^3\left( \cB(V)\#H,\Bbbk \right)$ is non-trivial.

\section*{Preliminaries}

Given a category ${\mathcal{C}}$ and objects $M,N\in {\mathcal{C}}$, the
notation ${\mathcal{C}}\left( M,N\right) $ stands for the set of morphisms
in ${\mathcal{C}}$. This notation will be mainly applied to the case ${%
\mathcal{C}}$ is the category of vector space $\mathbf{Vec}_{\Bbbk }$ over a
field $\Bbbk $ or ${\mathcal{C}}$ is the category of Yetter-Drinfeld modules
${_{H}^{H}\mathcal{YD}}$ over a Hopf algebra $H$. The set of natural numbers including $0$ is denoted by $\N_0$ while $\N$ denotes the same set without $0$.

\section{Yetter-Drinfeld}\label{sec:1}

\begin{definition}
Let $C$ be a coalgebra. Denote by $C_{n}$ the $n$-th term of the coradical
filtration of $C$ and set $C_{-1}:=0.$ For every $x\in C,$ we set%
\begin{equation*}
\left\vert x\right\vert :=\min \left\{ i\in \N_0:x\in C_{i}\right\}
\qquad \text{and}\qquad \overline{x}:=x+C_{\left\vert x\right\vert -1}.
\end{equation*}%
Note that, for $x=0$, we have $\left\vert x\right\vert =0.$ One can define
the associated graded coalgebra%
\begin{equation*}
\mathrm{gr}C:=\oplus _{i\in \N_0}\frac{C_{i}}{C_{i-1}}
\end{equation*}%
with structure given, for every $x\in C$, by%
\begin{eqnarray}
\Delta _{\mathrm{gr}C}\left( \overline{x}\right) &=&\sum_{0\leq i\leq
\left\vert x\right\vert }\left( x_{1}+C_{i-1}\right) \otimes \left(
x_{2}+C_{\left\vert x\right\vert -i-1}\right) ,  \label{form:DeltaGr} \\
\varepsilon _{\mathrm{gr}C}\left( \overline{x}\right) &=&\delta _{\left\vert
x\right\vert ,0}\varepsilon _{C}\left( x\right) .  \label{form:EpsGr}
\end{eqnarray}
\end{definition}

\begin{claim}
\label{claim:basis}For every $i\in \N_0$, take a basis $\left\{
\overline{x^{i,j}}\mid j\in B_{i}\right\} $ of the $\Bbbk $-module $%
C_{i}/C_{i-1}\ $with $\overline{x^{i,j}}\neq \overline{x^{i,l}}$ for $j\neq
l $ and
\begin{equation*}
\left\vert x^{i,j}\right\vert =i.
\end{equation*}%
Then $\left\{ x^{i,j}\mid 0\leq i\leq n,j\in B_{i}\right\} $ is a basis of $%
C_{n}$ and $\left\{ x^{i,j}\mid i\in \N_0,j\in B_{i}\right\} $ is a
basis of $C$. Assume that $C$ has a distinguished grouplike element $%
1=1_{C}\neq 0$ and take $i>0.$ If $\varepsilon \left( x^{i,j}\right) \neq 0$
then we have that
\begin{equation*}
\overline{x^{i,j}-\varepsilon \left( x^{i,j}\right) 1}=\overline{x^{i,j}}
\end{equation*}%
so that we can take $x^{i,j}-\varepsilon \left( x^{i,j}\right) 1$ in place
of $x^{i,j}.$ In other words we can assume
\begin{equation}
\varepsilon \left( x^{i,j}\right) =0,\text{ for every }i>0,j\in B_{i}.
\label{form:XijNorm}
\end{equation}%
It is well-known there is a $\Bbbk $-linear isomorphism $\varphi
:C\rightarrow \mathrm{gr}C$ defined on the basis by $\varphi \left(
x^{i,j}\right) :=\overline{x^{i,j}}.$

We compute%
\begin{equation*}
\varepsilon _{\mathrm{gr}C}\varphi \left( x^{i,j}\right) =\varepsilon _{%
\mathrm{gr}C} \left( \overline{x^{i,j}}\right) \overset{(\ref{form:EpsGr})}{=%
}\delta _{i,0}\varepsilon \left( x^{0,j}\right) \overset{(\ref{form:XijNorm})%
}{=}\varepsilon \left( x^{i,j}\right) .
\end{equation*}%
Hence we obtain%
\begin{equation}
\varepsilon _{\mathrm{gr}C}\circ \varphi =\varepsilon .  \label{form:Gel1}
\end{equation}
\end{claim}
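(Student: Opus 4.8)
The plan is to verify the several assertions packaged in the Claim one after another; all of them are elementary linear algebra, the only external input being that the coradical filtration is exhaustive, i.e. $C=\bigcup_{n\in\N_0}C_n$.

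First I would settle the basis statements by induction on $n$, showing that $\{x^{i,j}\mid 0\le i\le n,\ j\in B_i\}$ is a basis of $C_n$. For $n=0$ this is the hypothesis, since $C_{-1}=0$ forces $C_0/C_{-1}=C_0$. For the inductive step, the short exact sequence of $\Bbbk$-modules $0\to C_{n-1}\to C_n\to C_n/C_{n-1}\to 0$ splits, so a basis of $C_n$ is obtained by adjoining to a basis of $C_{n-1}$ any family in $C_n$ projecting to a basis of $C_n/C_{n-1}$; the elements $x^{n,j}$ qualify precisely because $\left\vert x^{n,j}\right\vert =n$ guarantees $x^{n,j}\in C_n$. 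Taking the union over all $n$ and using exhaustiveness of the coradical filtration shows that $\{x^{i,j}\mid i\in\N_0,\ j\in B_i\}$ spans $C$; linear independence follows by the usual top-degree argument, reducing a hypothetical finite dependence relation modulo $C_{m-1}$, where $m$ is the largest index occurring, to a nontrivial relation among the basis $\{\overline{x^{m,j}}\}$ of $C_m/C_{m-1}$, a contradiction.

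Next I would check the normalization \eqref{form:XijNorm}. The key point is that the distinguished grouplike $1=1_{C}$ lies in $C_0$, hence in $C_{i-1}$ as soon as $i>0$; therefore $\varepsilon\left(x^{i,j}\right)1\in C_{i-1}$, so $x^{i,j}-\varepsilon\left(x^{i,j}\right)1$ and $x^{i,j}$ have the same class in $C_i/C_{i-1}$. In particular this class is still nonzero, so $\left\vert x^{i,j}-\varepsilon\left(x^{i,j}\right)1\right\vert =i$ and $\overline{x^{i,j}-\varepsilon\left(x^{i,j}\right)1}=\overline{x^{i,j}}$, which means replacing $x^{i,j}$ by $x^{i,j}-\varepsilon\left(x^{i,j}\right)1$ leaves all hypotheses of the Claim intact; after the replacement $\varepsilon\left(x^{i,j}-\varepsilon\left(x^{i,j}\right)1\right)=\varepsilon\left(x^{i,j}\right)-\varepsilon\left(x^{i,j}\right)\varepsilon\left(1_{C}\right)=0$ since $\varepsilon\left(1_{C}\right)=1$.

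Finally, $\varphi$ is a $\Bbbk$-linear isomorphism because it carries the basis $\{x^{i,j}\}$ of $C$ bijectively onto the basis $\{\overline{x^{i,j}}\}$ of $\mathrm{gr}C=\oplus_{i}C_i/C_{i-1}$, the latter being a basis summand by summand by the choice of the $x^{i,j}$; and \eqref{form:Gel1} is then the displayed computation, evaluating $\varepsilon_{\mathrm{gr}C}\circ\varphi$ on this basis via \eqref{form:EpsGr} and \eqref{form:XijNorm}. I do not expect any genuine obstacle here: the only thing to keep honest is the bookkeeping of filtration degrees and the appeal to exhaustiveness of the coradical filtration, which is standard coalgebra theory.
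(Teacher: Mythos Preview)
Your proposal is correct and follows the same elementary approach the paper takes; in fact the paper treats this claim as a sequence of standard observations with essentially no proof (it just writes ``It is well-known'' for the isomorphism $\varphi$ and displays the final counit computation), so you have simply filled in the routine details the authors left implicit.
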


Let $H$ be a Hopf algebra. A \textbf{coalgebra with multiplication and unit}
in ${_{H}^{H}\mathcal{YD}}$ is a datum $\left( Q,m,u,\Delta ,\varepsilon
\right) $ where $\left( Q,\Delta ,\varepsilon \right) $ is a coalgebra in ${%
_{H}^{H}\mathcal{YD}}$, $m:Q\otimes Q\rightarrow Q$ is a coalgebra morphism
in ${_{H}^{H}\mathcal{YD}}$ called multiplication (which may fail to be
associative) and $u:\Bbbk \rightarrow Q$ is a coalgebra morphism in ${%
_{H}^{H}\mathcal{YD}}$ called unit. In this case we set $1_{Q}:=u\left(
1_{\Bbbk }\right) .$

Note that, for every $h\in H,k\in \Bbbk $, we have%
\begin{eqnarray}
h1_{Q} &=&hu\left( 1_{\Bbbk }\right) =u\left( h1_{\Bbbk }\right) =u\left(
\varepsilon _{H}\left( h\right) 1_{\Bbbk }\right) =\varepsilon _{H}\left(
h\right) u\left( 1_{\Bbbk }\right) =\varepsilon _{H}\left( h\right) 1_{Q},
\label{form:1Qlin} \\
\left( 1_{Q}\right) _{-1}\otimes \left( 1_{Q}\right) _{0} &=&\left( u\left(
1_{\Bbbk }\right) \right) _{-1}\otimes \left( u\left( 1_{\Bbbk }\right)
\right) _{0}=\left( 1_{\Bbbk }\right) _{-1}\otimes u\left( \left( 1_{\Bbbk
}\right) _{0}\right) =1_{H}\otimes u\left( 1_{\Bbbk }\right) =1_{H}\otimes
1_{Q}.  \label{form:1Qcolin}
\end{eqnarray}

\begin{proposition}
\label{pro:CMU}Let $H$ be a Hopf algebra and let $\left( Q,m,u,\Delta
,\varepsilon \right) $ be a coalgebra with multiplication and unit in ${%
_{H}^{H}\mathcal{YD}}$. If $Q_{0}$ is a subcoalgebra of $Q$ in ${_{H}^{H}%
\mathcal{YD}}$ such that $Q_{0}\cdot Q_{0}\subseteq Q_{0},$ then $Q_{n}$ is
a subcoalgebra of $Q$ in ${_{H}^{H}\mathcal{YD}}$ for every $n\in \N_0$%
. Moreover $Q_{a}\cdot Q_{b}\subseteq Q_{a+b}$ for every $a,b\in \N_0$
and the graded coalgebra $\mathrm{gr}Q,$ associated with the coradical
filtration of $Q,$ is a coalgebra with multiplication and unit in ${_{H}^{H}%
\mathcal{YD}}$ with respect to the usual coalgebra structure and with
multiplication and unit defined by
\begin{eqnarray}
m_{\mathrm{gr}Q}\left( \left( x+Q_{a-1}\right) \otimes \left(
y+Q_{b-1}\right) \right) &:&=xy+Q_{a+b-1},  \label{eq:mgr} \\
u_{\mathrm{gr}Q}\left( k\right) &:&=k1_{Q}+Q_{-1}  \notag
\end{eqnarray}
\end{proposition}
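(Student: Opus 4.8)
The plan is to bootstrap from the hypothesis $Q_0\cdot Q_0\subseteq Q_0$ to the general statement $Q_a\cdot Q_b\subseteq Q_{a+b}$ by induction, and then to check that the induced structure maps on $\mathrm{gr}Q$ are well-defined morphisms in ${_H^H\mathcal{YD}}$. First I would recall the standard fact about coradical filtrations of coalgebras: if $f\colon C\to C'$ is a coalgebra map then $f(C_n)\subseteq C'_n$, and that the coradical filtration of a tensor product satisfies $(C\otimes C')_n=\sum_{i+j=n}C_i\otimes C'_j$ (Sweedler's wedge computations). The multiplication $m\colon Q\otimes Q\to Q$ is a coalgebra morphism in ${_H^H\mathcal{YD}}$, hence in particular a coalgebra morphism of ordinary $\Bbbk$-coalgebras; so $m\big((Q\otimes Q)_n\big)\subseteq Q_n$, which immediately gives $Q_a\cdot Q_b\subseteq Q_{a+b}$ for all $a,b$, since $Q_a\otimes Q_b\subseteq(Q\otimes Q)_{a+b}$. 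This also forces $Q_n$ to be a subcoalgebra of $Q$ in ${_H^H\mathcal{YD}}$: it is a subcoalgebra as an ordinary coalgebra (true for every term of the coradical filtration), and it is an $H$-submodule and $H$-subcomodule because the coradical filtration of a coalgebra object in ${_H^H\mathcal{YD}}$ is automatically a filtration by Yetter–Drinfeld submodules — the point being that $H$ acts and coacts by coalgebra morphisms (for the action, $h\cdot(-)$ need not be a coalgebra map, but $Q_n$ is stable because the coradical is the sum of simple subcoalgebras and the action permutes these; more robustly, one uses that $\mathrm{gr}$ is functorial and the ${_H^H\mathcal{YD}}$-structure descends). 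I would cite Proposition~\ref{pro:CMU}'s own first assertion here, or rather prove the submodule/subcomodule claim by observing that $H\otimes Q\to Q$ and $Q\to H\otimes Q$ are coalgebra morphisms in $\mathbf{Vec}_\Bbbk$ and applying the tensor-product formula for the coradical filtration together with $H=H_0$.

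Next I would address well-definedness of $m_{\mathrm{gr}Q}$ and $u_{\mathrm{gr}Q}$. For $m_{\mathrm{gr}Q}$: given $x\in Q_a$, $y\in Q_b$, the assignment $(x+Q_{a-1})\otimes(y+Q_{b-1})\mapsto xy+Q_{a+b-1}$ is well-defined precisely because $Q_{a-1}\cdot Q_b\subseteq Q_{a+b-1}$ and $Q_a\cdot Q_{b-1}\subseteq Q_{a+b-1}$, which is exactly the inclusion $Q_i\cdot Q_j\subseteq Q_{i+j}$ just established. One then has to assemble these maps over all $a,b$ into a single map $\mathrm{gr}Q\otimes\mathrm{gr}Q\to\mathrm{gr}Q$, using the canonical identification $(\mathrm{gr}Q\otimes\mathrm{gr}Q)_n\cong\bigoplus_{i+j=n}(Q_i/Q_{i-1})\otimes(Q_j/Q_{j-1})$ in degree $n$. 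For $u_{\mathrm{gr}Q}$, well-definedness is immediate since $Q_{-1}=0$; note $1_Q\in Q_0$ because $1_Q$ is grouplike (it is a grouplike since $u$ is a coalgebra map), and grouplikes lie in the coradical.

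Then I would verify that $m_{\mathrm{gr}Q}$ and $u_{\mathrm{gr}Q}$ are morphisms in ${_H^H\mathcal{YD}}$ and are coalgebra morphisms. The Yetter–Drinfeld compatibility passes to the quotient: each $Q_n$ is a YD-submodule, so $Q_i/Q_{i-1}$ inherits the structure, and the formulas \eqref{form:DeltaGr}–\eqref{form:EpsGr} for $\Delta_{\mathrm{gr}Q},\varepsilon_{\mathrm{gr}Q}$ are built from the structure maps of $Q$, which are YD-morphisms; since $m$ and $u$ on $Q$ are $H$-linear and $H$-colinear, so are their graded versions. That $m_{\mathrm{gr}Q}$ is a coalgebra morphism reduces to checking $\Delta_{\mathrm{gr}Q}(xy+Q_{a+b-1})=\sum (x_1y_1+\cdots)\otimes(x_2y_2+\cdots)$ and the counit condition; here one expands $\Delta_{\mathrm{gr}Q}$ using \eqref{form:DeltaGr}, uses that $m$ is a coalgebra morphism on $Q$ so $\Delta(xy)=\sum x_1y_1\otimes x_2y_2$, and carefully tracks degrees: writing $\overline{x}=x+Q_{a-1}$ with $|x|\le a$ and similarly for $y$, the cross terms in $\Delta(xy)$ land in the appropriate $Q_{i-1}\otimes Q_{*}+Q_*\otimes Q_{*-1}$ and vanish after passing to $\mathrm{gr}$. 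The compatibility of $u_{\mathrm{gr}Q}$ with $\Delta$ and $\varepsilon$ follows from \eqref{form:1Qcolin}, \eqref{form:1Qlin} and the fact that $1_Q$ is grouplike.

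The main obstacle I anticipate is bookkeeping rather than conceptual: namely the degree-tracking when proving $\Delta_{\mathrm{gr}Q}\circ m_{\mathrm{gr}Q}$ agrees with $(m_{\mathrm{gr}Q}\otimes m_{\mathrm{gr}Q})\circ(\mathrm{id}\otimes\tau_{YD}\otimes\mathrm{id})\circ(\Delta_{\mathrm{gr}Q}\otimes\Delta_{\mathrm{gr}Q})$ — one must be scrupulous that the representatives $x_1,x_2$ of $\Delta(x)$ can be chosen with $|x_1|+|x_2|\le|x|$ and that the braiding $\tau_{YD}$ of ${_H^H\mathcal{YD}}$ used to flip the middle tensor factors in $\mathrm{gr}Q\otimes\mathrm{gr}Q$ matches the one on $Q\otimes Q$, which is automatic since the coaction on $Q_i/Q_{i-1}$ is induced from that on $Q_i$. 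Once the degree inequalities are in place, everything collapses cleanly modulo lower filtration degree.
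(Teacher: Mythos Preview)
Your shortcut for $Q_a\cdot Q_b\subseteq Q_{a+b}$ has a genuine gap. The formula $(C\otimes C')_n=\sum_{i+j=n}C_i\otimes C'_j$ is not a standard fact; it is not in Sweedler, and in fact the inclusion you need, $Q_a\otimes Q_b\subseteq (Q\otimes Q)_{a+b}$, already fails at level zero over non-perfect fields (tensor products of cosemisimple coalgebras need not be cosemisimple). What \emph{is} standard is only the opposite inclusion $(C\otimes C')_0\subseteq C_0\otimes C'_0$, which goes the wrong way for your argument. Moreover, $m$ is a coalgebra morphism for the \emph{braided} coalgebra structure on $Q\otimes Q$ (with $\Delta_{Q\otimes Q}$ built from $c_{Q,Q}$), not the symmetric one, so even if the formula held in $\mathbf{Vec}_\Bbbk$ it would not apply here without further work. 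Your sentence ``hence in particular a coalgebra morphism of ordinary $\Bbbk$-coalgebras'' is true, but the ordinary $\Bbbk$-coalgebra in question is $(Q\otimes Q,\Delta^{\mathrm{YD}})$, whose coradical filtration you have not identified.

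The paper avoids this entirely by a direct induction on $n=a+b$: assuming $Q_i\cdot Q_j\subseteq Q_{i+j}$ for $i+j<n$, one expands $\Delta(Q_a\cdot Q_b)$ using the braided multiplicativity of $\Delta$, the inclusions $\Delta(Q_a)\subseteq\sum_iQ_i\otimes Q_{a-i}$, and the fact that $c_{Q,Q}(Q_u\otimes Q_v)\subseteq Q_v\otimes Q_u$. Splitting off the top term one finds $\Delta(Q_a\cdot Q_b)\subseteq Q_{a+b-1}\otimes Q+Q\otimes Q_0$, whence $Q_a\cdot Q_b\subseteq Q_{a+b-1}\wedge_Q Q_0=Q_{a+b}$. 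This uses only the hypothesis $Q_0\cdot Q_0\subseteq Q_0$ and the wedge description of the coradical filtration, never the coradical filtration of $Q\otimes Q$. The rest of your outline (well-definedness of $m_{\mathrm{gr}Q}$, the YD-compatibility, the degree bookkeeping for $\Delta_{\mathrm{gr}Q}\circ m_{\mathrm{gr}Q}$) is fine and matches the paper, but the first step needs to be replaced by this direct induction.
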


\begin{proof}
The coalgebra structure of $Q$ induces a coalgebra structure on $\mathrm{gr}%
Q $. Since $Q_{0}$ is a subcoalgebra of $Q$ in ${_{H}^{H}\mathcal{YD}}\ $%
and, for $n\geq 1$, one has $Q_{n}=Q_{n-1}\wedge _{Q}Q_{0},$ then
inductively one proves that $Q_{n}$ is a subcoalgebra of $Q$ in ${_{H}^{H}%
\mathcal{YD}}$. As a consequence one gets that $\mathrm{gr}Q$ is a coalgebra
in ${_{H}^{H}\mathcal{YD}}$ (this construction can be performed in the
setting of monoidal categories under suitable assumptions, see e.g. \cite[%
Theorem 2.10]{AM}). Let us prove that $\mathrm{gr}Q$ inherits also a
multiplication and unit. Let us check that $Q_{a}\cdot Q_{b}\subseteq
Q_{a+b} $ for every $a,b\in \N_0$. We proceed by induction on $n=a+b.$
If $n=0$ there is nothing to prove. Let $n\geq 1$ and assume that $%
Q_{i}\cdot Q_{j}\subseteq Q_{i+j}$ for every $i,j\in \N_0$ such that $%
0\leq i+j\leq n-1.$ Let $a,b\in \N_0$ be such that $n=a+b$. Since $%
\Delta \left( Q_{a}\right) \subseteq \sum_{i=0}^{a}Q_{i}\otimes Q_{a-i}$ and
$c_{Q,Q}\left( Q_{u}\otimes Q_{v}\right) \subseteq Q_{v}\otimes Q_{u},$
where $c_{Q,Q}$ denotes the braiding in ${_{H}^{H}\mathcal{YD}}$, using the
compatibility condition between $\Delta $ and $m,$ one easily gets that $%
\Delta \left( Q_{a}\cdot Q_{b}\right) \subseteq Q_{a+b-1}\otimes Q+Q\otimes
Q_{0}.$

\begin{invisible}
We compute%
\begin{eqnarray*}
\Delta \left( Q_{a}\cdot Q_{b}\right) &=&\Delta m\left( Q_{a}\otimes
Q_{b}\right) =\left( m\otimes m\right) \Delta _{Q\otimes Q}\left(
Q_{a}\otimes Q_{b}\right) \\
&=&\left( m\otimes m\right) \left( Q\otimes c_{Q,Q}\otimes Q\right) \left(
\Delta \otimes \Delta \right) \left( Q_{a}\otimes Q_{b}\right) \\
&\subseteq &\left( m\otimes m\right) \left( Q\otimes c_{Q,Q}\otimes Q\right)
\left( \left( \sum_{i=0}^{a}Q_{i}\otimes Q_{a-i}\right) \otimes \left(
\sum_{j=0}^{b}Q_{j}\otimes Q_{b-j}\right) \right) \\
&\subseteq &\sum_{i=0}^{a}\sum_{j=0}^{b}\left( m\otimes m\right) \left(
Q_{i}\otimes c_{Q,Q}\left( Q_{a-i}\otimes Q_{j}\right) \otimes Q_{b-j}\right)
\\
&\subseteq &\sum_{i=0}^{a}\sum_{j=0}^{b}\left( m\otimes m\right) \left(
Q_{i}\otimes Q_{j}\otimes Q_{a-i}\otimes Q_{b-j}\right) \\
&\subseteq &\sum_{i=0}^{a}\sum_{j=0}^{b}\left( Q_{i}\cdot Q_{j}\otimes
Q_{a-i}\cdot Q_{b-j}\right) \subseteq \sum_{\substack{ 0\leq i\leq a,  \\ %
0\leq j\leq b,  \\ i+j<a+b}}\left( Q_{i}\cdot Q_{j}\otimes Q_{a-i}\cdot
Q_{b-j}\right) +\left( Q_{a}\cdot Q_{b}\otimes Q_{0}\cdot Q_{0}\right) \\
&\subseteq &\sum_{\substack{ 0\leq i\leq a,  \\ 0\leq j\leq b,  \\ i+j<a+b}}%
Q_{i+j}\otimes Q+Q\otimes Q_{0}\subseteq Q_{a+b-1}\otimes Q+Q\otimes Q_{0}.
\end{eqnarray*}
\end{invisible}

Therefore $Q_{a}\cdot Q_{b}\subseteq Q_{a+b}.$ This property implies we have
a well-defined map in ${_{H}^{H}\mathcal{YD}}$%
\begin{equation*}
m_{\mathrm{gr}Q}^{a,b}:\frac{Q_{a}}{Q_{a-1}}\otimes \frac{Q_{b}}{Q_{b-1}}%
\rightarrow \frac{Q_{a+b}}{Q_{a+b-1}}
\end{equation*}%
defined, for $x\in Q_{a}$ and $y\in Q_{b},$ by (\ref{eq:mgr}). This can be
seen as the graded component of a morphism in ${_{H}^{H}\mathcal{YD}}$ that
we denote by $m_{\mathrm{gr}Q}:\mathrm{gr}Q\otimes \mathrm{gr}Q\rightarrow
\mathrm{gr}Q$. Let us check that $m_{\mathrm{gr}Q}$ is a coalgebra morphism
in ${_{H}^{H}\mathcal{YD}}$. Consider a basis of $Q$ with terms of the form $%
x^{i,j}$ as in \ref{claim:basis}. Hence we can write the comultiplication in
the form
\begin{equation*}
\Delta \left( x^{a,u}\right) =\sum_{s+t\leq a}\sum_{l,m}\eta
_{s,t,l,m}^{a,u}x^{s,l}\otimes x^{t,m}.
\end{equation*}%
Now, using (\ref{form:DeltaGr}), one gets that
\begin{equation}
\Delta _{\mathrm{gr}Q}\left( \overline{x^{a,u}}\right) =\sum_{0\leq i\leq
a}\sum_{l,m}\eta _{i,a-i,l,m}^{a,u}\overline{x^{i,l}}\otimes \overline{%
x^{a-i,m}}.  \label{form:deltagr}
\end{equation}

\begin{invisible}
Here is the proof:%
\begin{eqnarray*}
&&\Delta _{\mathrm{gr}Q}\left( \overline{x^{a,u}}\right) \overset{(\ref%
{form:DeltaGr})}{=}\sum_{0\leq i\leq a}\left( \left( x^{a,u}\right)
_{1}+Q_{i-1}\right) \otimes \left( \left( x^{a,u}\right)
_{2}+Q_{a-i-1}\right) \\
&=&\sum_{0\leq i\leq a}\sum_{s+t\leq a}\sum_{l,m}\eta _{s,t,l,m}^{a,u}\left(
x^{s,l}+Q_{i-1}\right) \otimes \left( x^{t,m}+Q_{a-i-1}\right) \\
&=&\sum_{0\leq i\leq a}\sum_{\substack{ s+t\leq a  \\ i\leq s,a-i\leq t}}%
\sum_{l,m}\eta _{s,t,l,m}^{a,u}\left( x^{s,l}+Q_{i-1}\right) \otimes \left(
x^{t,m}+Q_{a-i-1}\right) \\
&=&\sum_{0\leq i\leq a}\sum_{s=i,t=a-i}\sum_{l,m}\eta _{s,t,l,m}^{a,u}\left(
x^{s,l}+Q_{i-1}\right) \otimes \left( x^{t,m}+Q_{a-i-1}\right) \\
&=&\sum_{0\leq i\leq a}\sum_{l,m}\eta _{i,a-i,l,m}^{a,u}\left(
x^{i,l}+Q_{i-1}\right) \otimes \left( x^{a-i,m}+Q_{a-i-1}\right) \\
&=&\sum_{0\leq i\leq a}\sum_{l,m}\eta _{i,a-i,l,m}^{a,u}\overline{x^{i,l}}%
\otimes \overline{x^{a-i,m}}
\end{eqnarray*}
\end{invisible}

Using that $\Delta _{\mathrm{gr}Q\otimes \mathrm{gr}Q}=\left( \mathrm{gr}%
Q\otimes c_{\mathrm{gr}Q,\mathrm{gr}Q}\otimes \mathrm{gr}Q\right) \left(
\Delta _{\mathrm{gr}Q}\otimes \Delta _{\mathrm{gr}Q}\right) $ and (\ref%
{form:deltagr}), it is straightforward to check that $\left( m_{\mathrm{gr}%
Q}\otimes m_{\mathrm{gr}Q}\right) \Delta _{\mathrm{gr}Q\otimes \mathrm{gr}%
Q}\left( \overline{x^{a,u}}\otimes \overline{x^{b,v}}\right) =\Delta _{%
\mathrm{gr}Q}m_{\mathrm{gr}Q}\left( \overline{x^{a,u}}\otimes \overline{%
x^{b,v}}\right) .$

\begin{invisible}
We have%
\begin{eqnarray*}
&&\left( m_{\mathrm{gr}Q}\otimes m_{\mathrm{gr}Q}\right) \Delta _{\mathrm{gr}%
Q\otimes \mathrm{gr}Q}\left( \overline{x^{a,u}}\otimes \overline{x^{b,v}}%
\right) \\
&=&\left( m_{\mathrm{gr}Q}\otimes m_{\mathrm{gr}Q}\right) \left( \mathrm{gr}%
Q\otimes c_{\mathrm{gr}Q,\mathrm{gr}Q}\otimes \mathrm{gr}Q\right) \left(
\Delta _{\mathrm{gr}Q}\otimes \Delta _{\mathrm{gr}Q}\right) \left( \overline{%
x^{a,u}}\otimes \overline{x^{b,v}}\right) \\
&\overset{(\ref{form:deltagr})}{=}&\left[
\begin{array}{c}
\left( m_{\mathrm{gr}Q}\otimes m_{\mathrm{gr}Q}\right) \left( \mathrm{gr}%
Q\otimes c_{\mathrm{gr}Q,\mathrm{gr}Q}\otimes \mathrm{gr}Q\right) \\
\left( \sum_{0\leq i\leq a}\sum_{l,m}\eta _{i,a-i,l,m}^{a,u}\overline{x^{i,l}%
}\otimes \overline{x^{a-i,m}}\otimes \sum_{0\leq i^{\prime }\leq
b}\sum_{l^{\prime },m^{\prime }}\eta _{i^{\prime },b-i^{\prime },l^{\prime
},m^{\prime }}^{b,v}\overline{x^{i^{\prime },l^{\prime }}}\otimes \overline{%
x^{b-i^{\prime },m^{\prime }}}\right)%
\end{array}%
\right] \\
&=&\left[
\begin{array}{c}
\sum_{0\leq i\leq a}\sum_{l,m}\sum_{0\leq i^{\prime }\leq b}\sum_{l^{\prime
},m^{\prime }}\eta _{i^{\prime },b-i^{\prime },l^{\prime },m^{\prime
}}^{b,v}\eta _{i,a-i,l,m}^{a,u}\left( m_{\mathrm{gr}Q}\otimes m_{\mathrm{gr}%
Q}\right) \\
\left( \overline{x^{i,l}}\otimes \left( \overline{x^{a-i,m}}\right) _{-1}%
\overline{x^{i^{\prime },l^{\prime }}}\otimes \left( \overline{x^{a-i,m}}%
\right) _{0}\otimes \overline{x^{b-i^{\prime },m^{\prime }}}\right)%
\end{array}%
\right] \\
&=&\left[
\begin{array}{c}
\sum_{0\leq i\leq a}\sum_{l,m}\sum_{0\leq i^{\prime }\leq b}\sum_{l^{\prime
},m^{\prime }}\eta _{i^{\prime },b-i^{\prime },l^{\prime },m^{\prime
}}^{b,v}\eta _{i,a-i,l,m}^{a,u}\left( m_{\mathrm{gr}Q}\otimes m_{\mathrm{gr}%
Q}\right) \\
\left( \overline{x^{i,l}}\otimes \left( x^{a-i,m}+Q_{a-i-1}\right) _{-1}%
\overline{x^{i^{\prime },l^{\prime }}}\otimes \left(
x^{a-i,m}+Q_{a-i-1}\right) _{0}\otimes \overline{x^{b-i^{\prime },m^{\prime
}}}\right)%
\end{array}%
\right] \\
&=&\left[
\begin{array}{c}
\sum_{0\leq i\leq a}\sum_{l,m}\sum_{0\leq i^{\prime }\leq b}\sum_{l^{\prime
},m^{\prime }}\eta _{i^{\prime },b-i^{\prime },l^{\prime },m^{\prime
}}^{b,v}\eta _{i,a-i,l,m}^{a,u}\left( m_{\mathrm{gr}Q}\otimes m_{\mathrm{gr}%
Q}\right) \\
\left( \overline{x^{i,l}}\otimes \left( x^{a-i,m}\right) _{-1}\overline{%
x^{i^{\prime },l^{\prime }}}\otimes \left( \left( x^{a-i,m}\right)
_{0}+Q_{a-i-1}\right) \otimes \overline{x^{b-i^{\prime },m^{\prime }}}\right)%
\end{array}%
\right] \\
&=&\left[
\begin{array}{c}
\sum_{0\leq i\leq a}\sum_{l,m}\sum_{0\leq i^{\prime }\leq b}\sum_{l^{\prime
},m^{\prime }}\eta _{i^{\prime },b-i^{\prime },l^{\prime },m^{\prime
}}^{b,v}\eta _{i,a-i,l,m}^{a,u}\left( m_{\mathrm{gr}Q}\otimes m_{\mathrm{gr}%
Q}\right) \\
\left( \overline{x^{i,l}}\otimes \left( x^{a-i,m}\right) _{-1}\left(
x^{i^{\prime },l^{\prime }}+Q_{i^{\prime }-1}\right) \otimes \left( \left(
x^{a-i,m}\right) _{0}+Q_{a-i-1}\right) \otimes \overline{x^{b-i^{\prime
},m^{\prime }}}\right)%
\end{array}%
\right] \\
&=&\left[
\begin{array}{c}
\sum_{0\leq i\leq a}\sum_{l,m}\sum_{0\leq i^{\prime }\leq b}\sum_{l^{\prime
},m^{\prime }}\eta _{i^{\prime },b-i^{\prime },l^{\prime },m^{\prime
}}^{b,v}\eta _{i,a-i,l,m}^{a,u}\left( m_{\mathrm{gr}Q}\otimes m_{\mathrm{gr}%
Q}\right) \\
\left( \left( x^{i,l}+Q_{i-1}\right) \otimes \left( \left( x^{a-i,m}\right)
_{-1}x^{i^{\prime },l^{\prime }}+Q_{i^{\prime }-1}\right) \otimes \left(
\left( x^{a-i,m}\right) _{0}+Q_{a-i-1}\right) \otimes \left( x^{b-i^{\prime
},m^{\prime }}+Q_{b-i^{\prime }-1}\right) \right)%
\end{array}%
\right] \\
&=&\left[
\begin{array}{c}
\sum_{0\leq i\leq a}\sum_{0\leq i^{\prime }\leq b}\sum_{l,m}\sum_{l^{\prime
},m^{\prime }}\eta _{i^{\prime },b-i^{\prime },l^{\prime },m^{\prime
}}^{b,v}\eta _{i,a-i,l,m}^{a,u} \\
\left( \left( x^{i,l}\left( \left( x^{a-i,m}\right) _{-1}x^{i^{\prime
},l^{\prime }}\right) +Q_{i+i^{\prime }-1}\right) \otimes \left(
x^{a-i,m}\right) _{0}x^{b-i^{\prime },m^{\prime }}+Q_{a-i+b-i^{\prime
}-1}\right)%
\end{array}%
\right] \\
&=&\left[
\begin{array}{c}
\sum_{w=0}^{a+b}\sum_{\substack{ i+i^{\prime }=w  \\ i\leq a,i^{\prime }\leq
b }}\sum_{l,m}\sum_{l^{\prime },m^{\prime }}\eta _{i^{\prime },b-i^{\prime
},l^{\prime },m^{\prime }}^{b,v}\eta _{i,a-i,l,m}^{a,u} \\
\left( x^{i,l}\left( \left( x^{a-i,m}\right) _{-1}x^{i^{\prime },l^{\prime
}}\right) +Q_{w-1}\right) \otimes \left( \left( x^{a-i,m}\right)
_{0}x^{b-i^{\prime },m^{\prime }}+Q_{a+b-w-1}\right)%
\end{array}%
\right] \\
&=&\left[
\begin{array}{c}
\sum_{w=0}^{a+b}\sum_{\substack{ i+t=a,i^{\prime }+t^{\prime }=b,  \\ %
t+t^{\prime }=a+b-w,  \\ i+i^{\prime }=w}}\sum_{l,m}\sum_{l^{\prime
},m^{\prime }}\eta _{i^{\prime },t^{\prime },l^{\prime },m^{\prime
}}^{b,v}\eta _{i,t,l,m}^{a,u} \\
\left( x^{i,l}\left( \left( x^{t,m}\right) _{-1}x^{i^{\prime },l^{\prime
}}\right) +Q_{w-1}\right) \otimes \left( \left( x^{t,m}\right)
_{0}x^{t^{\prime },m^{\prime }}+Q_{a+b-w-1}\right)%
\end{array}%
\right] \\
&=&\left[
\begin{array}{c}
\sum_{w=0}^{a+b}\sum_{\substack{ s+t=a,s^{\prime }+t^{\prime }=b,  \\ %
t+t^{\prime }=a+b-w,  \\ s+s^{\prime }=w}}\sum_{l,m}\sum_{l^{\prime
},m^{\prime }}\eta _{s^{\prime },t^{\prime },l^{\prime },m^{\prime
}}^{b,v}\eta _{s,t,l,m}^{a,u} \\
\left( x^{s,l}\left( \left( x^{t,m}\right) _{-1}x^{s^{\prime },l^{\prime
}}\right) +Q_{w-1}\right) \otimes \left( \left( x^{t,m}\right)
_{0}x^{t^{\prime },m^{\prime }}+Q_{a+b-w-1}\right)%
\end{array}%
\right] \\
&=&\left[
\begin{array}{c}
\sum_{w=0}^{a+b}\sum_{l,m}\sum_{\substack{ s+t\leq a,s^{\prime }+t^{\prime
}\leq b,  \\ a+b-w\leq t+t^{\prime },  \\ w\leq s+s^{\prime }}}%
\sum_{l^{\prime },m^{\prime }}\eta _{s^{\prime },t^{\prime },l^{\prime
},m^{\prime }}^{b,v}\eta _{s,t,l,m}^{a,u} \\
\left( x^{s,l}\left( \left( x^{t,m}\right) _{-1}x^{s^{\prime },l^{\prime
}}\right) +Q_{w-1}\right) \otimes \left( \left( x^{t,m}\right)
_{0}x^{t^{\prime },m^{\prime }}+Q_{a+b-w-1}\right)%
\end{array}%
\right] \\
&=&\left[
\begin{array}{c}
\sum_{w=0}^{a+b}\sum_{s+t\leq a}\sum_{l,m}\eta
_{s,t,l,m}^{a,u}\sum_{s^{\prime }+t^{\prime }\leq b}\sum_{l^{\prime
},m^{\prime }}\eta _{s^{\prime },t^{\prime },l^{\prime },m^{\prime }}^{b,v}
\\
\left( x^{s,l}\left( \left( x^{t,m}\right) _{-1}x^{s^{\prime },l^{\prime
}}\right) +Q_{w-1}\right) \otimes \left( \left( x^{t,m}\right)
_{0}x^{t^{\prime },m^{\prime }}+Q_{a+b-w-1}\right)%
\end{array}%
\right] \\
&=&\sum_{w=0}^{a+b}\left( \left( x^{a,u}\right) _{1}\left( \left( \left(
x^{a,u}\right) _{2}\right) _{-1}\left( x^{b,v}\right) _{1}\right)
+Q_{w-1}\right) \otimes \left( \left( \left( x^{a,u}\right) _{2}\right)
_{0}\left( x^{b,v}\right) _{2}+Q_{a+b-w-1}\right) \\
&=&\sum_{w=0}^{a+b}\left( \left( x^{a,u}x^{b,v}\right) _{1}+Q_{w-1}\right)
\otimes \left( \left( x^{a,u}x^{b,v}\right) _{2}+Q_{a+b-w-1}\right) \\
&=&\Delta _{\mathrm{gr}Q}\left( x^{a,u}x^{b,v}+Q_{a+b-1}\right) \\
&=&\Delta _{\mathrm{gr}Q}m_{\mathrm{gr}Q}\left( \left(
x^{a,u}+Q_{a-1}\right) \otimes \left( x^{b,v}+Q_{b-1}\right) \right) \\
&=&\Delta _{\mathrm{gr}Q}m_{\mathrm{gr}Q}\left( \overline{x^{a,u}}\otimes
\overline{x^{b,v}}\right) .
\end{eqnarray*}
\end{invisible}

Moreover, since $\varepsilon _{\mathrm{gr}Q\otimes \mathrm{gr}Q}=\varepsilon
_{\mathrm{gr}Q}\otimes \varepsilon _{\mathrm{gr}Q},$ we get that $%
\varepsilon _{\mathrm{gr}Q}m_{\mathrm{gr}Q}\left( \overline{x^{a,u}}\otimes
\overline{x^{b,v}}\right) =\varepsilon _{\mathrm{gr}Q\otimes \mathrm{gr}%
Q}\left( \overline{x^{a,u}}\otimes \overline{x^{b,v}}\right) .$

\begin{invisible}
\begin{eqnarray*}
\varepsilon _{\mathrm{gr}Q}m_{\mathrm{gr}Q}\left( \overline{x^{a,u}}\otimes
\overline{x^{b,v}}\right) &=&\varepsilon _{\mathrm{gr}Q}m_{\mathrm{gr}%
Q}\left( \left( x^{a,u}+Q_{a-1}\right) \otimes \left( x^{b,v}+Q_{b-1}\right)
\right) \\
&=&\varepsilon _{\mathrm{gr}Q}\left( x^{a,u}x^{b,v}+Q_{a+b-1}\right) \\
&=&\delta _{a+b,0}\varepsilon \left( x^{a,u}x^{b,v}\right) \\
&=&\delta _{a+b,0}\varepsilon m\left( x^{a,u}\otimes x^{b,v}\right) \\
&=&\delta _{a+b,0}\varepsilon _{Q\otimes Q}\left( x^{a,u}\otimes
x^{b,v}\right) \\
&=&\delta _{a,0}\delta _{b,0}\varepsilon \left( x^{a,u}\right) \varepsilon
\left( x^{b,v}\right) \\
&=&\delta _{a,0}\varepsilon \left( x^{a,u}\right) \delta _{b,0}\varepsilon
\left( x^{b,v}\right) \\
&=&\varepsilon _{\mathrm{gr}Q}\left( \overline{x^{a,u}}\right) \varepsilon _{%
\mathrm{gr}Q}\left( \overline{x^{b,v}}\right) \\
&=&\varepsilon _{\mathrm{gr}Q\otimes \mathrm{gr}Q}\left( \overline{x^{a,u}}%
\otimes \overline{x^{b,v}}\right) .
\end{eqnarray*}
\end{invisible}

This proves that $m_{\mathrm{gr}Q}$ is a coalgebra morphism in ${_{H}^{H}%
\mathcal{YD}}$.

The fact that $u_{\mathrm{gr}Q}:\Bbbk \rightarrow \mathrm{gr}Q,$ defined by $%
u_{\mathrm{gr}Q}\left( k\right) :=k1_{Q}+Q_{-1}$ is a coalgebra morphism in $%
{_{H}^{H}\mathcal{YD}}$ easily follows by means of (\ref{form:1Qlin}) and (%
\ref{form:1Qcolin}).

\begin{invisible}
Let us check that $u_{\mathrm{gr}Q}:\Bbbk \rightarrow \mathrm{gr}Q,$ defined
by $u_{\mathrm{gr}Q}\left( k\right) :=k1_{Q}+Q_{-1}$ is a coalgebra morphism
in ${_{H}^{H}\mathcal{YD}}$ too. For $h\in H,k\in \Bbbk $, we have%
\begin{eqnarray*}
u_{\mathrm{gr}Q}\left( hk\right) &=&u_{\mathrm{gr}Q}\left( \varepsilon
_{H}\left( h\right) k\right) =\varepsilon _{H}\left( h\right) u_{\mathrm{gr}%
Q}\left( k\right) =\varepsilon _{H}\left( h\right) \left(
k1_{Q}+Q_{-1}\right) \\
&=&\varepsilon _{H}\left( h\right) k1_{Q}+Q_{-1}\overset{(\ref{form:1Qlin})}{%
=}kh1_{Q}+Q_{-1}=h\left( k1_{Q}+Q_{-1}\right) =hu_{\mathrm{gr}Q}\left(
k\right) , \\
\left[ u_{\mathrm{gr}Q}\left( k\right) \right] _{-1}\otimes \left[ u_{%
\mathrm{gr}Q}\left( k\right) \right] _{0} &=&\left[ k1_{Q}+Q_{-1}\right]
_{-1}\otimes \left[ k1_{Q}+Q_{-1}\right] _{0}=\left( k1_{Q}\right)
_{-1}\otimes \left[ \left( k1_{Q}\right) _{0}+Q_{-1}\right] \\
&=&\left( 1_{Q}\right) _{-1}\otimes \left[ k\left( 1_{Q}\right) _{0}+Q_{-1}%
\right] \overset{(\ref{form:1Qcolin})}{=}1_{H}\otimes \left[ k1_{Q}+Q_{-1}%
\right] =1_{H}\otimes u_{\mathrm{gr}Q}\left( k\right) .
\end{eqnarray*}%
Moreover%
\begin{eqnarray*}
\Delta _{\mathrm{gr}Q}u_{\mathrm{gr}Q}\left( k\right) &=&\Delta _{\mathrm{gr}%
Q}\left( k1_{Q}+Q_{-1}\right) \\
&=&\left( \left( k1_{Q}\right) _{1}+Q_{-1}\right) \otimes \left( \left(
k1_{Q}\right) _{2}+Q_{-1}\right) \\
&=&k\left( \left( 1_{Q}\right) _{1}+Q_{-1}\right) \otimes \left( \left(
1_{Q}\right) _{2}+Q_{-1}\right) \\
&=&k\left( 1_{Q}+Q_{-1}\right) \otimes \left( 1_{Q}+Q_{-1}\right) \\
&=&u_{\mathrm{gr}Q}\left( k\right) \otimes u_{\mathrm{gr}Q}\left( 1_{\Bbbk
}\right) \\
&=&\left( u_{\mathrm{gr}Q}\otimes u_{\mathrm{gr}Q}\right) \Delta _{\Bbbk
}\left( k\right) , \\
\varepsilon _{\mathrm{gr}Q}u_{\mathrm{gr}Q}\left( k\right) &=&\varepsilon _{%
\mathrm{gr}Q}\left( k1_{Q}+Q_{-1}\right) =\varepsilon \left( k1_{Q}\right)
=k\varepsilon \left( 1_{Q}\right) =k1_{\Bbbk }=k=\varepsilon _{\Bbbk }\left(
k\right) .
\end{eqnarray*}
\end{invisible}
\end{proof}

\begin{definition}[{\protect\cite[Definition 5.2]{ABM}}]
\label{def: dual quasi braided} Let $H$ be a Hopf algebra. Recall that a
\emph{coquasi-bialgebra} $(Q,m,u,\Delta ,\varepsilon ,\alpha )$ in the
pre-braided monoidal category $_{H}^{H}\mathcal{YD}$ is a coalgebra $\left(
Q,\Delta ,\varepsilon \right) $ in $_{H}^{H}\mathcal{YD}$ together with
coalgebra homomorphisms $m:Q\otimes Q\rightarrow Q$ and $u:\Bbbk \rightarrow
Q$ in $_{H}^{H}\mathcal{YD}$ and a convolution invertible element $\alpha
\in {_{H}^{H}\mathcal{YD}}\left( Q^{\otimes 3},\Bbbk \right) $ (\emph{%
braided reassociator}) such that%
\begin{eqnarray}
&&\alpha \left( Q\otimes Q\otimes m\right) \ast \alpha \left( m\otimes
Q\otimes Q\right) =\left( \varepsilon \otimes \alpha \right) \ast \alpha
\left( Q\otimes m\otimes Q\right) \ast \left( \alpha \otimes \varepsilon
\right) ,  \label{form: alpha 3-cocycle} \\
&&\alpha \left( Q\otimes u\otimes Q\right) =\alpha \left( u\otimes Q\otimes
Q\right) =\alpha \left( Q\otimes Q\otimes u\right) =\varepsilon _{Q\otimes
Q},  \label{form: alpha unital} \\
&&m\left( Q\otimes m\right) \ast \alpha =\alpha \ast m\left( m\otimes
Q\right) ,  \label{form: m quasi assoc} \\
&&m\left( u\otimes Q\right) =\mathrm{Id}_{Q}=m\left( Q\otimes u\right) .
\label{form: m unital}
\end{eqnarray}%
Here $\ast $ denotes the convolution product, where $Q^{\otimes 3}$ is the
tensor product of coalgebras in $_{H}^{H}\mathcal{YD}$ whence it depends on
the braiding of this category. Note that in (\ref{form: alpha unital}) any
of the three equalities such as $\alpha \left( u\otimes Q\otimes Q\right)
=\varepsilon _{Q\otimes Q}$ implies that $\alpha $ is unital.
\end{definition}

\begin{theorem}
\label{teo:grHopf}Let $H$ be a Hopf algebra and let $\left( Q,m,u,\Delta
,\varepsilon ,\omega \right) $ be a connected coquasi-bialgebra in ${_{H}^{H}%
\mathcal{YD}}$. Then $\mathrm{gr}Q$ is a connected bialgebra in ${_{H}^{H}%
\mathcal{YD}}$.
\end{theorem}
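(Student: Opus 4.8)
The plan is to read off the coalgebra‑with‑multiplication‑and‑unit structure of $\mathrm{gr}Q$ from Proposition~\ref{pro:CMU}, and then to prove that the graded multiplication is genuinely associative — in other words, that the braided reassociator $\omega$ becomes trivial upon passing to the associated graded object, for purely degree‑theoretic reasons. Since $Q$ is connected, $Q_{0}=\Bbbk 1_{Q}$; by \eqref{form:1Qlin}, \eqref{form:1Qcolin} and the fact that $u$ and $\Delta$ are morphisms in ${}_{H}^{H}\mathcal{YD}$, this $Q_{0}$ is a subcoalgebra of $Q$ in ${}_{H}^{H}\mathcal{YD}$, and $Q_{0}\cdot Q_{0}=\Bbbk 1_{Q}\subseteq Q_{0}$ by \eqref{form: m unital}. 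Hence Proposition~\ref{pro:CMU} applies: $\mathrm{gr}Q$ is a coalgebra with multiplication $m_{\mathrm{gr}Q}$ and unit $u_{\mathrm{gr}Q}$ in ${}_{H}^{H}\mathcal{YD}$, both $m_{\mathrm{gr}Q}$ and $u_{\mathrm{gr}Q}$ are coalgebra morphisms, and $Q_{a}\cdot Q_{b}\subseteq Q_{a+b}$ for all $a,b\geq 0$. Moreover $\mathrm{gr}Q$ is connected as a $\Bbbk$‑coalgebra, being the graded coalgebra associated with the coradical filtration of the connected coalgebra $Q$, with coradical $Q_{0}/Q_{-1}$. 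Thus, to see that $\mathrm{gr}Q$ is a connected bialgebra in ${}_{H}^{H}\mathcal{YD}$, it remains only to check that $m_{\mathrm{gr}Q}$ is unital and associative; unitality is immediate from \eqref{form: m unital}.

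Associativity is the heart of the matter. I would rewrite the quasi‑associativity relation \eqref{form: m quasi assoc} inside the convolution algebra ${}_{H}^{H}\mathcal{YD}(Q^{\otimes 3},Q)$, a bimodule over the convolution algebra ${}_{H}^{H}\mathcal{YD}(Q^{\otimes 3},\Bbbk)$, in the form
\[
m(Q\otimes m)=\omega \ast m(m\otimes Q)\ast \omega ^{-1}.
\]
Fix $x\in Q_{a}$, $y\in Q_{b}$, $z\in Q_{c}$ and set $n=a+b+c$, so that $x(yz),(xy)z\in Q_{n}$ by the inclusion $Q_{a}\cdot Q_{b}\subseteq Q_{a+b}$. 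Evaluating the displayed identity on $x\otimes y\otimes z$ expands the right‑hand side as a sum of terms $\omega(\xi_{(1)})\,m(m\otimes Q)(\xi_{(2)})\,\omega^{-1}(\xi_{(3)})$ arising from the $2$‑fold iterated comultiplication $\Delta^{(2)}$ of $x\otimes y\otimes z$ in $Q^{\otimes 3}$; this $\Delta^{(2)}$ respects the tensor filtration of $Q^{\otimes 3}$, because $\Delta(Q_{i})\subseteq \sum_{j}Q_{j}\otimes Q_{i-j}$ and $c_{Q,Q}(Q_{u}\otimes Q_{v})\subseteq Q_{v}\otimes Q_{u}$, as in the proof of Proposition~\ref{pro:CMU}. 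Since $m(m\otimes Q)$ sends the degree‑$q$ part of $Q^{\otimes 3}$ into $Q_{q}$ (again using $Q_{i}\cdot Q_{j}\subseteq Q_{i+j}$), while the $\omega$‑ and $\omega^{-1}$‑legs contribute only scalars, a filtration‑degree count shows that a term can survive modulo $Q_{n-1}$ only when $\xi_{(2)}$ has full filtration degree $n$, which forces $\xi_{(1)}$ and $\xi_{(3)}$ into filtration degree $0$, i.e. into $(Q^{\otimes 3})_{0}=Q_{0}\otimes Q_{0}\otimes Q_{0}=\Bbbk(1_{Q}\otimes 1_{Q}\otimes 1_{Q})$ — this is exactly where connectedness is used. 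On $1_{Q}\otimes 1_{Q}\otimes 1_{Q}$ both $\omega$ and $\omega^{-1}$ agree with $\varepsilon_{Q^{\otimes 3}}$, by the unitality relations \eqref{form: alpha unital} (and since $1_{Q}\otimes 1_{Q}\otimes 1_{Q}$ is grouplike). Hence the right‑hand side reduces, modulo $Q_{n-1}$, to $m(m\otimes Q)(x\otimes y\otimes z)=(xy)z$, so that $x(yz)\equiv (xy)z \pmod{Q_{n-1}}$; this is precisely the identity $m_{\mathrm{gr}Q}(\mathrm{gr}Q\otimes m_{\mathrm{gr}Q})=m_{\mathrm{gr}Q}(m_{\mathrm{gr}Q}\otimes \mathrm{gr}Q)$.

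Putting this together, $\mathrm{gr}Q$ is an associative unital algebra and a coalgebra in ${}_{H}^{H}\mathcal{YD}$ whose multiplication and unit are coalgebra morphisms, hence a bialgebra in ${}_{H}^{H}\mathcal{YD}$, and it is connected by the remark above, as desired. I expect the main obstacle to be the degree bookkeeping in the previous paragraph: one must justify carefully, term by term, that after applying $\Delta^{(2)}$ on $Q^{\otimes 3}$ only the evaluation of $\omega$ and $\omega^{-1}$ on $1_{Q}\otimes 1_{Q}\otimes 1_{Q}$ can contribute modulo $Q_{n-1}$. This uses all the compatibilities between $\Delta$, $m$, the braiding of ${}_{H}^{H}\mathcal{YD}$ and the coradical filtration assembled in Proposition~\ref{pro:CMU}, and is presumably dispatched in the author's text as a lengthy but routine computation.
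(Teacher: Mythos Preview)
Your proposal is correct and follows essentially the same approach as the paper. The paper makes your degree bookkeeping precise via \cite[Lemma~3.69]{AMS}: for $w$ in the $n$-th piece of the tensor filtration on $Q^{\otimes 3}$ with $n>0$, one has $\Delta(w)\equiv w\otimes 1_{Q}^{\otimes 3}+1_{Q}^{\otimes 3}\otimes w$ modulo lower terms, and iterating this gives the clean identity $w_{1}\otimes\bigl(m(Q\otimes m)(w_{2})+Q_{n-1}\bigr)\otimes w_{3}=1_{Q}^{\otimes 3}\otimes\bigl(m(Q\otimes m)(w)+Q_{n-1}\bigr)\otimes 1_{Q}^{\otimes 3}$, from which the reduction of $\omega$ and $\omega^{-1}$ to their values on $1_{Q}^{\otimes 3}$ is immediate --- exactly the step you anticipated.
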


\begin{proof}
By Proposition \ref{pro:CMU}, we know that $\mathrm{gr}Q$ is a coalgebra
with multiplication and unit in ${_{H}^{H}\mathcal{YD}}$. We have to check
that the multiplication is associative and unitary.

Given two coalgebras $D,E$ in ${_{H}^{H}\mathcal{YD}}$ endowed with
coalgebras filtration $\left( D_{\left( n\right) }\right) _{n\in \N_0}$
and $\left( E_{\left( n\right) }\right) _{n\in \N_0}$ in ${_{H}^{H}%
\mathcal{YD}}$ such that $D_{\left( 0\right) }$ and $E_{\left( 0\right) }$
are one-dimensional, let us check that $C_{\left( n\right) }:=\sum_{0\leq
i\leq n}D_{\left( i\right) }\otimes E_{\left( n-i\right) }$ gives a
coalgebra filtration on $C:=D\otimes E$ in ${_{H}^{H}\mathcal{YD}}.$ First
note that the coalgebra structure of $C$ depends on the braiding. Thus, we
have
\begin{eqnarray*}
\Delta _{C}\left( C_{\left( n\right) }\right) &=&\left( D\otimes
c_{D,E}\otimes E\right) \left( \Delta _{D}\otimes \Delta _{E}\right) \left(
\sum_{i=0}^{n}D_{\left( i\right) }\otimes E_{\left( n-i\right) }\right) \\
&\subseteq &\left( D\otimes c_{D,E}\otimes E\right) \left(
\sum_{i=0}^{n}\sum_{a=0}^{i}\sum_{b=0}^{n-i}D_{\left( a\right) }\otimes
D_{\left( i-a\right) }\otimes E_{\left( b\right) }\otimes E_{\left(
n-i-b\right) }\right) \\
&\subseteq &\sum_{i=0}^{n}\sum_{a=0}^{i}\sum_{b=0}^{n-i}D_{\left( a\right)
}\otimes c_{D,E}\left( D_{\left( i-a\right) }\otimes E_{\left( b\right)
}\right) \otimes E_{\left( n-i-b\right) } \\
&\subseteq &\sum_{i=0}^{n}\sum_{a=0}^{i}\sum_{b=0}^{n-i}D_{\left( a\right)
}\otimes c_{D_{\left( i-a\right) },E_{\left( b\right) }}\left( D_{\left(
i-a\right) }\otimes E_{\left( b\right) }\right) \otimes E_{\left(
n-i-b\right) } \\
&\subseteq &\sum_{i=0}^{n}\sum_{a=0}^{i}\sum_{b=0}^{n-i}D_{\left( a\right)
}\otimes E_{\left( b\right) }\otimes D_{\left( i-a\right) }\otimes E_{\left(
n-i-b\right) } \\
&\subseteq &\sum_{i=0}^{n}\sum_{w=0}^{n}\sum_{\substack{ 0\leq a\leq i,  \\ %
0\leq b\leq n-i  \\ a+b=w}}D_{\left( a\right) }\otimes E_{\left( b\right)
}\otimes D_{\left( i-a\right) }\otimes E_{\left( n-i-b\right) } \\
&\subseteq &\sum_{w=0}^{n}C_{\left( w\right) }\otimes C_{\left( n-w\right) }.
\end{eqnarray*}%
Moreover, by \cite[Proposition 11.1.1]{Sw}, we have that the coradical of $%
C $ is contained in $D_{\left( 0\right) }\otimes E_{\left( 0\right) }\ $and
hence it is one-dimensional.

This argument can be used to produce a coalgebra filtration on $C:=Q\otimes
Q\otimes Q$ using as a filtration on $Q$ the coradical filtration. Let $n>0$
and let $w\in C_{\left( n\right) }=\sum_{i+j+k\leq n}Q_{i}\otimes
Q_{j}\otimes Q_{k}.$ By \cite[Lemma 3.69]{AMS}, we have that%
\begin{equation*}
\Delta _{C}\left( w\right) -w\otimes \left( 1_{Q}\right) ^{\otimes 3}-\left(
1_{Q}\right) ^{\otimes 3}\otimes w\in C_{\left( n-1\right) }\otimes
C_{\left( n-1\right) }.
\end{equation*}%
Thus we get%
\begin{equation*}
w_{1}\otimes w_{2}\otimes w_{3}-\Delta _{C}\left( w\right) \otimes \left(
1_{Q}\right) ^{\otimes 3}-\Delta _{C}\left( \left( 1_{Q}\right) ^{\otimes
3}\right) \otimes w\in \Delta _{C}\left( C_{\left( n-1\right) }\right)
\otimes C_{\left( n-1\right) }
\end{equation*}%
and hence, tensoring the first relation by $\left( 1_{Q}\right) ^{\otimes 3}$
on the right and adding it to the second one, we get
\begin{equation*}
w_{1}\otimes w_{2}\otimes w_{3}-w\otimes \left( 1_{Q}\right) ^{\otimes
3}\otimes \left( 1_{Q}\right) ^{\otimes 3}-\left( 1_{Q}\right) ^{\otimes
3}\otimes w\otimes \left( 1_{Q}\right) ^{\otimes 3}-\left( 1_{Q}\right)
^{\otimes 6}\otimes w\in C_{\left( n-1\right) }\otimes C_{\left( n-1\right)
}\otimes C_{\left( n-1\right) }.
\end{equation*}

For shortness, we set $\nu _{n}\left( z\right) :=m\left( Q\otimes m\right)
\left( z\right) +Q_{n-1}$ for every $z\in C.$ Thus, by applying to the last
displayed relation $C_{\left( n-1\right) }\otimes m\left( Q\otimes m\right)
\otimes C_{\left( n-1\right) }$ and factoring out the middle term by $%
Q_{n-1},$ we get
\begin{eqnarray*}
&&\left[
\begin{array}{c}
w_{1}\otimes \nu _{n}\left( w_{2}\right) \otimes w_{3}-w\otimes \nu
_{n}\left( \left( 1_{Q}\right) ^{\otimes 3}\right) \otimes \left(
1_{Q}\right) ^{\otimes 3}+ \\
-\left( 1_{Q}\right) ^{\otimes 3}\otimes \nu _{n}\left( w\right) \otimes
\left( 1_{Q}\right) ^{\otimes 3}-\left( 1_{Q}\right) ^{\otimes 3}\otimes \nu
_{n}\left( \left( 1_{Q}\right) ^{\otimes 3}\right) \otimes w%
\end{array}%
\right] \\
&\in &C_{\left( n-1\right) }\otimes \left( \frac{\nu _{n}\left( C_{\left(
n-1\right) }\right) }{Q_{n-1}}\right) \otimes C_{\left( n-1\right)
}\subseteq C_{\left( n-1\right) }\otimes \frac{Q_{n-1}}{Q_{n-1}}\otimes
C_{\left( n-1\right) }=0.
\end{eqnarray*}%
Thus we can express the first term with respect to the remaining ones as
follows
\begin{eqnarray*}
&&w_{1}\otimes \nu _{n}\left( w_{2}\right) \otimes w_{3} \\
&=&w\otimes \nu _{n}\left( \left( 1_{Q}\right) ^{\otimes 3}\right) \otimes
\left( 1_{Q}\right) ^{\otimes 3}+\left( 1_{Q}\right) ^{\otimes 3}\otimes \nu
_{n}\left( w\right) \otimes \left( 1_{Q}\right) ^{\otimes 3}+\left(
1_{Q}\right) ^{\otimes 3}\otimes \nu _{n}\left( \left( 1_{Q}\right)
^{\otimes 3}\right) \otimes w \\
&=&w\otimes \left( 1_{Q}+Q_{n-1}\right) \otimes \left( 1_{Q}\right)
^{\otimes 3}+\left( 1_{Q}\right) ^{\otimes 3}\otimes \nu _{n}\left( w\right)
\otimes \left( 1_{Q}\right) ^{\otimes 3}+\left( 1_{Q}\right) ^{\otimes
3}\otimes \left( 1_{Q}+Q_{n-1}\right) \otimes w \\
&\overset{n>0}{=}&\left( 1_{Q}\right) ^{\otimes 3}\otimes \nu _{n}\left(
w\right) \otimes \left( 1_{Q}\right) ^{\otimes 3}.
\end{eqnarray*}%
We have so proved that for $n>0$ and $w\in C_{\left( n\right) }$%
\begin{equation}
w_{1}\otimes \nu _{n}\left( w_{2}\right) \otimes w_{3}=\left( 1_{Q}\right)
^{\otimes 3}\otimes \nu _{n}\left( w\right) \otimes \left( 1_{Q}\right)
^{\otimes 3}.  \label{form:Dragos}
\end{equation}%
The same equation trivially holds also in the case $n=0$ as $C_{\left(
n\right) }$ is one-dimensional.

Let $x,y,z\in Q$. Then $x\otimes y\otimes z\in C_{\left( \left\vert
x\right\vert +\left\vert y\right\vert +\left\vert z\right\vert \right) }$ so
that
\begin{eqnarray*}
\left( \overline{x}\cdot \overline{y}\right) \cdot \overline{z} &=&\left(
\left( x+Q_{\left\vert x\right\vert -1}\right) \cdot \left( y+Q_{\left\vert
y\right\vert -1}\right) \right) \cdot \left( z+Q_{\left\vert z\right\vert
-1}\right) \\
&=&\left( \left( xy\right) +Q_{\left\vert x\right\vert +\left\vert
y\right\vert -1}\right) \cdot \left( z+Q_{\left\vert z\right\vert -1}\right)
\\
&=&\left( xy\right) z+Q_{\left\vert x\right\vert +\left\vert y\right\vert
+\left\vert z\right\vert -1} \\
&=&\omega ^{-1}\left( \left( x\otimes y\otimes z\right) _{1}\right) \nu
_{\left\vert x\right\vert +\left\vert y\right\vert +\left\vert z\right\vert
}\left( \left( x\otimes y\otimes z\right) _{2}\right) \omega \left( \left(
x\otimes y\otimes z\right) _{3}\right) \\
&\overset{(\ref{form:Dragos})}{=}&\omega ^{-1}\left( 1_{Q}\otimes
1_{Q}\otimes 1_{Q}\right) \nu _{\left\vert x\right\vert +\left\vert
y\right\vert +\left\vert z\right\vert }\left( x\otimes y\otimes z\right)
\omega \left( 1_{Q}\otimes 1_{Q}\otimes 1_{Q}\right) \\
&=&\nu _{\left\vert x\right\vert +\left\vert y\right\vert +\left\vert
z\right\vert }\left( x\otimes y\otimes z\right) \\
&=&x\left( yz\right) +Q_{\left\vert x\right\vert +\left\vert y\right\vert
+\left\vert z\right\vert -1}=\overline{x}\cdot \left( \overline{y}\cdot
\overline{z}\right) .
\end{eqnarray*}

Therefore the multiplication is associative. It is also unitary as
\begin{equation*}
\overline{x}\cdot \overline{1_{Q}}=\left( x+Q_{\left\vert x\right\vert
-1}\right) \cdot \left( 1_{Q}+Q_{-1}\right) =x\cdot 1_{Q}+Q_{\left\vert
x\right\vert -1}=x+Q_{\left\vert x\right\vert -1}=\overline{x}
\end{equation*}%
and similarly $\overline{1_{Q}}\cdot \overline{x}=\overline{x}$ for every $%
x\in Q.$
\end{proof}

\section{Gauge deformation}\label{sec:2}

\begin{definition}
Let $H$ be a Hopf algebra and let $\left( Q,m,u,\Delta ,\varepsilon ,\omega
\right) $ be a coquasi-bialgebra in ${_{H}^{H}\mathcal{YD}}$. A \textbf{%
gauge transformation} for $Q$ is a morphism $\gamma :Q\otimes Q\rightarrow
\Bbbk $ in ${_{H}^{H}\mathcal{YD}}$ which is convolution invertible in ${%
_{H}^{H}\mathcal{YD}}$ and which is also unitary on both entries.
\end{definition}

\begin{remark}
\label{rem:gamma-1gauge}For $\gamma $ as above, let us check that $\gamma
^{-1}$ is unitary whence a gauge transformation too.

First note that for all $x\in Q,$ by means of (\ref{form:1Qcolin}) and (\ref%
{form:1Qlin}), one gets%
\begin{eqnarray}
\left( 1_{Q}\otimes x\right) _{1}\otimes \left( 1_{Q}\otimes x\right) _{2}
&=&1_{Q}\otimes x_{1}\otimes 1_{Q}\otimes x_{2}  \label{form:delta1x} \\
\left( x\otimes 1_{Q}\right) _{1}\otimes \left( x\otimes 1_{Q}\right) _{2}
&=&x_{1}\otimes 1_{Q}\otimes x_{2}\otimes 1_{Q}  \label{form:deltax1}
\end{eqnarray}

\begin{invisible}
For all $x\in Q,$%
\begin{equation*}
\left( 1_{Q}\otimes x\right) _{1}\otimes \left( 1_{Q}\otimes x\right)
_{2}=1_{Q}\otimes \left( 1_{Q}\right) _{-1}x_{1}\otimes \left( 1_{Q}\right)
_{0}\otimes x_{2}\overset{(\ref{form:1Qcolin})}{=}1_{Q}\otimes x_{1}\otimes
1_{Q}\otimes x_{2}
\end{equation*}%
and%
\begin{eqnarray*}
\left( x\otimes 1_{Q}\right) _{1}\otimes \left( x\otimes 1_{Q}\right) _{2}
&=&x_{1}\otimes \left( x_{2}\right) _{-1}1_{Q}\otimes \left( x_{2}\right)
_{0}\otimes 1_{Q}\overset{(\ref{form:1Qlin})}{=}x_{1}\otimes \varepsilon
_{H}\left( \left( x_{2}\right) _{-1}\right) 1_{Q}\otimes \left( x_{2}\right)
_{0}\otimes 1_{Q} \\
&=&x_{1}\otimes 1_{Q}\otimes x_{2}\otimes 1_{Q}
\end{eqnarray*}
\end{invisible}

Thus%
\begin{equation*}
\gamma ^{-1}\left( 1_{Q}\otimes x\right) =\gamma ^{-1}\left( 1_{Q}\otimes
x_{1}\right) \varepsilon \left( x_{2}\right) =\gamma ^{-1}\left(
1_{Q}\otimes x_{1}\right) \gamma \left( 1_{Q}\otimes x_{2}\right) =\left(
\gamma ^{-1}\ast \gamma \right) \left( 1_{Q}\otimes x\right) =\varepsilon
\left( x\right)
\end{equation*}%
and similarly $\gamma ^{-1}\left( x\otimes 1_{Q}\right) =\varepsilon \left(
x\right) .$
\end{remark}

\begin{lemma}
\label{lem:InvYD}Let $H$ be a Hopf algebra and let $C$ be a coalgebra in ${%
_{H}^{H}\mathcal{YD}}$. Given a map $\gamma \in {_{H}^{H}\mathcal{YD}}\left(
C,\Bbbk \right) ,$ we have that $\gamma $ is convolution invertible in ${%
_{H}^{H}\mathcal{YD}}\left( C,\Bbbk \right) $ if and only if it is
convolution invertible in $\mathbf{Vec}_{\Bbbk }\left( C,\Bbbk \right) $.
Moreover the inverse is the same.
\end{lemma}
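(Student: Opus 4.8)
The plan is to prove the two implications separately, one of them being immediate. The convolution product on $\mathbf{Vec}_{\Bbbk }\left( C,\Bbbk \right) $ and its unit $\varepsilon _{C}$ depend only on the coalgebra structure of $C$, not on the ${_{H}^{H}\mathcal{YD}}$-structure; hence ${_{H}^{H}\mathcal{YD}}\left( C,\Bbbk \right) $ is a unital subalgebra of $\mathbf{Vec}_{\Bbbk }\left( C,\Bbbk \right) $ under convolution. Since convolution inverses are unique, this already settles one implication together with the claim that the inverse is the same: an inverse of $\gamma $ computed inside ${_{H}^{H}\mathcal{YD}}\left( C,\Bbbk \right) $ is automatically its unique convolution inverse in $\mathbf{Vec}_{\Bbbk }\left( C,\Bbbk \right) $. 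So the whole content is the converse: assuming $\gamma \in {_{H}^{H}\mathcal{YD}}\left( C,\Bbbk \right) $ and $\delta \in \mathbf{Vec}_{\Bbbk }\left( C,\Bbbk \right) $ with $\gamma \ast \delta =\varepsilon _{C}=\delta \ast \gamma $, I will show that $\delta $ is itself a morphism in ${_{H}^{H}\mathcal{YD}}$, i.e. $H$-linear and $H$-colinear with respect to the trivial structures on $\Bbbk $; then $\delta $ is, again by uniqueness, the convolution inverse of $\gamma $ inside ${_{H}^{H}\mathcal{YD}}\left( C,\Bbbk \right) $.

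First I would prove the $H$-linearity of $\delta $. For $h\in H$ put $\delta _{h}\in \mathbf{Vec}_{\Bbbk }\left( C,\Bbbk \right) $, $\delta _{h}\left( c\right) :=\delta \left( hc\right) $. Using that $\Delta _{C}$ and $\varepsilon _{C}$ are $H$-linear ($C$ being a coalgebra in ${_{H}^{H}\mathcal{YD}}$) and that $\gamma $ is $H$-linear by hypothesis, one expands $\left( \gamma \ast \delta \right) \left( hc\right) =\varepsilon _{C}\left( hc\right) $ and rearranges to get $\gamma \ast \delta _{h}=\varepsilon _{H}\left( h\right) \varepsilon _{C}=\gamma \ast \left( \varepsilon _{H}\left( h\right) \delta \right) $. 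Convolving on the left with $\delta $, using $\delta \ast \gamma =\varepsilon _{C}$ and associativity of convolution, gives $\delta _{h}=\varepsilon _{H}\left( h\right) \delta $, that is $\delta \left( hc\right) =\varepsilon _{H}\left( h\right) \delta \left( c\right) $, which is the desired $H$-linearity.

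Next, for the $H$-colinearity of $\delta $, I would pass to the convolution algebra $\mathbf{Vec}_{\Bbbk }\left( C,H\right) $, whose unit is $u_{H}\varepsilon _{C}$; post-composition with the unit $u_{H}:\Bbbk \rightarrow H$ yields an algebra map $\mathbf{Vec}_{\Bbbk }\left( C,\Bbbk \right) \rightarrow \mathbf{Vec}_{\Bbbk }\left( C,H\right) $. Put $\bar{\gamma}:=u_{H}\gamma $ and $\bar{\delta}:=u_{H}\delta $, whence $\bar{\delta}$ is the two-sided convolution inverse of $\bar{\gamma}$, and let $\psi \in \mathbf{Vec}_{\Bbbk }\left( C,H\right) $ be given by $\psi \left( c\right) :=c_{-1}\delta \left( c_{0}\right) $. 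The crux is to check $\bar{\gamma}\ast \psi =u_{H}\varepsilon _{C}$. For this one applies $\gamma $ and $\delta $ to the second and third tensor factors of the $H$-colinearity identity of $\Delta _{C}$, namely $\left( c_{1}\right) _{-1}\left( c_{2}\right) _{-1}\otimes \left( c_{1}\right) _{0}\otimes \left( c_{2}\right) _{0}=c_{-1}\otimes \left( c_{0}\right) _{1}\otimes \left( c_{0}\right) _{2}$: the right-hand side becomes $c_{-1}\left( \gamma \ast \delta \right) \left( c_{0}\right) =c_{-1}\varepsilon _{C}\left( c_{0}\right) =\varepsilon _{C}\left( c\right) 1_{H}$ by $H$-colinearity of $\varepsilon _{C}$, and the left-hand side, after factoring the product in $H$, equals $\left( \left( c_{1}\right) _{-1}\gamma \left( \left( c_{1}\right) _{0}\right) \right) \left( \left( c_{2}\right) _{-1}\delta \left( \left( c_{2}\right) _{0}\right) \right) $, which the $H$-colinearity of $\gamma $ (i.e. $x_{-1}\gamma \left( x_{0}\right) =\gamma \left( x\right) 1_{H}$) turns into $\gamma \left( c_{1}\right) \left( c_{2}\right) _{-1}\delta \left( \left( c_{2}\right) _{0}\right) =\left( \bar{\gamma}\ast \psi \right) \left( c\right) $. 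Convolving $\bar{\gamma}\ast \psi =u_{H}\varepsilon _{C}$ on the left with $\bar{\delta}$ gives $\psi =\bar{\delta}$, i.e. $c_{-1}\delta \left( c_{0}\right) =\delta \left( c\right) 1_{H}$, which is precisely the $H$-colinearity of $\delta $. Combined with the previous step, this shows $\delta \in {_{H}^{H}\mathcal{YD}}\left( C,\Bbbk \right) $ and completes the proof.

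The step I expect to require the most care is this last rewriting of the left-hand side of the colinearity identity: one manipulates $\gamma \left( \left( c_{1}\right) _{0}\right) \delta \left( \left( c_{2}\right) _{0}\right) \left( c_{1}\right) _{-1}\left( c_{2}\right) _{-1}$, an expression in which the two legs of the coaction are entangled inside a single Sweedler sum, so one has to make sure that extracting the scalars and factoring the product in $H$ as $\left( \left( c_{1}\right) _{-1}\gamma \left( \left( c_{1}\right) _{0}\right) \right) \left( \left( c_{2}\right) _{-1}\delta \left( \left( c_{2}\right) _{0}\right) \right) $ is legitimate --- which it is, once $\Delta _{C}\left( c\right) $ is fully expanded and bilinearity of the multiplication of $H$ is invoked --- before applying the colinearity of $\gamma $ to the first leg. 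Everything else reduces to routine manipulations of the convolution product.
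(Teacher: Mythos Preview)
Your proof is correct and follows essentially the same approach as the paper. For $H$-linearity the paper simply cites \cite[Remark 2.4(ii)]{ABM-cocycleproj}, while you spell out the convolution-cancellation argument directly; for $H$-colinearity the paper performs a direct Sweedler computation in $H\otimes\Bbbk$ (inserting $\gamma\ast\gamma^{-1}$ and using colinearity of $\gamma$, $\Delta_C$, $\varepsilon_C$), whereas you repackage the same computation as the identity $\bar{\gamma}\ast\psi=u_H\varepsilon_C$ in the convolution algebra $\mathbf{Vec}_\Bbbk(C,H)$ followed by cancellation --- the underlying manipulations and the ingredients used are identical.
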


\begin{proof}
Assume there is a $\Bbbk $-linear map $\gamma ^{-1}:C\rightarrow \Bbbk $
which is a convolution inverse of $\gamma $ in $\mathbf{Vec}_{\Bbbk }\left(
C,\Bbbk \right) $. By \cite[Remark 2.4(ii)]{ABM-cocycleproj}, $\gamma ^{-1}$
is left $H$-linear. Let us check that $\gamma ^{-1}$ is left $H$-colinear:%
\begin{align*}
c_{-1}\otimes \gamma ^{-1}\left( c_{0}\right) & =\left( c_{1}\right)
_{-1}1_{H}\otimes \gamma ^{-1}\left( \left( c_{1}\right) _{0}\right) \gamma
\left( c_{2}\right) \gamma ^{-1}\left( c_{3}\right) \\
& =\left( c_{1}\right) _{-1}\left( c_{2}\right) _{-1}\otimes \gamma
^{-1}\left( \left( c_{1}\right) _{0}\right) \gamma \left( \left(
c_{2}\right) _{0}\right) \gamma ^{-1}\left( c_{3}\right) \\
& \overset{(\ast )}{=}\left( c_{1}\right) _{-1}\otimes \gamma ^{-1}\left(
\left( \left( c_{1}\right) _{0}\right) _{1}\right) \gamma \left( \left(
\left( c_{1}\right) _{0}\right) _{2}\right) \gamma ^{-1}\left( c_{2}\right)
\\
& =\left( c_{1}\right) _{-1}\otimes \left( \gamma ^{-1}\ast \gamma \right)
\left( \left( c_{1}\right) _{0}\right) \gamma ^{-1}\left( c_{2}\right) \\
& =\left( c_{1}\right) _{-1}\otimes \varepsilon _{C}\left( \left(
c_{1}\right) _{0}\right) \gamma ^{-1}\left( c_{2}\right) \\
& \overset{(\ast )}{=}1_{H}\otimes \varepsilon _{C}\left( c_{1}\right)
\gamma ^{-1}\left( c_{2}\right) =1_{H}\otimes \gamma ^{-1}\left( c\right)
\end{align*}%
where in (*) we used that the comultiplication or the counit of $C$ is left $%
H$-colinear. Thus $\gamma $ is convolution invertible in ${_{H}^{H}\mathcal{%
YD}}\left( C,\Bbbk \right) $. The other implication is obvious.
\end{proof}

\begin{proposition}
\label{pro:deformYD}Let $H$ be a Hopf algebra and let $\left( Q,m,u,\Delta
,\varepsilon ,\omega \right) $ be a coquasi-bialgebra in ${_{H}^{H}\mathcal{%
YD}}$. Let $\gamma :Q\otimes Q\rightarrow \Bbbk $ be a gauge transformation
in ${_{H}^{H}\mathcal{YD}}$. Then%
\begin{equation*}
Q^{\gamma }:=\left( Q,m^{\gamma },u,\Delta ,\varepsilon ,\omega ^{\gamma
}\right)
\end{equation*}%
is a coquasi-bialgebra in ${_{H}^{H}\mathcal{YD}}$, where%
\begin{eqnarray*}
m^{\gamma } &:=&\gamma \ast m\ast \gamma ^{-1} \\
\omega ^{\gamma } &:=&\left( \varepsilon \otimes \gamma \right) \ast \gamma
\left( Q\otimes m\right) \ast \omega \ast \gamma ^{-1}\left( m\otimes
Q\right) \ast \left( \gamma ^{-1}\otimes \varepsilon \right) .
\end{eqnarray*}
\end{proposition}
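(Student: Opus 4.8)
The plan is to check all the axioms of Definition \ref{def: dual quasi braided} for the datum $Q^{\gamma }$, viewing it as the braided analogue of the classical Drinfeld gauge transformation of a coquasi-bialgebra. Since $\left( Q,\Delta ,\varepsilon \right) $ and $u$ are unchanged, the only points requiring work are: (i) $m^{\gamma }$ is a coalgebra morphism in ${_{H}^{H}\mathcal{YD}}$; (ii) $m^{\gamma }$ is unitary, i.e. (\ref{form: m unital}); (iii) $\omega ^{\gamma }$ is convolution invertible in ${_{H}^{H}\mathcal{YD}}\left( Q^{\otimes 3},\Bbbk \right) $; (iv) $\omega ^{\gamma }$ is unital, i.e. (\ref{form: alpha unital}); (v) the quasi-associativity (\ref{form: m quasi assoc}) holds for $\left( m^{\gamma },\omega ^{\gamma }\right) $; and (vi) $\omega ^{\gamma }$ satisfies the $3$-cocycle identity (\ref{form: alpha 3-cocycle}). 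All maps in sight are morphisms in ${_{H}^{H}\mathcal{YD}}$ --- Lemma \ref{lem:InvYD} is invoked repeatedly to guarantee that $\gamma ^{-1}$, $\omega ^{-1}$ and the various convolution inverses below stay inside the category --- so the natural ambient structures are the convolution monoids ${_{H}^{H}\mathcal{YD}}\left( Q^{\otimes k},\Bbbk \right) $ and the ${_{H}^{H}\mathcal{YD}}\left( Q^{\otimes k},\Bbbk \right) $-bimodule ${_{H}^{H}\mathcal{YD}}\left( Q^{\otimes k},Q\right) $, and I will freely use that precomposition by a coalgebra morphism distributes over convolution products.

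For (i): being a convolution product of morphisms in ${_{H}^{H}\mathcal{YD}}$, the map $m^{\gamma }=\gamma \ast m\ast \gamma ^{-1}$ is one; for the counit, $\varepsilon _{Q}m^{\gamma }=\gamma \ast \left( \varepsilon _{Q}m\right) \ast \gamma ^{-1}=\gamma \ast \varepsilon _{Q\otimes Q}\ast \gamma ^{-1}=\gamma \ast \gamma ^{-1}=\varepsilon _{Q\otimes Q}$; for the comultiplication, I would expand both $\Delta _{Q}m^{\gamma }$ and $\left( m^{\gamma }\otimes m^{\gamma }\right) \Delta _{Q\otimes Q}$ as iterated convolutions over powers of $\Delta _{Q\otimes Q}$ and use coassociativity of $\Delta _{Q\otimes Q}$, the fact that $m$ is a coalgebra morphism, and the fact that in the second expression the inner $\gamma ^{-1}$ and $\gamma $ fall on adjacent tensorands so that $\gamma ^{-1}\ast \gamma =\varepsilon _{Q\otimes Q}$ collapses them; both sides then reduce to $\left( \gamma \otimes m\otimes m\otimes \gamma ^{-1}\right) \Delta _{Q\otimes Q}^{(3)}$. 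For (ii), evaluating $m^{\gamma }$ on $1_{Q}\otimes x$ and on $x\otimes 1_{Q}$ via (\ref{form:delta1x}) and (\ref{form:deltax1}), the outer $\gamma ,\gamma ^{-1}$ contribute only counits (by the unitality of $\gamma $ and of $\gamma ^{-1}$, the latter from Remark \ref{rem:gamma-1gauge}) while the middle factor gives $m\left( 1_{Q}\otimes x\right) =x=m\left( x\otimes 1_{Q}\right) $ by (\ref{form: m unital}) for $Q$.

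For (iii), $\omega ^{\gamma }$ is a convolution product of the convolution invertible maps $\varepsilon \otimes \gamma $, $\gamma \left( Q\otimes m\right) $, $\omega $, $\gamma ^{-1}\left( m\otimes Q\right) $ and $\gamma ^{-1}\otimes \varepsilon $, with inverses $\varepsilon \otimes \gamma ^{-1}$, $\gamma ^{-1}\left( Q\otimes m\right) $, $\omega ^{-1}$, $\gamma \left( m\otimes Q\right) $ and $\gamma \otimes \varepsilon $ respectively (each because precomposing a convolution invertible map by a coalgebra morphism preserves invertibility). For (iv), since $Q\otimes u\otimes Q\colon Q\otimes \Bbbk \otimes Q\to Q^{\otimes 3}$ is a coalgebra morphism, precomposition with it distributes over the five factors of $\omega ^{\gamma }$; using $m\left( u\otimes Q\right) =\mathrm{Id}_{Q}=m\left( Q\otimes u\right) $, the unitality of $\gamma $ and $\gamma ^{-1}$, and (\ref{form: alpha unital}) for $\omega $, these five factors become $\varepsilon _{Q\otimes Q}$, $\gamma $, $\varepsilon _{Q\otimes Q}$, $\gamma ^{-1}$, $\varepsilon _{Q\otimes Q}$, so $\omega ^{\gamma }\left( Q\otimes u\otimes Q\right) =\gamma \ast \gamma ^{-1}=\varepsilon _{Q\otimes Q}$; by the final remark in Definition \ref{def: dual quasi braided} this single equality forces the whole of (\ref{form: alpha unital}).

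The core of the proof is (v) and (vi). For (v) I would first rewrite $m^{\gamma }\left( Q\otimes m^{\gamma }\right) $ and $m^{\gamma }\left( m^{\gamma }\otimes Q\right) $ as convolution products involving $m\left( Q\otimes m\right) $, resp. $m\left( m\otimes Q\right) $, and several copies of $\gamma ^{\pm 1}$; then use that $m$ is a coalgebra morphism to slide the $\gamma ^{\pm 1}$ past $\Delta $, cancel the pairs $\gamma \ast \gamma ^{-1}$, invoke (\ref{form: m quasi assoc}) for $\left( m,\omega \right) $, and compare with the expansion of $\omega ^{\gamma }$. For (vi) the point is that the defining formula for $\omega ^{\gamma }$ is exactly the coboundary shift of $\omega $ by the ``$2$-cochain'' $\gamma $: expanding the two sides of (\ref{form: alpha 3-cocycle}) over iterated comultiplications of $Q^{\otimes 4}$, using coassociativity to pair the adjacent occurrences of $\gamma $ and $\gamma ^{-1}$ into counits, the facts that $m$ is a coalgebra morphism and that $1_{Q}$ is transparent (by (\ref{form:1Qlin}) and (\ref{form:1Qcolin})), and finally substituting (\ref{form: alpha 3-cocycle}) for $\omega $, both sides reduce to a common expression. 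I expect (vi) to be the main obstacle: conceptually nothing is new --- it is the standard Drinfeld gauge computation --- but in ${_{H}^{H}\mathcal{YD}}$ every comultiplication of a tensor power of $Q$ carries the braiding $c_{Q,Q}\left( v\otimes w\right) =v_{-1}w\otimes v_{0}$, and on $Q^{\otimes 4}$ the bookkeeping of exactly where each braiding is applied is delicate. One could instead try to deduce the statement from the classical coquasi-bialgebra case by bosonization, but the compatibility of gauge transformations with bosonization is established only later in the paper and rests on the present proposition, so I would carry out the braided computation directly.
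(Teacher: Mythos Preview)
Your proposal is correct and follows essentially the same route as the paper's proof. Two minor differences worth noting: for the unitality of $\omega^{\gamma}$ the paper does not invoke the remark in Definition~\ref{def: dual quasi braided} but instead checks all three equalities directly, by introducing the auxiliary coalgebra maps $\alpha_{i}:Q\otimes Q\to Q^{\otimes 3}$ (e.g.\ $\alpha_{2}(x\otimes y)=x\otimes 1_{Q}\otimes y$) and precomposing; and for (v) and (vi) the paper organises the convolution manipulations via \cite[Lemma~2.10, formula~(2.7)]{ABM}, which packages exactly the ``slide $\gamma^{\pm1}$ past a coalgebra map and cancel adjacent pairs'' moves you describe. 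This abstract convolution calculus makes the braiding bookkeeping you flag as the main obstacle disappear entirely: once one works with convolution products of morphisms in ${_{H}^{H}\mathcal{YD}}$ and precomposition by coalgebra morphisms such as $Q\otimes m$, $m\otimes Q$, $Q\otimes m\otimes Q$, the braiding is absorbed into the coalgebra structure of $Q^{\otimes k}$ and never needs to be tracked by hand.
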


\begin{proof}
The proof is analogue to \cite[Proposition XV.3.2]{Kassel} in its dual
version. We include some details for the reader's sake. Note that $Q^{\gamma
}$ has the same underlying coalgebra of $Q$ which is a coalgebra in ${%
_{H}^{H}\mathcal{YD}}$. The unit is also the same and hence it is a
coalgebra map in ${_{H}^{H}\mathcal{YD}}$. Since $m^{\gamma }$ is the
convolution product of morphisms in ${_{H}^{H}\mathcal{YD}},$ it results
that $m^{\gamma }$ is in ${_{H}^{H}\mathcal{YD}}$ as well.

\begin{invisible}
Let us check that the multiplication is $m^{\gamma }$ is a morphism in ${%
_{H}^{H}\mathcal{YD}}.$ Let $C=Q\otimes Q.$ For $h\in H,c\in C$, we have%
\begin{eqnarray*}
\left( \gamma \ast m\ast \gamma ^{-1}\right) \left( hc\right) &=&\gamma
\left( \left( hc\right) _{1}\right) \cdot m\left( \left( hc\right)
_{2}\right) \cdot \gamma ^{-1}\left( \left( hc\right) _{3}\right) \\
&=&\gamma \left( h_{1}c_{1}\right) \cdot m\left( h_{2}c_{2}\right) \cdot
\gamma ^{-1}\left( h_{3}c_{3}\right) \\
&=&\left( \varepsilon _{H}\left( h_{1}\right) \gamma \left( c_{1}\right)
\right) \cdot \left( h_{2}m\left( c_{2}\right) \right) \cdot \left(
\varepsilon _{H}\left( h_{3}\right) \gamma ^{-1}\left( c_{3}\right) \right)
\\
&=&\gamma \left( c_{1}\right) \cdot \left( hm\left( c_{2}\right) \right)
\cdot \gamma ^{-1}\left( c_{3}\right) \\
&=&h\left( \gamma \left( c_{1}\right) \cdot m\left( c_{2}\right) \cdot
\gamma ^{-1}\left( c_{3}\right) \right) \\
&=&h\left( \gamma \ast m\ast \gamma ^{-1}\right) \left( c\right) , \\
c_{-1}\otimes \left( \gamma \ast m\ast \gamma ^{-1}\right) \left(
c_{0}\right) &=&c_{-1}\otimes \gamma \left( \left( c_{0}\right) _{1}\right)
\cdot m\left( \left( c_{0}\right) _{2}\right) \cdot \gamma ^{-1}\left(
\left( c_{0}\right) _{3}\right) \\
&=&\left( c_{1}\right) _{-1}\left( c_{2}\right) _{-1}\left( c_{3}\right)
_{-1}\otimes \gamma \left( \left( c_{1}\right) _{0}\right) \cdot m\left(
\left( c_{2}\right) _{0}\right) \cdot \gamma ^{-1}\left( \left( c_{3}\right)
_{0}\right) \\
&=&1_{H}\left( m\left( c_{2}\right) \right) _{-1}1_{H}\otimes \gamma \left(
c_{1}\right) \cdot \left( m\left( c_{2}\right) \right) _{0}\cdot \gamma
^{-1}\left( c_{3}\right) \\
&=&\left( m\left( c_{2}\right) \right) _{-1}\otimes \gamma \left(
c_{1}\right) \cdot \left( m\left( c_{2}\right) \right) _{0}\cdot \gamma
^{-1}\left( c_{3}\right) \\
&=&\left( \gamma \left( c_{1}\right) \cdot m\left( c_{2}\right) \cdot \gamma
^{-1}\left( c_{3}\right) \right) _{-1}\otimes \left( \gamma \left(
c_{1}\right) \cdot m\left( c_{2}\right) \cdot \gamma ^{-1}\left(
c_{3}\right) \right) _{0} \\
&=&\left( \left( \gamma \ast m\ast \gamma ^{-1}\right) \left( c\right)
\right) _{-1}\otimes \left( \left( \gamma \ast m\ast \gamma ^{-1}\right)
\left( c\right) \right) _{0}.
\end{eqnarray*}
\end{invisible}

Since $m$ is a coalgebra map in ${_{H}^{H}\mathcal{YD}}$ and $\gamma $ is
convolution invertible with convolution inverse $\gamma ^{-1},$ it follows
that $m^{\gamma }$ is a coalgebra map in ${_{H}^{H}\mathcal{YD}}$.

\begin{invisible}
Let us check that $m^{\gamma }$ is a coalgebra map in ${_{H}^{H}\mathcal{YD}}%
:$%
\begin{eqnarray*}
\Delta m^{\gamma }\left( c\right) &=&\gamma \left( c_{1}\right) \cdot \left(
m\left( c_{2}\right) \right) _{1}\otimes \left( m\left( c_{2}\right) \right)
_{2}\cdot \gamma ^{-1}\left( c_{3}\right) \\
&=&\gamma \left( c_{1}\right) \cdot m\left( \left( c_{2}\right) _{1}\right)
\otimes m\left( \left( c_{2}\right) _{2}\right) \cdot \gamma ^{-1}\left(
c_{3}\right) \\
&=&\gamma \left( c_{1}\right) \cdot m\left( c_{2}\right) \cdot \gamma
^{-1}\left( c_{3}\right) \otimes \gamma \left( c_{4}\right) \cdot m\left(
c_{5}\right) \cdot \gamma ^{-1}\left( c_{6}\right) \\
&=&m^{\gamma }\left( c_{1}\right) \otimes m^{\gamma }\left( c_{2}\right) , \\
\varepsilon m^{\gamma }\left( c\right) &=&\gamma \left( c_{1}\right) \cdot
\varepsilon m\left( c_{2}\right) \cdot \gamma ^{-1}\left( c_{3}\right)
=\gamma \left( c_{1}\right) \cdot \varepsilon _{C}\left( c_{2}\right) \cdot
\gamma ^{-1}\left( c_{3}\right) \\
&=&\gamma \left( c_{1}\right) \cdot \gamma ^{-1}\left( c_{2}\right) =\left(
\gamma \ast \gamma ^{-1}\right) \left( c\right) =\varepsilon _{C}\left(
c\right) .
\end{eqnarray*}
\end{invisible}

By means of (\ref{form:delta1x}) and (\ref{form:deltax1}), one gets that $%
m^{\gamma }\left( 1_{Q}\otimes x\right) =x=m^{\gamma }\left( x\otimes
1_{Q}\right) .$

\begin{invisible}
Let us check that $m^{\gamma }$ is unitary. For every $x\in Q,$ we have%
\begin{eqnarray*}
m^{\gamma }\left( 1_{Q}\otimes x\right) &=&\gamma \left( \left( 1_{Q}\otimes
x\right) _{1}\right) \cdot m\left( \left( 1_{Q}\otimes x\right) _{2}\right)
\cdot \gamma ^{-1}\left( \left( 1_{Q}\otimes x\right) _{3}\right) \\
&&\overset{(\ref{form:delta1x})}{=}\gamma \left( 1_{Q}\otimes x_{1}\right)
\cdot m\left( 1_{Q}\otimes x_{2}\right) \cdot \gamma ^{-1}\left(
1_{Q}\otimes x_{3}\right) \\
&=&\varepsilon \left( x_{1}\right) \cdot m\left( 1_{Q}\otimes x_{2}\right)
\cdot \varepsilon \left( x_{3}\right) =m\left( 1_{Q}\otimes x\right) =x.
\end{eqnarray*}%
Similarly, using (\ref{form:deltax1}), we get $m^{\gamma }\left( x\otimes
1_{Q}\right) =x.$
\end{invisible}

Let us consider now $\omega ^{\gamma }.$ Since it is the convolution product
of morphisms in ${_{H}^{H}\mathcal{YD}},$ it results that $\omega ^{\gamma }$
is in ${_{H}^{H}\mathcal{YD}}$ as well.

\begin{invisible}
Let us check it is a morphism in ${_{H}^{H}\mathcal{YD}}$. Set $D:=Q\otimes
Q\otimes Q$ and let $d\in D.$ We compute%
\begin{eqnarray*}
\omega ^{\gamma }\left( hd\right) &=&\left( \varepsilon \otimes \gamma
\right) \left( h_{1}d_{1}\right) \cdot \gamma \left( Q\otimes m\right)
\left( h_{2}d_{2}\right) \cdot \omega \left( h_{3}d_{3}\right) \cdot \gamma
^{-1}\left( m\otimes Q\right) \left( h_{4}d_{4}\right) \cdot \left( \gamma
^{-1}\otimes \varepsilon \right) \left( h_{5}d_{5}\right) \\
&=&\left[
\begin{array}{c}
\varepsilon _{H}\left( h_{1}\right) \left( \varepsilon \otimes \gamma
\right) \left( d_{1}\right) \cdot \varepsilon _{H}\left( h_{2}\right) \gamma
\left( Q\otimes m\right) \left( d_{2}\right) \cdot \varepsilon _{H}\left(
h_{3}\right) \omega \left( d_{3}\right) \\
\cdot \varepsilon _{H}\left( h_{4}\right) \gamma ^{-1}\left( m\otimes
Q\right) \left( d_{4}\right) \cdot \varepsilon _{H}\left( h_{5}\right)
\left( \gamma ^{-1}\otimes \varepsilon \right) \left( d_{5}\right)%
\end{array}%
\right] \\
&=&\varepsilon _{H}\left( h\right) \omega ^{\gamma }\left( d\right)
\end{eqnarray*}%
and%
\begin{eqnarray*}
&&d_{-1}\otimes \omega ^{\gamma }\left( d_{0}\right) \\
&=&d_{-1}\otimes \left[
\begin{array}{c}
\left( \varepsilon \otimes \gamma \right) \left( \left( d_{0}\right)
_{1}\right) \cdot \gamma \left( Q\otimes m\right) \left( \left( d_{0}\right)
_{2}\right) \cdot \omega \left( \left( d_{0}\right) _{3}\right) \\
\cdot \gamma ^{-1}\left( m\otimes Q\right) \left( \left( d_{0}\right)
_{4}\right) \cdot \left( \gamma ^{-1}\otimes \varepsilon \right) \left(
\left( d_{0}\right) _{5}\right)%
\end{array}%
\right] \\
&=&\left( d_{1}\right) _{-1}\left( d_{2}\right) _{-1}\left( d_{3}\right)
_{-1}\left( d_{4}\right) _{-1}\left( d_{5}\right) _{-1}\otimes \left[
\begin{array}{c}
\left( \varepsilon \otimes \gamma \right) \left( \left( d_{1}\right)
_{0}\right) \cdot \gamma \left( Q\otimes m\right) \left( \left( d_{2}\right)
_{0}\right) \cdot \omega \left( \left( d_{3}\right) _{0}\right) \\
\cdot \gamma ^{-1}\left( m\otimes Q\right) \left( \left( d_{4}\right)
_{0}\right) \cdot \left( \gamma ^{-1}\otimes \varepsilon \right) \left(
\left( d_{5}\right) _{0}\right)%
\end{array}%
\right] \\
&=&1_{H}\otimes \left[
\begin{array}{c}
\left( \varepsilon \otimes \gamma \right) \left( d_{1}\right) \cdot \gamma
\left( Q\otimes m\right) \left( d_{2}\right) \cdot \omega \left( d_{3}\right)
\\
\cdot \gamma ^{-1}\left( m\otimes Q\right) \left( d_{4}\right) \cdot \left(
\gamma ^{-1}\otimes \varepsilon \right) \left( d_{5}\right)%
\end{array}%
\right] =1_{H}\otimes \omega ^{\gamma }\left( d\right) .
\end{eqnarray*}
\end{invisible}

Let us check that $\omega ^{\gamma }$ is unitary. Consider the map $\alpha
_{2}:Q\otimes Q\rightarrow Q\otimes Q\otimes Q$ defined by $\alpha
_{2}\left( x\otimes y\right) =x\otimes 1_{Q}\otimes y.$ The equalities (\ref%
{form:deltax1}) and (\ref{form:1Qcolin}) yield%
\begin{eqnarray*}
\left( \alpha _{2}\left( x\otimes y\right) \right) _{1}\otimes \left( \alpha
_{2}\left( x\otimes y\right) \right) _{2} &=&\alpha _{2}\left( x_{1}\otimes
\left( x_{2}\right) _{-1}y_{1}\right) \otimes \alpha _{2}\left( \left(
x_{2}\right) _{0}\otimes y_{2}\right) \\
&=&\alpha _{2}\left( \left( x\otimes y\right) _{1}\right) \otimes \alpha
_{2}\left( \left( x\otimes y\right) _{2}\right)
\end{eqnarray*}%
so that $\alpha _{2}$ is comultiplicative.

\begin{invisible}
For $x,y\in Q$, we have%
\begin{eqnarray*}
\left( x\otimes 1_{Q}\otimes y\right) _{1}\otimes \left( x\otimes
1_{Q}\otimes y\right) _{2} &=&\left( x\otimes 1_{Q}\right) _{1}\otimes
\left( \left( x\otimes 1_{Q}\right) _{2}\right) _{-1}y_{1}\otimes \left(
\left( x\otimes 1_{Q}\right) _{2}\right) _{0}\otimes y_{2} \\
&&\overset{(\ref{form:deltax1})}{=}\left( x_{1}\otimes 1_{Q}\right) \otimes
\left( x_{2}\otimes 1_{Q}\right) _{-1}y_{1}\otimes \left( x_{2}\otimes
1_{Q}\right) _{0}\otimes y_{2} \\
&=&x_{1}\otimes 1_{Q}\otimes \left( x_{2}\right) _{-1}\left( 1_{Q}\right)
_{-1}y_{1}\otimes \left( x_{2}\right) _{0}\otimes \left( 1_{Q}\right)
_{0}\otimes y_{2} \\
&&\overset{(\ref{form:1Qcolin})}{=}x_{1}\otimes 1_{Q}\otimes \left(
x_{2}\right) _{-1}y_{1}\otimes \left( x_{2}\right) _{0}\otimes 1_{Q}\otimes
y_{2}.
\end{eqnarray*}
\end{invisible}

Thus%
\begin{equation*}
\omega ^{\gamma }\alpha _{2}:=\left( \varepsilon \otimes \gamma \right)
\alpha _{2}\ast \gamma \left( Q\otimes m\right) \alpha _{2}\ast \omega
\alpha _{2}\ast \gamma ^{-1}\left( m\otimes Q\right) \alpha _{2}\ast \left(
\gamma ^{-1}\otimes \varepsilon \right) \alpha _{2}
\end{equation*}%
and computing the factors of this convolution products one gets
\begin{gather*}
\left( \varepsilon \otimes \gamma \right) \alpha _{2}=\varepsilon \otimes
\varepsilon ,\quad \gamma \left( Q\otimes m\right) \alpha _{2}=\gamma ,\quad
\omega \alpha _{2}=\varepsilon \otimes \varepsilon , \\
\gamma ^{-1}\left( m\otimes Q\right) \alpha _{2}=\gamma ^{-1},\quad \left(
\gamma ^{-1}\otimes \varepsilon \right) \alpha _{2}=\varepsilon \otimes
\varepsilon
\end{gather*}%
and hence $\omega ^{\gamma }\alpha _{2}=\gamma \ast \gamma ^{-1}=\varepsilon
\otimes \varepsilon ,$ which means that $\omega ^{\gamma }\left( x\otimes
1_{Q}\otimes y\right) =\varepsilon \left( x\right) \varepsilon \left(
y\right) $ for every $x,y\in Q.$

\begin{invisible}
Thus, for every $x,y\in Q$, we have
\begin{eqnarray*}
\omega ^{\gamma }\left( x\otimes 1_{Q}\otimes y\right) &=&\omega ^{\gamma
}\alpha _{2}\left( x\otimes y\right) \\
&=&\left[
\begin{array}{c}
\left( \varepsilon \otimes \gamma \right) \left( \left( \alpha _{2}\left(
x\otimes y\right) \right) _{1}\right) \cdot \gamma \left( Q\otimes m\right)
\left( \left( \alpha _{2}\left( x\otimes y\right) \right) _{2}\right) \\
\cdot \omega \left( \left( \alpha _{2}\left( x\otimes y\right) \right)
_{3}\right) \cdot \gamma ^{-1}\left( m\otimes Q\right) \left( \left( \alpha
_{2}\left( x\otimes y\right) \right) _{4}\right) \cdot \left( \gamma
^{-1}\otimes \varepsilon \right) \left( \left( \alpha _{2}\left( x\otimes
y\right) \right) _{5}\right)%
\end{array}%
\right] \\
&=&\left[
\begin{array}{c}
\left( \varepsilon \otimes \gamma \right) \left( \alpha _{2}\left( \left(
x\otimes y\right) _{1}\right) \right) \cdot \gamma \left( Q\otimes m\right)
\left( \alpha _{2}\left( \left( x\otimes y\right) _{2}\right) \right) \\
\cdot \omega \left( \alpha _{2}\left( \left( x\otimes y\right) _{3}\right)
\right) \cdot \gamma ^{-1}\left( m\otimes Q\right) \left( \alpha _{2}\left(
\left( x\otimes y\right) _{4}\right) \right) \cdot \left( \gamma
^{-1}\otimes \varepsilon \right) \left( \alpha _{2}\left( \left( x\otimes
y\right) _{5}\right) \right)%
\end{array}%
\right] .
\end{eqnarray*}%
Note that%
\begin{eqnarray*}
\left( \varepsilon \otimes \gamma \right) \alpha _{2}\left( x^{\prime
}\otimes y^{\prime }\right) &=&\left( \varepsilon \otimes \gamma \right)
\left( x^{\prime }\otimes 1_{Q}\otimes y^{\prime }\right) =\varepsilon
\left( x^{\prime }\right) \gamma \left( 1_{Q}\otimes y^{\prime }\right)
=\varepsilon \left( x^{\prime }\right) \varepsilon \left( y^{\prime }\right)
, \\
\gamma \left( Q\otimes m\right) \alpha _{2}\left( x^{\prime }\otimes
y^{\prime }\right) &=&\gamma \left( Q\otimes m\right) \left( x^{\prime
}\otimes 1_{Q}\otimes y^{\prime }\right) =\gamma \left( x^{\prime }\otimes
y^{\prime }\right) , \\
\omega \alpha _{2}\left( x^{\prime }\otimes y^{\prime }\right) &=&\omega
\left( x^{\prime }\otimes 1_{Q}\otimes y^{\prime }\right) =\varepsilon
\left( x^{\prime }\right) \varepsilon \left( y^{\prime }\right) , \\
\gamma ^{-1}\left( m\otimes Q\right) \alpha _{2}\left( x^{\prime }\otimes
y^{\prime }\right) &=&\gamma ^{-1}\left( m\otimes Q\right) \left( x^{\prime
}\otimes 1_{Q}\otimes y^{\prime }\right) =\gamma ^{-1}\left( x^{\prime
}\otimes y^{\prime }\right) , \\
\left( \gamma ^{-1}\otimes \varepsilon \right) \alpha _{2}\left( x^{\prime
}\otimes y^{\prime }\right) &=&\left( \gamma ^{-1}\otimes \varepsilon
\right) \left( x^{\prime }\otimes 1_{Q}\otimes y^{\prime }\right) =\gamma
^{-1}\left( x^{\prime }\otimes 1_{Q}\right) \varepsilon \left( y^{\prime
}\right) =\varepsilon \left( x^{\prime }\right) \varepsilon \left( y^{\prime
}\right)
\end{eqnarray*}%
so that%
\begin{equation*}
\omega ^{\gamma }\left( x\otimes 1_{Q}\otimes y\right) =\gamma \left( \left(
x\otimes y\right) _{1}\right) \cdot \gamma ^{-1}\left( \left( x\otimes
y\right) _{2}\right) =\varepsilon \left( x\right) \varepsilon \left(
y\right) .
\end{equation*}
\end{invisible}

Similarly, considering $\alpha _{1}:Q\otimes Q\rightarrow Q\otimes Q\otimes
Q $ defined by $\alpha _{2}\left( x\otimes y\right) =1_{Q}\otimes x\otimes y$%
, one proves that $\omega ^{\gamma }\left( 1_{Q}\otimes x\otimes y\right)
=\varepsilon \left( x\right) \varepsilon \left( y\right) .$ A symmetric
argument shows that $\omega ^{\gamma }\left( x\otimes y\otimes 1_{Q}\right)
=\varepsilon \left( x\right) \varepsilon \left( y\right) .$

\begin{invisible}
We have%
\begin{eqnarray*}
\left( 1_{Q}\otimes x\otimes y\right) _{1}\otimes \left( 1_{Q}\otimes
x\otimes y\right) _{2} &=&\left( 1_{Q}\otimes x\right) _{1}\otimes \left(
\left( 1_{Q}\otimes x\right) _{2}\right) _{-1}y_{1}\otimes \left( \left(
1_{Q}\otimes x\right) _{2}\right) _{0}\otimes y_{2} \\
&&\overset{(\ref{form:delta1x})}{=}\left( 1_{Q}\otimes x_{1}\right) \otimes
\left( 1_{Q}\otimes x_{2}\right) _{-1}y_{1}\otimes \left( 1_{Q}\otimes
x_{2}\right) _{0}\otimes y_{2} \\
&=&\left( 1_{Q}\otimes x_{1}\right) \otimes \left( x_{2}\right)
_{-1}y_{1}\otimes 1_{Q}\otimes \left( x_{2}\right) _{0}\otimes y_{2}
\end{eqnarray*}%
and hence also the map $\alpha _{1}:Q\otimes Q\rightarrow Q\otimes Q\otimes
Q $ defined by $\alpha _{2}\left( x\otimes y\right) =1_{Q}\otimes x\otimes y$
is a coalgebra map. Hence one gets that $\omega ^{\gamma }\left(
1_{Q}\otimes x\otimes y\right) =\varepsilon \left( x\right) \varepsilon
\left( y\right) $ as a consequence of the following computations%
\begin{eqnarray*}
\left( \varepsilon \otimes \gamma \right) \alpha _{1}\left( x^{\prime
}\otimes y^{\prime }\right) &=&\left( \varepsilon \otimes \gamma \right)
\left( 1_{Q}\otimes x^{\prime }\otimes y^{\prime }\right) =\gamma \left(
x^{\prime }\otimes y^{\prime }\right) , \\
\gamma \left( Q\otimes m\right) \alpha _{1}\left( x^{\prime }\otimes
y^{\prime }\right) &=&\gamma \left( Q\otimes m\right) \left( 1_{Q}\otimes
x^{\prime }\otimes y^{\prime }\right) =\gamma \left( 1_{Q}\otimes x^{\prime
}y^{\prime }\right) =\varepsilon \left( x^{\prime }\right) \varepsilon
\left( y^{\prime }\right) , \\
\omega \alpha _{1}\left( x^{\prime }\otimes y^{\prime }\right) &=&\omega
\left( 1_{Q}\otimes x^{\prime }\otimes y^{\prime }\right) =\varepsilon
\left( x^{\prime }\right) \varepsilon \left( y^{\prime }\right) , \\
\gamma ^{-1}\left( m\otimes Q\right) \alpha _{1}\left( x^{\prime }\otimes
y^{\prime }\right) &=&\gamma ^{-1}\left( m\otimes Q\right) \left(
1_{Q}\otimes x^{\prime }\otimes y^{\prime }\right) =\gamma ^{-1}\left(
x^{\prime }\otimes y^{\prime }\right) , \\
\left( \gamma ^{-1}\otimes \varepsilon \right) \alpha _{1}\left( x^{\prime
}\otimes y^{\prime }\right) &=&\left( \gamma ^{-1}\otimes \varepsilon
\right) \left( 1_{Q}\otimes x^{\prime }\otimes y^{\prime }\right) =\gamma
^{-1}\left( 1_{Q}\otimes x^{\prime }\right) \varepsilon \left( y^{\prime
}\right) =\varepsilon \left( x^{\prime }\right) \varepsilon \left( y^{\prime
}\right) .
\end{eqnarray*}%
A symmetric argument shows that $\omega ^{\gamma }\left( x\otimes y\otimes
1_{Q}\right) =\varepsilon \left( x\right) \varepsilon \left( y\right) .$
\end{invisible}

Note that, by Lemma \ref{lem:InvYD}, $\omega ^{\gamma }$ is convolution
invertible in ${_{H}^{H}\mathcal{YD}}\left( D,\Bbbk \right) $ as it is
convolution invertible in $\mathbf{Vec}_{\Bbbk }\left( D,\Bbbk \right) $.

Let us check that the multiplication is quasi-associative. By \cite[Lemma
2.10 formula (2.7)]{ABM}, we have
\begin{eqnarray*}
m^{\gamma }\left( Q\otimes \gamma \ast m\ast \gamma ^{-1}\right) &=&\left(
\varepsilon \otimes \gamma \right) \ast m^{\gamma }\left( Q\otimes m\right)
\ast \left( \varepsilon \otimes \gamma ^{-1}\right) \text{,} \\
\left( \varepsilon \otimes \gamma ^{-1}\right) \ast \left( \varepsilon
\otimes \gamma \right) &=&\varepsilon \otimes \left( \gamma ^{-1}\ast \gamma
\right) =\varepsilon \otimes \varepsilon \otimes \varepsilon , \\
m^{\gamma }\left( m^{\gamma }\otimes Q\right) &=&m^{\gamma }\left( \gamma
\ast m\ast \gamma ^{-1}\otimes Q\right) =\left( \gamma \otimes \varepsilon
\right) \ast m^{\gamma }\left( m\ast \gamma ^{-1}\otimes Q\right) \\
&=&\left( \gamma \otimes \varepsilon \right) \ast m^{\gamma }\left( m\otimes
Q\right) \ast \left( \gamma ^{-1}\otimes \varepsilon \right) , \\
\left( \gamma ^{-1}\otimes \varepsilon \right) \ast \left( \gamma \otimes
\varepsilon \right) &=&\left( \left( \gamma ^{-1}\ast \gamma \right) \otimes
\varepsilon \right) =\varepsilon \otimes \varepsilon \otimes \varepsilon .
\end{eqnarray*}%
By using these equalities one obtains%
\begin{eqnarray*}
m^{\gamma }\left( Q\otimes m^{\gamma }\right) \ast \omega ^{\gamma }
&=&\left( \varepsilon \otimes \gamma \right) \ast \gamma \left( Q\otimes
m\right) \ast m\left( Q\otimes m\right) \ast \omega \ast \gamma ^{-1}\left(
m\otimes Q\right) \ast \left( \gamma ^{-1}\otimes \varepsilon \right) , \\
\omega ^{\gamma }\ast m^{\gamma }\left( m^{\gamma }\otimes Q\right)
&=&\left( \varepsilon \otimes \gamma \right) \ast \gamma \left( Q\otimes
m\right) \ast \omega \ast m\left( m\otimes Q\right) \ast \gamma ^{-1}\left(
m\otimes Q\right) \ast \left( \gamma ^{-1}\otimes \varepsilon \right)
\end{eqnarray*}%
so that $\omega ^{\gamma }\ast m^{\gamma }\left( m^{\gamma }\otimes Q\right)
=m^{\gamma }\left( Q\otimes m^{\gamma }\right) \ast \omega ^{\gamma }.$

\begin{invisible}
We have that%
\begin{eqnarray*}
&&m^{\gamma }\left( Q\otimes m^{\gamma }\right) \ast \omega ^{\gamma } \\
&=&m^{\gamma }\left( Q\otimes \gamma \ast m\ast \gamma ^{-1}\right) \ast
\omega ^{\gamma } \\
&=&\left( \varepsilon \otimes \gamma \right) \ast m^{\gamma }\left( Q\otimes
m\right) \ast \left( \varepsilon \otimes \gamma ^{-1}\right) \ast \omega
^{\gamma } \\
&=&\left( \varepsilon \otimes \gamma \right) \ast m^{\gamma }\left( Q\otimes
m\right) \ast \left( \varepsilon \otimes \gamma ^{-1}\right) \ast \left(
\varepsilon \otimes \gamma \right) \ast \gamma \left( Q\otimes m\right) \ast
\omega \ast \gamma ^{-1}\left( m\otimes Q\right) \ast \left( \gamma
^{-1}\otimes \varepsilon \right) \\
&=&\left( \varepsilon \otimes \gamma \right) \ast m^{\gamma }\left( Q\otimes
m\right) \ast \gamma \left( Q\otimes m\right) \ast \omega \ast \gamma
^{-1}\left( m\otimes Q\right) \ast \left( \gamma ^{-1}\otimes \varepsilon
\right) \\
&=&\left( \varepsilon \otimes \gamma \right) \ast \left( \gamma \ast m\ast
\gamma ^{-1}\right) \left( Q\otimes m\right) \ast \gamma \left( Q\otimes
m\right) \ast \omega \ast \gamma ^{-1}\left( m\otimes Q\right) \ast \left(
\gamma ^{-1}\otimes \varepsilon \right) \\
&=&\left( \varepsilon \otimes \gamma \right) \ast \gamma \left( Q\otimes
m\right) \ast m\left( Q\otimes m\right) \ast \gamma ^{-1}\left( Q\otimes
m\right) \ast \gamma \left( Q\otimes m\right) \ast \omega \ast \gamma
^{-1}\left( m\otimes Q\right) \ast \left( \gamma ^{-1}\otimes \varepsilon
\right) \\
&=&\left( \varepsilon \otimes \gamma \right) \ast \gamma \left( Q\otimes
m\right) \ast m\left( Q\otimes m\right) \ast \left( \gamma ^{-1}\ast \gamma
\right) \left( Q\otimes m\right) \ast \omega \ast \gamma ^{-1}\left(
m\otimes Q\right) \ast \left( \gamma ^{-1}\otimes \varepsilon \right) \\
&=&\left( \varepsilon \otimes \gamma \right) \ast \gamma \left( Q\otimes
m\right) \ast m\left( Q\otimes m\right) \ast \omega \ast \gamma ^{-1}\left(
m\otimes Q\right) \ast \left( \gamma ^{-1}\otimes \varepsilon \right)
\end{eqnarray*}%
and similarly we have%
\begin{eqnarray*}
&&\omega ^{\gamma }\ast m^{\gamma }\left( m^{\gamma }\otimes Q\right) \\
&=&\omega ^{\gamma }\ast \left( \gamma \otimes \varepsilon \right) \ast
m^{\gamma }\left( m\otimes Q\right) \ast \left( \gamma ^{-1}\otimes
\varepsilon \right) \\
&=&\left( \varepsilon \otimes \gamma \right) \ast \gamma \left( Q\otimes
m\right) \ast \omega \ast \gamma ^{-1}\left( m\otimes Q\right) \ast \left(
\gamma ^{-1}\otimes \varepsilon \right) \ast \left( \gamma \otimes
\varepsilon \right) \ast m^{\gamma }\left( m\otimes Q\right) \ast \left(
\gamma ^{-1}\otimes \varepsilon \right) \\
&=&\left( \varepsilon \otimes \gamma \right) \ast \gamma \left( Q\otimes
m\right) \ast \omega \ast \gamma ^{-1}\left( m\otimes Q\right) \ast
m^{\gamma }\left( m\otimes Q\right) \ast \left( \gamma ^{-1}\otimes
\varepsilon \right) \\
&=&\left( \varepsilon \otimes \gamma \right) \ast \gamma \left( Q\otimes
m\right) \ast \omega \ast \gamma ^{-1}\left( m\otimes Q\right) \ast \left(
\gamma \ast m\ast \gamma ^{-1}\right) \left( m\otimes Q\right) \ast \left(
\gamma ^{-1}\otimes \varepsilon \right) \\
&=&\left( \varepsilon \otimes \gamma \right) \ast \gamma \left( Q\otimes
m\right) \ast \omega \ast \gamma ^{-1}\left( m\otimes Q\right) \ast \gamma
\left( m\otimes Q\right) \ast m\left( m\otimes Q\right) \ast \gamma
^{-1}\left( m\otimes Q\right) \ast \left( \gamma ^{-1}\otimes \varepsilon
\right) \\
&=&\left( \varepsilon \otimes \gamma \right) \ast \gamma \left( Q\otimes
m\right) \ast \omega \ast \left( \gamma ^{-1}\ast \gamma \right) \left(
m\otimes Q\right) \ast m\left( m\otimes Q\right) \ast \gamma ^{-1}\left(
m\otimes Q\right) \ast \left( \gamma ^{-1}\otimes \varepsilon \right) \\
&=&\left( \varepsilon \otimes \gamma \right) \ast \gamma \left( Q\otimes
m\right) \ast \omega \ast m\left( m\otimes Q\right) \ast \gamma ^{-1}\left(
m\otimes Q\right) \ast \left( \gamma ^{-1}\otimes \varepsilon \right) \\
&=&\left( \varepsilon \otimes \gamma \right) \ast \gamma \left( Q\otimes
m\right) \ast m\left( Q\otimes m\right) \ast \omega \ast \gamma ^{-1}\left(
m\otimes Q\right) \ast \left( \gamma ^{-1}\otimes \varepsilon \right)
\end{eqnarray*}%
so that $\omega ^{\gamma }\ast m^{\gamma }\left( m^{\gamma }\otimes Q\right)
=m^{\gamma }\left( Q\otimes m^{\gamma }\right) \ast \omega ^{\gamma }.$
\end{invisible}

It remains to check that $\omega ^{\gamma }$ is a reassociator. By \cite[%
Lemma 2.10 formula (2.7)]{ABM}, we have%
\begin{eqnarray*}
\omega ^{\gamma }\left( Q\otimes Q\otimes \gamma \ast m\ast \gamma
^{-1}\right) &=&\left( \varepsilon \otimes \varepsilon \otimes \gamma
\right) \ast \omega ^{\gamma }\left( Q\otimes Q\otimes m\right) \ast \left(
\varepsilon \otimes \varepsilon \otimes \gamma ^{-1}\right) , \\
\omega ^{\gamma }\left( \gamma \ast m\ast \gamma ^{-1}\otimes Q\otimes
Q\right) &=&\left( \gamma \otimes \varepsilon \otimes \varepsilon \right)
\ast \omega ^{\gamma }\left( m\otimes Q\otimes Q\right) \ast \left( \gamma
^{-1}\otimes \varepsilon \otimes \varepsilon \right) , \\
\left( \gamma \otimes \varepsilon \otimes \varepsilon \right) \ast \left(
\varepsilon \otimes \varepsilon \otimes \gamma \right) &=&\gamma \otimes
\gamma =\left( \varepsilon \otimes \varepsilon \otimes \gamma \right) \ast
\left( \gamma \otimes \varepsilon \otimes \varepsilon \right) .
\end{eqnarray*}%
By using these equalities one obtains%
\begin{eqnarray*}
&&\omega ^{\gamma }\left( Q\otimes Q\otimes m^{\gamma }\right) \ast \omega
^{\gamma }\left( m^{\gamma }\otimes Q\otimes Q\right) \\
&&=\left[
\begin{array}{c}
\left( \varepsilon \otimes \varepsilon \otimes \gamma \right) \ast \left(
\varepsilon \otimes \gamma \left( Q\otimes m\right) \right) \ast \gamma
\left( Q\otimes m\left( Q\otimes m\right) \right) \\
\ast \omega \left( Q\otimes Q\otimes m\right) \ast \omega \left( m\otimes
Q\otimes Q\right) \\
\ast \gamma ^{-1}\left( m\left( m\otimes Q\right) \otimes Q\right) \ast
\left( \gamma ^{-1}\left( m\otimes Q\right) \otimes \varepsilon \right) \ast
\left( \gamma ^{-1}\otimes \varepsilon \otimes \varepsilon \right)%
\end{array}%
\right]
\end{eqnarray*}%
and%
\begin{eqnarray*}
&&\left( \varepsilon \otimes \omega ^{\gamma }\right) \ast \omega ^{\gamma
}\left( Q\otimes m^{\gamma }\otimes Q\right) \ast \left( \omega ^{\gamma
}\otimes \varepsilon \right) \\
&&=\left[
\begin{array}{c}
\left( \varepsilon \otimes \varepsilon \otimes \gamma \right) \ast \left(
\varepsilon \otimes \gamma \left( Q\otimes m\right) \right) \ast \gamma
\left( Q\otimes m\left( Q\otimes m\right) \right) \\
\ast \left( \varepsilon \otimes \omega \right) \ast \omega \left( Q\otimes
m\otimes Q\right) \ast \left( \omega \otimes \varepsilon \right) \\
\ast \gamma ^{-1}\left( m\left( m\otimes Q\right) \otimes Q\right) \ast
\left( \gamma ^{-1}\left( m\otimes Q\right) \otimes \varepsilon \right) \ast
\left( \gamma ^{-1}\otimes \varepsilon \otimes \varepsilon \right)%
\end{array}%
\right] .
\end{eqnarray*}

\begin{invisible}
We compute%
\begin{eqnarray*}
&&\omega ^{\gamma }\left( Q\otimes Q\otimes m^{\gamma }\right) \ast \omega
^{\gamma }\left( m^{\gamma }\otimes Q\otimes Q\right) \\
&=&\left[
\begin{array}{c}
\left( \varepsilon \otimes \varepsilon \otimes \gamma \right) \ast \omega
^{\gamma }\left( Q\otimes Q\otimes m\right) \ast \left( \varepsilon \otimes
\varepsilon \otimes \gamma ^{-1}\right) \\
\ast \left( \gamma \otimes \varepsilon \otimes \varepsilon \right) \ast
\omega ^{\gamma }\left( m\otimes Q\otimes Q\right) \ast \left( \gamma
^{-1}\otimes \varepsilon \otimes \varepsilon \right)%
\end{array}%
\right] \\
&=&\left[
\begin{array}{c}
\left( \varepsilon \otimes \varepsilon \otimes \gamma \right) \ast \left(
\varepsilon \otimes \gamma \right) \left( Q\otimes Q\otimes m\right) \ast
\gamma \left( Q\otimes m\right) \left( Q\otimes Q\otimes m\right) \ast
\omega \left( Q\otimes Q\otimes m\right) \\
\ast \gamma ^{-1}\left( m\otimes Q\right) \left( Q\otimes Q\otimes m\right)
\ast \left( \gamma ^{-1}\otimes \varepsilon \right) \left( Q\otimes Q\otimes
m\right) \ast \left( \varepsilon \otimes \varepsilon \otimes \gamma
^{-1}\right) \\
\ast \left( \gamma \otimes \varepsilon \otimes \varepsilon \right) \ast
\left( \varepsilon \otimes \gamma \right) \left( m\otimes Q\otimes Q\right)
\ast \gamma \left( Q\otimes m\right) \left( m\otimes Q\otimes Q\right) \\
\ast \omega \left( m\otimes Q\otimes Q\right) \ast \gamma ^{-1}\left(
m\otimes Q\right) \left( m\otimes Q\otimes Q\right) \ast \left( \gamma
^{-1}\otimes \varepsilon \right) \left( m\otimes Q\otimes Q\right) \ast
\left( \gamma ^{-1}\otimes \varepsilon \otimes \varepsilon \right)%
\end{array}%
\right] \\
&=&\left[
\begin{array}{c}
\left( \varepsilon \otimes \varepsilon \otimes \gamma \right) \ast \left(
\varepsilon \otimes \gamma \left( Q\otimes m\right) \right) \ast \gamma
\left( Q\otimes m\left( Q\otimes m\right) \right) \ast \omega \left(
Q\otimes Q\otimes m\right) \\
\ast \gamma ^{-1}\left( m\otimes m\right) \ast \left( \gamma ^{-1}\otimes
\varepsilon \otimes \varepsilon \right) \ast \left( \varepsilon \otimes
\varepsilon \otimes \gamma ^{-1}\right) \\
\ast \left( \gamma \otimes \varepsilon \otimes \varepsilon \right) \ast
\left( \varepsilon \otimes \varepsilon \otimes \gamma \right) \ast \gamma
\left( m\otimes m\right) \\
\ast \omega \left( m\otimes Q\otimes Q\right) \ast \gamma ^{-1}\left(
m\left( m\otimes Q\right) \otimes Q\right) \ast \left( \gamma ^{-1}\left(
m\otimes Q\right) \otimes \varepsilon \right) \ast \left( \gamma
^{-1}\otimes \varepsilon \otimes \varepsilon \right)%
\end{array}%
\right] \\
&=&\left[
\begin{array}{c}
\left( \varepsilon \otimes \varepsilon \otimes \gamma \right) \ast \left(
\varepsilon \otimes \gamma \left( Q\otimes m\right) \right) \ast \gamma
\left( Q\otimes m\left( Q\otimes m\right) \right) \ast \omega \left(
Q\otimes Q\otimes m\right) \\
\ast \gamma ^{-1}\left( m\otimes m\right) \ast \left( \gamma ^{-1}\otimes
\varepsilon \otimes \varepsilon \right) \ast \left( \varepsilon \otimes
\varepsilon \otimes \gamma ^{-1}\right) \\
\ast \left( \varepsilon \otimes \varepsilon \otimes \gamma \right) \ast
\left( \gamma \otimes \varepsilon \otimes \varepsilon \right) \ast \gamma
\left( m\otimes m\right) \\
\ast \omega \left( m\otimes Q\otimes Q\right) \ast \gamma ^{-1}\left(
m\left( m\otimes Q\right) \otimes Q\right) \ast \left( \gamma ^{-1}\left(
m\otimes Q\right) \otimes \varepsilon \right) \ast \left( \gamma
^{-1}\otimes \varepsilon \otimes \varepsilon \right)%
\end{array}%
\right] \\
&=&\left[
\begin{array}{c}
\left( \varepsilon \otimes \varepsilon \otimes \gamma \right) \ast \left(
\varepsilon \otimes \gamma \left( Q\otimes m\right) \right) \ast \gamma
\left( Q\otimes m\left( Q\otimes m\right) \right) \ast \omega \left(
Q\otimes Q\otimes m\right) \\
\ast \gamma ^{-1}\left( m\otimes m\right) \ast \gamma \left( m\otimes
m\right) \\
\ast \omega \left( m\otimes Q\otimes Q\right) \ast \gamma ^{-1}\left(
m\left( m\otimes Q\right) \otimes Q\right) \ast \left( \gamma ^{-1}\left(
m\otimes Q\right) \otimes \varepsilon \right) \ast \left( \gamma
^{-1}\otimes \varepsilon \otimes \varepsilon \right)%
\end{array}%
\right] \\
&=&\left[
\begin{array}{c}
\left( \varepsilon \otimes \varepsilon \otimes \gamma \right) \ast \left(
\varepsilon \otimes \gamma \left( Q\otimes m\right) \right) \ast \gamma
\left( Q\otimes m\left( Q\otimes m\right) \right) \\
\ast \omega \left( Q\otimes Q\otimes m\right) \ast \omega \left( m\otimes
Q\otimes Q\right) \\
\ast \gamma ^{-1}\left( m\left( m\otimes Q\right) \otimes Q\right) \ast
\left( \gamma ^{-1}\left( m\otimes Q\right) \otimes \varepsilon \right) \ast
\left( \gamma ^{-1}\otimes \varepsilon \otimes \varepsilon \right)%
\end{array}%
\right] .
\end{eqnarray*}%
Moreover%
\begin{eqnarray*}
&&\left( \varepsilon \otimes \omega ^{\gamma }\right) \ast \omega ^{\gamma
}\left( Q\otimes m^{\gamma }\otimes Q\right) \ast \left( \omega ^{\gamma
}\otimes \varepsilon \right) \\
&=&\left( \varepsilon \otimes \omega ^{\gamma }\right) \ast \omega ^{\gamma
}\left( Q\otimes \left( \gamma \ast m\ast \gamma ^{-1}\right) \otimes
Q\right) \ast \left( \omega ^{\gamma }\otimes \varepsilon \right) \\
&=&\left( \varepsilon \otimes \omega ^{\gamma }\right) \ast \omega ^{\gamma
}\left( Q\otimes \left( \left( \gamma \otimes \varepsilon \right) \ast
\left( m\otimes Q\right) \ast \left( \gamma ^{-1}\otimes \varepsilon \right)
\right) \right) \ast \left( \omega ^{\gamma }\otimes \varepsilon \right) \\
&=&\left( \varepsilon \otimes \omega ^{\gamma }\right) \ast \left(
\varepsilon \otimes \gamma \otimes \varepsilon \right) \ast \omega ^{\gamma
}\left( Q\otimes m\otimes Q\right) \ast \left( \varepsilon \otimes \gamma
^{-1}\otimes \varepsilon \right) \ast \left( \omega ^{\gamma }\otimes
\varepsilon \right) \\
&=&\left[
\begin{array}{c}
\left( \varepsilon \otimes \varepsilon \otimes \gamma \right) \ast \left(
\varepsilon \otimes \gamma \left( Q\otimes m\right) \right) \ast \left(
\varepsilon \otimes \omega \right) \ast \left( \varepsilon \otimes \gamma
^{-1}\left( m\otimes Q\right) \right) \ast \left( \varepsilon \otimes \gamma
^{-1}\otimes \varepsilon \right) \ast \\
\left( \varepsilon \otimes \gamma \otimes \varepsilon \right) \ast \left(
\varepsilon \otimes \gamma \right) \left( Q\otimes m\otimes Q\right) \ast \\
\gamma \left( Q\otimes m\right) \left( Q\otimes m\otimes Q\right) \ast
\omega \left( Q\otimes m\otimes Q\right) \ast \gamma ^{-1}\left( m\otimes
Q\right) \left( Q\otimes m\otimes Q\right) \\
\ast \left( \gamma ^{-1}\otimes \varepsilon \right) \left( Q\otimes m\otimes
Q\right) \ast \left( \varepsilon \otimes \gamma ^{-1}\otimes \varepsilon
\right) \\
\ast \left( \varepsilon \otimes \gamma \otimes \varepsilon \right) \ast
\left( \gamma \left( Q\otimes m\right) \otimes \varepsilon \right) \ast
\left( \omega \otimes \varepsilon \right) \ast \left( \gamma ^{-1}\left(
m\otimes Q\right) \otimes \varepsilon \right) \ast \left( \gamma
^{-1}\otimes \varepsilon \otimes \varepsilon \right)%
\end{array}%
\right] \\
&=&\left[
\begin{array}{c}
\left( \varepsilon \otimes \varepsilon \otimes \gamma \right) \ast \left(
\varepsilon \otimes \gamma \left( Q\otimes m\right) \right) \ast \left(
\varepsilon \otimes \omega \right) \ast \left( \varepsilon \otimes \gamma
^{-1}\left( m\otimes Q\right) \right) \ast \\
\left( \varepsilon \otimes \gamma \left( m\otimes Q\right) \right) \ast \\
\gamma \left( Q\otimes m\left( m\otimes Q\right) \right) \ast \omega \left(
Q\otimes m\otimes Q\right) \ast \gamma ^{-1}\left( m\left( Q\otimes m\right)
\otimes Q\right) \\
\ast \left( \gamma ^{-1}\left( Q\otimes m\right) \otimes \varepsilon \right)
\ast \\
\ast \left( \gamma \left( Q\otimes m\right) \otimes \varepsilon \right) \ast
\left( \omega \otimes \varepsilon \right) \ast \left( \gamma ^{-1}\left(
m\otimes Q\right) \otimes \varepsilon \right) \ast \left( \gamma
^{-1}\otimes \varepsilon \otimes \varepsilon \right)%
\end{array}%
\right] \\
&=&\left[
\begin{array}{c}
\left( \varepsilon \otimes \varepsilon \otimes \gamma \right) \ast \left(
\varepsilon \otimes \gamma \left( Q\otimes m\right) \right) \ast \left(
\varepsilon \otimes \omega \right) \ast \\
\gamma \left( Q\otimes m\left( m\otimes Q\right) \right) \ast \omega \left(
Q\otimes m\otimes Q\right) \ast \gamma ^{-1}\left( m\left( Q\otimes m\right)
\otimes Q\right) \\
\ast \left( \omega \otimes \varepsilon \right) \ast \left( \gamma
^{-1}\left( m\otimes Q\right) \otimes \varepsilon \right) \ast \left( \gamma
^{-1}\otimes \varepsilon \otimes \varepsilon \right)%
\end{array}%
\right] \\
&=&\left[
\begin{array}{c}
\left( \varepsilon \otimes \varepsilon \otimes \gamma \right) \ast \left(
\varepsilon \otimes \gamma \left( Q\otimes m\right) \right) \ast \\
\gamma \left( Q\otimes \omega \ast m\left( m\otimes Q\right) \right) \ast
\omega \left( Q\otimes m\otimes Q\right) \ast \gamma ^{-1}\left( m\left(
Q\otimes m\right) \ast \omega \otimes Q\right) \\
\ast \left( \gamma ^{-1}\left( m\otimes Q\right) \otimes \varepsilon \right)
\ast \left( \gamma ^{-1}\otimes \varepsilon \otimes \varepsilon \right)%
\end{array}%
\right] \\
&=&\left[
\begin{array}{c}
\left( \varepsilon \otimes \varepsilon \otimes \gamma \right) \ast \left(
\varepsilon \otimes \gamma \left( Q\otimes m\right) \right) \ast \\
\gamma \left( Q\otimes m\left( Q\otimes m\right) \ast \omega \right) \ast
\omega \left( Q\otimes m\otimes Q\right) \ast \gamma ^{-1}\left( \omega \ast
m\left( m\otimes Q\right) \otimes Q\right) \\
\ast \left( \gamma ^{-1}\left( m\otimes Q\right) \otimes \varepsilon \right)
\ast \left( \gamma ^{-1}\otimes \varepsilon \otimes \varepsilon \right)%
\end{array}%
\right] \\
&=&\left[
\begin{array}{c}
\left( \varepsilon \otimes \varepsilon \otimes \gamma \right) \ast \left(
\varepsilon \otimes \gamma \left( Q\otimes m\right) \right) \ast \gamma
\left( Q\otimes m\left( Q\otimes m\right) \right) \\
\left( \varepsilon \otimes \omega \right) \ast \omega \left( Q\otimes
m\otimes Q\right) \ast \left( \omega \otimes \varepsilon \right) \ast \\
\gamma ^{-1}\left( m\left( m\otimes Q\right) \otimes Q\right) \ast \left(
\gamma ^{-1}\left( m\otimes Q\right) \otimes \varepsilon \right) \ast \left(
\gamma ^{-1}\otimes \varepsilon \otimes \varepsilon \right)%
\end{array}%
\right] \\
&=&\left[
\begin{array}{c}
\left( \varepsilon \otimes \varepsilon \otimes \gamma \right) \ast \left(
\varepsilon \otimes \gamma \left( Q\otimes m\right) \right) \ast \gamma
\left( Q\otimes m\left( Q\otimes m\right) \right) \\
\omega \left( Q\otimes Q\otimes m\right) \ast \omega \left( m\otimes
Q\otimes Q\right) \ast \\
\gamma ^{-1}\left( m\left( m\otimes Q\right) \otimes Q\right) \ast \left(
\gamma ^{-1}\left( m\otimes Q\right) \otimes \varepsilon \right) \ast \left(
\gamma ^{-1}\otimes \varepsilon \otimes \varepsilon \right)%
\end{array}%
\right] .
\end{eqnarray*}
\end{invisible}

Therefore%
\begin{equation*}
\omega ^{\gamma }\left( Q\otimes Q\otimes m^{\gamma }\right) \ast \omega
^{\gamma }\left( m^{\gamma }\otimes Q\otimes Q\right) =\left( \varepsilon
\otimes \omega ^{\gamma }\right) \ast \omega ^{\gamma }\left( Q\otimes
m^{\gamma }\otimes Q\right) \ast \left( \omega ^{\gamma }\otimes \varepsilon
\right) .
\end{equation*}
\end{proof}

In analogy to the case of Hopf algebras, one can define the bosonization
$E\#H$ of a coquasi-bialgebra in ${_{H}^{H}\mathcal{YD}}$ by a Hopf algebra $%
H,$ see \cite[Definition 5.4]{ABM} for further details on the structure. The
following result was originally stated for $E$ a Hopf algebra. Yorck Sommerh\"{a}user suggested the present more general form which investigates the
behaviour of the bosonization under a suitable gauge transformation.

\begin{proposition}
\label{pro:deformSmash}Let $H$ be a Hopf algebra and let $\left(
E,m,u,\Delta ,\varepsilon ,\omega \right) $ be a coquasi-bialgebra in ${%
_{H}^{H}\mathcal{YD}}$. Let $\gamma :E\otimes E\rightarrow \Bbbk $ be a
gauge transformation in ${_{H}^{H}\mathcal{YD}}$. Set%
\begin{equation*}
\Gamma :\left( E\#H\right) \otimes \left( E\#H\right) \rightarrow \Bbbk
:\left( x\#h\right) \otimes \left( x^{\prime }\#h^{\prime }\right) \mapsto
\gamma \left( x\otimes hx^{\prime }\right) \varepsilon _{H}\left( h^{\prime
}\right) .
\end{equation*}%
Then $\Gamma $ is a gauge transformation and $\left( E\#H\right) ^{\Gamma
}=E^{\gamma }\#H$ as ordinary coquasi-bialgebras.
\end{proposition}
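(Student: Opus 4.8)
The plan is to deduce everything from a single ``lifting'' principle relating the braided convolution on ${_{H}^{H}\mathcal{YD}}\left( E^{\otimes n},\Bbbk \right) $ with the ordinary convolution on $\mathbf{Vec}_{\Bbbk }\left( \left( E\#H\right) ^{\otimes n},\Bbbk \right) $. Recall from \cite[Definition 5.4]{ABM} that $E\#H$ has $E\otimes H$ as underlying coalgebra (the smash coproduct $\Delta _{E\#H}\left( x\#h\right) =\left( x_{1}\#\left( x_{2}\right) _{-1}h_{1}\right) \otimes \left( \left( x_{2}\right) _{0}\#h_{2}\right) $), unit $1_{E}\#1_{H}$, and multiplication $\left( x\#h\right) \left( x^{\prime }\#h^{\prime }\right) =x\left( h_{1}x^{\prime }\right) \#h_{2}h^{\prime }$. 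For a morphism $f:E^{\otimes n}\rightarrow \Bbbk $ in ${_{H}^{H}\mathcal{YD}}$ define $\widehat{f}:\left( E\#H\right) ^{\otimes n}\rightarrow \Bbbk $ by: use the iterated $H$-action to push all the $H$-components of $\left( x^{1}\#h^{1}\right) \otimes \cdots \otimes \left( x^{n}\#h^{n}\right) $ to the right along the tensor word, evaluate $f$ on the resulting element of $E^{\otimes n}$, and apply $\varepsilon _{H}$ to what is left of $H$; analogously one lifts morphisms $E^{\otimes n}\rightarrow E$ in ${_{H}^{H}\mathcal{YD}}$ to maps $\left( E\#H\right) ^{\otimes n}\rightarrow E\#H$. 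Then $\Gamma =\widehat{\gamma }$, $m_{E\#H}=\widehat{m}$, and (this is the form of the reassociator of the bosonization) $\omega _{E\#H}=\widehat{\omega }$.

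\textbf{The main step.} The key ingredient is the \emph{lifting lemma}: $\widehat{\left( -\right) }$ sends $\varepsilon _{E^{\otimes n}}$ to $\varepsilon _{\left( E\#H\right) ^{\otimes n}}$; it turns the braided convolution product over ${_{H}^{H}\mathcal{YD}}$ into the ordinary convolution over $\Bbbk $, i.e. $\widehat{g\ast f\ast g^{\prime }}=\widehat{g}\ast \widehat{f}\ast \widehat{g^{\prime }}$ whenever the convolutions make sense; it commutes with the various insertions of $m$, e.g. $\widehat{f\left( E\otimes E\otimes m\right) }=\widehat{f}\left( \left( E\#H\right) ^{\otimes 2}\otimes m_{E\#H}\right) $; and $\widehat{\varepsilon _{E}\otimes \gamma }=\varepsilon _{E\#H}\otimes \Gamma $, together with its symmetric analogues. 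Proving this lemma is the only genuinely computational ingredient, and I expect it to be the main obstacle: one unwinds $\Delta _{E\#H}$ and checks that the ${_{H}^{H}\mathcal{YD}}$-braiding $c_{E,E}$ appearing in the braided convolution is reproduced, after the flattening, by the coaction terms $\left( x_{2}\right) _{-1}$ in $\Delta _{E\#H}$, using throughout that $\gamma ,m,\omega $ are morphisms in ${_{H}^{H}\mathcal{YD}}$ and in particular $H$-linear (so an incoming $H$-factor acting on a tensor word collapses via $\varepsilon _{H}$). This is the same kind of Sweedler bookkeeping already carried out in the invisible parts of Propositions \ref{pro:CMU} and \ref{pro:deformYD}; it is not deep, but the indices must be tracked with care.

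\textbf{From the lemma to $\Gamma$ being a gauge transformation.} Granting the lifting lemma, $\widehat{\left( -\right) }$ is a morphism of convolution monoids, hence preserves convolution inverses: $\Gamma =\widehat{\gamma }$ is convolution invertible with $\Gamma ^{-1}=\widehat{\gamma ^{-1}}$, using that $\gamma ^{-1}\in {_{H}^{H}\mathcal{YD}}\left( E\otimes E,\Bbbk \right) $ by Lemma \ref{lem:InvYD}. Unitality of $\Gamma $ is immediate from the definition and (\ref{form:1Qlin}): $\Gamma \left( \left( 1_{E}\#1_{H}\right) \otimes \left( x^{\prime }\#h^{\prime }\right) \right) =\gamma \left( 1_{E}\otimes x^{\prime }\right) \varepsilon _{H}\left( h^{\prime }\right) =\varepsilon _{E\#H}\left( x^{\prime }\#h^{\prime }\right) $ and $\Gamma \left( \left( x\#h\right) \otimes \left( 1_{E}\#1_{H}\right) \right) =\gamma \left( x\otimes h1_{E}\right) =\varepsilon _{H}\left( h\right) \varepsilon _{E}\left( x\right) =\varepsilon _{E\#H}\left( x\#h\right) $. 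So $\Gamma $ is a gauge transformation for the ordinary coquasi-bialgebra $E\#H$, and $\left( E\#H\right) ^{\Gamma }$ is a coquasi-bialgebra by Proposition \ref{pro:deformYD} (taken with base Hopf algebra $\Bbbk $).

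\textbf{Identifying the two structures.} It remains to identify $\left( E\#H\right) ^{\Gamma }$ with $E^{\gamma }\#H$. They share the same underlying coalgebra (neither a gauge deformation nor the bosonization alters it) and the same unit $1_{E}\#1_{H}$. For the multiplication, the lifting lemma yields
\begin{equation*}
m_{\left( E\#H\right) ^{\Gamma }}=\Gamma \ast m_{E\#H}\ast \Gamma ^{-1}=\widehat{\gamma }\ast \widehat{m}\ast \widehat{\gamma ^{-1}}=\widehat{\gamma \ast m\ast \gamma ^{-1}}=\widehat{m^{\gamma }}=m_{E^{\gamma }\#H},
\end{equation*}
and, writing $Q=E\#H$, for the reassociator, by the formula of Proposition \ref{pro:deformYD} and again the lifting lemma,
\begin{equation*}
\omega _{\left( E\#H\right) ^{\Gamma }}=\left( \varepsilon \otimes \Gamma \right) \ast \Gamma \left( Q\otimes m_{Q}\right) \ast \omega _{Q}\ast \Gamma ^{-1}\left( m_{Q}\otimes Q\right) \ast \left( \Gamma ^{-1}\otimes \varepsilon \right) =\widehat{\omega ^{\gamma }}=\omega _{E^{\gamma }\#H},
\end{equation*}
the last equality because $\omega ^{\gamma }$ is the reassociator of $E^{\gamma }$ and the bosonization $E^{\gamma }\#H$ carries $\widehat{\omega ^{\gamma }}$. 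This proves $\left( E\#H\right) ^{\Gamma }=E^{\gamma }\#H$ as coquasi-bialgebras. Alternatively, one may skip the lifting lemma and verify these two identities directly by expanding $\Delta _{E\#H}$ on both sides; that amounts to proving the lemma in disguise.
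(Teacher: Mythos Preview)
Your proposal is correct and follows essentially the same route as the paper. Your ``lifting'' map $\widehat{(-)}$ is precisely the family of maps $\mho_{H,E}^{n}$ introduced in \cite[Lemma 2.15]{ABM}, and your ``lifting lemma'' (convolution-preserving, compatible with insertions of $m$, etc.) is exactly the content of that lemma; the paper simply cites it rather than reproving it. The only organizational difference is that for the multiplication the paper does not invoke $\mho$ directly: it first observes that both $m_{(E\#H)^{\Gamma}}$ and $m_{E^{\gamma}\#H}$ are $H$-bilinear and $H$-balanced, reduces to checking equality on elements of the form $(x\#1_{H})\otimes(x'\#1_{H})$, and then carries out the explicit Sweedler computation there; for the reassociator it proceeds exactly as you do, via $\mho_{H,E}^{3}$. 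So your uniform use of the lifting lemma is slightly cleaner, at the cost of having to prove (or cite) that lemma in full generality.
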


\begin{proof}
By \cite[Lemma 2.15 and what follows]{ABM}, we have that $\Gamma $ is
convolution invertible $H$-bilinear and $H$-balanced. Moreover $\Gamma
^{-1}\left( \left( x\#h\right) \otimes \left( x^{\prime }\#h^{\prime
}\right) \right) =\gamma ^{-1}\left( x\otimes hx^{\prime }\right)
\varepsilon _{H}\left( h^{\prime }\right) .$ If $\alpha :\left( E\#H\right)
\otimes \left( E\#H\right) \rightarrow E\#H$ is $H$-bilinear and $H$%
-balanced, it is easy to check that $\Gamma \ast \alpha \ast \Gamma ^{-1}$
is $H$-bilinear and $H$-balanced too.

\begin{invisible}
We check this for our sake. Note that $E\#H$ is an $H$-bimodule coalgebra
with respect to  $\left( r\#h\right) l=r\#hl$ and $l\left( s\#h\right)
=l_{1}s\#l_{2}h$. Let, in general, $A$ be an $H$-bimodule coalgebra and let $%
\alpha :A\otimes A\rightarrow A$ be an $H$-bilinear and $H$-balanced map.
Let $\Gamma :A\otimes A\rightarrow \Bbbk $ be $H$-bilinear and $H$-balanced
map. Then%
\begin{eqnarray*}
\left( \Gamma \ast \alpha \right) \left( hxh^{\prime }\otimes x^{\prime
}h^{\prime \prime }\right)  &=&\Gamma \left( \left( hxh^{\prime }\right)
_{1}\otimes \left( x^{\prime }h^{\prime \prime }\right) _{1}\right) \alpha
\left( \left( hxh^{\prime }\right) _{2}\otimes \left( x^{\prime }h^{\prime
\prime }\right) _{2}\right)  \\
&=&\Gamma \left( h_{1}x_{1}h_{1}^{\prime }\otimes x_{1}^{\prime
}h_{1}^{\prime \prime }\right) \alpha \left( h_{2}x_{2}h_{2}^{\prime
}\otimes x_{2}^{\prime }h_{2}^{\prime \prime }\right)  \\
&=&\varepsilon _{H}\left( h_{1}\right) \Gamma \left( x_{1}\otimes
h_{1}^{\prime }x_{1}^{\prime }\right) \varepsilon _{H}\left( h_{1}^{\prime
\prime }\right) h_{2}\alpha \left( x_{2}\otimes h_{2}^{\prime }x_{2}^{\prime
}\right) h_{2}^{\prime \prime } \\
&=&\Gamma \left( x_{1}\otimes h_{1}^{\prime }x_{1}^{\prime }\right) h\alpha
\left( x_{2}\otimes h_{2}^{\prime }x_{2}^{\prime }\right) h^{\prime \prime }
\\
&=&h\Gamma \left( x_{1}\otimes \left( h^{\prime }x^{\prime }\right)
_{1}\right) \alpha \left( x_{2}\otimes \left( h^{\prime }x^{\prime }\right)
_{2}\right) h^{\prime \prime } \\
&=&h\left( \Gamma \ast \alpha \right) \left( x\otimes h^{\prime }x^{\prime
}\right) h^{\prime \prime }
\end{eqnarray*}%
and%
\begin{eqnarray*}
\left( \alpha \ast \Gamma \right) \left( hxh^{\prime }\otimes x^{\prime
}h^{\prime \prime }\right)  &=&\alpha \left( \left( hxh^{\prime }\right)
_{1}\otimes \left( x^{\prime }h^{\prime \prime }\right) _{1}\right) \Gamma
\left( \left( hxh^{\prime }\right) _{2}\otimes \left( x^{\prime }h^{\prime
\prime }\right) _{2}\right)  \\
&=&\alpha \left( h_{1}x_{1}h_{1}^{\prime }\otimes x_{1}^{\prime
}h_{1}^{\prime \prime }\right) \Gamma \left( h_{2}x_{2}h_{2}^{\prime
}\otimes x_{2}^{\prime }h_{2}^{\prime \prime }\right)  \\
&=&h_{1}\alpha \left( x_{1}\otimes h_{1}^{\prime }x_{1}^{\prime }\right)
h_{1}^{\prime \prime }\varepsilon _{H}\left( h_{2}\right) \Gamma \left(
x_{2}\otimes h_{2}^{\prime }x_{2}^{\prime }\right) \varepsilon _{H}\left(
h_{2}^{\prime \prime }\right)  \\
&=&h\alpha \left( x_{1}\otimes h_{1}^{\prime }x_{1}^{\prime }\right)
h^{\prime \prime }\Gamma \left( x_{2}\otimes h_{2}^{\prime }x_{2}^{\prime
}\right)  \\
&=&h\alpha \left( x_{1}\otimes \left( h^{\prime }x^{\prime }\right)
_{1}\right) \Gamma \left( x_{2}\otimes \left( h^{\prime }x^{\prime }\right)
_{2}\right) h^{\prime \prime } \\
&=&h\left( \alpha \ast \Gamma \right) \left( x\otimes h^{\prime }x^{\prime
}\right) h^{\prime \prime }.
\end{eqnarray*}
\end{invisible}

In particular, since
\begin{equation*}
m_{E\#H}\left( \left( x\#h\right) \otimes \left( x^{\prime }\#h^{\prime
}\right) \right) =m\left( x\otimes h_{1}x^{\prime }\right) \otimes
h_{2}h^{\prime }
\end{equation*}%
we have that $m_{E\#H}$ is $H$-bilinear and $H$-balanced where $E\#H$
carries the left $H$-diagonal action and the right regular action over $H$.

\begin{invisible}
We have%
\begin{eqnarray*}
m_{E\#H}\left( l\left( x\#h\right) \otimes \left( x^{\prime }\#h^{\prime
}\right) \right)  &=&m_{E\#H}\left( \left( l_{1}x\#l_{2}h\right) \otimes
\left( x^{\prime }\#h^{\prime }\right) \right)  \\
&=&m_{E}\left( l_{1}x\otimes \left( l_{2}h_{1}\right) x^{\prime }\right)
\otimes \left( l_{3}h_{2}\right) h^{\prime } \\
&=&m_{E}\left( l_{1}x\otimes l_{2}\left( h_{1}x^{\prime }\right) \right)
\otimes l_{3}\left( h_{2}h^{\prime }\right)  \\
&=&l_{1}m_{E}\left( x\otimes h_{1}x^{\prime }\right) \otimes l_{2}\left(
h_{2}h^{\prime }\right)  \\
&=&l\left( m_{E}\left( x\otimes h_{1}x^{\prime }\right) \otimes
h_{2}h^{\prime }\right) =lm_{E\#H}\left( \left( x\#h\right) \otimes \left(
x^{\prime }\#h^{\prime }\right) \right) ,
\end{eqnarray*}%
\begin{eqnarray*}
m_{E\#H}\left( \left( x\#h\right) \otimes \left( x^{\prime }\#h^{\prime
}\right) l\right)  &=&m_{E\#H}\left( \left( x\#h\right) \otimes \left(
x^{\prime }\#h^{\prime }l\right) \right) =m_{E}\left( x\otimes
h_{1}x^{\prime }\right) \otimes h_{2}\left( h^{\prime }l\right)  \\
&=&m_{E}\left( x\otimes h_{1}x^{\prime }\right) \otimes \left(
h_{2}h^{\prime }\right) l=\left( m_{E}\left( x\otimes h_{1}x^{\prime
}\right) \otimes \left( h_{2}h^{\prime }\right) \right) l \\
&=&m_{E\#H}\left( \left( x\#h\right) \otimes \left( x^{\prime }\#h^{\prime
}\right) \right) l,
\end{eqnarray*}%
and%
\begin{eqnarray*}
m_{E\#H}\left( \left( x\#h\right) l\otimes \left( x^{\prime }\#h^{\prime
}\right) \right)  &=&m_{E\#H}\left( \left( x\#hl\right) \otimes \left(
x^{\prime }\#h^{\prime }\right) \right)  \\
&=&m_{E}\left( x\otimes \left( h_{1}l_{1}\right) x^{\prime }\right) \otimes
\left( h_{2}l_{2}\right) h^{\prime }=m_{E}\left( x\otimes h_{1}\left(
l_{1}x^{\prime }\right) \right) \otimes h_{2}\left( l_{2}h^{\prime }\right)
\\
&=&m_{E\#H}\left( \left( x\#h\right) \otimes \left( l_{1}x^{\prime
}\#l_{2}h^{\prime }\right) \right) =m_{E\#H}\left( \left( x\#h\right)
\otimes l\left( x^{\prime }\#h^{\prime }\right) \right) .
\end{eqnarray*}
\end{invisible}

Thus $m_{\left( E\#H\right) ^{\Gamma }}=\Gamma \ast m_{E\#H}\ast \Gamma ^{-1}
$ is $H$-bilinear and $H$-balanced. Moreover, since $E^{\gamma }$ is also a
coquasi-bialgebra in ${_{H}^{H}\mathcal{YD}}$ we have that $m_{E^{\gamma
}\#H}:\left( E\#H\right) \otimes \left( E\#H\right) \rightarrow E\#H$ is $H$%
-bilinear and $H$-balanced too.

\begin{invisible}
\begin{eqnarray*}
m_{E^{\gamma }\#H}\left( l\left( x\#h\right) s\otimes \left( x^{\prime
}\#h^{\prime }\right) t\right) &=&m_{E^{\gamma }\#H}\left( \left(
l_{1}x\#l_{2}hs\right) \otimes \left( x^{\prime }\#h^{\prime }t\right)
\right) \\
&=&\left( \gamma \ast m_{E}\ast \gamma ^{-1}\right) \left( l_{1}x\otimes
l_{2}h_{1}s_{1}x^{\prime }\right) \#l_{3}h_{2}s_{2}h^{\prime }t \\
&=&l_{1}\left( \gamma \ast m_{E}\ast \gamma ^{-1}\right) \left( x\otimes
h_{1}s_{1}x^{\prime }\right) \#l_{2}h_{2}s_{2}h^{\prime }t \\
&=&l\left[ \left( \gamma \ast m_{E}\ast \gamma ^{-1}\right) \left( x\otimes
h_{1}s_{1}x^{\prime }\right) \#h_{2}s_{2}h^{\prime }\right] t \\
&=&lm_{E^{\gamma }\#H}\left( \left( x\#h\right) \otimes \left(
s_{1}x^{\prime }\#s_{2}h^{\prime }\right) \right) t \\
&=&lm_{E^{\gamma }\#H}\left( \left( x\#h\right) \otimes s\left( x^{\prime
}\#h^{\prime }\right) \right) t
\end{eqnarray*}
\end{invisible}

Therefore, in order to check that $m_{\left( E\#H\right) ^{\Gamma
}}=m_{E^{\gamma }\#H},$ it suffices to prove that they coincide on elements
of the form $\left( x\#1_{H}\right) \otimes \left( x^{\prime }\#1_{H}\right)
.$

\begin{invisible}
If $\alpha :\left( E\#H\right) \otimes \left( E\#H\right) \rightarrow E\#H$
is $H$-bilinear and $H$-balanced, then
\begin{eqnarray*}
\alpha \left( \left( x\#h\right) \otimes \left( x^{\prime }\#h^{\prime
}\right) \right) &=&\alpha \left( \left( x\#1_{H}\right) h\otimes \left(
x^{\prime }\#h^{\prime }\right) \right) \\
&=&\alpha \left( \left( x\#1_{H}\right) \otimes h\left( x^{\prime
}\#h^{\prime }\right) \right) \\
&=&\alpha \left( \left( x\#1_{H}\right) \otimes \left( h_{1}x^{\prime
}\#h_{2}h^{\prime }\right) \right) \\
&=&\alpha \left( \left( x\#1_{H}\right) \otimes \left( h_{1}x^{\prime
}\#1_{H}\right) \right) h_{2}h^{\prime }.
\end{eqnarray*}
\end{invisible}

Let us consider the multiplication%
\begin{eqnarray*}
&&m_{\left( E\#H\right) ^{\Gamma }}\left( \left( x\#1_{H}\right) \otimes
\left( x^{\prime }\#1_{H}\right) \right) \\
&=&\left( \Gamma \ast m_{E\#H}\ast \Gamma ^{-1}\right) \left( \left(
x\#1_{H}\right) \otimes \left( x^{\prime }\#1_{H}\right) \right) \\
&=&\Gamma \left( \left( x\#1_{H}\right) _{1}\otimes \left( x^{\prime
}\#1_{H}\right) _{1}\right) \cdot m_{E\#H}\left( \left( x\#1_{H}\right)
_{2}\otimes \left( x^{\prime }\#1_{H}\right) _{2}\right) \cdot \Gamma
^{-1}\left( \left( x\#1_{H}\right) _{3}\otimes \left( x^{\prime
}\#1_{H}\right) _{3}\right) .
\end{eqnarray*}%
Now, from
\begin{equation*}
\Delta _{E\#H}\left( x\#h\right) =\sum \left( x^{\left( 1\right) }\#{%
x^{\left( 2\right) }}_{\left\langle -1\right\rangle }h_{1}\right) \otimes
\left( {x^{\left( 2\right) }}_{\left\langle 0\right\rangle }\#h_{2}\right)
\end{equation*}%
we get
\begin{eqnarray*}
&&\left( x\#1_{H}\right) _{1}\otimes \left( x\#1_{H}\right) _{2}\otimes
\left( x\#1_{H}\right) _{3} \\
&=&\sum \left( x^{\left( 1\right) }\#{x^{\left( 2\right) }}_{\left\langle
-1\right\rangle }{x^{\left( 3\right) }}_{\left\langle -2\right\rangle
}\right) \otimes \left( {x^{\left( 2\right) }}_{\left\langle 0\right\rangle
}\#{x^{\left( 3\right) }}_{\left\langle -1\right\rangle }\right) \otimes
\left( {x^{\left( 3\right) }}_{\left\langle 0\right\rangle }\#1_{H}\right)
\end{eqnarray*}

\begin{invisible}
\begin{eqnarray*}
&&\left( x\#1_{H}\right) _{1}\otimes \left( x\#1_{H}\right) _{2}\otimes
\left( x\#1_{H}\right) _{3} \\
&=&\sum \Delta _{E\#H}\left( x^{\left( 1\right) }\#\left( x^{\left( 2\right)
}\right) _{\left\langle -1\right\rangle }\right) \otimes \left( \left(
x^{\left( 2\right) }\right) _{\left\langle 0\right\rangle }\#1_{H}\right) \\
&=&\sum \left( \left( x^{\left( 1\right) }\right) ^{\left( 1\right)
}\#\left( \left( x^{\left( 1\right) }\right) ^{\left( 2\right) }\right)
_{\left\langle -1\right\rangle }\left( \left( x^{\left( 2\right) }\right)
_{\left\langle -1\right\rangle }\right) _{1}\right) \otimes \left( \left(
\left( x^{\left( 1\right) }\right) ^{\left( 2\right) }\right) _{\left\langle
0\right\rangle }\#\left( \left( x^{\left( 2\right) }\right) _{\left\langle
-1\right\rangle }\right) _{2}\right) \otimes \left( \left( x^{\left(
2\right) }\right) _{\left\langle 0\right\rangle }\#1_{H}\right) \\
&=&\sum \left( x^{\left( 1\right) }\#\left( x^{\left( 2\right) }\right)
_{\left\langle -1\right\rangle }\left( x^{\left( 3\right) }\right)
_{\left\langle -2\right\rangle }\right) \otimes \left( \left( x^{\left(
2\right) }\right) _{\left\langle 0\right\rangle }\#\left( x^{\left( 3\right)
}\right) _{\left\langle -1\right\rangle }\right) \otimes \left( \left(
x^{\left( 3\right) }\right) _{\left\langle 0\right\rangle }\#1_{H}\right)
\end{eqnarray*}
\end{invisible}

so that%
\begin{eqnarray*}
&&m_{\left( E\#H\right) ^{\Gamma }}\left( \left( x\#1_{H}\right) \otimes
\left( x^{\prime }\#1_{H}\right) \right)  \\
&=&\Gamma \left( \left( x\#1_{H}\right) _{1}\otimes \left( x^{\prime
}\#1_{H}\right) _{1}\right) \cdot m_{E\#H}\left( \left( x\#1_{H}\right)
_{2}\otimes \left( x^{\prime }\#1_{H}\right) _{2}\right) \cdot \Gamma
^{-1}\left( \left( x\#1_{H}\right) _{3}\otimes \left( x^{\prime
}\#1_{H}\right) _{3}\right)  \\
&=&\left[
\begin{array}{c}
\sum \Gamma \left( x^{\left( 1\right) }\#{x^{\left( 2\right) }}%
_{\left\langle -1\right\rangle }{x^{\left( 3\right) }}_{\left\langle
-2\right\rangle }\otimes x^{\prime \left( 1\right) }\#x^{\prime \left(
2\right) }{}_{\left\langle -1\right\rangle }x^{\prime \left( 3\right)
}{}_{\left\langle -2\right\rangle }\right)  \\
\cdot m_{E\#H}\left( {x^{\left( 2\right) }}_{\left\langle 0\right\rangle }\#{%
x^{\left( 3\right) }}_{\left\langle -1\right\rangle }\otimes x^{\prime
\left( 2\right) }{}_{\left\langle 0\right\rangle }\#x^{\prime \left(
3\right) }{}_{\left\langle -1\right\rangle }\right)  \\
\cdot \Gamma ^{-1}\left( {x^{\left( 3\right) }}_{\left\langle 0\right\rangle
}\#1_{H}\otimes x^{\prime \left( 3\right) }{}_{\left\langle 0\right\rangle
}\#1_{H}\right)
\end{array}%
\right]  \\
&=&\left[
\begin{array}{c}
\sum \gamma \left( x^{\left( 1\right) }\otimes {x^{\left( 2\right) }}%
_{\left\langle -1\right\rangle }{x^{\left( 3\right) }}_{\left\langle
-2\right\rangle }x^{\prime \left( 1\right) }\right)  \\
\cdot m_{E\#H}\left( {x^{\left( 2\right) }}_{\left\langle 0\right\rangle }\#{%
x^{\left( 3\right) }}_{\left\langle -1\right\rangle }\otimes x^{\prime
\left( 2\right) }\#x^{\prime \left( 3\right) }{}_{\left\langle
-1\right\rangle }\right)  \\
\cdot \gamma ^{-1}\left( {x^{\left( 3\right) }}_{\left\langle 0\right\rangle
}\otimes x^{\prime \left( 3\right) }{}_{\left\langle 0\right\rangle }\right)
\end{array}%
\right]  \\
&=&\left[
\begin{array}{c}
\sum \gamma \left( x^{\left( 1\right) }\otimes {x^{\left( 2\right) }}%
_{\left\langle -1\right\rangle }{x^{\left( 3\right) }}_{\left\langle
-2\right\rangle }x^{\prime \left( 1\right) }\right)  \\
\cdot m\left( {x^{\left( 2\right) }}_{\left\langle 0\right\rangle }\otimes {%
x^{\left( 3\right) }}_{\left\langle -2\right\rangle }x^{\prime \left(
2\right) }\right) \otimes x^{\left( 3\right) }{}_{\left\langle
-1\right\rangle }x^{\prime \left( 3\right) }{}_{\left\langle -1\right\rangle
} \\
\cdot \gamma ^{-1}\left( {x^{\left( 3\right) }}_{\left\langle 0\right\rangle
}\otimes x^{\prime \left( 3\right) }{}_{\left\langle 0\right\rangle }\right)
\end{array}%
\right]  \\
&=&\left[
\begin{array}{c}
\sum \gamma \left( x^{\left( 1\right) }\otimes {x^{\left( 2\right) }}%
_{\left\langle -1\right\rangle }x^{\left( 3\right) }{}_{\left\langle
-2\right\rangle }x^{\prime \left( 1\right) }\right)  \\
\cdot m\left( {x^{\left( 2\right) }}_{\left\langle 0\right\rangle }\otimes {%
x^{\left( 3\right) }}_{\left\langle -1\right\rangle }x^{\prime \left(
2\right) }\right) \otimes \left( {x^{\left( 3\right) }}_{\left\langle
0\right\rangle }\otimes x^{\prime \left( 3\right) }\right) _{\left\langle
-1\right\rangle } \\
\cdot \gamma ^{-1}\left( {x^{\left( 3\right) }}_{\left\langle 0\right\rangle
}\otimes x^{\prime \left( 3\right) }{}_{\left\langle 0\right\rangle }\right)
\end{array}%
\right]  \\
&\overset{\gamma ^{-1}\text{ colin.}}{=}&\left[
\begin{array}{c}
\sum \gamma \left( x^{\left( 1\right) }\otimes {x^{\left( 2\right) }}%
_{\left\langle -1\right\rangle }{x^{\left( 3\right) }}_{\left\langle
-2\right\rangle }x^{\prime \left( 1\right) }\right) \cdot m\left( {x^{\left(
2\right) }}_{\left\langle 0\right\rangle }\otimes {x^{\left( 3\right) }}%
_{\left\langle -1\right\rangle }x^{\prime \left( 2\right) }\right) \otimes
1_{H} \\
\cdot \gamma ^{-1}\left( {x^{\left( 3\right) }}_{\left\langle 0\right\rangle
}\otimes x^{\prime \left( 3\right) }\right)
\end{array}%
\right]  \\
&=&\left[
\begin{array}{c}
\sum \gamma \left( x^{\left( 1\right) }\otimes {x^{\left( 2\right) }}%
_{\left\langle -1\right\rangle }{x^{\left( 3\right) }}_{\left\langle
-2\right\rangle }x^{\prime \left( 1\right) }\right) m\left( {x^{\left(
2\right) }}_{\left\langle 0\right\rangle }\otimes {x^{\left( 3\right) }}%
_{\left\langle -1\right\rangle }x^{\prime \left( 2\right) }\right)  \\
\gamma ^{-1}\left( {x^{\left( 3\right) }}_{\left\langle 0\right\rangle
}\otimes x^{\prime \left( 3\right) }\right)
\end{array}%
\right] \otimes 1_{H}.
\end{eqnarray*}%
Now we have%
\begin{equation*}
\sum \left( x\otimes y\right) ^{\left( 1\right) }\otimes \left( x\otimes
y\right) ^{\left( 2\right) }=\sum x^{\left( 1\right) }\otimes x^{\left(
2\right) }{}_{\left\langle -1\right\rangle }y^{\left( 1\right) }\otimes
x^{\left( 2\right) }{}_{\left\langle 0\right\rangle }\otimes y^{\left(
2\right) }
\end{equation*}%
so that

\begin{invisible}
\begin{eqnarray*}
&&\sum \left( x\otimes y\right) ^{\left( 1\right) }\otimes \left( x\otimes
y\right) ^{\left( 2\right) }\otimes \left( x\otimes y\right) ^{\left(
3\right) } \\
&=&\sum \left( x^{\left( 1\right) }\otimes \left( x^{\left( 2\right)
}\right) _{\left\langle -1\right\rangle }y^{\left( 1\right) }\right)
^{\left( 1\right) }\otimes \left( x^{\left( 1\right) }\otimes \left(
x^{\left( 2\right) }\right) _{\left\langle -1\right\rangle }y^{\left(
1\right) }\right) ^{\left( 2\right) }\otimes \left( x^{\left( 2\right)
}\right) _{\left\langle 0\right\rangle }\otimes y^{\left( 2\right) } \\
&=&\sum \left( x^{\left( 1\right) \left( 1\right) }\otimes \left( x^{\left(
1\right) \left( 2\right) }\right) _{\left\langle -1\right\rangle }\left(
\left( x^{\left( 2\right) }\right) _{\left\langle -1\right\rangle }y^{\left(
1\right) }\right) ^{\left( 1\right) }\otimes \left( x^{\left( 1\right)
\left( 2\right) }\right) _{\left\langle 0\right\rangle }\otimes \left(
\left( x^{\left( 2\right) }\right) _{\left\langle -1\right\rangle }y^{\left(
1\right) }\right) ^{\left( 2\right) }\right) \otimes \left( x^{\left(
2\right) }\right) _{\left\langle 0\right\rangle }\otimes y^{\left( 2\right) }
\\
&=&\sum \left( x^{\left( 1\right) }\otimes \left( x^{\left( 2\right)
}\right) _{\left\langle -1\right\rangle }\left( \left( x^{\left( 3\right)
}\right) _{\left\langle -1\right\rangle }y^{\left( 1\right) }\right)
^{\left( 1\right) }\otimes \left( x^{\left( 2\right) }\right) _{\left\langle
0\right\rangle }\otimes \left( \left( x^{\left( 3\right) }\right)
_{\left\langle -1\right\rangle }y^{\left( 1\right) }\right) ^{\left(
2\right) }\right) \otimes \left( x^{\left( 3\right) }\right) _{\left\langle
0\right\rangle }\otimes y^{\left( 2\right) } \\
&=&\sum \left( x^{\left( 1\right) }\otimes \left( x^{\left( 2\right)
}\right) _{\left\langle -1\right\rangle }\left( x^{\left( 3\right) }\right)
_{\left\langle -2\right\rangle }\left( y^{\left( 1\right) }\right) ^{\left(
1\right) }\otimes \left( x^{\left( 2\right) }\right) _{\left\langle
0\right\rangle }\otimes \left( x^{\left( 3\right) }\right) _{\left\langle
-1\right\rangle }\left( y^{\left( 1\right) }\right) ^{\left( 2\right)
}\right) \otimes \left( x^{\left( 3\right) }\right) _{\left\langle
0\right\rangle }\otimes y^{\left( 2\right) } \\
&=&\sum \left( x^{\left( 1\right) }\otimes \left( x^{\left( 2\right)
}\right) _{\left\langle -1\right\rangle }\left( x^{\left( 3\right) }\right)
_{\left\langle -2\right\rangle }y^{\left( 1\right) }\right) \otimes \left(
\left( x^{\left( 2\right) }\right) _{\left\langle 0\right\rangle }\otimes
\left( x^{\left( 3\right) }\right) _{\left\langle -1\right\rangle }y^{\left(
2\right) }\right) \otimes \left( \left( x^{\left( 3\right) }\right)
_{\left\langle 0\right\rangle }\otimes y^{\left( 3\right) }\right)
\end{eqnarray*}%
i.e.
\end{invisible}

\begin{eqnarray*}
&&\sum \left( x\otimes y\right) ^{\left( 1\right) }\otimes \left( x\otimes
y\right) ^{\left( 2\right) }\otimes \left( x\otimes y\right) ^{\left(
3\right) } \\
&=&\sum \left( x^{\left( 1\right) }\otimes x^{\left( 2\right)
}{}_{\left\langle -1\right\rangle }x^{\left( 3\right) }{}_{\left\langle
-2\right\rangle }y^{\left( 1\right) }\right) \otimes \left( x^{\left(
2\right) }{}_{\left\langle 0\right\rangle }\otimes x^{\left( 3\right)
}{}_{\left\langle -1\right\rangle }y^{\left( 2\right) }\right) \otimes
\left( x^{\left( 3\right) }{}_{\left\langle 0\right\rangle }\otimes
y^{\left( 3\right) }\right) .
\end{eqnarray*}%
Using this equality we can proceed in our computation:%
\begin{eqnarray*}
&&m_{\left( E\#H\right) ^{\Gamma }}\left( \left( x\#1_{H}\right) \otimes
\left( x^{\prime }\#1_{H}\right) \right)  \\
&=&\left[
\begin{array}{c}
\sum \gamma \left( x^{\left( 1\right) }\otimes x^{\left( 2\right)
}{}_{\left\langle -1\right\rangle }x^{\left( 3\right) }{}_{\left\langle
-2\right\rangle }x^{\prime \left( 1\right) }\right)  \\
m\left( x^{\left( 2\right) }{}_{\left\langle 0\right\rangle }\otimes
x^{\left( 3\right) }{}_{\left\langle -1\right\rangle }x^{\prime \left(
2\right) }\right) \gamma ^{-1}\left( {x^{\left( 3\right) }}_{\left\langle
0\right\rangle }\otimes x^{\prime \left( 3\right) }\right)
\end{array}%
\right] \otimes 1_{H} \\
&=&\left[ \sum \gamma \left( \left( x\otimes x^{\prime }\right) ^{\left(
1\right) }\right) \cdot m\left( \left( x\otimes x^{\prime }\right) ^{\left(
2\right) }\right) \cdot \gamma ^{-1}\left( \left( x\otimes x^{\prime
}\right) ^{\left( 3\right) }\right) \right] \#1_{H} \\
&=&\left( \gamma \ast m\ast \gamma ^{-1}\right) \left( x\otimes x^{\prime
}\right) \#1_{H} \\
&=&m_{E^{\gamma }}\left( x\otimes x^{\prime }\right) \#1_{H} \\
&=&m_{E^{\gamma }\#H}\left( \left( x\#1_{H}\right) \otimes \left( x^{\prime
}\#1_{H}\right) \right) .
\end{eqnarray*}%
Finally $u_{\left( E\#H\right) ^{\Gamma
}}=u_{E\#H}=1_{E}\#1_{H}=1_{E^{\gamma }}\#1_{H}=u_{E^{\gamma }\#H}.$

As a coalgebra $\left( E\#H\right) ^{\Gamma }$ coincides with $E\#H$ and
hence with $E^{\gamma }\#H$.

Finally let us check that $\omega _{E^{\gamma }\#H}$ and $\omega _{\left(
E\#H\right) ^{\Gamma }}$ coincide. To this aim, let us use the maps $\mho
_{H,-}^{\ast }$ of \cite[Lemma 2.15]{ABM}. First note that $\omega
_{E^{\gamma }\#H}=\mho _{H,E^{\gamma }}^{3}\left( \omega _{E^{\gamma
}}\right) $ by \cite[Proposition 5.3]{ABM}. Now%
\begin{eqnarray*}
\omega _{\left( E\#H\right) ^{\Gamma }} &=&\left( \varepsilon _{E\#H}\otimes
\Gamma \right) \ast \Gamma \left( E\#H\otimes m_{E\#H}\right) \ast \omega
_{E\#H}\ast \Gamma ^{-1}\left( m_{E\#H}\otimes E\#H\right) \ast \left(
\Gamma ^{-1}\otimes \varepsilon _{E\#H}\right)  \\
&=&\left( \mho _{H,E}^{1}\left( \varepsilon \right) \otimes \mho
_{H,E}^{2}\left( \gamma \right) \right) \ast \mho _{H,E}^{2}\left( \gamma
\right) \left( E\#H\otimes m_{E\#H}\right) \ast \mho _{H,E}^{3}\left( \omega
\right)  \\
&&\ast \mho _{H,E}^{2}\left( \gamma ^{-1}\right) \left( m_{E\#H}\otimes
E\#H\right) \ast \left( \mho _{H,E}^{2}\left( \gamma ^{-1}\right) \otimes
\mho _{H,E}^{1}\left( \varepsilon \right) \right)
\end{eqnarray*}%
One easily checks that%
\begin{eqnarray*}
\mho _{H,E}^{1}\left( \varepsilon \right) \otimes \mho _{H,E}^{2}\left(
\gamma \right)  &=&\mho _{H,E^{\gamma }}^{3}\left( \varepsilon \otimes
\gamma \right) , \\
\mho _{H,E}^{2}\left( \gamma \right) \left( E\#H\otimes m_{E\#H}\right)
&=&\mho _{H,E^{\gamma }}^{3}\left( \gamma \left( E\otimes m\right) \right) ,
\\
\mho _{H,E}^{2}\left( \gamma ^{-1}\right) \left( m_{E\#H}\otimes E\#H\right)
&=&\mho _{H,E^{\gamma }}^{3}\left( \gamma ^{-1}\left( m\otimes E\right)
\right) , \\
\mho _{H,E}^{2}\left( \gamma ^{-1}\right) \otimes \mho _{H,E}^{1}\left(
\varepsilon _{E}\right)  &=&\mho _{H,E^{\gamma }}^{3}\left( \gamma
^{-1}\otimes \varepsilon \right) .
\end{eqnarray*}%
Thus we obtain%
\begin{eqnarray*}
\omega _{\left( E\#H\right) ^{\Gamma }} &=&\mho _{H,E^{\gamma }}^{3}\left(
\varepsilon \otimes \gamma \right) \ast \mho _{H,E^{\gamma }}^{3}\left(
\gamma \left( E\otimes m\right) \right) \ast \mho _{H,E}^{3}\left( \omega
\right) \ast \mho _{H,E^{\gamma }}^{3}\left( \gamma ^{-1}\left( m\otimes
E\right) \right) \ast \mho _{H,E^{\gamma }}^{3}\left( \gamma ^{-1}\otimes
\varepsilon \right)  \\
&=&\mho _{H,E^{\gamma }}^{3}\left[ \left( \varepsilon \otimes \gamma \right)
\ast \gamma \left( E\otimes m\right) \ast \omega \ast \gamma ^{-1}\left(
m\otimes E\right) \ast \left( \gamma ^{-1}\otimes \varepsilon \right) \right]
\\
&=&\mho _{H,E^{\gamma }}^{3}\left( \omega _{E^{\gamma }}\right) =\omega
_{E^{\gamma }\#H}.
\end{eqnarray*}

\begin{invisible}
We have%
\begin{eqnarray*}
\left[ \mho _{H,E}^{1}\left( \varepsilon \right) \otimes \mho
_{H,E}^{2}\left( \gamma \right) \right] \left( r\#h\otimes s\#l\otimes
t\#k\right)  &=&\varepsilon _{E\#H}\left( r\#h\right) \mho _{H,E}^{2}\left(
\gamma \right) \left( s\#l\otimes t\#k\right)  \\
&=&\varepsilon \left( r\right) \varepsilon _{H}\left( h\right) \gamma \left(
s\otimes lt\right) \varepsilon _{H}\left( k\right)  \\
&&\overset{\gamma \text{ lin.}}{=}\varepsilon \left( r\right) \gamma \left(
h_{1}s\otimes h_{2}lt\right) \varepsilon _{H}\left( k\right)  \\
&=&\left( \varepsilon \otimes \gamma \right) \left( r\otimes h_{1}s\otimes
h_{2}lt\right) \varepsilon _{H}\left( k\right)  \\
&=&\mho _{H,E^{\gamma }}^{3}\left( \varepsilon \otimes \gamma \right) \left(
r\#h\otimes s\#l\otimes t\#k\right) ,
\end{eqnarray*}%
\begin{eqnarray*}
&&\mho _{H,E}^{2}\left( \gamma \right) \left( E\#H\otimes m_{E\#H}\right)
\left( r\#h\otimes s\#l\otimes t\#k\right)  \\
&=&\mho _{H,E}^{2}\left( \gamma \right) \left( r\#h\otimes m\left( s\otimes
l_{1}t\right) \#l_{2}k\right) =\gamma \left( r\otimes hm\left( s\otimes
l_{1}t\right) \right) \varepsilon _{H}\left( l_{2}k\right)  \\
&&\overset{m\text{ lin.}}{=}\gamma \left( r\otimes m\left( h_{1}s\otimes
h_{2}lt\right) \right) \varepsilon _{H}\left( k\right)  \\
&=&\gamma \left( E\otimes m\right) \left( r\otimes h_{1}s\otimes
h_{2}lt\right) \varepsilon _{H}\left( k\right)  \\
&=&\mho _{H,E^{\gamma }}^{3}\left( \gamma \left( E\otimes m\right) \right) ,
\end{eqnarray*}%
\begin{eqnarray*}
&&\mho _{H,E}^{2}\left( \gamma ^{-1}\right) \left( m_{E\#H}\otimes
E\#H\right) \left( r\#h\otimes s\#l\otimes t\#k\right)  \\
&=&\mho _{H,E}^{2}\left( \gamma ^{-1}\right) \left( m\left( r\otimes
h_{1}s\right) \#h_{2}l\otimes t\#k\right) =\gamma ^{-1}\left( m\left(
r\otimes h_{1}s\right) \otimes h_{2}lt\right) \varepsilon _{H}\left(
k\right)  \\
&=&\gamma ^{-1}\left( m\otimes E\right) \left( r\otimes h_{1}s\otimes
h_{2}lt\right) \varepsilon _{H}\left( k\right) =\mho _{H,E^{\gamma
}}^{3}\left( \gamma ^{-1}\left( m\otimes E\right) \right) \left( r\#h\otimes
s\#l\otimes t\#k\right) ,
\end{eqnarray*}%
\begin{eqnarray*}
&&\left( \mho _{H,E}^{2}\left( \gamma ^{-1}\right) \otimes \mho
_{H,E}^{1}\left( \varepsilon _{E}\right) \right) \left( r\#h\otimes
s\#l\otimes t\#k\right)  \\
&=&\gamma ^{-1}\left( r\otimes hs\right) \varepsilon _{H}\left( l\right)
\varepsilon \left( t\right) \varepsilon _{H}\left( k\right)  \\
&=&\gamma ^{-1}\left( r\otimes h_{1}s\right) \varepsilon \left(
h_{2}lt\right) \varepsilon _{H}\left( k\right)  \\
&=&\left( \gamma ^{-1}\otimes \varepsilon \right) \left( r\otimes
h_{1}s\otimes h_{2}lt\right) \varepsilon _{H}\left( k\right)  \\
&=&\mho _{H,E^{\gamma }}^{3}\left( \gamma ^{-1}\otimes \varepsilon \right)
\left( r\#h\otimes s\#l\otimes t\#k\right) .
\end{eqnarray*}
\end{invisible}
\end{proof}

\begin{proposition}
\label{pro:grgaugeYD} Let $H$ be a Hopf algebra and let $\left( Q,m,u,\Delta
,\varepsilon ,\omega \right) $ be a connected coquasi-bialgebra in ${_{H}^{H}%
\mathcal{YD}}$. Let $\gamma :Q\otimes Q\rightarrow \Bbbk $ be a gauge
transformation in ${_{H}^{H}\mathcal{YD}}$. Then $\mathrm{gr}\left(
Q^{\gamma }\right) $ and $\mathrm{gr}\left( Q\right) $ coincide as
bialgebras in ${_{H}^{H}\mathcal{YD}}$.
\end{proposition}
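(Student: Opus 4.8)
The strategy is to reduce the statement to a comparison of the two multiplications on the associated graded, and then to compare $m^{\gamma }$ with $m$ modulo lower coradical degree. By Proposition \ref{pro:deformYD}, $Q^{\gamma }$ is a connected coquasi-bialgebra in ${_{H}^{H}\mathcal{YD}}$ with the \emph{same} underlying coalgebra and the \emph{same} unit $u$ as $Q$; hence $Q$ and $Q^{\gamma }$ share the same coradical filtration and $\mathrm{gr}\left( Q^{\gamma }\right) $, $\mathrm{gr}\left( Q\right) $ coincide as coalgebras with unit in ${_{H}^{H}\mathcal{YD}}$ (the unit being $k\mapsto k1_{Q}+Q_{-1}$ in both cases). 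By Theorem \ref{teo:grHopf} applied to $Q$ and to $Q^{\gamma }$, both $\mathrm{gr}\left( Q\right) $ and $\mathrm{gr}\left( Q^{\gamma }\right) $ are connected bialgebras in ${_{H}^{H}\mathcal{YD}}$, so the only thing to check is $m_{\mathrm{gr}\left( Q^{\gamma }\right) }=m_{\mathrm{gr}Q}$. By (\ref{eq:mgr}) this amounts to proving that $m^{\gamma }\left( x\otimes y\right) \equiv m\left( x\otimes y\right) \pmod{Q_{\left\vert x\right\vert +\left\vert y\right\vert -1}}$ for all $x,y\in Q$.

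To this end I would set $C:=Q\otimes Q$, endowed with the coalgebra structure of ${_{H}^{H}\mathcal{YD}}$ induced by the braiding, and equip it with the coalgebra filtration $C_{\left( n\right) }:=\sum_{0\leq i\leq n}Q_{i}\otimes Q_{n-i}$. By the first part of the proof of Theorem \ref{teo:grHopf} (applied with $D=E=Q$, using that $Q$ is connected), this is a coalgebra filtration in ${_{H}^{H}\mathcal{YD}}$ whose coradical is one-dimensional, spanned by the grouplike element $1_{C}:=1_{Q}\otimes 1_{Q}$; moreover $m\left( C_{\left( n\right) }\right) \subseteq Q_{n}$ by Proposition \ref{pro:CMU}, while $\gamma \left( C_{\left( n\right) }\right) $ and $\gamma ^{-1}\left( C_{\left( n\right) }\right) $ lie in $\Bbbk $. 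Fix $n\geq 1$ and $w\in C_{\left( n\right) }$. Arguing exactly as in the proof of Theorem \ref{teo:grHopf}, that is iterating \cite[Lemma 3.69]{AMS} once, one obtains
\begin{equation*}
w_{1}\otimes w_{2}\otimes w_{3}-w\otimes 1_{C}\otimes 1_{C}-1_{C}\otimes w\otimes 1_{C}-1_{C}\otimes 1_{C}\otimes w\in C_{\left( n-1\right) }\otimes C_{\left( n-1\right) }\otimes C_{\left( n-1\right) }.
\end{equation*}
Now apply to this relation the ${_{H}^{H}\mathcal{YD}}$-morphism $\mu :C\otimes C\otimes C\rightarrow Q$, $\mu \left( a\otimes b\otimes c\right) :=\gamma \left( a\right) m\left( b\right) \gamma ^{-1}\left( c\right) $: since $\mu \left( C_{\left( n-1\right) }^{\otimes 3}\right) \subseteq Q_{n-1}$ by the previous remarks, since $\gamma \left( 1_{C}\right) =\gamma ^{-1}\left( 1_{C}\right) =1$ and $m\left( 1_{C}\right) =1_{Q}$, and since $\mu \left( w\otimes 1_{C}\otimes 1_{C}\right) =\gamma \left( w\right) 1_{Q}$ and $\mu \left( 1_{C}\otimes 1_{C}\otimes w\right) =\gamma ^{-1}\left( w\right) 1_{Q}$ both lie in $Q_{0}\subseteq Q_{n-1}$, one gets
\begin{equation*}
m^{\gamma }\left( w\right) =\gamma \left( w_{1}\right) m\left( w_{2}\right) \gamma ^{-1}\left( w_{3}\right) =\mu \left( w_{1}\otimes w_{2}\otimes w_{3}\right) \equiv \mu \left( 1_{C}\otimes w\otimes 1_{C}\right) =m\left( w\right) \pmod{Q_{n-1}}.
\end{equation*}

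Specializing to $w=x\otimes y\in C_{\left( \left\vert x\right\vert +\left\vert y\right\vert \right) }$ with $\left\vert x\right\vert +\left\vert y\right\vert \geq 1$ gives the desired congruence; when $\left\vert x\right\vert +\left\vert y\right\vert =0$, connectedness of $Q$ forces $x,y\in \Bbbk 1_{Q}$ and the congruence is trivial since $m^{\gamma }$ and $m$ are both unitary. Therefore $m_{\mathrm{gr}\left( Q^{\gamma }\right) }=m_{\mathrm{gr}Q}$, which proves that $\mathrm{gr}\left( Q^{\gamma }\right) $ and $\mathrm{gr}\left( Q\right) $ coincide as bialgebras in ${_{H}^{H}\mathcal{YD}}$. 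The step I expect to require the most care is the filtered estimate: one has to use the tensor-product coalgebra filtration on $Q\otimes Q$ (which depends on the braiding of ${_{H}^{H}\mathcal{YD}}$) and to transfer verbatim the ``lower order term'' bookkeeping from the proof of Theorem \ref{teo:grHopf}; everything else is formal.
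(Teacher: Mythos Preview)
Your proof is correct and follows essentially the same approach as the paper: both set $C=Q\otimes Q$ with the induced coalgebra filtration, iterate \cite[Lemma 3.69]{AMS} to control $w_{1}\otimes w_{2}\otimes w_{3}$ modulo $C_{(n-1)}^{\otimes 3}$, and then use that $m$ respects the filtration while $\gamma,\gamma^{-1}$ are unital. The only cosmetic difference is that the paper first records the intermediate identity $w_{1}\otimes (m(w_{2})+Q_{n-1})\otimes w_{3}=(1_{Q})^{\otimes 2}\otimes (m(w)+Q_{n-1})\otimes (1_{Q})^{\otimes 2}$ and then applies $\gamma,\gamma^{-1}$ to the outer slots, whereas you collapse all three slots at once via the single map $\mu$; the content is identical.
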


\begin{proof}
By Proposition \ref{pro:deformYD}, $Q^{\gamma }$ is a coquasi-bialgebra in ${%
_{H}^{H}\mathcal{YD}}$. It is obviously connected as it coincides with $Q$
as a coalgebra. By Theorem \ref{teo:grHopf}, both $\mathrm{gr}Q$ and $%
\mathrm{gr}\left( Q^{\gamma }\right) $ are connected bialgebras in ${_{H}^{H}%
\mathcal{YD}}$. Let us check they coincide.

Note that, by Remark \ref{rem:gamma-1gauge}, we have that $\gamma ^{-1}$ is
a gauge transformation, hence it is trivial on $\Bbbk 1_{Q}\otimes 1_{Q}.$
Let $C:=Q\otimes Q$. Let $n>0$ and let $w\in C_{\left( n\right)
}=\sum_{i+j\leq n}Q_{i}\otimes Q_{j}.$ By \cite[Lemma 3.69]{AMS}, we have
that $\Delta _{C}\left( w\right) -w\otimes \left( 1_{Q}\right) ^{\otimes
2}-\left( 1_{Q}\right) ^{\otimes 2}\otimes w\in C_{\left( n-1\right)
}\otimes C_{\left( n-1\right) }$. Thus we get%
\begin{equation*}
w_{1}\otimes w_{2}\otimes w_{3}-\Delta _{C}\left( w\right) \otimes \left(
1_{Q}\right) ^{\otimes 2}-\Delta _{C}\left( \left( 1_{Q}\right) ^{\otimes
2}\right) \otimes w\in \Delta _{C}\left( C_{\left( n-1\right) }\right)
\otimes C_{\left( n-1\right) }
\end{equation*}%
and hence%
\begin{equation*}
w_{1}\otimes w_{2}\otimes w_{3}-w\otimes \left( 1_{Q}\right) ^{\otimes
2}\otimes \left( 1_{Q}\right) ^{\otimes 2}-\left( 1_{Q}\right) ^{\otimes
2}\otimes w\otimes \left( 1_{Q}\right) ^{\otimes 2}-\left( 1_{Q}\right)
^{\otimes 4}\otimes w\in C_{\left( n-1\right) }\otimes C_{\left( n-1\right)
}\otimes C_{\left( n-1\right) }.
\end{equation*}%
Since $m\left( C_{\left( n-1\right) }\right) \subseteq Q_{n-1}$ we get%
\begin{equation*}
w_{1}\otimes m\left( w_{2}\right) \otimes w_{3}-w\otimes 1_{Q}\otimes \left(
1_{Q}\right) ^{\otimes 2}-\left( 1_{Q}\right) ^{\otimes 2}\otimes m\left(
w\right) \otimes \left( 1_{Q}\right) ^{\otimes 2}-\left( 1_{Q}\right)
^{\otimes 3}\otimes w\in C_{\left( n-1\right) }\otimes Q_{n-1}\otimes
C_{\left( n-1\right) }
\end{equation*}

and hence%
\begin{equation}
w_{1}\otimes \left( m\left( w_{2}\right) +Q_{n-1}\right) \otimes
w_{3}=\left( 1_{Q}\right) ^{\otimes 2}\otimes \left( m\left( w\right)
+Q_{n-1}\right) \otimes \left( 1_{Q}\right) ^{\otimes 2}.  \label{form:gr1}
\end{equation}

Let $x,y\in Q$. We compute%
\begin{eqnarray*}
\overline{x}\cdot _{\gamma }\overline{y} &=&\left( x+Q_{\left\vert
x\right\vert -1}\right) \cdot _{\gamma }\left( y+Q_{\left\vert y\right\vert
-1}\right) \\
&=&\left( x\cdot _{\gamma }y\right) +Q_{\left\vert x\right\vert +\left\vert
y\right\vert -1} \\
&=&\gamma \left( \left( x\otimes y\right) _{1}\right) m\left( \left(
x\otimes y\right) _{2}\right) \gamma ^{-1}\left( \left( x\otimes y\right)
_{3}\right) +Q_{\left\vert x\right\vert +\left\vert y\right\vert -1} \\
&=&\gamma \left( \left( x\otimes y\right) _{1}\right) \left( m\left( \left(
x\otimes y\right) _{2}\right) +Q_{\left\vert x\right\vert +\left\vert
y\right\vert -1}\right) \gamma ^{-1}\left( \left( x\otimes y\right)
_{3}\right) \\
&\overset{(\ref{form:gr1})}{=}&\gamma \left( \left( 1_{Q}\right) ^{\otimes
2}\right) \left( m\left( x\otimes y\right) +Q_{\left\vert x\right\vert
+\left\vert y\right\vert -1}\right) \gamma ^{-1}\left( \left( 1_{Q}\right)
^{\otimes 2}\right) \\
&=&m\left( x\otimes y\right) +Q_{\left\vert x\right\vert +\left\vert
y\right\vert -1}=\left( x\cdot y\right) +Q_{\left\vert x\right\vert
+\left\vert y\right\vert -1}=\overline{x}\cdot \overline{y}.
\end{eqnarray*}%
Note that $Q^{\gamma }$ and $Q$ have the same unit so that $\mathrm{gr}Q$
and $\mathrm{gr}Q^{\gamma }$ have.
\end{proof}

\section{(Co)semisimple case}\label{sec:3}

Assume $H$ is a semisimple and cosemisimple Hopf algebra (e.g. $H$ is
finite-dimensional cosemisimple over a field of characteristic zero). Note
that $H$ is then separable (see e.g. \cite[Corollary 3.7]{St} or \cite[%
Theorem 2.34]{AMS}) whence finite-dimensional. Let $\left( Q,m,u,\Delta
,\varepsilon \right) $ be a f.d. coalgebra with multiplication and unit in ${%
_{H}^{H}\mathcal{YD}}$. Assume that the coradical $Q_{0}$ is a subcoalgebra
of $Q$ in ${_{H}^{H}\mathcal{YD}}$ such that $Q_{0}\cdot Q_{0}\subseteq
Q_{0}.$ Let $y^{n,i}$ with $1\leq i\leq \dim \left( Q_{n}/Q_{n-1}\right) $
be a basis for $Q_{n}/Q_{n-1}.$ Consider, for every $n>0,$ the exact
sequence in ${_{H}^{H}\mathcal{YD}}$ given by
\begin{equation*}
\xymatrixrowsep{25pt}\xymatrixcolsep{1cm} \xymatrix{0\ar[r]&Q_{n-1}%
\ar[r]^{s_n}&Q_n\ar[r]^{\pi_n}&\frac{Q_n}{Q_{n-1}}\ar[r]&0}
\end{equation*}
Now, since $H$ is semisimple and cosemisimple, by \cite[Proposition 7]{Ra}
the Drinfeld double $D(H)$ is semisimple. By a result essentially due to
Majid (see \cite[Proposition 10.6.16]{Mo}) and by \cite[Proposition 6]{RT},
we get that the category ${_{H}^{H}\mathcal{YD}}\cong {{_{D(H)}}}\mathfrak{M}
$ is a semisimple category. Therefore $\pi _{n}$ cosplits i.e. there is a
morphism $\sigma _{n}:\left( Q_{n}/Q_{n-1}\right) \rightarrow Q_{n}$ in ${%
_{H}^{H}\mathcal{YD}}$ such that $\pi _{n}\sigma _{n}=\mathrm{Id}.$ Let $%
u_{n}:\Bbbk \rightarrow Q_{n}$ be the corestriction of the unit $u:\Bbbk
\rightarrow Q$ and let $\varepsilon _{n}=\varepsilon _{\mid
Q_{n}}:Q_{n}\rightarrow \Bbbk $ be the counit of the subcoalgebra $Q_{n}.$
Set%
\begin{equation*}
\sigma _{n}^{\prime }:=\sigma _{n}-u_{n}\circ \varepsilon _{n}\circ \sigma
_{n}
\end{equation*}%
This is a morphism in ${_{H}^{H}\mathcal{YD}}$. Moreover
\begin{eqnarray*}
\pi _{n}\circ \sigma _{n}^{\prime } &=&\pi _{n}\circ \sigma _{n}-\pi
_{n}\circ u_{n}\circ \varepsilon _{n}\circ \sigma _{n}\overset{n>0}{=}%
\mathrm{Id}_{Q_{n}/Q_{n-1}}-0=\mathrm{Id}_{Q_{n}/Q_{n-1}}, \\
\varepsilon _{n}\circ \sigma _{n}^{\prime } &=&\varepsilon _{n}\circ \sigma
_{n}-\varepsilon _{n}\circ u_{n}\circ \varepsilon _{n}\circ \sigma
_{n}=\varepsilon _{n}\circ \sigma _{n}-\varepsilon _{n}\circ \sigma _{n}=0.
\end{eqnarray*}%
Therefore, without loss of generality we can assume that $\varepsilon
_{n}\circ \sigma _{n}=0.$ A standard argument on split short exact sequences
shows that there exists a morphism $p_{n}:Q_{n}\rightarrow Q_{n-1}$ in ${%
_{H}^{H}\mathcal{YD}}$ such that $s_{n}p_{n}+\sigma _{n}\pi _{n}=\mathrm{Id}%
_{Q_{n}}$, $p_{n}s_{n}=\mathrm{Id}_{Q_{n-1}}$ and $p_{n}\sigma _{n}=0$. We
set
\begin{equation*}
x^{n,i}:=\sigma _{n}\left( y^{n,i}\right) .
\end{equation*}%
Therefore%
\begin{equation*}
y^{n,i}=\pi _{n}\sigma _{n}\left( y^{n,i}\right) =\pi _{n}\left(
x^{n,i}\right) =x^{n,i}+Q_{n-1}=\overline{x^{n,i}}.
\end{equation*}%
These terms $x^{n,i}$ define a $\Bbbk $-basis for $Q.$ As $Q$ is
finite-dimensional, there exists $d\in \N_0$ such that $Q=Q_{d}$; we
fix $d$ minimal. For all $0\leq a\leq b,$ define the maps%
\begin{eqnarray*}
p_{a,b} &:&Q_{b}\rightarrow Q_{a},\qquad p_{a,b}:=p_{a+1}\circ p_{a+2}\circ
\cdots \circ p_{b-1}\circ p_{b}, \\
s_{b,a} &:&Q_{a}\rightarrow Q_{b},\qquad s_{b,a}:=s_{b}\circ s_{b-1}\circ
\cdots \circ s_{a+2}\circ s_{a+1}.
\end{eqnarray*}%
Clearly one has%
\begin{equation*}
p_{a,b}\circ s_{b,a}=\mathrm{Id}_{Q_{a}}\text{.}
\end{equation*}%
Thus, for $0\leq i,a\leq b$ we have%
\begin{equation}
p_{i,b}\circ s_{b,a}=\left\{
\begin{array}{cc}
p_{i,b}\circ s_{b,i}\circ s_{i,a} & i>a \\
p_{i,a}\circ p_{a,b}\circ s_{b,a} & i\leq a%
\end{array}%
\right. =\left\{
\begin{array}{cc}
s_{i,a} & i>a \\
p_{i,a} & i\leq a%
\end{array}%
\right.  \label{form:ps1}
\end{equation}

Thus we get an isomorphism $\varphi :Q\rightarrow \mathrm{gr}Q$ of objects
in ${_{H}^{H}\mathcal{YD}}$ given by%
\begin{eqnarray*}
\varphi \left( x\right) := &&p_{0,d}\left( x\right) +\pi _{1}p_{1,d}\left(
x\right) +\pi _{2}p_{2,d}\left( x\right) +\cdots +\pi _{d-2}p_{d-2,d}\left(
x\right) +\pi _{d-1}p_{d-1,d}\left( x\right) +\pi _{d}\left( x\right) \\
&=&\sum_{0\leq t\leq d}\pi _{t}p_{t,d}\left( x\right) ,\text{ for every }%
x\in Q,
\end{eqnarray*}%
where we set%
\begin{equation*}
\pi _{0}=\mathrm{Id}_{Q_{0}},\qquad p_{d,d}=\mathrm{Id}_{Q_{d}}.
\end{equation*}%
For $0\leq n\leq d,$ we have%
\begin{eqnarray*}
\varphi \left( x^{n,i}\right) &=&\varphi \left( s_{d,n}\left( x^{n,i}\right)
\right) =\varphi \left( s_{d,n}\sigma _{n}\left( y^{n,i}\right) \right)
=\sum_{0\leq t\leq d}\pi _{t}p_{t,d}s_{d,n}\left( \sigma _{n}\left(
y^{n,i}\right) \right) \\
&=&\sum_{n<t\leq d}\pi _{t}p_{t,d}s_{d,n}\left( \sigma _{n}\left(
y^{n,i}\right) \right) +\sum_{0\leq t\leq n}\pi _{t}p_{t,d}s_{d,n}\left(
\sigma _{n}\left( y^{n,i}\right) \right) \\
&\overset{(\ref{form:ps1})}{=}&\sum_{n<t\leq d}\pi _{t}s_{t,n}\left( \sigma
_{n}\left( y^{n,i}\right) \right) +\sum_{0\leq t<n}\pi _{t}p_{t,n}\left(
\sigma _{n}\left( y^{n,i}\right) \right) +\pi _{n}p_{n,d}s_{d,n}\left(
\sigma _{n}\left( y^{n,i}\right) \right) \\
&=&\sum_{n<t\leq d}\pi _{t}s_{t,t-1}s_{t-1,n}\left( \sigma _{n}\left(
y^{n,i}\right) \right) +\sum_{0\leq t<n}\pi _{t}p_{t,n-1}p_{n-1,n}\left(
\sigma _{n}\left( y^{n,i}\right) \right) + \\
&&+\pi _{n}p_{n,d}s_{d,n}\left( \sigma _{n}\left( y^{n,i}\right) \right) \\
&=&\sum_{n<t\leq d}\pi _{t}s_{t}s_{t-1,n}\sigma _{n}\left( y^{n,i}\right)
+\sum_{0\leq t<n}\pi _{t}p_{t,n-1}p_{n}\sigma _{n}\left( y^{n,i}\right) +\pi
_{n}\sigma _{n}\left( y^{n,i}\right) \\
&=&0+0+y^{n,i}=y^{n,i}.
\end{eqnarray*}%
Hence $\varphi \left( x^{n,i}\right) =y^{n,i}.$ Since $y^{n,i}$ with $1\leq
i\leq \dim \left( Q_{n}/Q_{n-1}\right) =:d_{n}$ form a basis for $%
Q_{n}/Q_{n-1}$ we have that%
\begin{equation*}
hy^{n,i}\in \frac{Q_{n}}{Q_{n-1}},\qquad \left( y^{n,i}\right) _{-1}\otimes
\left( y^{n,i}\right) _{0}\in H\otimes \frac{Q_{n}}{Q_{n-1}}.
\end{equation*}%
Therefore there are $\chi _{t,i}^{n}\in H^{\ast }$ and $h_{t,i}^{n}\in H$
such that
\begin{equation}
hy^{n,i}=\sum_{1\leq t\leq d_{n}}\chi _{t,i}^{n}\left( h\right)
y^{n,t},\qquad \left( y^{n,i}\right) _{-1}\otimes \left( y^{n,i}\right)
_{0}=\sum_{1\leq t\leq d_{n}}h_{i,t}^{n}\otimes y^{n,t}.  \label{eq:chi}
\end{equation}%
We have%
\begin{eqnarray*}
h\left( h^{\prime }y^{n,i}\right) &=&\sum_{1\leq s\leq d_{n}}\chi
_{s,i}^{n}\left( h^{\prime }\right) hy^{n,s}=\sum_{1\leq s\leq d_{n}}\chi
_{s,i}^{n}\left( h^{\prime }\right) \sum_{1\leq t\leq d_{n}}\chi
_{t,s}^{n}\left( h\right) y^{n,t} \\
&=&\sum_{1\leq s\leq d_{n}}\sum_{1\leq t\leq d_{n}}\chi _{t,s}^{n}\left(
h\right) \chi _{s,i}^{n}\left( h^{\prime }\right) y^{n,t}, \\
\left( hh^{\prime }\right) y^{n,i} &=&\sum_{1\leq t\leq d_{n}}\chi
_{t,i}^{n}\left( hh^{\prime }\right) y^{n,t}
\end{eqnarray*}%
and hence%
\begin{equation*}
\chi _{t,i}^{n}\left( hh^{\prime }\right) =\sum_{1\leq s\leq d_{n}}\chi
_{t,s}^{n}\left( h\right) \chi _{s,i}^{n}\left( h^{\prime }\right) .
\end{equation*}%
Moreover
\begin{equation*}
y^{n,i}=1_{H}y^{n,i}=\sum_{1\leq t\leq d_{n}}\chi _{t,i}^{n}\left(
1_{H}\right) y^{n,t}
\end{equation*}%
and hence
\begin{equation*}
\chi _{t,i}^{n}\left( 1_{H}\right) =\delta _{t,i}.
\end{equation*}%
We also have%
\begin{eqnarray*}
\left( y^{n,i}\right) _{-1}\otimes \left( \left( y^{n,i}\right) _{0}\right)
_{-1}\otimes \left( \left( y^{n,i}\right) _{0}\right) _{0} &=&\sum_{1\leq
s\leq d_{n}}h_{i,s}^{n}\otimes \left( y^{n,s}\right) _{-1}\otimes \left(
y^{n,s}\right) _{0} \\
&=&\sum_{1\leq s\leq d_{n}}h_{i,s}^{n}\otimes \sum_{1\leq t\leq
d_{n}}h_{s,t}^{n}\otimes y^{n,t} \\
&=&\sum_{1\leq s\leq d_{n}}\sum_{1\leq t\leq d_{n}}h_{i,s}^{n}\otimes
h_{s,t}^{n}\otimes y^{n,t}, \\
\left( \left( y^{n,i}\right) _{-1}\right) _{1}\otimes \left( \left(
y^{n,i}\right) _{-1}\right) _{2}\otimes \left( y^{n,i}\right) _{0}
&=&\sum_{1\leq t\leq d_{n}}\Delta _{H}\left( h_{t,i}^{n}\right) \otimes
y^{n,t}
\end{eqnarray*}%
so that%
\begin{equation*}
\Delta _{H}\left( h_{t,i}^{n}\right) =\sum_{1\leq s\leq
d_{n}}h_{i,s}^{n}\otimes h_{s,t}^{n}.
\end{equation*}%
Moreover%
\begin{equation*}
y^{n,i}=\varepsilon _{H}\left( \left( y^{n,i}\right) _{-1}\right) \left(
y^{n,i}\right) _{0}=\sum_{1\leq t\leq d_{n}}\varepsilon _{H}\left(
h_{t,i}^{n}\right) y^{n,t}
\end{equation*}%
and hence%
\begin{equation*}
\varepsilon _{H}\left( h_{t,i}^{n}\right) =\delta _{t,i}.
\end{equation*}%
Finally%
\begin{eqnarray*}
\left( h_{1}y^{n,i}\right) _{-1}h_{2}\otimes \left( h_{1}y^{n,i}\right) _{0}
&=&\sum_{1\leq s\leq d_{n}}\chi _{s,i}^{n}\left( h_{1}\right) \left(
y^{n,s}\right) _{-1}h_{2}\otimes \left( y^{n,s}\right) _{0} \\
&=&\sum_{1\leq s\leq d_{n}}\chi _{s,i}^{n}\left( h_{1}\right) \sum_{1\leq
t\leq d_{n}}h_{s,t}^{n}h_{2}\otimes y^{n,t} \\
&=&\sum_{1\leq s\leq d_{n}}\sum_{1\leq t\leq d_{n}}h_{s,t}^{n}\chi
_{s,i}^{n}\left( h_{1}\right) h_{2}\otimes y^{n,t}, \\
h_{1}\left( y^{n,i}\right) _{-1}\otimes h_{2}\left( y^{n,i}\right) _{0}
&=&\sum_{1\leq s\leq d_{n}}h_{1}h_{i,s}^{n}\otimes h_{2}y^{n,s}=\sum_{1\leq
s\leq d_{n}}h_{1}h_{i,s}^{n}\otimes \sum_{1\leq t\leq d_{n}}\chi
_{t,s}^{n}\left( h_{2}\right) y^{n,t} \\
&=&\sum_{1\leq s\leq d_{n}}\sum_{1\leq t\leq d_{n}}h_{1}\chi
_{t,s}^{n}\left( h_{2}\right) h_{i,s}^{n}\otimes y^{n,t}
\end{eqnarray*}%
Therefore, we get%
\begin{equation*}
\sum_{1\leq s\leq d_{n}}h_{s,t}^{n}\chi _{s,i}^{n}\left( h_{1}\right)
h_{2}=\sum_{1\leq s\leq d_{n}}h_{1}\chi _{t,s}^{n}\left( h_{2}\right)
h_{i,s}^{n}.
\end{equation*}%
We have%
\begin{eqnarray*}
hx^{n,i} &=&h\sigma _{n}\left( y^{n,i}\right) =\sigma _{n}\left(
hy^{n,i}\right) =\sigma _{n}\left( \sum_{1\leq t\leq d_{n}}\chi
_{t,i}^{n}\left( h\right) y^{n,t}\right) =\sum_{1\leq t\leq d_{n}}\chi
_{t,i}^{n}\left( h\right) x^{n,t}, \\
\left( x^{n,i}\right) _{-1}\otimes \left( x^{n,i}\right) _{0} &=&\left(
\sigma _{n}\left( y^{n,i}\right) \right) _{-1}\otimes \left( \sigma
_{n}\left( y^{n,i}\right) \right) _{0}=\left( y^{n,i}\right) _{-1}\otimes
\sigma _{n}\left( \left( y^{n,i}\right) _{0}\right) =\sum_{1\leq t\leq
d_{n}}h_{i,t}^{n}\otimes x^{n,t}, \\
\varepsilon _{Q}\left( x^{n,i}\right) &=&\varepsilon _{n}\left(
x^{n,i}\right) =\varepsilon _{n}\sigma _{n}\left( y^{n,i}\right) =0\text{
for }n>0.
\end{eqnarray*}%
If $Q$ is connected, then $d_{0}=1$ so we may assume $y^{0,0}:=1_{Q}+Q_{-1}$%
. Since $\pi _{0}=\mathrm{Id}_{Q_{0}}$ we get
\begin{equation*}
\sigma _{0}=\mathrm{Id}_{Q_{0}}\circ \sigma _{0}=\pi _{0}\circ \sigma _{0}=%
\mathrm{Id}_{Q_{0}}
\end{equation*}%
and hence
\begin{equation*}
x^{0,0}=\sigma _{0}\left( y^{0,0}\right) =\sigma _{0}\left(
1_{Q}+Q_{-1}\right) =1_{Q}.
\end{equation*}%
Since, by Proposition \ref{pro:CMU}, $Q_{a}\cdot Q_{a^{\prime }}\subseteq
Q_{a+a^{\prime }}$ for every $a,a^{\prime }\in \N_0$, we can write the
product of two elements of the basis in the form%
\begin{equation}
x^{a,l}x^{a^{\prime },l^{\prime }}=\sum_{u\leq a+a^{\prime }}\sum_{v}\mu
_{u,v}^{a,l,a^{\prime },l^{\prime }}x^{u,v}.  \label{form:GelYD1}
\end{equation}%
We compute%
\begin{eqnarray*}
\overline{x^{a,l}}\cdot \overline{x^{a^{\prime },l^{\prime }}} &=&\left(
x^{a,l}+Q_{a-1}\right) \left( x^{a^{\prime },l^{\prime }}+Q_{a^{\prime
}-1}\right) \\
&=&\left( x^{a,l}x^{a^{\prime },l^{\prime }}\right) +Q_{a+a^{\prime }-1} \\
&&\overset{(\ref{form:GelYD1})}{=}\left( \sum_{u\leq a+a^{\prime
}}\sum_{v}\mu _{u,v}^{a,l,a^{\prime },l^{\prime }}x^{u,v}\right)
+Q_{a+a^{\prime }-1} \\
&=&\left( \sum_{v}\mu _{a+a^{\prime },v}^{a,l,a^{\prime },l^{\prime
}}x^{a+a^{\prime },v}\right) +Q_{a+a^{\prime }-1} \\
&=&\sum_{v}\mu _{a+a^{\prime },v}^{a,l,a^{\prime },l^{\prime }}\left(
x^{a+a^{\prime },v}+Q_{a+a^{\prime }-1}\right) \\
&=&\sum_{v}\mu _{a+a^{\prime },v}^{a,l,a^{\prime },l^{\prime }}\overline{%
x^{a+a^{\prime },v}}.
\end{eqnarray*}%
which gives%
\begin{equation}
\overline{x^{a,l}}\cdot \overline{x^{a^{\prime },l^{\prime }}}=\sum_{v}\mu
_{a+a^{\prime },v}^{a,l,a^{\prime },l^{\prime }}\overline{x^{a+a^{\prime },v}%
}.  \label{form:GelYD2}
\end{equation}

\begin{remark}
\label{rem:HochYD}Let $H$ be a Hopf algebra and let $\left(
A,m_{A},u_{A}\right) $ be an algebra in ${_{H}^{H}\mathcal{YD}}$. Let $%
\varepsilon _{A}:A\rightarrow \Bbbk $ be an algebra map in ${_{H}^{H}%
\mathcal{YD}}$. The Hochschild cohomology in a monoidal category is known, see e.g. \cite{AMS-Hoch}.
Consider $\Bbbk $ as an $A$-bimodule in ${_{H}^{H}\mathcal{YD}}$ through $\varepsilon _{A}$. Then,
following \cite[1.24]{AMS-Hoch}, we can consider an analogue of the standard
complex
\begin{equation*}
\xymatrixrowsep{25pt}\xymatrixcolsep{1cm} \xymatrix{\yd( \Bbbk ,\Bbbk)
\ar[r]^{\partial ^0}& \yd(A ,\Bbbk) \ar[r]^{\partial ^1}&\yd( A^{\otimes2}
,\Bbbk) \ar[r]^-{\partial ^2}&\yd( A^{\otimes3} ,\Bbbk) \ar[r]^-{\partial
^3}&\cdots}
\end{equation*}
Explicitly, given $f$ in the corresponding domain of $\partial ^{n},$ for $%
n=0,1,2,3$, we have
\begin{eqnarray*}
\partial ^{0}\left( f\right) &=&f\left( 1\right) \varepsilon
_{A}-\varepsilon _{A}f\left( 1\right) =0, \\
\partial ^{1}\left( f\right) &=&f\otimes \varepsilon _{A}-fm_{A}+\varepsilon
_{A}\otimes f, \\
\partial ^{2}\left( f\right) &=&f\otimes \varepsilon _{A}-f\left( A\otimes
m_{A}\right) +f\left( m_{A}\otimes A\right) -\varepsilon _{A}\otimes f, \\
\partial ^{3}\left( f\right) &=&f\otimes \varepsilon _{A}-f\left( A\otimes
A\otimes m_{A}\right) +f\left( A\otimes m_{A}\otimes A\right) -f\left(
m_{A}\otimes A\otimes A\right) +\varepsilon _{A}\otimes f.
\end{eqnarray*}%
For every $n\geq 1$ denote by
\begin{equation*}
\mathrm{Z}_{{\mathcal{YD}}}^{n}\left( A,\Bbbk \right) :=\mathrm{ker}\left(
\partial ^{n}\right) ,\qquad \mathrm{B}_{{\mathcal{YD}}}^{n}\left( A,\Bbbk
\right) :=\mathrm{Im}\left( \partial ^{n-1}\right) \qquad \text{and}\qquad
\mathrm{H}_{{\mathcal{YD}}}^{n}\left( A,\Bbbk \right) :=\frac{\mathrm{Z}_{{%
\mathcal{YD}}}^{n}\left( A,\Bbbk \right) }{\mathrm{B}_{{\mathcal{YD}}%
}^{n}\left( A,\Bbbk \right) }
\end{equation*}%
the abelian groups of $n$-cocycles, of $n$-coboundaries and the $n$-th
Hochschild cohomology group in ${_{H}^{H}\mathcal{YD}}$ of the algebra $A$
with coefficients in $\Bbbk $. We point out that the construction above
works for an arbitrary $A$-bimodule $M$ in ${_{H}^{H}\mathcal{YD}}$ instead
of $\Bbbk $.
\end{remark}

Next result is inspired by \cite[Proposition 2.3]{EG}. Two
coquasi-bialgebras $Q$ and $Q^{\prime }$ in ${_{H}^{H}\mathcal{YD}}$ will be
called \textbf{gauge equivalent} whenever there is some gauge transformation
$\gamma :Q\otimes Q\rightarrow \Bbbk $ in ${_{H}^{H}\mathcal{YD}}$ such that
$Q^{\gamma }\cong Q^{\prime }$ as coquasi-bialgebras in ${_{H}^{H}\mathcal{YD%
}},$ see Proposition \ref{pro:deformYD} for the structure of $Q^{\gamma }$.

\begin{theorem}
\label{teo:GelakiYD} Let $H$ be a semisimple and cosemisimple Hopf algebra
and let $\left( Q,m,u,\Delta ,\varepsilon ,\omega \right) $ be a f.d.
connected coquasi-bialgebra in ${_{H}^{H}\mathcal{YD}}$. If $\mathrm{H}_{{%
\mathcal{YD}}}^{3}\left( \mathrm{gr}Q,\Bbbk \right) =0$ then $Q$ is gauge
equivalent to a connected bialgebra in ${_{H}^{H}\mathcal{YD}}$.
\end{theorem}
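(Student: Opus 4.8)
The plan is to remove the reassociator of $Q$ degree by degree through gauge transformations, using the vanishing of $\mathrm{H}^{3}_{{\mathcal{YD}}}(\mathrm{gr}Q,\Bbbk)$ as the engine at each step, until the reassociator becomes trivial and $Q$ is turned into a bialgebra. Throughout, I fix the basis $\{x^{n,i}\}$ of $Q$ and the isomorphism $\varphi\colon Q\to\mathrm{gr}Q$ in ${_{H}^{H}\mathcal{YD}}$ produced in the (co)semisimple construction above (so $\varphi(x^{n,i})=\overline{x^{n,i}}$, $\varphi(1_{Q})=1_{\mathrm{gr}Q}$, $\varepsilon_{\mathrm{gr}Q}\varphi=\varepsilon$), and recall that each $Q_{n}$ is a subobject of $Q$ in ${_{H}^{H}\mathcal{YD}}$ by Proposition~\ref{pro:CMU}, so the coradical filtration splits $Q$, hence $Q^{\otimes k}$, as a direct sum of YD-submodules indexed by total degree. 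Call $\omega$ \emph{trivial up to degree $N$} if $\omega(x\otimes y\otimes z)=\varepsilon(x)\varepsilon(y)\varepsilon(z)$ whenever $|x|+|y|+|z|\le N$. By the unital axiom \eqref{form: alpha unital}, $\omega$ is trivial up to degree $2$; and since $Q$ is finite-dimensional, $Q=Q_{d}$ for some $d$, so $\omega$ trivial up to degree $3d$ means $\omega=\varepsilon_{Q}\otimes\varepsilon_{Q}\otimes\varepsilon_{Q}$, whence by \eqref{form: m quasi assoc} and \eqref{form: m unital} the datum $(Q,m,u,\Delta,\varepsilon)$ is a connected bialgebra in ${_{H}^{H}\mathcal{YD}}$. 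It therefore suffices to prove the inductive step: if $\omega$ is trivial up to degree $N$ with $2\le N<3d$, then $Q$ is gauge equivalent to a finite-dimensional connected coquasi-bialgebra $Q'$ in ${_{H}^{H}\mathcal{YD}}$ with $\mathrm{gr}Q'=\mathrm{gr}Q$ whose reassociator is trivial up to degree $N+1$.

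For the inductive step, define $\omega_{N+1}\colon(\mathrm{gr}Q)^{\otimes 3}\to\Bbbk$ on the basis by $\omega_{N+1}(\overline{x^{a,l}}\otimes\overline{x^{b,m}}\otimes\overline{x^{c,p}}):=\omega(x^{a,l}\otimes x^{b,m}\otimes x^{c,p})$ if $a+b+c=N+1$ and $:=0$ otherwise. Transporting $\omega$ along $\varphi$ and using the YD-grading on $Q^{\otimes 3}$, $\omega_{N+1}$ is a morphism in ${_{H}^{H}\mathcal{YD}}$, homogeneous of degree $N+1$, and normalized (it vanishes when one entry is $1_{\mathrm{gr}Q}$, because $\omega$ is unital and $\varepsilon$ kills the positive-degree basis elements). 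Writing $\omega=\varepsilon_{Q}^{\otimes 3}+\rho$ with $\rho$ vanishing on inputs of total degree $\le N$, each of the five factors in the $3$-cocycle identity \eqref{form: alpha 3-cocycle} equals $\varepsilon_{Q}^{\otimes 4}$ plus a map vanishing on total degree $\le N$ (here one uses that $m$ preserves total degree); since a convolution product of two or more maps vanishing on degree $\le N$ vanishes on degree $\le 2N+1\ge N+1$, comparing the two sides of \eqref{form: alpha 3-cocycle} on inputs of total degree $\le N+1$ and transporting via $\varphi$ — with $m$ replaced, in top degree, by $m_{\mathrm{gr}Q}$ as in \eqref{form:GelYD2} — yields exactly $\partial^{3}(\omega_{N+1})=0$. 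Thus $\omega_{N+1}\in\mathrm{Z}^{3}_{{\mathcal{YD}}}(\mathrm{gr}Q,\Bbbk)$.

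Since $\mathrm{H}^{3}_{{\mathcal{YD}}}(\mathrm{gr}Q,\Bbbk)=0$ there is $\eta\in{_{H}^{H}\mathcal{YD}}((\mathrm{gr}Q)^{\otimes 2},\Bbbk)$ with $\partial^{2}(\eta)=\omega_{N+1}$; as $\partial^{2}$ is homogeneous of degree $0$ we may take $\eta$ homogeneous of degree $N+1$, and then $\eta(1_{\mathrm{gr}Q}\otimes-)$ and $\eta(-\otimes 1_{\mathrm{gr}Q})$ vanish (evaluate $\partial^{2}(\eta)=\omega_{N+1}$ at $1\otimes y\otimes z$ and at $y\otimes z\otimes 1$ and use $\eta(1_{\mathrm{gr}Q}\otimes1_{\mathrm{gr}Q})=0$), so $\eta$ is normalized. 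Put $\gamma:=\varepsilon_{Q}\otimes\varepsilon_{Q}+\eta\circ(\varphi\otimes\varphi)\colon Q\otimes Q\to\Bbbk$. Then $\gamma$ is a morphism in ${_{H}^{H}\mathcal{YD}}$, unitary on both entries (because $\varphi(1_{Q})=1_{\mathrm{gr}Q}$ and $\eta$ is normalized), and convolution invertible in $\mathbf{Vec}_{\Bbbk}$ because $Q\otimes Q$ is a connected coalgebra and $\gamma$ agrees with $\varepsilon_{Q}\otimes\varepsilon_{Q}$ on its coradical, hence convolution invertible in ${_{H}^{H}\mathcal{YD}}$ by Lemma~\ref{lem:InvYD}; so $\gamma$ is a gauge transformation. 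By Proposition~\ref{pro:deformYD}, $Q^{\gamma}$ is a coquasi-bialgebra in ${_{H}^{H}\mathcal{YD}}$, finite-dimensional and connected since it has the same underlying coalgebra as $Q$, and $\mathrm{gr}(Q^{\gamma})=\mathrm{gr}Q$ by Proposition~\ref{pro:grgaugeYD}. Finally, substituting $\gamma=\varepsilon_{Q}\otimes\varepsilon_{Q}+\xi$ with $\xi:=\eta\circ(\varphi\otimes\varphi)$ into the formula for $\omega^{\gamma}$ from Proposition~\ref{pro:deformYD} and reducing modulo maps vanishing on inputs of total degree $\le N+1$ (so $\gamma^{-1}\equiv\varepsilon_{Q}\otimes\varepsilon_{Q}-\xi$, and products of two or more correction terms drop out), one gets $\omega^{\gamma}\equiv\varepsilon_{Q}^{\otimes 3}+\big[\varepsilon_{Q}\otimes\xi+\xi(Q\otimes m)+\rho-\xi(m\otimes Q)-\xi\otimes\varepsilon_{Q}\big]$, whose bracketed term vanishes on degree $\le N$ and, on degree $N+1$, transports to $\omega_{N+1}-\partial^{2}(\eta)=0$; hence $\omega^{\gamma}$ is trivial up to degree $N+1$. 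Taking $Q'=Q^{\gamma}$ completes the inductive step, and iterating from $N=2$ up to $N=3d$, then composing the gauge transformations obtained, shows $Q$ is gauge equivalent to a connected bialgebra in ${_{H}^{H}\mathcal{YD}}$.

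The main obstacle is the bookkeeping in the two degree-extraction arguments: isolating the degree-$(N+1)$ homogeneous component of the braided pentagon \eqref{form: alpha 3-cocycle} — which involves the braided tensor coalgebra structure on $Q^{\otimes 4}$ and the interplay of $\Delta$, $m$ and the braiding with the coradical filtration — so as to identify $\omega_{N+1}$ with an honest Hochschild $3$-cocycle on $\mathrm{gr}Q$, and the parallel computation showing that the $\gamma$ built from a primitive $\eta$ of $\omega_{N+1}$ raises by one the degree up to which the reassociator is trivial; everything else (invertibility, YD-compatibility, unitality, connectedness, invariance of $\mathrm{gr}$) is supplied by the earlier results.
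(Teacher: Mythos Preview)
Your proposal is correct and follows essentially the same approach as the paper's proof: decompose $\omega$ by total degree, identify the first nontrivial homogeneous piece as a $3$-cocycle on $\mathrm{gr}Q$ in ${_{H}^{H}\mathcal{YD}}$, use the vanishing of $\mathrm{H}^{3}_{\mathcal{YD}}$ to write it as $\partial^{2}$ of a homogeneous $\eta$, build the gauge transformation $\gamma=\varepsilon\otimes\varepsilon+\eta\circ(\varphi\otimes\varphi)$, and verify that $\omega^{\gamma}$ has its first nontrivial component in strictly higher degree; then iterate. The only cosmetic difference is that you phrase the argument as an explicit induction on $N$ starting from $N=2$ (using unitality of $\omega$), whereas the paper introduces $s:=\min\{i\geq 1:\omega_{i}\neq 0\}$ and shows $(\omega^{\gamma})_{t}=0$ for $1\le t\le s$ directly.
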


\begin{proof}
For $t\in \N_0$, and $x,y,z$ in the basis of $Q$, we set%
\begin{equation*}
\omega _{t}\left( x\otimes y\otimes z\right) :=\delta _{\left\vert
x\right\vert +\left\vert y\right\vert +\left\vert z\right\vert ,t}\omega
\left( x\otimes y\otimes z\right) .
\end{equation*}%
Let us check it defines a morphism $\omega _{t}:Q\otimes Q\otimes
Q\rightarrow \Bbbk $ in ${_{H}^{H}\mathcal{YD}}$. It is left $H$-linear as,
by means of (\ref{eq:chi}), the definition of $\omega _{t}$ and the $H$%
-linearity of $\omega $, we can prove that $\omega _{t}\left( h\left(
x^{n,i}\otimes x^{n^{\prime },i^{\prime }}\otimes x^{n^{\prime \prime
},i^{\prime \prime }}\right) \right) =\varepsilon _{H}\left( h\right) \omega
_{t}\left( x^{n,i}\otimes x^{n^{\prime },i^{\prime }}\otimes x^{n^{\prime
\prime },i^{\prime \prime }}\right) .$

\begin{invisible}
\begin{eqnarray*}
&&\omega _{t}\left( h\left( x^{n,i}\otimes x^{n^{\prime },i^{\prime
}}\otimes x^{n^{\prime \prime },i^{\prime \prime }}\right) \right) \\
&=&\omega _{t}\left( h_{1}x^{n,i}\otimes h_{2}x^{n^{\prime },i^{\prime
}}\otimes h_{3}x^{n^{\prime \prime },i^{\prime \prime }}\right) \\
&&\overset{(\ref{eq:chi})}{=}\omega _{t}\left( \sum_{1\leq w\leq d_{n}}\chi
_{w,i}^{n}\left( h_{1}\right) x^{n,w}\otimes \sum_{1\leq w^{\prime }\leq
d_{n^{\prime }}}\chi _{w^{\prime },i^{\prime }}^{n^{\prime }}\left(
h_{2}\right) x^{n^{\prime },w^{\prime }}\otimes \sum_{1\leq w^{\prime \prime
}\leq d_{n^{\prime \prime }}}\chi _{w^{\prime \prime },i^{\prime \prime
}}^{n^{\prime \prime }}\left( h_{3}\right) x^{n^{\prime \prime },w^{\prime
\prime }}\right) \\
&=&\sum_{1\leq w\leq d_{n}}\sum_{1\leq w^{\prime }\leq d_{n^{\prime
}}}\sum_{1\leq w^{\prime \prime }\leq d_{n^{\prime \prime }}}\chi
_{w,i}^{n}\left( h_{1}\right) \chi _{w^{\prime },i^{\prime }}^{n^{\prime
}}\left( h_{2}\right) \chi _{w^{\prime \prime },i^{\prime \prime
}}^{n^{\prime \prime }}\left( h_{3}\right) \omega _{t}\left( x^{n,w}\otimes
x^{n^{\prime },w^{\prime }}\otimes x^{n^{\prime \prime },w^{\prime \prime
}}\right) \\
&=&\sum_{1\leq w\leq d_{n}}\sum_{1\leq w^{\prime }\leq d_{n^{\prime
}}}\sum_{1\leq w^{\prime \prime }\leq d_{n^{\prime \prime }}}\chi
_{w,i}^{n}\left( h_{1}\right) \chi _{w^{\prime },i^{\prime }}^{n^{\prime
}}\left( h_{2}\right) \chi _{w^{\prime \prime },i^{\prime \prime
}}^{n^{\prime \prime }}\left( h_{3}\right) \delta _{n+n^{\prime }+n^{\prime
\prime },t}\omega \left( x^{n,w}\otimes x^{n^{\prime },w^{\prime }}\otimes
x^{n^{\prime \prime },w^{\prime \prime }}\right) \\
&&\overset{(\ref{eq:chi})}{=}\delta _{n+n^{\prime }+n^{\prime \prime
},t}\omega \left( h_{1}x^{n,i}\otimes h_{2}x^{n^{\prime },i^{\prime
}}\otimes h_{3}x^{n^{\prime \prime },i^{\prime \prime }}\right) \\
&=&\delta _{n+n^{\prime }+n^{\prime \prime },t}\omega \left( h\left(
x^{n,i}\otimes x^{n^{\prime },i^{\prime }}\otimes x^{n^{\prime \prime
},i^{\prime \prime }}\right) \right) \\
&=&\delta _{n+n^{\prime }+n^{\prime \prime },t}\varepsilon _{H}\left(
h\right) \omega \left( x^{n,i}\otimes x^{n^{\prime },i^{\prime }}\otimes
x^{n^{\prime \prime },i^{\prime \prime }}\right) =\varepsilon _{H}\left(
h\right) \omega _{t}\left( x^{n,i}\otimes x^{n^{\prime },i^{\prime }}\otimes
x^{n^{\prime \prime },i^{\prime \prime }}\right) .
\end{eqnarray*}
\end{invisible}

Moreover it is left $H$-colinear as, by means of (\ref{eq:chi}), the
definition of $\omega _{t}$ and the $H$-colinearity of $\omega $, we can
prove that%
\begin{equation*}
\left( x^{n,i}\otimes x^{n^{\prime },i^{\prime }}\otimes x^{n^{\prime \prime
},i^{\prime \prime }}\right) _{\left\langle -1\right\rangle }\otimes \omega
_{t}\left( \left( x^{n,i}\otimes x^{n^{\prime },i^{\prime }}\otimes
x^{n^{\prime \prime },i^{\prime \prime }}\right) _{\left\langle
0\right\rangle }\right) =1_{H}\otimes \omega _{t}\left( x^{n,i}\otimes
x^{n^{\prime },i^{\prime }}\otimes x^{n^{\prime \prime },i^{\prime \prime
}}\right) .
\end{equation*}

\begin{invisible}
\begin{eqnarray*}
&&\left( x^{n,i}\otimes x^{n^{\prime },i^{\prime }}\otimes x^{n^{\prime
\prime },i^{\prime \prime }}\right) _{\left\langle -1\right\rangle }\otimes
\omega _{t}\left( \left( x^{n,i}\otimes x^{n^{\prime },i^{\prime }}\otimes
x^{n^{\prime \prime },i^{\prime \prime }}\right) _{\left\langle
0\right\rangle }\right) \\
&=&\left( x^{n,i}\right) _{-1}\left( x^{n^{\prime },i^{\prime }}\right)
_{-1}\left( x^{n^{\prime \prime },i^{\prime \prime }}\right) _{-1}\otimes
\omega _{t}\left( \left( x^{n,i}\right) _{0}\otimes \left( x^{n^{\prime
},i^{\prime }}\right) _{0}\otimes \left( x^{n^{\prime \prime },i^{\prime
\prime }}\right) _{0}\right) \\
&&\overset{(\ref{eq:chi})}{=}\sum_{1\leq w\leq d_{n}}h_{i,w}^{n}\sum_{1\leq
w^{\prime }\leq d_{n^{\prime }}}h_{i^{\prime },w^{\prime }}^{n^{\prime
}}\sum_{1\leq w^{\prime \prime }\leq d_{n^{\prime \prime }}}h_{i^{\prime
\prime },w^{\prime \prime }}^{n^{\prime \prime }}\otimes \omega _{t}\left(
x^{n,w}\otimes x^{n^{\prime },w^{\prime }}\otimes x^{n^{\prime \prime
},w^{\prime \prime }}\right) \\
&=&\sum_{1\leq w\leq d_{n}}h_{i,w}^{n}\sum_{1\leq w^{\prime }\leq
d_{n^{\prime }}}h_{i^{\prime },w^{\prime }}^{n^{\prime }}\sum_{1\leq
w^{\prime \prime }\leq d_{n^{\prime \prime }}}h_{i^{\prime \prime
},w^{\prime \prime }}^{n^{\prime \prime }}\otimes \delta _{n+n^{\prime
}+n^{\prime \prime },t}\omega \left( x^{n,w}\otimes x^{n^{\prime },w^{\prime
}}\otimes x^{n^{\prime \prime },w^{\prime \prime }}\right) \\
&&\overset{(\ref{eq:chi})}{=}\delta _{n+n^{\prime }+n^{\prime \prime
},t}\left( x^{n,i}\right) _{-1}\left( x^{n^{\prime },i^{\prime }}\right)
_{-1}\left( x^{n^{\prime \prime },i^{\prime \prime }}\right) _{-1}\otimes
\omega \left( \left( x^{n,i}\right) _{0}\otimes \left( x^{n^{\prime
},i^{\prime }}\right) _{0}\otimes \left( x^{n^{\prime \prime },i^{\prime
\prime }}\right) _{0}\right) \\
&=&\delta _{n+n^{\prime }+n^{\prime \prime },t}\left( x^{n,i}\otimes
x^{n^{\prime },i^{\prime }}\otimes x^{n^{\prime \prime },i^{\prime \prime
}}\right) _{-1}\otimes \omega \left( \left( x^{n,i}\otimes x^{n^{\prime
},i^{\prime }}\otimes x^{n^{\prime \prime },i^{\prime \prime }}\right)
_{0}\right) \\
&=&\delta _{n+n^{\prime }+n^{\prime \prime },t}\left( \omega \left(
x^{n,i}\otimes x^{n^{\prime },i^{\prime }}\otimes x^{n^{\prime \prime
},i^{\prime \prime }}\right) \right) _{-1}\otimes \left( \omega \left(
x^{n,i}\otimes x^{n^{\prime },i^{\prime }}\otimes x^{n^{\prime \prime
},i^{\prime \prime }}\right) \right) _{0} \\
&=&1_{H}\otimes \delta _{n+n^{\prime }+n^{\prime \prime },t}\omega \left(
x^{n,i}\otimes x^{n^{\prime },i^{\prime }}\otimes x^{n^{\prime \prime
},i^{\prime \prime }}\right) \\
&=&1_{H}\otimes \omega _{t}\left( x^{n,i}\otimes x^{n^{\prime },i^{\prime
}}\otimes x^{n^{\prime \prime },i^{\prime \prime }}\right) .
\end{eqnarray*}
\end{invisible}

Clearly, for $x,y,z\in Q$ in the basis, one has%
\begin{equation*}
\sum\limits_{t\in \N_0}\omega _{t}\left( x\otimes y\otimes z\right)
=\sum\limits_{t\in \N_0}\delta _{\left\vert x\right\vert +\left\vert
y\right\vert +\left\vert z\right\vert ,t}\omega \left( x\otimes y\otimes
z\right) =\omega \left( x\otimes y\otimes z\right)
\end{equation*}%
so that we can formally write
\begin{equation}
\omega =\sum\limits_{t\in \N_0}\omega _{t}.  \label{eq:omegapiece}
\end{equation}

Since $\varepsilon $ is trivial on elements in the basis of strictly
positive degree, one gets%
\begin{equation}
\omega _{0}=\varepsilon \otimes \varepsilon \otimes \varepsilon .
\label{form:Omega0YD}
\end{equation}

\begin{invisible}
In fact%
\begin{eqnarray*}
\omega _{0}\left( x\otimes y\otimes z\right) &=&\delta _{\left\vert
x\right\vert +\left\vert y\right\vert +\left\vert z\right\vert ,0}\omega
\left( x\otimes y\otimes z\right) \\
&=&\delta _{\left\vert x\right\vert ,0}\delta _{\left\vert y\right\vert
,0}\delta _{\left\vert z\right\vert ,0}\omega \left( x\otimes y\otimes
z\right) \\
&=&\delta _{\left\vert x\right\vert ,0}\delta _{\left\vert y\right\vert
,0}\delta _{\left\vert z\right\vert ,0}xyz\omega \left( 1_{\Bbbk }\otimes
1_{\Bbbk }\otimes 1_{\Bbbk }\right) \\
&=&\delta _{\left\vert x\right\vert ,0}\delta _{\left\vert y\right\vert
,0}\delta _{\left\vert z\right\vert ,0}xyz=\delta _{\left\vert x\right\vert
,0}\delta _{\left\vert y\right\vert ,0}\delta _{\left\vert z\right\vert
,0}\varepsilon \left( x\right) \varepsilon \left( y\right) \varepsilon
\left( z\right) \\
&\overset{(\ast )}{=}&\varepsilon \left( x\right) \varepsilon \left(
y\right) \varepsilon \left( z\right) =\left( \varepsilon \otimes \varepsilon
\otimes \varepsilon \right) \left( x\otimes y\otimes z\right)
\end{eqnarray*}%
where (*) we used that $\varepsilon $ is trivial on elements in the basis of
strictly positive degree.
\end{invisible}

If $\omega =\omega _{0}$ then $Q$ is a (connected) bialgebra in ${_{H}^{H}%
\mathcal{YD}}$ and the proof is finished. Thus we can assume $\omega \neq
\omega _{0}$ and set
\begin{eqnarray*}
s &:&=\min \left\{ i\in \N :\omega
_{i}\neq 0\right\} , \\
\overline{\omega }_{s} &:&=\omega _{s}\left( \varphi ^{-1}\otimes \varphi
^{-1}\otimes \varphi ^{-1}\right) , \\
\overline{Q} &:&=\mathrm{gr}Q.
\end{eqnarray*}%
Note that $\overline{\omega }_{s}$ is a morphism in ${_{H}^{H}\mathcal{YD}}$
as a composition of morphisms in ${_{H}^{H}\mathcal{YD}}$.

Let $n\in \N_0,$ let $C^{4}=Q\otimes Q\otimes Q\otimes Q$ and let $%
u\in C_{\left( n\right) }^{4}=\sum_{i+j+k+l\leq n}Q_{i}\otimes Q_{j}\otimes
Q_{k}\otimes Q_{l}$.

\begin{invisible}
Similarly one has $C^{3}=Q\otimes Q\otimes Q$ and the corresponding $%
C_{\left( n\right) }^{3}.$ Then
\begin{equation*}
u_{1}\otimes u_{2}\in \sum_{i+j\leq n}C_{\left( i\right) }^{4}\otimes
C_{\left( j\right) }^{4},\qquad u_{1}\otimes u_{2}\otimes u_{3}\in
\sum_{a+b+c\leq n}C_{\left( a\right) }^{4}\otimes C_{\left( b\right)
}^{4}\otimes C_{\left( c\right) }^{4}.
\end{equation*}%
Note that%
\begin{eqnarray*}
\omega _{|C_{\left( n\right) }^{3}} &=&\left( \sum\limits_{t\in \N_0%
}\omega _{t}\right) _{|C_{\left( n\right) }^{3}}=\left( \sum\limits_{0\leq
t\leq n}\omega _{t}\right) _{|C_{\left( n\right) }^{3}}, \\
\left( Q\otimes Q\otimes m\right) \left( C_{\left( n\right) }^{4}\right)
&\subseteq &C_{\left( n\right) }^{3},\left( m\otimes Q\otimes Q\right)
\left( C_{\left( n\right) }^{4}\right) \subseteq C_{\left( n\right)
}^{3},\left( Q\otimes m\otimes Q\right) \left( C_{\left( n\right)
}^{4}\right) \subseteq C_{\left( n\right) }^{3}, \\
\left( \varepsilon \otimes Q\otimes Q\otimes Q\right) \left( C_{\left(
n\right) }^{4}\right) &\subseteq &C_{\left( n\right) }^{3},\left( Q\otimes
Q\otimes Q\otimes \varepsilon \right) \left( C_{\left( n\right) }^{4}\right)
\subseteq C_{\left( n\right) }^{3}.
\end{eqnarray*}%
Hence%
\begin{eqnarray*}
&&\left( Q\otimes Q\otimes m\right) \left( u_{1}\right) \otimes \left(
m\otimes Q\otimes Q\right) \left( u_{2}\right) \\
&\in &\sum_{i+j\leq n}\left( Q\otimes Q\otimes m\right) \left( C_{\left(
i\right) }^{4}\right) \otimes \left( m\otimes Q\otimes Q\right) \left(
C_{\left( j\right) }^{4}\right) \\
&\subseteq &\sum_{i+j\leq n}C_{\left( i\right) }^{3}\otimes C_{\left(
j\right) }^{3}\subseteq \sum_{i+j\leq n}C_{\left( i+j\right) }^{6}\subseteq
C_{\left( n\right) }^{6}, \\
&&\left( \varepsilon \otimes Q^{\otimes 3}\right) \left( u_{1}\right)
\otimes \left( Q\otimes m\otimes Q\right) \left( u_{2}\right) \cdot \left(
Q^{\otimes 3}\otimes \varepsilon \right) \left( u_{3}\right) \\
&\in &\sum_{a+b+c\leq n}\left( \varepsilon \otimes Q^{\otimes 3}\right)
\left( C_{\left( a\right) }^{4}\right) \otimes \left( Q\otimes m\otimes
Q\right) \left( C_{\left( b\right) }^{4}\right) \cdot \left( Q^{\otimes
3}\otimes \varepsilon \right) \left( C_{\left( c\right) }^{4}\right) \\
&\subseteq &\sum_{a+b+c\leq n}C_{\left( a\right) }^{3}\otimes C_{\left(
b\right) }^{3}\otimes C_{\left( c\right) }^{3}\subseteq \sum_{a+b+c\leq
n}C_{\left( a+b+c\right) }^{9}\subseteq C_{\left( n\right) }^{9}
\end{eqnarray*}%
A direct computation on a generic element $x^{n_{1},i_{1}}\otimes
x^{n_{2},i_{2}}\otimes x^{n_{3},i_{3}}\otimes x^{n_{4},i_{4}}\otimes
x^{n_{5},i_{5}}\otimes x^{n_{6},i_{6}}$ of the basis of $C_{\left( n\right)
}^{6}$ shows that%
\begin{equation*}
\left( \omega \otimes \omega \right) _{\mid C_{\left( n\right) }^{6}}=\left(
\sum_{i+j\leq n}\omega _{i}\otimes \omega _{j}\right) _{\mid C_{\left(
n\right) }^{6}}.
\end{equation*}%
Similarly one proves%
\begin{equation*}
\left( \omega \otimes \omega \otimes \omega \right) _{\mid C_{\left(
n\right) }^{9}}=\left( \sum_{a+b+c\leq n}\omega _{a}\otimes \omega
_{b}\otimes \omega _{c}\right) _{\mid C_{\left( n\right) }^{9}}.
\end{equation*}

Since $\omega $ is a reassociator, we have%
\begin{equation*}
\omega \left( Q\otimes Q\otimes m\right) \ast \omega \left( m\otimes
Q\otimes Q\right) =\left( \varepsilon \otimes \omega \right) \ast \omega
\left( Q\otimes m\otimes Q\right) \ast \left( \omega \otimes \varepsilon
\right)
\end{equation*}%
and hence%
\begin{equation*}
\omega \left( Q\otimes Q\otimes m\right) \left( u_{1}\right) \cdot \omega
\left( m\otimes Q\otimes Q\right) \left( u_{2}\right) =\omega \left( \left(
\varepsilon \otimes Q^{\otimes 3}\right) \left( u_{1}\right) \right) \cdot
\omega \left( Q\otimes m\otimes Q\right) \left( u_{2}\right) \cdot \omega
\left( \left( Q^{\otimes 3}\otimes \varepsilon \right) \left( u_{3}\right)
\right)
\end{equation*}%
which, by the foregoing, rereads as
\begin{equation*}
\sum\limits_{0\leq i+j\leq n}\left[
\begin{array}{c}
\omega _{i}\left( \left( Q\otimes Q\otimes m\right) \left( u_{1}\right)
\right) \cdot \\
\omega _{j}\left( \left( m\otimes Q\otimes Q\right) \left( u_{2}\right)
\right)%
\end{array}%
\right] =\sum\limits_{0\leq a+b+c\leq n}\left[
\begin{array}{c}
\left( \varepsilon \otimes \omega _{a}\right) \left( u_{1}\right) \cdot \\
\omega _{b}\left( \left( Q\otimes m\otimes Q\right) \left( u_{2}\right)
\right) \cdot \\
\left( \omega _{c}\otimes \varepsilon \right) \left( u_{3}\right)%
\end{array}%
\right] .
\end{equation*}%
This leads to the following equality.
\end{invisible}

A direct computation rewriting the cocycle condition using (\ref%
{eq:omegapiece}) proves that, for every $n\in \N_0$, and $u\in
C_{\left( n\right) }^{4}$%
\begin{eqnarray}
&&\sum\limits_{0\leq i+j\leq n}\left[ \omega _{i}\left( Q\otimes Q\otimes
m\right) \ast \omega _{j}\left( m\otimes Q\otimes Q\right) \right] \left(
u\right)  \label{form:goldenYD} \\
&=&\sum\limits_{0\leq a+b+c\leq n}\left[ \left( \varepsilon \otimes \omega
_{a}\right) \ast \omega _{b}\left( Q\otimes m\otimes Q\right) \ast \left(
\omega _{c}\otimes \varepsilon \right) \right] \left( u\right) .  \notag
\end{eqnarray}

Next aim is to check that $\left[ \overline{\omega }_{s}\right] \in \mathrm{H%
}_{{\mathcal{YD}}}^{3}\left( \mathrm{gr}Q,\Bbbk \right) $ i.e. that%
\begin{equation*}
\overline{\omega }_{s}\left( m_{\overline{Q}}\otimes \overline{Q}\otimes
\overline{Q}\right) +\overline{\omega }_{s}\left( \overline{Q}\otimes
\overline{Q}\otimes m_{\overline{Q}}\right) =\left( \varepsilon _{\overline{Q%
}}\otimes \overline{\omega }_{s}\right) +\overline{\omega }_{s}\left(
\overline{Q}\otimes m_{\overline{Q}}\otimes \overline{Q}\right) +\left(
\overline{\omega }_{s}\otimes \varepsilon _{\overline{Q}}\right) .
\end{equation*}%
This is achieved by evaluating the two sides of the equality above on $%
\overline{u}:=\overline{x}\otimes \overline{y}\otimes \overline{z}\otimes
\overline{t}$ where $x,y,z,t$ are elements in the basis and using (\ref%
{form:GelYD2}). If $\overline{u}$ has homogeneous degree greater than $s,$
then both terms are zero. Otherwise, i.e. if $\overline{u}$ has homogeneous
degree at most $s$, one has $\overline{\omega }_{s}\left( m_{\overline{Q}%
}\otimes \overline{Q}\otimes \overline{Q}\right) \left( \overline{u}\right)
=\omega _{s}\left( m_{Q}\otimes Q\otimes Q\right) \left( u\right) $ and
similarly for the other pieces so that one has to check that
\begin{equation*}
\omega _{s}\left( m\otimes Q\otimes Q\right) \left( u\right) +\omega
_{s}\left( Q\otimes Q\otimes m\right) \left( u\right) =\left( \varepsilon
\otimes \omega _{s}\right) \left( u\right) +\omega _{s}\left( Q\otimes
m\otimes Q\right) \left( u\right) +\left( \omega _{s}\otimes \varepsilon
\right) \left( u\right) .
\end{equation*}

This equality follows by using (\ref{form:goldenYD}) and the definition of $%
s $.

\begin{invisible}
We compute this equality on $\overline{u}:=\overline{x}\otimes \overline{y}%
\otimes \overline{z}\otimes \overline{t}$ where $x,y,z,t$ are elements in
the basis. In this case take $x=x^{a,l}$ and $y=x^{a^{\prime },l^{\prime }}$%
. First we have%
\begin{eqnarray*}
&&\overline{\omega }_{s}\left( m_{\overline{Q}}\otimes \overline{Q}\otimes
\overline{Q}\right) \left( \overline{u}\right) \\
&=&\overline{\omega }_{s}\left( \overline{x}\cdot \overline{y}\otimes
\overline{z}\otimes \overline{t}\right) \\
&=&\omega _{s}\left( \varphi ^{-1}\left( \overline{x}\cdot \overline{y}%
\right) \otimes z\otimes t\right) \\
&\overset{(\ref{form:GelYD2})}{=}&\omega _{s}\left( \varphi ^{-1}\left(
\sum_{v}\mu _{a+a^{\prime },v}^{a,l,a^{\prime },l^{\prime }}\overline{%
x^{a+a^{\prime },v}}\right) \otimes z\otimes t\right) \\
&=&\omega _{s}\left( \sum_{v}\mu _{a+a^{\prime },v}^{a,l,a^{\prime
},l^{\prime }}x^{a+a^{\prime },v}\otimes z\otimes t\right) \\
&=&\sum_{v}\mu _{a+a^{\prime },v}^{a,l,a^{\prime },l^{\prime }}\omega
_{s}\left( x^{a+a^{\prime },v}\otimes z\otimes t\right) =\sum_{v}\mu
_{a+a^{\prime },v}^{a,l,a^{\prime },l^{\prime }}\delta _{a+a^{\prime
}+\left\vert z\right\vert +\left\vert t\right\vert ,s}\omega \left(
x^{a+a^{\prime },v}\otimes z\otimes t\right) \\
&=&\sum_{v}\mu _{a+a^{\prime },v}^{a,l,a^{\prime },l^{\prime }}\delta
_{\left\vert x\right\vert +\left\vert y\right\vert +\left\vert z\right\vert
+\left\vert t\right\vert ,s}\omega \left( x^{a+a^{\prime },v}\otimes
z\otimes t\right) .
\end{eqnarray*}%
so that
\begin{equation}
\overline{\omega }_{s}\left( m_{\overline{Q}}\otimes \overline{Q}\otimes
\overline{Q}\right) \left( \overline{u}\right) =\sum_{v}\mu _{a+a^{\prime
},v}^{a,l,a^{\prime },l^{\prime }}\delta _{\left\vert x\right\vert
+\left\vert y\right\vert +\left\vert z\right\vert +\left\vert t\right\vert
,s}\omega \left( x^{a+a^{\prime },v}\otimes z\otimes t\right) .
\label{form:ubar1}
\end{equation}

CASE 1) $\left\vert x\right\vert +\left\vert y\right\vert +\left\vert
z\right\vert +\left\vert t\right\vert >s.$ In this case by (\ref{form:ubar1}%
), we get%
\begin{equation*}
\overline{\omega }_{s}\left( m_{\overline{Q}}\otimes \overline{Q}\otimes
\overline{Q}\right) \left( \overline{u}\right) =\sum_{v}\mu _{a+a^{\prime
},v}^{a,l,a^{\prime },l^{\prime }}\delta _{\left\vert x\right\vert
+\left\vert y\right\vert +\left\vert z\right\vert +\left\vert t\right\vert
,s}\omega \left( x^{a+a^{\prime },v}\otimes z\otimes t\right) =0.
\end{equation*}%
Similarly $\overline{\omega }_{s}\left( \overline{Q}\otimes \overline{Q}%
\otimes m_{\overline{Q}}\right) \left( \overline{u}\right) =0=\overline{%
\omega }_{s}\left( \overline{Q}\otimes m_{\overline{Q}}\otimes \overline{Q}%
\right) \left( \overline{u}\right) .$ Moreover%
\begin{eqnarray*}
\left( \overline{\omega }_{s}\otimes \varepsilon _{\overline{A}}\right)
\left( \overline{u}\right) &=&\overline{\omega }_{s}\left( \overline{x}%
\otimes \overline{y}\otimes \overline{z}\right) \varepsilon _{\overline{A}%
}\left( \overline{t}\right) \\
&=&\omega _{s}\left( x\otimes y\otimes z\right) \delta _{\left\vert
t\right\vert ,0}\varepsilon \left( t\right) \\
&=&\delta _{\left\vert x\right\vert +\left\vert y\right\vert +\left\vert
z\right\vert ,s}\omega \left( x\otimes y\otimes z\right) \delta _{\left\vert
t\right\vert ,0}\varepsilon \left( t\right) =0.
\end{eqnarray*}%
Similarly $\left( \varepsilon _{\overline{A}}\otimes \overline{\omega }%
_{s}\right) \left( \overline{u}\right) =0.$

CASE 2) $\left\vert x\right\vert +\left\vert y\right\vert +\left\vert
z\right\vert +\left\vert t\right\vert \leq s.$ In this case we will use (\ref%
{form:goldenYD}) for $u=x\otimes y\otimes z\otimes t$ and hence%
\begin{eqnarray*}
&&\sum\limits_{0\leq i+j\leq s}\left[ \omega _{i}\left( Q\otimes Q\otimes
m\right) \ast \omega _{j}\left( m\otimes Q\otimes Q\right) \right] \left(
u\right) \\
&=&\sum\limits_{0\leq a+b+c\leq s}\left[ \left( \varepsilon \otimes \omega
_{a}\right) \ast \omega _{b}\left( Q\otimes m\otimes Q\right) \ast \left(
\omega _{c}\otimes \varepsilon \right) \right] \left( u\right) .
\end{eqnarray*}%
By minimality of $s$ we obtain%
\begin{eqnarray*}
&&\left[
\begin{array}{c}
\left[ \omega _{0}\left( Q\otimes Q\otimes m\right) \ast \omega _{s}\left(
m\otimes Q\otimes Q\right) \right] \left( u\right) + \\
\left[ \omega _{s}\left( Q\otimes Q\otimes m\right) \ast \omega _{0}\left(
m\otimes Q\otimes Q\right) \right] \left( u\right)%
\end{array}%
\right] \\
&=&\left[
\begin{array}{c}
\left[ \left( \varepsilon \otimes \omega _{0}\right) \ast \omega _{0}\left(
Q\otimes m\otimes Q\right) \ast \left( \omega _{s}\otimes \varepsilon
\right) \right] \left( u\right) + \\
\left[ \left( \varepsilon \otimes \omega _{0}\right) \ast \omega _{s}\left(
Q\otimes m\otimes Q\right) \ast \left( \omega _{0}\otimes \varepsilon
\right) \right] \left( u\right) + \\
\left[ \left( \varepsilon \otimes \omega _{s}\right) \ast \omega _{0}\left(
Q\otimes m\otimes Q\right) \ast \left( \omega _{0}\otimes \varepsilon
\right) \right] \left( u\right)%
\end{array}%
\right] .
\end{eqnarray*}

By (\ref{form:Omega0YD}) the equality above becomes%
\begin{eqnarray}
&&\omega _{s}\left( m\otimes Q\otimes Q\right) \left( u\right) +\omega
_{s}\left( Q\otimes Q\otimes m\right) \left( u\right)  \label{form:OmegaSYD}
\\
&=&\left( \omega _{s}\otimes \varepsilon \right) \left( u\right) +\omega
_{s}\left( Q\otimes m\otimes Q\right) \left( u\right) +\left( \varepsilon
\otimes \omega _{s}\right) \left( u\right) .  \notag
\end{eqnarray}%
We will rewrite this formula in terms of $\overline{\omega }_{s}$. For $%
x=x^{a,l}$ and $y=x^{a^{\prime },l^{\prime }}$, by (\ref{form:ubar1}) we have%
\begin{equation*}
\overline{\omega }_{s}\left( m_{\overline{Q}}\otimes \overline{Q}\otimes
\overline{Q}\right) \left( \overline{u}\right) =\sum_{v}\mu _{a+a^{\prime
},v}^{a,l,a^{\prime },l^{\prime }}\delta _{\left\vert x\right\vert
+\left\vert y\right\vert +\left\vert z\right\vert +\left\vert t\right\vert
,s}\omega \left( x^{a+a^{\prime },v}\otimes z\otimes t\right) .
\end{equation*}%
Moreover%
\begin{eqnarray*}
&&\omega _{s}\left( m\otimes Q\otimes Q\right) \left( u\right) \\
&=&\omega _{s}\left( m\otimes Q\otimes Q\right) \left( x\otimes y\otimes
z\otimes t\right) \\
&=&\omega _{s}\left( xy\otimes z\otimes t\right) \\
&=&\omega _{s}\left( x^{a,l}x^{a^{\prime },l^{\prime }}\otimes z\otimes
t\right) \\
&\overset{(\ref{form:GelYD1})}{=}&\omega _{s}\left( \sum_{w\leq a+a^{\prime
}}\sum_{v}\mu _{w,v}^{a,l,a^{\prime },l^{\prime }}x^{w,v}\otimes z\otimes
t\right) \\
&=&\sum_{w\leq a+a^{\prime }}\sum_{v}\mu _{w,v}^{a,l,a^{\prime },l^{\prime
}}\omega _{s}\left( x^{w,v}\otimes z\otimes t\right) \\
&=&\sum_{w\leq \left\vert x\right\vert +\left\vert y\right\vert }\sum_{v}\mu
_{w,v}^{a,l,a^{\prime },l^{\prime }}\delta _{w+\left\vert z\right\vert
+\left\vert t\right\vert ,s}\omega \left( x^{w,v}\otimes z\otimes t\right) \\
&&\left( \left\vert x\right\vert +\left\vert y\right\vert +\left\vert
z\right\vert +\left\vert t\right\vert \leq s\Rightarrow w+\left\vert
z\right\vert +\left\vert t\right\vert \leq \left\vert x\right\vert
+\left\vert y\right\vert +\left\vert z\right\vert +\left\vert t\right\vert
\leq s\right) \\
&=&\sum_{v}\mu _{a+a^{\prime },v}^{a,l,a^{\prime },l^{\prime }}\delta
_{\left\vert x\right\vert +\left\vert y\right\vert +\left\vert z\right\vert
+\left\vert t\right\vert ,s}\omega \left( x^{\left\vert x\right\vert
+\left\vert y\right\vert ,v}\otimes z\otimes t\right)
\end{eqnarray*}%
so that $\overline{\omega }_{s}\left( m_{\overline{Q}}\otimes \overline{Q}%
\otimes \overline{Q}\right) \left( \overline{u}\right) =\omega _{s}\left(
m\otimes Q\otimes Q\right) \left( u\right) .$ Similarly one proves
\begin{eqnarray*}
\overline{\omega }_{s}\left( \overline{Q}\otimes \overline{Q}\otimes m_{%
\overline{Q}}\right) \left( \overline{u}\right) &=&\omega _{s}\left(
Q\otimes Q\otimes m\right) \left( u\right) , \\
\overline{\omega }_{s}\left( \overline{Q}\otimes m_{\overline{Q}}\otimes
\overline{Q}\right) \left( \overline{u}\right) &=&\omega _{s}\left( Q\otimes
m\otimes Q\right) \left( u\right) .
\end{eqnarray*}%
Since $x,y,z,t$ are elements in the basis we have%
\begin{eqnarray*}
&&\left( \overline{\omega }_{s}\otimes \varepsilon _{\overline{Q}}\right)
\left( \overline{u}\right) \\
&=&\overline{\omega }_{s}\left( \overline{x}\otimes \overline{y}\otimes
\overline{z}\right) \varepsilon _{\overline{Q}}\left( \overline{t}\right) \\
&=&\omega _{s}\left( x\otimes y\otimes z\right) \delta _{\left\vert
t\right\vert ,0}\varepsilon \left( t\right) \text{ (note }t\text{ is of the
form }x^{i,j}\text{ and that }\varepsilon \left( x^{i,j}\right) =\delta
_{i,0}\varepsilon \left( x^{i,j}\right) \text{)} \\
&=&\omega _{s}\left( x\otimes y\otimes z\right) \varepsilon \left( t\right)
\\
&=&\left( \omega _{s}\otimes \varepsilon \right) \left( u\right)
\end{eqnarray*}%
and hence $\left( \overline{\omega }_{s}\otimes \varepsilon _{\overline{Q}%
}\right) \left( \overline{u}\right) =\left( \omega _{s}\otimes \varepsilon
\right) \left( u\right) .$ In a similar way one gets%
\begin{equation*}
\left( \varepsilon _{\overline{Q}}\otimes \overline{\omega }_{s}\right)
\left( \overline{u}\right) =\left( \varepsilon \otimes \omega _{s}\right)
\left( u\right) .
\end{equation*}%
Summing up we get that (\ref{form:OmegaSYD}) rewrites as
\begin{equation*}
\overline{\omega }_{s}\left( m_{\overline{Q}}\otimes \overline{Q}\otimes
\overline{Q}\right) \left( \overline{u}\right) +\overline{\omega }_{s}\left(
\overline{Q}\otimes \overline{Q}\otimes m_{\overline{Q}}\right) \left(
\overline{u}\right) =\left( \varepsilon _{\overline{Q}}\otimes \overline{%
\omega }_{s}\right) \left( \overline{u}\right) +\overline{\omega }_{s}\left(
\overline{Q}\otimes m_{\overline{Q}}\otimes \overline{Q}\right) \left(
\overline{u}\right) +\left( \overline{\omega }_{s}\otimes \varepsilon _{%
\overline{Q}}\right) \left( \overline{u}\right) .
\end{equation*}%
We have so proved that $\overline{\omega }_{s}\in \mathrm{Z}_{{\mathcal{YD}}%
}^{3}\left( \mathrm{gr}Q,\Bbbk \right) .$
\end{invisible}

By assumption $\mathrm{H}_{{\mathcal{YD}}}^{3}\left( \mathrm{gr}Q,\Bbbk
\right) =0$ so that there exists a morphism $\overline{v}:\overline{Q}%
\otimes \overline{Q}\rightarrow \Bbbk $ in ${_{H}^{H}\mathcal{YD}}$ such that%
\begin{equation*}
\overline{\omega }_{s}=\partial ^{2}\overline{v}=\overline{v}\otimes
\varepsilon _{\overline{Q}}-\overline{v}\left( \overline{Q}\otimes m_{%
\overline{Q}}\right) +\overline{v}\left( m_{\overline{Q}}\otimes \overline{Q}%
\right) -\varepsilon _{\overline{Q}}\otimes \overline{v}.
\end{equation*}%
Explicitly, on elements in the basis we get%
\begin{equation*}
\overline{\omega }_{s}\left( \overline{x}\otimes \overline{y}\otimes
\overline{z}\right) =\overline{v}\left( \overline{x}\otimes \overline{y}%
\right) \varepsilon _{\overline{Q}}\left( \overline{z}\right) -\overline{v}%
\left( \overline{x}\otimes \overline{y}\cdot \overline{z}\right) +\overline{v%
}\left( \overline{x}\cdot \overline{y}\otimes \overline{z}\right)
-\varepsilon _{\overline{Q}}\left( \overline{x}\right) \overline{v}\left(
\overline{y}\otimes \overline{z}\right) .
\end{equation*}%
Define $\overline{\zeta }:\overline{Q}\otimes \overline{Q}\rightarrow \Bbbk $
on the basis by setting
\begin{equation*}
\overline{\zeta }\left( \overline{x}\otimes \overline{y}\right) :=\delta
_{\left\vert x\right\vert +\left\vert y\right\vert ,s}\overline{v}\left(
\overline{x}\otimes \overline{y}\right) .
\end{equation*}%
As we have done for $\omega _{t},$ one can check that $\overline{\zeta }$ is
a morphism in ${_{H}^{H}\mathcal{YD}}$.

\begin{invisible}
It is a morphism in ${_{H}^{H}\mathcal{YD}}$ as%
\begin{eqnarray*}
\overline{\zeta }\left( h\left( y^{n,i}\otimes y^{n^{\prime },i^{\prime
}}\right) \right) &=&\overline{\zeta }\left( h_{1}y^{n,i}\otimes
h_{2}y^{n^{\prime },i^{\prime }}\right) \\
&=&\overline{\zeta }\left( \sum_{1\leq w\leq d_{n}}\chi _{w,i}^{n}\left(
h_{1}\right) y^{n,w}\otimes \sum_{1\leq w^{\prime }\leq d_{n^{\prime }}}\chi
_{w^{\prime },i^{\prime }}^{n^{\prime }}\left( h_{2}\right) y^{n^{\prime
},w^{\prime }}\right) \\
&=&\delta _{n+n^{\prime },s}\overline{v}\left( \sum_{1\leq w\leq d_{n}}\chi
_{w,i}^{n}\left( h_{1}\right) y^{n,w}\otimes \sum_{1\leq w^{\prime }\leq
d_{n^{\prime }}}\chi _{w^{\prime },i^{\prime }}^{n^{\prime }}\left(
h_{2}\right) y^{n^{\prime },w^{\prime }}\right) \\
&=&\delta _{n+n^{\prime },s}\overline{v}\left( h_{1}y^{n,i}\otimes
h_{2}y^{n^{\prime },i^{\prime }}\right) \\
&=&\delta _{n+n^{\prime },s}\overline{v}\left( h\left( y^{n,i}\otimes
y^{n^{\prime },i^{\prime }}\right) \right) =\varepsilon _{H}\left( h\right)
\delta _{n+n^{\prime },s}\overline{v}\left( y^{n,i}\otimes y^{n^{\prime
},i^{\prime }}\right) \\
&=&\varepsilon _{H}\left( h\right) \overline{\zeta }\left( y^{n,i}\otimes
y^{n^{\prime },i^{\prime }}\right)
\end{eqnarray*}%
and%
\begin{eqnarray*}
&&\left( y^{n,i}\otimes y^{n^{\prime },i^{\prime }}\right) _{-1}\otimes
\overline{\zeta }\left( \left( y^{n,i}\otimes y^{n^{\prime },i^{\prime
}}\right) _{0}\right) \\
&=&\left( y^{n,i}\right) _{-1}\left( y^{n^{\prime },i^{\prime }}\right)
_{-1}\otimes \overline{\zeta }\left( \left( y^{n,i}\right) _{0}\otimes
\left( y^{n^{\prime },i^{\prime }}\right) _{0}\right) \\
&=&\sum_{1\leq w\leq d_{n}}h_{i,w}^{n}\sum_{1\leq w^{\prime }\leq
d_{n^{\prime }}}h_{i^{\prime },w^{\prime }}^{n^{\prime }}\otimes \overline{%
\zeta }\left( y^{n,w}\otimes y^{n^{\prime },w^{\prime }}\right) \\
&=&\delta _{n+n^{\prime },s}\sum_{1\leq w\leq d_{n}}h_{i,w}^{n}\sum_{1\leq
w^{\prime }\leq d_{n^{\prime }}}h_{i^{\prime },w^{\prime }}^{n^{\prime
}}\otimes \overline{v}\left( y^{n,w}\otimes y^{n^{\prime },w^{\prime
}}\right) \\
&=&\delta _{n+n^{\prime },s}\left( y^{n,i}\otimes y^{n^{\prime },i^{\prime
}}\right) _{-1}\otimes \overline{v}\left( \left( y^{n,i}\otimes y^{n^{\prime
},i^{\prime }}\right) _{0}\right) \\
&=&\delta _{n+n^{\prime },s}\left( \overline{v}\left( y^{n,i}\otimes
y^{n^{\prime },i^{\prime }}\right) \right) _{-1}\otimes \left( \overline{v}%
\left( y^{n,i}\otimes y^{n^{\prime },i^{\prime }}\right) \right) _{0} \\
&=&\delta _{n+n^{\prime },s}1_{H}\otimes \overline{v}\left( y^{n,i}\otimes
y^{n^{\prime },i^{\prime }}\right) \\
&=&1_{H}\otimes \overline{\zeta }\left( y^{n,i}\otimes y^{n^{\prime
},i^{\prime }}\right) .
\end{eqnarray*}
\end{invisible}

Moreover on elements in the basis we get%
\begin{eqnarray*}
&&\left( \partial ^{2}\overline{\zeta }\right) \left( \overline{x}\otimes
\overline{y}\otimes \overline{z}\right) \\
&=&\left( \overline{\zeta }\otimes \varepsilon _{\overline{Q}}\right) \left(
\overline{x}\otimes \overline{y}\otimes \overline{z}\right) -\overline{\zeta
}\left( \overline{Q}\otimes m_{\overline{Q}}\right) \left( \overline{x}%
\otimes \overline{y}\otimes \overline{z}\right) +\overline{\zeta }\left( m_{%
\overline{Q}}\otimes \overline{Q}\right) \left( \overline{x}\otimes
\overline{y}\otimes \overline{z}\right) -\left( \varepsilon _{\overline{Q}%
}\otimes \overline{\zeta }\right) \left( \overline{x}\otimes \overline{y}%
\otimes \overline{z}\right) \\
&=&\overline{\zeta }\left( \overline{x}\otimes \overline{y}\right)
\varepsilon _{\overline{Q}}\left( \overline{z}\right) -\overline{\zeta }%
\left( \overline{x}\otimes \overline{y}\cdot \overline{z}\right) +\overline{%
\zeta }\left( \overline{x}\cdot \overline{y}\otimes \overline{z}\right)
-\varepsilon _{\overline{Q}}\left( \overline{x}\right) \overline{\zeta }%
\left( \overline{y}\otimes \overline{z}\right) .
\end{eqnarray*}

\begin{invisible}
Note that $\overline{y}$ and $\overline{z}$ are of the form $\overline{y}=%
\overline{x^{a,l}}$ and $\overline{z}=\overline{x^{a^{\prime },l^{\prime }}}$
and $)$hence one obtains
\begin{eqnarray*}
&&\overline{\zeta }\left( \overline{x}\otimes \overline{x^{a,l}}\cdot
\overline{x^{a^{\prime },l^{\prime }}}\right) \overset{(\ref{form:GelYD2})}{=%
}\overline{\zeta }\left( \overline{x}\otimes \sum_{v}\mu _{a+a^{\prime
},v}^{a,l,a^{\prime },l^{\prime }}\overline{x^{a+a^{\prime },v}}\right) \\
&=&\delta _{\left\vert x\right\vert +a+a^{\prime },s}\overline{v}\left(
\overline{x}\otimes \sum_{v}\mu _{a+a^{\prime },v}^{a,l,a^{\prime
},l^{\prime }}\overline{x^{a+a^{\prime },v}}\right) \\
&\overset{(\ref{form:GelYD2})}{=}&\delta _{\left\vert x\right\vert
+a+a^{\prime },s}\overline{v}\left( \overline{x}\otimes \overline{x^{a,l}}%
\cdot \overline{x^{a^{\prime },l^{\prime }}}\right)
\end{eqnarray*}
\end{invisible}

By using (\ref{form:GelYD2}), one gets
\begin{equation*}
\overline{\zeta }\left( \overline{x}\otimes \overline{y}\cdot \overline{z}%
\right) =\delta _{\left\vert x\right\vert +\left\vert y\right\vert
+\left\vert z\right\vert ,s}\overline{v}\left( \overline{x}\otimes \overline{%
y}\cdot \overline{z}\right) \qquad \text{and}\qquad \overline{\zeta }\left(
\overline{x}\cdot \overline{y}\otimes \overline{z}\right) =\delta
_{\left\vert x\right\vert +\left\vert y\right\vert +\left\vert z\right\vert
,s}\overline{v}\left( \overline{x}\cdot \overline{y}\otimes \overline{z}%
\right) .
\end{equation*}%
By means of these equalities one gets%
\begin{eqnarray*}
\left( \partial ^{2}\overline{\zeta }\right) \left( \overline{x}\otimes
\overline{y}\otimes \overline{z}\right) &=&\delta _{\left\vert x\right\vert
+\left\vert y\right\vert +\left\vert z\right\vert ,s}\left( \partial ^{2}%
\overline{v}\right) \left( \overline{x}\otimes \overline{y}\otimes \overline{%
z}\right) =\delta _{\left\vert x\right\vert +\left\vert y\right\vert
+\left\vert z\right\vert ,s}\overline{\omega }_{s}\left( \overline{x}\otimes
\overline{y}\otimes \overline{z}\right) \\
&=&\delta _{\left\vert x\right\vert +\left\vert y\right\vert +\left\vert
z\right\vert ,s}\omega _{s}\left( x\otimes y\otimes z\right) =\delta
_{\left\vert x\right\vert +\left\vert y\right\vert +\left\vert z\right\vert
,s}\delta _{\left\vert x\right\vert +\left\vert y\right\vert +\left\vert
z\right\vert ,s}\omega \left( x\otimes y\otimes z\right) \\
&=&\delta _{\left\vert x\right\vert +\left\vert y\right\vert +\left\vert
z\right\vert ,s}\omega \left( x\otimes y\otimes z\right) =\omega _{s}\left(
x\otimes y\otimes z\right) =\overline{\omega }_{s}\left( \overline{x}\otimes
\overline{y}\otimes \overline{z}\right) .
\end{eqnarray*}

\begin{invisible}
\begin{eqnarray*}
&&\left( \partial ^{2}\overline{\zeta }\right) \left( \overline{x}\otimes
\overline{y}\otimes \overline{z}\right) \\
&=&\overline{\zeta }\left( \overline{x}\otimes \overline{y}\right)
\varepsilon _{\overline{Q}}\left( \overline{z}\right) -\overline{\zeta }%
\left( \overline{x}\otimes \overline{y}\cdot \overline{z}\right) +\overline{%
\zeta }\left( \overline{x}\cdot \overline{y}\otimes \overline{z}\right)
-\varepsilon _{\overline{Q}}\left( \overline{x}\right) \overline{\zeta }%
\left( \overline{y}\otimes \overline{z}\right) \\
&=&\left[
\begin{array}{c}
\delta _{\left\vert x\right\vert +\left\vert y\right\vert ,s}\overline{v}%
\left( \overline{x}\otimes \overline{y}\right) \delta _{\left\vert
z\right\vert ,0}\varepsilon _{\overline{Q}}\left( \overline{z}\right)
-\delta _{\left\vert x\right\vert +\left\vert y\right\vert +\left\vert
z\right\vert ,s}\overline{v}\left( \overline{x}\otimes \overline{y}\cdot
\overline{z}\right) \\
+\delta _{\left\vert x\right\vert +\left\vert y\right\vert +\left\vert
z\right\vert ,s}\overline{v}\left( \overline{x}\cdot \overline{y}\otimes
\overline{z}\right) -\delta _{\left\vert x\right\vert ,0}\varepsilon _{%
\overline{Q}}\left( \overline{x}\right) \delta _{\left\vert y\right\vert
+\left\vert z\right\vert ,s}\overline{v}\left( \overline{y}\otimes \overline{%
z}\right)%
\end{array}%
\right] \\
&=&\left[
\begin{array}{c}
\delta _{\left\vert x\right\vert +\left\vert y\right\vert ,s}\delta
_{\left\vert z\right\vert ,0}\left( \overline{v}\otimes \varepsilon _{%
\overline{Q}}\right) \left( \overline{x}\otimes \overline{y}\otimes
\overline{z}\right) -\delta _{\left\vert x\right\vert +\left\vert
y\right\vert +\left\vert z\right\vert ,s}\overline{v}\left( \overline{x}%
\otimes \overline{y}\cdot \overline{z}\right) \\
+\delta _{\left\vert x\right\vert +\left\vert y\right\vert +\left\vert
z\right\vert ,s}\overline{v}\left( \overline{x}\cdot \overline{y}\otimes
\overline{z}\right) -\delta _{\left\vert x\right\vert ,0}\delta _{\left\vert
y\right\vert +\left\vert z\right\vert ,s}\left( \varepsilon _{\overline{Q}%
}\otimes \overline{v}\right) \left( \overline{x}\otimes \overline{y}\otimes
\overline{z}\right)%
\end{array}%
\right] \\
&=&\left[
\begin{array}{c}
\delta _{\left\vert x\right\vert +\left\vert y\right\vert +\left\vert
z\right\vert ,s}\left( \overline{v}\otimes \varepsilon _{\overline{Q}%
}\right) \left( \overline{x}\otimes \overline{y}\otimes \overline{z}\right)
-\delta _{\left\vert x\right\vert +\left\vert y\right\vert +\left\vert
z\right\vert ,s}\overline{v}\left( \overline{x}\otimes \overline{y}\cdot
\overline{z}\right) \\
+\delta _{\left\vert x\right\vert +\left\vert y\right\vert +\left\vert
z\right\vert ,s}\overline{v}\left( \overline{x}\cdot \overline{y}\otimes
\overline{z}\right) -\delta _{\left\vert x\right\vert +\left\vert
y\right\vert +\left\vert z\right\vert ,s}\left( \varepsilon _{\overline{Q}%
}\otimes \overline{v}\right) \left( \overline{x}\otimes \overline{y}\otimes
\overline{z}\right)%
\end{array}%
\right] \\
&=&\delta _{\left\vert x\right\vert +\left\vert y\right\vert +\left\vert
z\right\vert ,s}\left( \partial ^{2}\overline{v}\right) \left( \overline{x}%
\otimes \overline{y}\otimes \overline{z}\right) \\
&=&\delta _{\left\vert x\right\vert +\left\vert y\right\vert +\left\vert
z\right\vert ,s}\overline{\omega }_{s}\left( \overline{x}\otimes \overline{y}%
\otimes \overline{z}\right) \\
&=&\delta _{\left\vert x\right\vert +\left\vert y\right\vert +\left\vert
z\right\vert ,s}\omega _{s}\left( x\otimes y\otimes z\right) =\delta
_{\left\vert x\right\vert +\left\vert y\right\vert +\left\vert z\right\vert
,s}\delta _{\left\vert x\right\vert +\left\vert y\right\vert +\left\vert
z\right\vert ,s}\omega \left( x\otimes y\otimes z\right) =\delta
_{\left\vert x\right\vert +\left\vert y\right\vert +\left\vert z\right\vert
,s}\omega \left( x\otimes y\otimes z\right) \\
&=&\omega _{s}\left( x\otimes y\otimes z\right) =\overline{\omega }%
_{s}\left( \overline{x}\otimes \overline{y}\otimes \overline{z}\right) .
\end{eqnarray*}
\end{invisible}

Therefore $\partial ^{2}\overline{\zeta }=\overline{\omega }_{s}.$ This
means that we can assume that $\overline{v}\left( \overline{x}\otimes
\overline{y}\right) =0$ for $\left\vert x\right\vert +\left\vert
y\right\vert \neq s.$ Equivalently%
\begin{equation}
\overline{v}\left( \overline{x}\otimes \overline{y}\right) =\delta
_{\left\vert x\right\vert +\left\vert y\right\vert ,s}\overline{v}\left(
\overline{x}\otimes \overline{y}\right) \text{ for }x,y\text{ in the basis.}
\label{form:vbarYD}
\end{equation}

\begin{invisible}
\lbrack NOTE that we essentially proved that $\partial ^{2}\overline{v}$
graded implies we can take $\overline{v}$ graded. Is this equivalent to
prove that $\partial ^{2}$ is graded? Does it make sense to write it in
general for a graded algebra $A=\oplus _{i\in \N_0}A_{i}$ instead of $%
\mathrm{gr}Q$???? It this case probably we have to take on ${_{H}^{H}%
\mathcal{YD}}\left( A^{\otimes n},\Bbbk \right) $ the graduation ${_{H}^{H}%
\mathcal{YD}}\left( A^{\otimes n},\Bbbk \right) _{t}:={_{H}^{H}\mathcal{YD}}%
\left( \left( A^{\otimes n}\right) _{t},\Bbbk \right) $ where $\left(
A^{\otimes n}\right) _{t}=\sum\limits_{i_{1}+\cdots
ti_{n}=t}A_{i_{1}}\otimes \cdots \otimes A_{i_{n}}.$]
\end{invisible}

Set
\begin{equation*}
v:=\overline{v}\circ \left( \varphi \otimes \varphi \right) \qquad \text{and}%
\qquad \gamma :=\left( \varepsilon \otimes \varepsilon \right) +v.
\end{equation*}%
In particular, one gets%
\begin{equation}
v\left( x\otimes y\right) =\delta _{\left\vert x\right\vert +\left\vert
y\right\vert ,s}v\left( x\otimes y\right) \text{ for }x,y\text{ in the basis.%
}  \label{form:vYD}
\end{equation}%
Note also that both $v$ and $\gamma $ are morphisms in ${_{H}^{H}\mathcal{YD}%
}$ as they are obtained as composition or sum of morphisms in this category.
Let us check that $\gamma $ is a gauge transformation on $Q$ in ${_{H}^{H}%
\mathcal{YD}}$.

Recall that $x^{0,0}=1_{Q}$ is in the basis. For $x$ in the basis, we have $%
\gamma \left( x\otimes 1_{Q}\right) =\varepsilon \left( x\right) +v\left(
x\otimes 1_{Q}\right) .$ Note that%
\begin{eqnarray*}
0 &=&\delta _{\left\vert x\right\vert ,s}\varepsilon \left( x\right) =\delta
_{\left\vert x\right\vert +\left\vert 1_{Q}\right\vert +\left\vert
1_{Q}\right\vert ,s}\omega \left( x\otimes 1_{Q}\otimes 1_{Q}\right) \\
&=&\omega _{s}\left( x\otimes 1_{Q}\otimes 1_{Q}\right) =\overline{\omega }%
_{s}\left( \overline{x}\otimes \overline{1_{Q}}\otimes \overline{1_{Q}}%
\right) \\
&=&\overline{v}\left( \overline{x}\otimes \overline{1_{Q}}\right)
\varepsilon _{\overline{Q}}\left( \overline{1_{Q}}\right) -\overline{v}%
\left( \overline{x}\otimes \overline{1_{Q}}\cdot \overline{1_{Q}}\right) +%
\overline{v}\left( \overline{x}\cdot \overline{1_{Q}}\otimes \overline{1_{Q}}%
\right) -\varepsilon _{\overline{Q}}\left( \overline{x}\right) \overline{v}%
\left( \overline{1_{Q}}\otimes \overline{1_{Q}}\right) \\
&\overset{(\ref{form:vbarYD})}{=}&\overline{v}\left( \overline{x}\otimes
\overline{1_{Q}}\right) -\overline{v}\left( \overline{x}\otimes \overline{%
1_{Q}}\right) +\overline{v}\left( \overline{x}\otimes \overline{1_{Q}}%
\right) -\varepsilon _{\overline{Q}}\left( \overline{x}\right) \delta
_{\left\vert 1_{Q}\right\vert +\left\vert 1_{Q}\right\vert ,s}\overline{v}%
\left( \overline{1_{Q}}\otimes \overline{1_{Q}}\right) \\
&=&v\left( x\otimes 1_{Q}\right)
\end{eqnarray*}%
so that $v\left( x\otimes 1_{Q}\right) =0$ and hence $\gamma \left( x\otimes
1_{Q}\right) =\varepsilon \left( x\right) +v\left( x\otimes 1_{Q}\right)
=\varepsilon \left( x\right) .$ Similarly one proves $\gamma \left(
1_{Q}\otimes x\right) =\varepsilon \left( x\right) .$ Hence $\gamma $ is
unital. Note that the coalgebra $C=Q\otimes Q$ is connected as $Q$ is. Thus,
in order to prove that $\gamma :Q\otimes Q\rightarrow \Bbbk $ is convolution
invertible it suffices to check (see \cite[Lemma 5.2.10]{Mo}) that $\gamma
_{\mid \Bbbk 1_{Q}\otimes \Bbbk 1_{Q}}$ is convolution invertible. But for $%
k,k^{\prime }\in \Bbbk $ we have
\begin{equation*}
\gamma \left( k1_{Q}\otimes k^{\prime }1_{Q}\right) =kk^{\prime }\gamma
\left( 1_{Q}\otimes 1_{Q}\right) =kk^{\prime }\varepsilon \left(
1_{Q}\right) =kk^{\prime }=\left( \varepsilon \otimes \varepsilon \right)
\left( k1_{Q}\otimes k^{\prime }1_{Q}\right)
\end{equation*}%
Hence $\gamma _{\mid \Bbbk 1_{Q}\otimes \Bbbk 1_{Q}}=\left( \varepsilon
\otimes \varepsilon \right) _{\mid \Bbbk 1_{Q}\otimes \Bbbk 1_{Q}}$ which is
convolution invertible. Thus there is a $\Bbbk $-linear map $\gamma
^{-1}:Q\otimes Q\rightarrow \Bbbk $ and such that%
\begin{equation*}
\gamma \ast \gamma ^{-1}=\varepsilon \otimes \varepsilon =\gamma ^{-1}\ast
\gamma .
\end{equation*}%
Note that, by Lemma \ref{lem:InvYD}, $\gamma \in {_{H}^{H}\mathcal{YD}}$
implies $\gamma ^{-1}\in {_{H}^{H}\mathcal{YD}}$.

Therefore $\gamma $ is a gauge transformation in ${_{H}^{H}\mathcal{YD}}$.
By Proposition \ref{pro:deformYD}, $Q^{\gamma }$ is a coquasi-bialgebra in ${%
_{H}^{H}\mathcal{YD}}$. By Proposition \ref{pro:grgaugeYD}, we have that $\mathrm{%
gr}Q^{\gamma }$ and $\mathrm{gr}Q$ coincide as bialgebras in ${_{H}^{H}%
\mathcal{YD}}$. Hence $\mathrm{H}_{{\mathcal{YD}}}^{3}\left( \mathrm{gr}%
Q^{\gamma },\Bbbk \right) =\mathrm{H}_{{\mathcal{YD}}}^{3}\left( \mathrm{gr}%
Q,\Bbbk \right) =0.$ Therefore $Q^{\gamma }$ fulfills the same requirement
of $Q$ as in the statement. Let us check that $\left( \omega ^{\gamma
}\right) _{t}=0$ for $1\leq t\leq s$ (this will complete the proof by an
induction process as $Q$ is finite-dimensional).

Note that the definition of $\gamma $ and (\ref{form:vYD}) imply%
\begin{equation}
\gamma \left( x\otimes y\right) =\delta _{\left\vert x\right\vert
+\left\vert y\right\vert ,0}\gamma \left( x\otimes y\right) +\delta
_{\left\vert x\right\vert +\left\vert y\right\vert ,s}\gamma \left( x\otimes
y\right) \text{ for }x,y\text{ in the basis.}  \label{form:gamma1YD}
\end{equation}

Let $C^{2}=Q\otimes Q$ and let $C_{\left( n\right) }^{2}=\sum_{i+j\leq
n}Q_{i}\otimes Q_{j}$. For $u\in C_{\left( 2s-1\right) }^{2}$ we have%
\begin{equation*}
\left[ \gamma \ast \left( \left( \varepsilon \otimes \varepsilon \right)
-v\right) \right] \left( u\right) =\left( \varepsilon \otimes \varepsilon
\right) \left( u\right) -v\left( u\right) +v\left( u\right) -v\left(
u_{1}\right) v\left( u_{2}\right) \overset{(\ref{form:vYD})}{=}\left(
\varepsilon \otimes \varepsilon \right) \left( u\right) .
\end{equation*}

\begin{invisible}
Here is the computation%
\begin{eqnarray*}
&&\left[ \gamma \ast \left( \left( \varepsilon \otimes \varepsilon \right)
-v\right) \right] \left( u\right) \\
&=&\left[ \left( \left( \varepsilon \otimes \varepsilon \right) +v\right)
\ast \left( \left( \varepsilon \otimes \varepsilon \right) -v\right) \right]
\left( u\right) \\
&=&\left( \left( \varepsilon \otimes \varepsilon \right) +v\right) \left(
u_{1}\right) \cdot \left( \left( \varepsilon \otimes \varepsilon \right)
-v\right) \left( u_{2}\right) \\
&=&\left[
\begin{array}{c}
\left( \varepsilon \otimes \varepsilon \right) \left( u_{1}\right) \left(
\varepsilon \otimes \varepsilon \right) \left( u_{2}\right) + \\
\left( \varepsilon \otimes \varepsilon \right) \left( u_{1}\right) \left(
-v\right) \left( u_{2}\right) + \\
\left( v\right) \left( u_{1}\right) \left( \varepsilon \otimes \varepsilon
\right) \left( u_{2}\right) + \\
v\left( u_{1}\right) \left( -v\right) \left( u_{2}\right)%
\end{array}%
\right] \\
&=&\left( \varepsilon \otimes \varepsilon \right) \left( u\right) -v\left(
u\right) +v\left( u\right) -v\left( u_{1}\right) v\left( u_{2}\right) \\
&=&\left( \varepsilon \otimes \varepsilon \right) \left( u\right) -v\left(
u_{1}\right) v\left( u_{2}\right) \\
&\overset{(\ref{form:vYD})}{=}&\left( \varepsilon \otimes \varepsilon
\right) \left( u\right)
\end{eqnarray*}
\end{invisible}

Therefore $\left[ \gamma \ast \left( \left( \varepsilon \otimes \varepsilon
\right) -v\right) \right] _{|C_{\left( 2s-1\right) }^{2}}=\left( \varepsilon
\otimes \varepsilon \right) _{|C_{\left( 2s-1\right) }^{2}}.$ By uniqueness
of the convolution inverse, we deduce
\begin{equation}
\gamma ^{-1}\left( u\right) =\left( \varepsilon \otimes \varepsilon \right)
\left( u\right) -v\left( u\right) ,\text{ for }u\in C_{\left( 2s-1\right)
}^{2}.  \label{form:gamma-1}
\end{equation}

\begin{invisible}
Explicitly, for $u\in C_{\left( 2s-1\right) }^{2}$%
\begin{eqnarray*}
\gamma ^{-1}\left( u\right) &=&\gamma ^{-1}\left( u_{1}\right) \left(
\varepsilon \otimes \varepsilon \right) \left( u_{2}\right) \text{ (use that
}u_{2}\in C_{\left( 2s-1\right) }^{2}\text{)} \\
&=&\gamma ^{-1}\left( u_{1}\right) \left[ \gamma \ast \left( \left(
\varepsilon \otimes \varepsilon \right) -v\right) \right] \left( u_{2}\right)
\\
&=&\left[ \gamma ^{-1}\ast \gamma \ast \left( \left( \varepsilon \otimes
\varepsilon \right) -v\right) \right] \left( u\right) \\
&=&\left( \left( \varepsilon \otimes \varepsilon \right) -v\right) \left(
u\right)
\end{eqnarray*}
\end{invisible}

Let $x,y,z$ be in the basis. Set $\overline{u}:=\overline{x}\otimes
\overline{y}\otimes \overline{z}$ and $u:=x\otimes y\otimes z.$ We compute%
\begin{eqnarray*}
\left( \omega ^{\gamma }\right) _{s}\left( u\right) &=&\delta _{\left\vert
x\right\vert +\left\vert y\right\vert +\left\vert z\right\vert ,s}\omega
^{\gamma }\left( u\right) \\
&=&\delta _{\left\vert x\right\vert +\left\vert y\right\vert +\left\vert
z\right\vert ,s}\left[ \left( \varepsilon \otimes \gamma \right) \ast \gamma
\left( Q\otimes m\right) \ast \omega \ast \gamma ^{-1}\left( m\otimes
Q\right) \ast \left( \gamma ^{-1}\otimes \varepsilon \right) \right] \left(
u\right) \\
&=&\delta _{\left\vert x\right\vert +\left\vert y\right\vert +\left\vert
z\right\vert ,s}\left[ \left( \varepsilon \otimes \gamma \right) \ast \gamma
\left( Q\otimes m\right) \ast \left( \omega _{0}+\omega _{s}\right) \ast
\gamma ^{-1}\left( m\otimes Q\right) \ast \left( \gamma ^{-1}\otimes
\varepsilon \right) \right] \left( u\right) \\
&\overset{(\ref{form:Omega0YD})}{=}&\delta _{\left\vert x\right\vert
+\left\vert y\right\vert +\left\vert z\right\vert ,s}\left[
\begin{array}{c}
\left( \varepsilon \otimes \gamma \right) \ast \gamma \left( Q\otimes
m\right) \ast \gamma ^{-1}\left( m\otimes Q\right) \ast \left( \gamma
^{-1}\otimes \varepsilon \right) + \\
\left( \varepsilon \otimes \gamma \right) \ast \gamma \left( Q\otimes
m\right) \ast \omega _{s}\ast \gamma ^{-1}\left( m\otimes Q\right) \ast
\left( \gamma ^{-1}\otimes \varepsilon \right)%
\end{array}%
\right] \left( u\right) \\
&=&\left[
\begin{array}{c}
\delta _{\left\vert x\right\vert +\left\vert y\right\vert +\left\vert
z\right\vert ,s}\left( \varepsilon \otimes \gamma \right) \left(
u_{1}\right) \cdot \gamma \left( Q\otimes m\right) \left( u_{2}\right) \cdot
\gamma ^{-1}\left( m\otimes Q\right) \left( u_{3}\right) \cdot \left( \gamma
^{-1}\otimes \varepsilon \right) \left( u_{4}\right) + \\
\delta _{\left\vert x\right\vert +\left\vert y\right\vert +\left\vert
z\right\vert ,s}\left( \varepsilon \otimes \gamma \right) \left(
u_{1}\right) \cdot \gamma \left( Q\otimes m\right) \left( u_{2}\right) \cdot
\omega _{s}\left( u_{3}\right) \cdot \gamma ^{-1}\left( m\otimes Q\right)
\left( u_{4}\right) \cdot \left( \gamma ^{-1}\otimes \varepsilon \right)
\left( u_{5}\right)%
\end{array}%
\right] .
\end{eqnarray*}

\begin{invisible}
Here is the full computation:
\begin{eqnarray*}
&&\left( \omega ^{\gamma }\right) _{s}\left( u\right) =\delta _{\left\vert
x\right\vert +\left\vert y\right\vert +\left\vert z\right\vert ,s}\omega
^{\gamma }\left( u\right) \\
&=&\delta _{\left\vert x\right\vert +\left\vert y\right\vert +\left\vert
z\right\vert ,s}\left[ \left( \varepsilon \otimes \gamma \right) \ast \gamma
\left( Q\otimes m\right) \ast \omega \ast \gamma ^{-1}\left( m\otimes
Q\right) \ast \left( \gamma ^{-1}\otimes \varepsilon \right) \right] \left(
u\right) \\
&=&\delta _{\left\vert x\right\vert +\left\vert y\right\vert +\left\vert
z\right\vert ,s}\left[ \left( \varepsilon \otimes \gamma \right) \ast \gamma
\left( Q\otimes m\right) \ast \left( \sum\limits_{t\in \N_0}\omega
_{t}\right) \ast \gamma ^{-1}\left( m\otimes Q\right) \ast \left( \gamma
^{-1}\otimes \varepsilon \right) \right] \left( u\right) \\
&=&\delta _{\left\vert x\right\vert +\left\vert y\right\vert +\left\vert
z\right\vert ,s}\left[ \left( \varepsilon \otimes \gamma \right) \ast \gamma
\left( Q\otimes m\right) \ast \left( \omega _{0}+\omega _{s}\right) \ast
\gamma ^{-1}\left( m\otimes Q\right) \ast \left( \gamma ^{-1}\otimes
\varepsilon \right) \right] \left( u\right) \\
&=&\delta _{\left\vert x\right\vert +\left\vert y\right\vert +\left\vert
z\right\vert ,s}\left[
\begin{array}{c}
\left( \varepsilon \otimes \gamma \right) \ast \gamma \left( Q\otimes
m\right) \ast \omega _{0}\ast \gamma ^{-1}\left( m\otimes Q\right) \ast
\left( \gamma ^{-1}\otimes \varepsilon \right) + \\
\left( \varepsilon \otimes \gamma \right) \ast \gamma \left( Q\otimes
m\right) \ast \omega _{s}\ast \gamma ^{-1}\left( m\otimes Q\right) \ast
\left( \gamma ^{-1}\otimes \varepsilon \right)%
\end{array}%
\right] \left( u\right) \\
&\overset{(\ref{form:Omega0YD})}{=}&\delta _{\left\vert x\right\vert
+\left\vert y\right\vert +\left\vert z\right\vert ,s}\left[
\begin{array}{c}
\left( \varepsilon \otimes \gamma \right) \ast \gamma \left( Q\otimes
m\right) \ast \gamma ^{-1}\left( m\otimes Q\right) \ast \left( \gamma
^{-1}\otimes \varepsilon \right) + \\
\left( \varepsilon \otimes \gamma \right) \ast \gamma \left( Q\otimes
m\right) \ast \omega _{s}\ast \gamma ^{-1}\left( m\otimes Q\right) \ast
\left( \gamma ^{-1}\otimes \varepsilon \right)%
\end{array}%
\right] \left( u\right) \\
&=&\left[
\begin{array}{c}
\delta _{\left\vert x\right\vert +\left\vert y\right\vert +\left\vert
z\right\vert ,s}\left[ \left( \varepsilon \otimes \gamma \right) \ast \gamma
\left( Q\otimes m\right) \ast \gamma ^{-1}\left( m\otimes Q\right) \ast
\left( \gamma ^{-1}\otimes \varepsilon \right) \right] \left( u\right) + \\
\delta _{\left\vert x\right\vert +\left\vert y\right\vert +\left\vert
z\right\vert ,s}\left[ \left( \varepsilon \otimes \gamma \right) \ast \gamma
\left( Q\otimes m\right) \ast \omega _{s}\ast \gamma ^{-1}\left( m\otimes
Q\right) \ast \left( \gamma ^{-1}\otimes \varepsilon \right) \right] \left(
u\right)%
\end{array}%
\right] \\
&=&\left[
\begin{array}{c}
\delta _{\left\vert x\right\vert +\left\vert y\right\vert +\left\vert
z\right\vert ,s}\left( \varepsilon \otimes \gamma \right) \left(
u_{1}\right) \cdot \gamma \left( Q\otimes m\right) \left( u_{2}\right) \cdot
\gamma ^{-1}\left( m\otimes Q\right) \left( u_{3}\right) \cdot \left( \gamma
^{-1}\otimes \varepsilon \right) \left( u_{4}\right) + \\
\delta _{\left\vert x\right\vert +\left\vert y\right\vert +\left\vert
z\right\vert ,s}\left( \varepsilon \otimes \gamma \right) \left(
u_{1}\right) \cdot \gamma \left( Q\otimes m\right) \left( u_{2}\right) \cdot
\omega _{s}\left( u_{3}\right) \cdot \gamma ^{-1}\left( m\otimes Q\right)
\left( u_{4}\right) \cdot \left( \gamma ^{-1}\otimes \varepsilon \right)
\left( u_{5}\right)%
\end{array}%
\right] .
\end{eqnarray*}
\end{invisible}

Now, all terms appearing in the last two lines, excepted $\omega _{s}$,
vanish out of degrees $0$ and $s$ and coincide with $\varepsilon \otimes
\varepsilon \otimes \varepsilon $ on degree $0.$ On the other hand $\omega
_{s}$ vanishes out of $s$. Since $\gamma :=\left( \varepsilon \otimes
\varepsilon \right) +v$ and in view of (\ref{form:gamma-1}), the term $%
\delta _{\left\vert x\right\vert +\left\vert y\right\vert +\left\vert
z\right\vert ,s}$ forces the following simplification%
\begin{equation*}
\left( \omega ^{\gamma }\right) _{s}\left( u\right) =\left[
\begin{array}{c}
\delta _{\left\vert x\right\vert +\left\vert y\right\vert +\left\vert
z\right\vert ,s}\left[ \left( \varepsilon \otimes v\right) \left( u\right)
+v\left( Q\otimes m\right) \left( u\right) -v\left( m\otimes Q\right) \left(
u\right) -\left( v\otimes \varepsilon \right) \left( u\right) \right] + \\
+\delta _{\left\vert x\right\vert +\left\vert y\right\vert +\left\vert
z\right\vert ,s}\omega _{s}\left( u\right)%
\end{array}%
\right] .
\end{equation*}

\begin{invisible}
\begin{eqnarray*}
&&\left( \omega ^{\gamma }\right) _{s}\left( u\right) \\
&=&\left[
\begin{array}{c}
\delta _{\left\vert x\right\vert +\left\vert y\right\vert +\left\vert
z\right\vert ,s}\left( \varepsilon \otimes \gamma \right) \left(
u_{1}\right) \cdot \gamma \left( Q\otimes m\right) \left( u_{2}\right) \cdot
\gamma ^{-1}\left( m\otimes Q\right) \left( u_{3}\right) \cdot \left( \gamma
^{-1}\otimes \varepsilon \right) \left( u_{4}\right) + \\
\delta _{\left\vert x\right\vert +\left\vert y\right\vert +\left\vert
z\right\vert ,s}\left( \varepsilon \otimes \gamma \right) \left(
u_{1}\right) \cdot \gamma \left( Q\otimes m\right) \left( u_{2}\right) \cdot
\omega _{s}\left( u_{3}\right) \cdot \gamma ^{-1}\left( m\otimes Q\right)
\left( u_{4}\right) \cdot \left( \gamma ^{-1}\otimes \varepsilon \right)
\left( u_{5}\right)%
\end{array}%
\right] \\
&=&\left[
\begin{array}{c}
\delta _{\left\vert x\right\vert +\left\vert y\right\vert +\left\vert
z\right\vert ,s}\left( \varepsilon \otimes \left( \left( \varepsilon \otimes
\varepsilon \right) +v\right) \right) \left( u_{1}\right) \cdot \gamma
\left( Q\otimes m\right) \left( u_{2}\right) \cdot \gamma ^{-1}\left(
m\otimes Q\right) \left( u_{3}\right) \cdot \left( \gamma ^{-1}\otimes
\varepsilon \right) \left( u_{4}\right) + \\
\delta _{\left\vert x\right\vert +\left\vert y\right\vert +\left\vert
z\right\vert ,s}\left( \varepsilon \otimes \left( \left( \varepsilon \otimes
\varepsilon \right) +v\right) \right) \left( u_{1}\right) \cdot \gamma
\left( Q\otimes m\right) \left( u_{2}\right) \cdot \omega _{s}\left(
u_{3}\right) \cdot \gamma ^{-1}\left( m\otimes Q\right) \left( u_{4}\right)
\cdot \left( \gamma ^{-1}\otimes \varepsilon \right) \left( u_{5}\right)%
\end{array}%
\right] \\
&=&\left[
\begin{array}{c}
\delta _{\left\vert x\right\vert +\left\vert y\right\vert +\left\vert
z\right\vert ,s}\left( \varepsilon \otimes \varepsilon \otimes \varepsilon
\right) \left( u_{1}\right) \cdot \gamma \left( Q\otimes m\right) \left(
u_{2}\right) \cdot \gamma ^{-1}\left( m\otimes Q\right) \left( u_{3}\right)
\cdot \left( \gamma ^{-1}\otimes \varepsilon \right) \left( u_{4}\right) +
\\
\delta _{\left\vert x\right\vert +\left\vert y\right\vert +\left\vert
z\right\vert ,s}\left( \varepsilon \otimes v\right) \left( u_{1}\right)
\cdot \gamma \left( Q\otimes m\right) \left( u_{2}\right) \cdot \gamma
^{-1}\left( m\otimes Q\right) \left( u_{3}\right) \cdot \left( \gamma
^{-1}\otimes \varepsilon \right) \left( u_{4}\right) + \\
\delta _{\left\vert x\right\vert +\left\vert y\right\vert +\left\vert
z\right\vert ,s}\left( \varepsilon \otimes \varepsilon \otimes \varepsilon
\right) \left( u_{1}\right) \cdot \gamma \left( Q\otimes m\right) \left(
u_{2}\right) \cdot \omega _{s}\left( u_{3}\right) \cdot \gamma ^{-1}\left(
m\otimes Q\right) \left( u_{4}\right) \cdot \left( \gamma ^{-1}\otimes
\varepsilon \right) \left( u_{5}\right) + \\
\delta _{\left\vert x\right\vert +\left\vert y\right\vert +\left\vert
z\right\vert ,s}\left( \varepsilon \otimes v\right) \left( u_{1}\right)
\cdot \gamma \left( Q\otimes m\right) \left( u_{2}\right) \cdot \omega
_{s}\left( u_{3}\right) \cdot \gamma ^{-1}\left( m\otimes Q\right) \left(
u_{4}\right) \cdot \left( \gamma ^{-1}\otimes \varepsilon \right) \left(
u_{5}\right)%
\end{array}%
\right] \\
&\overset{\left\vert u_{1}\right\vert +\left\vert u_{3}\right\vert \leq s}{=}%
&\left[
\begin{array}{c}
\delta _{\left\vert x\right\vert +\left\vert y\right\vert +\left\vert
z\right\vert ,s}\left( \varepsilon \otimes \varepsilon \otimes \varepsilon
\right) \left( u_{1}\right) \cdot \gamma \left( Q\otimes m\right) \left(
u_{2}\right) \cdot \gamma ^{-1}\left( m\otimes Q\right) \left( u_{3}\right)
\cdot \left( \gamma ^{-1}\otimes \varepsilon \right) \left( u_{4}\right) +
\\
\delta _{\left\vert x\right\vert +\left\vert y\right\vert +\left\vert
z\right\vert ,s}\left( \varepsilon \otimes v\right) \left( u_{1}\right)
\cdot \gamma \left( Q\otimes m\right) \left( u_{2}\right) \cdot \gamma
^{-1}\left( m\otimes Q\right) \left( u_{3}\right) \cdot \left( \gamma
^{-1}\otimes \varepsilon \right) \left( u_{4}\right) + \\
\delta _{\left\vert x\right\vert +\left\vert y\right\vert +\left\vert
z\right\vert ,s}\left( \varepsilon \otimes \varepsilon \otimes \varepsilon
\right) \left( u_{1}\right) \cdot \gamma \left( Q\otimes m\right) \left(
u_{2}\right) \cdot \omega _{s}\left( u_{3}\right) \cdot \gamma ^{-1}\left(
m\otimes Q\right) \left( u_{4}\right) \cdot \left( \gamma ^{-1}\otimes
\varepsilon \right) \left( u_{5}\right) +0%
\end{array}%
\right] \\
&=&\left[
\begin{array}{c}
\delta _{\left\vert x\right\vert +\left\vert y\right\vert +\left\vert
z\right\vert ,s}\gamma \left( Q\otimes m\right) \left( u_{1}\right) \cdot
\gamma ^{-1}\left( m\otimes Q\right) \left( u_{2}\right) \cdot \left( \gamma
^{-1}\otimes \varepsilon \right) \left( u_{3}\right) + \\
\delta _{\left\vert x\right\vert +\left\vert y\right\vert +\left\vert
z\right\vert ,s}\left( \varepsilon \otimes v\right) \left( u_{1}\right)
\cdot \gamma \left( Q\otimes m\right) \left( u_{2}\right) \cdot \gamma
^{-1}\left( m\otimes Q\right) \left( u_{3}\right) \cdot \left( \gamma
^{-1}\otimes \varepsilon \right) \left( u_{4}\right) + \\
\delta _{\left\vert x\right\vert +\left\vert y\right\vert +\left\vert
z\right\vert ,s}\gamma \left( Q\otimes m\right) \left( u_{1}\right) \cdot
\omega _{s}\left( u_{2}\right) \cdot \gamma ^{-1}\left( m\otimes Q\right)
\left( u_{3}\right) \cdot \left( \gamma ^{-1}\otimes \varepsilon \right)
\left( u_{4}\right) +%
\end{array}%
\right] \\
&=&\left[
\begin{array}{c}
\delta _{\left\vert x\right\vert +\left\vert y\right\vert +\left\vert
z\right\vert ,s}\left( \left( \varepsilon \otimes \varepsilon \right)
+v\right) \left( Q\otimes m\right) \left( u_{1}\right) \cdot \gamma
^{-1}\left( m\otimes Q\right) \left( u_{2}\right) \cdot \left( \gamma
^{-1}\otimes \varepsilon \right) \left( u_{3}\right) + \\
\delta _{\left\vert x\right\vert +\left\vert y\right\vert +\left\vert
z\right\vert ,s}\left( \varepsilon \otimes v\right) \left( u_{1}\right)
\cdot \left( \left( \varepsilon \otimes \varepsilon \right) +v\right) \left(
Q\otimes m\right) \left( u_{2}\right) \cdot \gamma ^{-1}\left( m\otimes
Q\right) \left( u_{3}\right) \cdot \left( \gamma ^{-1}\otimes \varepsilon
\right) \left( u_{4}\right) + \\
\delta _{\left\vert x\right\vert +\left\vert y\right\vert +\left\vert
z\right\vert ,s}\left( \left( \varepsilon \otimes \varepsilon \right)
+v\right) \left( Q\otimes m\right) \left( u_{1}\right) \cdot \omega
_{s}\left( u_{2}\right) \cdot \gamma ^{-1}\left( m\otimes Q\right) \left(
u_{3}\right) \cdot \left( \gamma ^{-1}\otimes \varepsilon \right) \left(
u_{4}\right) +%
\end{array}%
\right] \\
&=&\left[
\begin{array}{c}
\delta _{\left\vert x\right\vert +\left\vert y\right\vert +\left\vert
z\right\vert ,s}\left( \varepsilon \otimes \varepsilon \right) \left(
Q\otimes m\right) \left( u_{1}\right) \cdot \gamma ^{-1}\left( m\otimes
Q\right) \left( u_{2}\right) \cdot \left( \gamma ^{-1}\otimes \varepsilon
\right) \left( u_{3}\right) + \\
\delta _{\left\vert x\right\vert +\left\vert y\right\vert +\left\vert
z\right\vert ,s}v\left( Q\otimes m\right) \left( u_{1}\right) \cdot \gamma
^{-1}\left( m\otimes Q\right) \left( u_{2}\right) \cdot \left( \gamma
^{-1}\otimes \varepsilon \right) \left( u_{3}\right) + \\
\delta _{\left\vert x\right\vert +\left\vert y\right\vert +\left\vert
z\right\vert ,s}\left( \varepsilon \otimes v\right) \left( u_{1}\right)
\cdot \left( \varepsilon \otimes \varepsilon \right) \left( Q\otimes
m\right) \left( u_{2}\right) \cdot \gamma ^{-1}\left( m\otimes Q\right)
\left( u_{3}\right) \cdot \left( \gamma ^{-1}\otimes \varepsilon \right)
\left( u_{4}\right) + \\
\delta _{\left\vert x\right\vert +\left\vert y\right\vert +\left\vert
z\right\vert ,s}\left( \varepsilon \otimes v\right) \left( u_{1}\right)
\cdot v\left( Q\otimes m\right) \left( u_{2}\right) \cdot \gamma ^{-1}\left(
m\otimes Q\right) \left( u_{3}\right) \cdot \left( \gamma ^{-1}\otimes
\varepsilon \right) \left( u_{4}\right) + \\
\delta _{\left\vert x\right\vert +\left\vert y\right\vert +\left\vert
z\right\vert ,s}\left( \varepsilon \otimes \varepsilon \right) \left(
Q\otimes m\right) \left( u_{1}\right) \cdot \omega _{s}\left( u_{2}\right)
\cdot \gamma ^{-1}\left( m\otimes Q\right) \left( u_{3}\right) \cdot \left(
\gamma ^{-1}\otimes \varepsilon \right) \left( u_{4}\right) + \\
\delta _{\left\vert x\right\vert +\left\vert y\right\vert +\left\vert
z\right\vert ,s}v\left( Q\otimes m\right) \left( u_{1}\right) \cdot \omega
_{s}\left( u_{2}\right) \cdot \gamma ^{-1}\left( m\otimes Q\right) \left(
u_{3}\right) \cdot \left( \gamma ^{-1}\otimes \varepsilon \right) \left(
u_{4}\right)%
\end{array}%
\right] \\
&\overset{\left\vert u_{1}\right\vert +\left\vert u_{2}\right\vert \leq s}{=}%
&\left[
\begin{array}{c}
\delta _{\left\vert x\right\vert +\left\vert y\right\vert +\left\vert
z\right\vert ,s}\gamma ^{-1}\left( m\otimes Q\right) \left( u_{1}\right)
\cdot \left( \gamma ^{-1}\otimes \varepsilon \right) \left( u_{2}\right) +
\\
\delta _{\left\vert x\right\vert +\left\vert y\right\vert +\left\vert
z\right\vert ,s}v\left( Q\otimes m\right) \left( u_{1}\right) \cdot \gamma
^{-1}\left( m\otimes Q\right) \left( u_{2}\right) \cdot \left( \gamma
^{-1}\otimes \varepsilon \right) \left( u_{3}\right) + \\
\delta _{\left\vert x\right\vert +\left\vert y\right\vert +\left\vert
z\right\vert ,s}\left( \varepsilon \otimes v\right) \left( u_{1}\right)
\cdot \gamma ^{-1}\left( m\otimes Q\right) \left( u_{2}\right) \cdot \left(
\gamma ^{-1}\otimes \varepsilon \right) \left( u_{3}\right) +0+ \\
\delta _{\left\vert x\right\vert +\left\vert y\right\vert +\left\vert
z\right\vert ,s}\omega _{s}\left( u_{1}\right) \cdot \gamma ^{-1}\left(
m\otimes Q\right) \left( u_{2}\right) \cdot \left( \gamma ^{-1}\otimes
\varepsilon \right) \left( u_{3}\right) +0%
\end{array}%
\right] \\
&\overset{(\ref{form:gamma-1})}{=}&\left[
\begin{array}{c}
\delta _{\left\vert x\right\vert +\left\vert y\right\vert +\left\vert
z\right\vert ,s}\left( \left( \varepsilon \otimes \varepsilon \right)
-v\right) \left( m\otimes Q\right) \left( u_{1}\right) \cdot \left( \gamma
^{-1}\otimes \varepsilon \right) \left( u_{2}\right) + \\
\delta _{\left\vert x\right\vert +\left\vert y\right\vert +\left\vert
z\right\vert ,s}v\left( Q\otimes m\right) \left( u_{1}\right) \cdot \left(
\left( \varepsilon \otimes \varepsilon \right) -v\right) \left( m\otimes
Q\right) \left( u_{2}\right) \cdot \left( \gamma ^{-1}\otimes \varepsilon
\right) \left( u_{3}\right) + \\
\delta _{\left\vert x\right\vert +\left\vert y\right\vert +\left\vert
z\right\vert ,s}\left( \varepsilon \otimes v\right) \left( u_{1}\right)
\cdot \left( \left( \varepsilon \otimes \varepsilon \right) -v\right) \left(
m\otimes Q\right) \left( u_{2}\right) \cdot \left( \gamma ^{-1}\otimes
\varepsilon \right) \left( u_{3}\right) + \\
\delta _{\left\vert x\right\vert +\left\vert y\right\vert +\left\vert
z\right\vert ,s}\omega _{s}\left( u_{1}\right) \cdot \left( \left(
\varepsilon \otimes \varepsilon \right) -v\right) \left( m\otimes Q\right)
\left( u_{2}\right) \cdot \left( \gamma ^{-1}\otimes \varepsilon \right)
\left( u_{3}\right)%
\end{array}%
\right] \\
&=&\left[
\begin{array}{c}
\delta _{\left\vert x\right\vert +\left\vert y\right\vert +\left\vert
z\right\vert ,s}\left( \varepsilon \otimes \varepsilon \right) \left(
m\otimes Q\right) \left( u_{1}\right) \cdot \left( \gamma ^{-1}\otimes
\varepsilon \right) \left( u_{2}\right) + \\
-\delta _{\left\vert x\right\vert +\left\vert y\right\vert +\left\vert
z\right\vert ,s}v\left( m\otimes Q\right) \left( u_{1}\right) \cdot \left(
\gamma ^{-1}\otimes \varepsilon \right) \left( u_{2}\right) + \\
\delta _{\left\vert x\right\vert +\left\vert y\right\vert +\left\vert
z\right\vert ,s}v\left( Q\otimes m\right) \left( u_{1}\right) \cdot \left(
\varepsilon \otimes \varepsilon \right) \left( m\otimes Q\right) \left(
u_{2}\right) \cdot \left( \gamma ^{-1}\otimes \varepsilon \right) \left(
u_{3}\right) + \\
-\delta _{\left\vert x\right\vert +\left\vert y\right\vert +\left\vert
z\right\vert ,s}v\left( Q\otimes m\right) \left( u_{1}\right) \cdot v\left(
m\otimes Q\right) \left( u_{2}\right) \cdot \left( \gamma ^{-1}\otimes
\varepsilon \right) \left( u_{3}\right) + \\
\delta _{\left\vert x\right\vert +\left\vert y\right\vert +\left\vert
z\right\vert ,s}\left( \varepsilon \otimes v\right) \left( u_{1}\right)
\cdot \left( \varepsilon \otimes \varepsilon \right) \left( m\otimes
Q\right) \left( u_{2}\right) \cdot \left( \gamma ^{-1}\otimes \varepsilon
\right) \left( u_{3}\right) + \\
-\delta _{\left\vert x\right\vert +\left\vert y\right\vert +\left\vert
z\right\vert ,s}\left( \varepsilon \otimes v\right) \left( u_{1}\right)
\cdot v\left( m\otimes Q\right) \left( u_{2}\right) \cdot \left( \gamma
^{-1}\otimes \varepsilon \right) \left( u_{3}\right) + \\
\delta _{\left\vert x\right\vert +\left\vert y\right\vert +\left\vert
z\right\vert ,s}\omega _{s}\left( u_{1}\right) \cdot \left( \varepsilon
\otimes \varepsilon \right) \left( m\otimes Q\right) \left( u_{2}\right)
\cdot \left( \gamma ^{-1}\otimes \varepsilon \right) \left( u_{3}\right) \\
-\delta _{\left\vert x\right\vert +\left\vert y\right\vert +\left\vert
z\right\vert ,s}\omega _{s}\left( u_{1}\right) \cdot v\left( m\otimes
Q\right) \left( u_{2}\right) \cdot \left( \gamma ^{-1}\otimes \varepsilon
\right) \left( u_{3}\right)%
\end{array}%
\right] \\
&\overset{\left\vert u_{1}\right\vert +\left\vert u_{2}\right\vert \leq s}{=}%
&\left[
\begin{array}{c}
\delta _{\left\vert x\right\vert +\left\vert y\right\vert +\left\vert
z\right\vert ,s}\left( \gamma ^{-1}\otimes \varepsilon \right) \left(
u\right) + \\
-\delta _{\left\vert x\right\vert +\left\vert y\right\vert +\left\vert
z\right\vert ,s}v\left( m\otimes Q\right) \left( u_{1}\right) \cdot \left(
\gamma ^{-1}\otimes \varepsilon \right) \left( u_{2}\right) + \\
\delta _{\left\vert x\right\vert +\left\vert y\right\vert +\left\vert
z\right\vert ,s}v\left( Q\otimes m\right) \left( u_{1}\right) \cdot \left(
\gamma ^{-1}\otimes \varepsilon \right) \left( u_{2}\right) -0+ \\
\delta _{\left\vert x\right\vert +\left\vert y\right\vert +\left\vert
z\right\vert ,s}\left( \varepsilon \otimes v\right) \left( u_{1}\right)
\cdot \left( \gamma ^{-1}\otimes \varepsilon \right) \left( u_{2}\right) -0+
\\
\delta _{\left\vert x\right\vert +\left\vert y\right\vert +\left\vert
z\right\vert ,s}\omega _{s}\left( u_{1}\right) \cdot \left( \gamma
^{-1}\otimes \varepsilon \right) \left( u_{2}\right) -0%
\end{array}%
\right] \\
&\overset{(\ref{form:gamma-1})}{=}&\left[
\begin{array}{c}
\delta _{\left\vert x\right\vert +\left\vert y\right\vert +\left\vert
z\right\vert ,s}\left( \left( \left( \varepsilon \otimes \varepsilon \right)
-v\right) \otimes \varepsilon \right) \left( u\right) + \\
-\delta _{\left\vert x\right\vert +\left\vert y\right\vert +\left\vert
z\right\vert ,s}v\left( m\otimes Q\right) \left( u_{1}\right) \cdot \left(
\left( \left( \varepsilon \otimes \varepsilon \right) -v\right) \otimes
\varepsilon \right) \left( u_{2}\right) + \\
\delta _{\left\vert x\right\vert +\left\vert y\right\vert +\left\vert
z\right\vert ,s}v\left( Q\otimes m\right) \left( u_{1}\right) \cdot \left(
\left( \left( \varepsilon \otimes \varepsilon \right) -v\right) \otimes
\varepsilon \right) \left( u_{2}\right) + \\
\delta _{\left\vert x\right\vert +\left\vert y\right\vert +\left\vert
z\right\vert ,s}\left( \varepsilon \otimes v\right) \left( u_{1}\right)
\cdot \left( \left( \left( \varepsilon \otimes \varepsilon \right) -v\right)
\otimes \varepsilon \right) \left( u_{2}\right) + \\
\delta _{\left\vert x\right\vert +\left\vert y\right\vert +\left\vert
z\right\vert ,s}\omega _{s}\left( u_{1}\right) \cdot \left( \left( \left(
\varepsilon \otimes \varepsilon \right) -v\right) \otimes \varepsilon
\right) \left( u_{2}\right)%
\end{array}%
\right] \\
&=&\left[
\begin{array}{c}
\delta _{\left\vert x\right\vert +\left\vert y\right\vert +\left\vert
z\right\vert ,s}\left( \varepsilon \otimes \varepsilon \otimes \varepsilon
\right) \left( u\right) -\delta _{\left\vert x\right\vert +\left\vert
y\right\vert +\left\vert z\right\vert ,s}\left( v\otimes \varepsilon \right)
\left( u\right) \\
-\delta _{\left\vert x\right\vert +\left\vert y\right\vert +\left\vert
z\right\vert ,s}v\left( m\otimes Q\right) \left( u_{1}\right) \cdot \left(
\varepsilon \otimes \varepsilon \otimes \varepsilon \right) \left(
u_{2}\right) + \\
\delta _{\left\vert x\right\vert +\left\vert y\right\vert +\left\vert
z\right\vert ,s}v\left( m\otimes Q\right) \left( u_{1}\right) \cdot \left(
v\otimes \varepsilon \right) \left( u_{2}\right) + \\
\delta _{\left\vert x\right\vert +\left\vert y\right\vert +\left\vert
z\right\vert ,s}v\left( Q\otimes m\right) \left( u_{1}\right) \cdot \left(
\varepsilon \otimes \varepsilon \otimes \varepsilon \right) \left(
u_{2}\right) + \\
-\delta _{\left\vert x\right\vert +\left\vert y\right\vert +\left\vert
z\right\vert ,s}v\left( Q\otimes m\right) \left( u_{1}\right) \cdot \left(
v\otimes \varepsilon \right) \left( u_{2}\right) + \\
\delta _{\left\vert x\right\vert +\left\vert y\right\vert +\left\vert
z\right\vert ,s}\left( \varepsilon \otimes v\right) \left( u_{1}\right)
\cdot \left( \varepsilon \otimes \varepsilon \otimes \varepsilon \right)
\left( u_{2}\right) + \\
-\delta _{\left\vert x\right\vert +\left\vert y\right\vert +\left\vert
z\right\vert ,s}\left( \varepsilon \otimes v\right) \left( u_{1}\right)
\cdot \left( v\otimes \varepsilon \right) \left( u_{2}\right) + \\
\delta _{\left\vert x\right\vert +\left\vert y\right\vert +\left\vert
z\right\vert ,s}\omega _{s}\left( u_{1}\right) \cdot \left( \varepsilon
\otimes \varepsilon \otimes \varepsilon \right) \left( u_{2}\right) + \\
-\delta _{\left\vert x\right\vert +\left\vert y\right\vert +\left\vert
z\right\vert ,s}\omega _{s}\left( u_{1}\right) \cdot \left( v\otimes
\varepsilon \right) \left( u_{2}\right)%
\end{array}%
\right] \\
&\overset{\left\vert u_{1}\right\vert +\left\vert u_{2}\right\vert \leq s}{=}%
&\left[
\begin{array}{c}
0-\delta _{\left\vert x\right\vert +\left\vert y\right\vert +\left\vert
z\right\vert ,s}\left( v\otimes \varepsilon \right) \left( u\right) \\
-\delta _{\left\vert x\right\vert +\left\vert y\right\vert +\left\vert
z\right\vert ,s}v\left( m\otimes Q\right) \left( u\right) +0+ \\
\delta _{\left\vert x\right\vert +\left\vert y\right\vert +\left\vert
z\right\vert ,s}v\left( Q\otimes m\right) \left( u\right) -0+ \\
\delta _{\left\vert x\right\vert +\left\vert y\right\vert +\left\vert
z\right\vert ,s}\left( \varepsilon \otimes v\right) \left( u\right) -0+ \\
\delta _{\left\vert x\right\vert +\left\vert y\right\vert +\left\vert
z\right\vert ,s}\omega _{s}\left( u\right) -0%
\end{array}%
\right] \\
&=&\left[
\begin{array}{c}
\delta _{\left\vert x\right\vert +\left\vert y\right\vert +\left\vert
z\right\vert ,s}\left[ \left( \varepsilon \otimes v\right) \left( u\right)
+v\left( Q\otimes m\right) \left( u\right) -v\left( m\otimes Q\right) \left(
u\right) -\left( v\otimes \varepsilon \right) \left( u\right) \right] + \\
\delta _{\left\vert x\right\vert +\left\vert y\right\vert +\left\vert
z\right\vert ,s}\omega _{s}\left( u\right)%
\end{array}%
\right] .
\end{eqnarray*}
\end{invisible}

Now $\omega _{s}\left( u\right) =\overline{\omega }_{s}\left( \overline{u}%
\right) $ while one proves that $\left( \varepsilon \otimes v\right) \left(
u\right) =\left( \varepsilon _{\overline{Q}}\otimes \overline{v}\right)
\left( \overline{u}\right) ,$ $\delta _{\left\vert x\right\vert +\left\vert
y\right\vert +\left\vert z\right\vert ,s}v\left( m\otimes Q\right) \left(
u\right) =\delta _{\left\vert x\right\vert +\left\vert y\right\vert
+\left\vert z\right\vert ,s}\overline{v}\left( m_{\overline{Q}}\otimes
\overline{Q}\right) \left( \overline{u}\right) $ and similarly for the other
pieces of the equality.

\begin{invisible}
Here is the full computation%
\begin{eqnarray*}
\left( \varepsilon \otimes v\right) \left( u\right) &=&\left( \varepsilon
\otimes v\right) \left( x\otimes y\otimes z\right) =\varepsilon \left(
x\right) v\left( y\otimes z\right) =\varepsilon \left( x\right) \overline{v}%
\left( \overline{y}\otimes \overline{z}\right) \\
&=&\delta _{\left\vert x\right\vert ,0}\varepsilon \left( x\right) \overline{%
v}\left( \overline{y}\otimes \overline{z}\right) =\varepsilon _{\overline{Q}%
}\left( \overline{x}\right) \overline{v}\left( \overline{y}\otimes \overline{%
z}\right) =\left( \varepsilon _{\overline{Q}}\otimes \overline{v}\right)
\left( \overline{u}\right) ,
\end{eqnarray*}%
and taking $x=x^{a,l}$ and $y=x^{a^{\prime },l^{\prime }}$%
\begin{eqnarray*}
\delta _{\left\vert x\right\vert +\left\vert y\right\vert +\left\vert
z\right\vert ,s}v\left( m\otimes Q\right) \left( u\right) &=&\delta
_{\left\vert x\right\vert +\left\vert y\right\vert +\left\vert z\right\vert
,s}v\left( x\cdot y\otimes z\right) \overset{(\ref{form:GelYD1})}{=}\delta
_{a+a^{\prime }+\left\vert z\right\vert ,s}v\left( \sum_{w\leq a+a^{\prime
}}\sum_{t}\mu _{w,t}^{a,l,a^{\prime },l^{\prime }}x^{w,t}\otimes z\right) \\
&=&\delta _{a+a^{\prime }+\left\vert z\right\vert ,s}\sum_{w\leq a+a^{\prime
}}\sum_{t}\mu _{w,t}^{a,l,a^{\prime },l^{\prime }}v\left( x^{w,t}\otimes
z\right) \\
&=&\delta _{a+a^{\prime }+\left\vert z\right\vert ,s}\sum_{w\leq a+a^{\prime
}}\sum_{t}\mu _{w,t}^{a,l,a^{\prime },l^{\prime }}\overline{v}\left(
\overline{x^{w,t}}\otimes \overline{z}\right) \\
&&\overset{(\ref{form:vbarYD})}{=}\delta _{a+a^{\prime }+\left\vert
z\right\vert ,s}\sum_{w\leq a+a^{\prime }}\sum_{t}\mu _{w,t}^{a,l,a^{\prime
},l^{\prime }}\delta _{w+\left\vert z\right\vert ,s}\overline{v}\left(
\overline{x^{w,t}}\otimes \overline{z}\right) \\
&=&\delta _{a+a^{\prime }+\left\vert z\right\vert ,s}\sum_{t}\mu
_{a+a^{\prime },t}^{a,l,a^{\prime },l^{\prime }}\overline{v}\left( \overline{%
x^{a+a^{\prime },t}}\otimes \overline{z}\right) \\
&&\overset{(\ref{form:GelYD2})}{=}\delta _{a+a^{\prime }+\left\vert
z\right\vert ,s}\overline{v}\left( \overline{x^{a,l}}\cdot \overline{%
x^{a^{\prime },l^{\prime }}}\otimes \overline{z}\right) \\
&=&\delta _{\left\vert x\right\vert +\left\vert y\right\vert +\left\vert
z\right\vert ,s}\overline{v}\left( \overline{x}\cdot \overline{y}\otimes
\overline{z}\right) \\
&=&\delta _{\left\vert x\right\vert +\left\vert y\right\vert +\left\vert
z\right\vert ,s}\overline{v}\left( m_{\overline{Q}}\otimes \overline{Q}%
\right) \left( \overline{u}\right)
\end{eqnarray*}%
Similarly one gets $\left( v\otimes \varepsilon \right) \left( u\right)
=\left( \overline{v}\otimes \varepsilon _{\overline{Q}}\right) \left(
\overline{u}\right) $ and $\delta _{\left\vert x\right\vert +\left\vert
y\right\vert +\left\vert z\right\vert ,s}v\left( Q\otimes m\right) \left(
u\right) =\delta _{\left\vert x\right\vert +\left\vert y\right\vert
+\left\vert z\right\vert ,s}\overline{v}\left( \overline{Q}\otimes m_{%
\overline{Q}}\right) \left( \overline{u}\right) .$
\end{invisible}

Thus one gets%
\begin{eqnarray*}
\left( \omega ^{\gamma }\right) _{s}\left( u\right) &=&\left[
\begin{array}{c}
\delta _{\left\vert x\right\vert +\left\vert y\right\vert +\left\vert
z\right\vert ,s}\left[ \left( \varepsilon _{\overline{Q}}\otimes \overline{v}%
\right) \left( \overline{u}\right) +\overline{v}\left( \overline{Q}\otimes
m_{\overline{Q}}\right) \left( \overline{u}\right) -\overline{v}\left( m_{%
\overline{Q}}\otimes \overline{Q}\right) \left( \overline{u}\right) -\left(
\overline{v}\otimes \varepsilon _{\overline{Q}}\right) \left( \overline{u}%
\right) \right] + \\
+\delta _{\left\vert x\right\vert +\left\vert y\right\vert +\left\vert
z\right\vert ,s}\overline{\omega }_{s}\left( \overline{u}\right)%
\end{array}%
\right] \\
&=&-\delta _{\left\vert x\right\vert +\left\vert y\right\vert +\left\vert
z\right\vert ,s}\partial ^{2}\overline{v}+\delta _{\left\vert x\right\vert
+\left\vert y\right\vert +\left\vert z\right\vert ,s}\overline{\omega }%
_{s}\left( \overline{u}\right) =0.
\end{eqnarray*}%
For $0\leq t\leq s-1$, analogously to the above, we compute
\begin{eqnarray*}
\left( \omega ^{\gamma }\right) _{t}\left( u\right) &=&\delta _{\left\vert
x\right\vert +\left\vert y\right\vert +\left\vert z\right\vert ,t}\omega
^{\gamma }\left( u\right) \\
&=&\delta _{\left\vert x\right\vert +\left\vert y\right\vert +\left\vert
z\right\vert ,t}\left[ \left( \varepsilon \otimes \gamma \right) \ast \gamma
\left( Q\otimes m\right) \ast \omega \ast \gamma ^{-1}\left( m\otimes
Q\right) \ast \left( \gamma ^{-1}\otimes \varepsilon \right) \right] \left(
u\right) \\
&=&\delta _{\left\vert x\right\vert +\left\vert y\right\vert +\left\vert
z\right\vert ,t}\left[ \left( \varepsilon \otimes \gamma \right) \ast \gamma
\left( Q\otimes m\right) \ast \omega _{0}\ast \gamma ^{-1}\left( m\otimes
Q\right) \ast \left( \gamma ^{-1}\otimes \varepsilon \right) \right] \left(
u\right) \\
&\overset{(\ref{form:Omega0YD})}{=}&\delta _{\left\vert x\right\vert
+\left\vert y\right\vert +\left\vert z\right\vert ,t}\left[ \left(
\varepsilon \otimes \gamma \right) \ast \gamma \left( Q\otimes m\right) \ast
\gamma ^{-1}\left( m\otimes Q\right) \ast \left( \gamma ^{-1}\otimes
\varepsilon \right) \right] \left( u\right) \\
&=&\delta _{\left\vert x\right\vert +\left\vert y\right\vert +\left\vert
z\right\vert ,t}\left( \varepsilon \otimes \varepsilon \otimes \varepsilon
\right) \left( u\right) =\delta _{0,t}\left( \varepsilon \otimes \varepsilon
\otimes \varepsilon \right) \left( u\right) .
\end{eqnarray*}

\begin{invisible}
Here are the datails%
\begin{eqnarray*}
&&\left( \omega ^{\gamma }\right) _{t}\left( u\right) =\delta _{\left\vert
x\right\vert +\left\vert y\right\vert +\left\vert z\right\vert ,t}\omega
^{\gamma }\left( u\right) \\
&=&\delta _{\left\vert x\right\vert +\left\vert y\right\vert +\left\vert
z\right\vert ,t}\left[ \left( \varepsilon \otimes \gamma \right) \ast \gamma
\left( Q\otimes m\right) \ast \omega \ast \gamma ^{-1}\left( m\otimes
Q\right) \ast \left( \gamma ^{-1}\otimes \varepsilon \right) \right] \left(
u\right) \\
&=&\delta _{\left\vert x\right\vert +\left\vert y\right\vert +\left\vert
z\right\vert ,t}\left[ \left( \varepsilon \otimes \gamma \right) \ast \gamma
\left( Q\otimes m\right) \ast \left( \sum\limits_{t^{\prime }\in \N_0%
}\omega _{t^{\prime }}\right) \ast \gamma ^{-1}\left( m\otimes Q\right) \ast
\left( \gamma ^{-1}\otimes \varepsilon \right) \right] \left( u\right) \\
&=&\delta _{\left\vert x\right\vert +\left\vert y\right\vert +\left\vert
z\right\vert ,t}\left[ \left( \varepsilon \otimes \gamma \right) \ast \gamma
\left( Q\otimes m\right) \ast \omega _{0}\ast \gamma ^{-1}\left( m\otimes
Q\right) \ast \left( \gamma ^{-1}\otimes \varepsilon \right) \right] \left(
u\right) \\
&\overset{(\ref{form:Omega0YD})}{=}&\delta _{\left\vert x\right\vert
+\left\vert y\right\vert +\left\vert z\right\vert ,t}\left[ \left(
\varepsilon \otimes \gamma \right) \ast \gamma \left( Q\otimes m\right) \ast
\gamma ^{-1}\left( m\otimes Q\right) \ast \left( \gamma ^{-1}\otimes
\varepsilon \right) \right] \left( u\right) \\
&=&\delta _{\left\vert x\right\vert +\left\vert y\right\vert +\left\vert
z\right\vert ,t}\left( \varepsilon \otimes \gamma \right) \left(
u_{1}\right) \cdot \gamma \left( Q\otimes m\right) \left( u_{2}\right) \cdot
\gamma ^{-1}\left( m\otimes Q\right) \left( u_{3}\right) \cdot \left( \gamma
^{-1}\otimes \varepsilon \right) \left( u_{4}\right) \\
&=&\delta _{\left\vert x\right\vert +\left\vert y\right\vert +\left\vert
z\right\vert ,t}\left( \left( \varepsilon \otimes \varepsilon \right)
+v\right) \left( u_{1}\right) \cdot \gamma \left( Q\otimes m\right) \left(
u_{2}\right) \cdot \gamma ^{-1}\left( m\otimes Q\right) \left( u_{3}\right)
\cdot \left( \gamma ^{-1}\otimes \varepsilon \right) \left( u_{4}\right) \\
&\overset{\left\vert u_{1}\right\vert <s}{=}&\delta _{\left\vert
x\right\vert +\left\vert y\right\vert +\left\vert z\right\vert ,t}\left(
\varepsilon \otimes \varepsilon \right) \left( u_{1}\right) \cdot \gamma
\left( Q\otimes m\right) \left( u_{2}\right) \cdot \gamma ^{-1}\left(
m\otimes Q\right) \left( u_{3}\right) \cdot \left( \gamma ^{-1}\otimes
\varepsilon \right) \left( u_{4}\right) \\
&=&\delta _{\left\vert x\right\vert +\left\vert y\right\vert +\left\vert
z\right\vert ,t}\gamma \left( Q\otimes m\right) \left( u_{1}\right) \cdot
\gamma ^{-1}\left( m\otimes Q\right) \left( u_{2}\right) \cdot \left( \gamma
^{-1}\otimes \varepsilon \right) \left( u_{3}\right) \\
&=&\delta _{\left\vert x\right\vert +\left\vert y\right\vert +\left\vert
z\right\vert ,t}\left( \left( \varepsilon \otimes \varepsilon \right)
+v\right) \left( Q\otimes m\right) \left( u_{1}\right) \cdot \gamma
^{-1}\left( m\otimes Q\right) \left( u_{2}\right) \cdot \left( \gamma
^{-1}\otimes \varepsilon \right) \left( u_{3}\right) \\
&\overset{\left\vert u_{1}\right\vert <s}{=}&\delta _{\left\vert
x\right\vert +\left\vert y\right\vert +\left\vert z\right\vert ,t}\left(
\varepsilon \otimes \varepsilon \right) \left( Q\otimes m\right) \left(
u_{1}\right) \cdot \gamma ^{-1}\left( m\otimes Q\right) \left( u_{2}\right)
\cdot \left( \gamma ^{-1}\otimes \varepsilon \right) \left( u_{3}\right) \\
&=&\delta _{\left\vert x\right\vert +\left\vert y\right\vert +\left\vert
z\right\vert ,t}\gamma ^{-1}\left( m\otimes Q\right) \left( u_{1}\right)
\cdot \left( \gamma ^{-1}\otimes \varepsilon \right) \left( u_{2}\right) \\
&\overset{(\ref{form:gamma-1})}{=}&\delta _{\left\vert x\right\vert
+\left\vert y\right\vert +\left\vert z\right\vert ,t}\left( \left(
\varepsilon \otimes \varepsilon \right) -v\right) \left( m\otimes Q\right)
\left( u_{1}\right) \cdot \left( \gamma ^{-1}\otimes \varepsilon \right)
\left( u_{2}\right) \\
&\overset{\left\vert u_{1}\right\vert <s}{=}&\delta _{\left\vert
x\right\vert +\left\vert y\right\vert +\left\vert z\right\vert ,t}\left(
\varepsilon \otimes \varepsilon \right) \left( m\otimes Q\right) \left(
u_{1}\right) \cdot \left( \gamma ^{-1}\otimes \varepsilon \right) \left(
u_{2}\right) \\
&=&\delta _{\left\vert x\right\vert +\left\vert y\right\vert +\left\vert
z\right\vert ,t}\left( \gamma ^{-1}\otimes \varepsilon \right) \left(
u\right) \\
&\overset{(\ref{form:gamma-1})}{=}&\delta _{\left\vert x\right\vert
+\left\vert y\right\vert +\left\vert z\right\vert ,t}\left( \left( \left(
\varepsilon \otimes \varepsilon \right) -v\right) \otimes \varepsilon
\right) \left( u\right) \\
&\overset{\left\vert u_{1}\right\vert <s}{=}&\delta _{\left\vert
x\right\vert +\left\vert y\right\vert +\left\vert z\right\vert ,t}\left(
\varepsilon \otimes \varepsilon \otimes \varepsilon \right) \left( u\right)
\\
&=&\delta _{0,t}\left( \varepsilon \otimes \varepsilon \otimes \varepsilon
\right) \left( u\right) =\delta _{0,t}\left( \varepsilon _{\overline{Q}%
}\otimes \varepsilon _{\overline{Q}}\otimes \varepsilon _{\overline{Q}%
}\right) \left( \overline{u}\right) .
\end{eqnarray*}
\end{invisible}

Therefore we can now repeat the argument on $\omega ^{\gamma }$ instead of $%
\omega .$ Deforming several times we will get a reassociator, say $\omega
^{\prime },$ whose first non trivial component $\omega _{t}^{\prime }$, with
$t\neq 0$, exceeds the dimension of $Q.$ In other words $\omega ^{\prime
}=\omega _{0}^{\prime }$ which is trivial. Hence $Q$ is gauge equivalent to
a connected bialgebra in ${_{H}^{H}\mathcal{YD}}$.
\end{proof}

\section{Invariants}\label{sec:4}

Given a $\Bbbk $-algebra $A,$ we denote by $\mathrm{H}^{n}\left( A,-\right) $
the $n$-th right derived functor of $\mathrm{Hom}_{A,A}\left( A,-\right) $
in the category of $A$-bimodules. In other words, for every $A$-bimodule $M$%
, $\mathrm{H}^{n}\left( A,M\right) $ is the Hochschild cohomology group of $%
A $ with coefficients in $M$. Denote by $\mathrm{Z}^{n}\left( A,M\right) $
and $\mathrm{B}^{n}\left( A,M\right) $ the abelian groups of $n$-cocycles
and of $n$-coboundaries respectively.

Let $H$ be a Hopf algebra, let $B$ be a left $H$-module algebra and let $M$
be a $B\#H$-bimodule, where $B\#H$ denotes the smash product algebra, see
e.g. \cite[Definition 4.1.3]{Mo}. Then $\mathrm{H}^{n}\left( B,M\right) $
becomes an $H$-bimodule as follows. Its structure of left $H$-module is
given via $\varepsilon _{H}$ and its structure of right $H$-module is
defined, for every $f\in \mathrm{Z}^{n}\left( B,M\right) $ and $h\in H,$ by
setting
\begin{equation*}
\left[ f\right] h:=\left[ \chi _{n}^{h}\left( M\right) \left( f\right) %
\right]
\end{equation*}%
where, for every $k\in \Bbbk ,b_{1},\ldots ,b_{n}\in B,$ we set%
\begin{eqnarray*}
\chi _{0}^{h}\left( M\right) \left( f\right) \left( k\right) := &&\left(
1_{B}\#S\left( h_{1}\right) \right) f\left( k\right) \left(
1_{B}\#h_{2}\right) \text{ for }n=0\text{ while and for }n\geq 1 \\
\chi _{n}^{h}\left( M\right) \left( f\right) \left( b_{1}\otimes
b_{2}\otimes \cdots \otimes b_{n}\right) := &&\left( 1_{B}\#S\left(
h_{1}\right) \right) f\left( h_{2}b_{1}\otimes h_{3}b_{2}\otimes \cdots
\otimes h_{n+1}b_{n}\right) \left( 1_{B}\#h_{n+2}\right) .
\end{eqnarray*}

Moreover
\begin{equation}
\partial ^{n}\circ \chi _{n}^{h}\left( M\right) =\chi _{n+1}^{h}\left(
M\right) \circ \partial ^{n},\text{ for every }n\geq -1,
\label{form:PartialChi}
\end{equation}%
where $\partial ^{n}:\mathrm{Hom}_{\Bbbk }\left( B^{\otimes n},M\right)
\rightarrow \mathrm{Hom}_{\Bbbk }\left( B^{\otimes \left( n+1\right)
},M\right) $ denotes the differential of the usual Hochschild cohomology.

Denote by $\mathrm{H}^{n}\left( B,M\right) ^{H}$ the space of $H$-invariant
elements of $\mathrm{H}^{n}\left( B,M\right) $.

\begin{proposition}
\label{pro:Dragos}Let $H$ be a semisimple Hopf algebra and let $B$ be a left
$H$-module algebra. Denote by $A:=B\#H$. Then, for each $n\in \N_0$
and for every $A$-bimodule $M$%
\begin{equation*}
\mathrm{H}^{n}\left( B\#H,M\right) \cong \mathrm{H}^{n}\left( B,M\right)
^{H}.
\end{equation*}
\end{proposition}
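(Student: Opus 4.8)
The plan is to realise $\mathrm{H}^{n}(B\#H,M)$ as the $n$-th cohomology of the $H$-fixed subcomplex of the Hochschild cochain complex of $B$, where $H$ acts through the operators $\chi^{h}_{\bullet}(M)$, and then to use semisimplicity of $H$ to interchange ``take $H$-invariants'' with ``take cohomology''. Write $A:=B\#H$ and let $C^{\bullet}(B,M)=\mathrm{Hom}_{\Bbbk}(B^{\otimes\bullet},M)$, $C^{\bullet}(A,M)=\mathrm{Hom}_{\Bbbk}(A^{\otimes\bullet},M)$ be the usual Hochschild cochain complexes, computing $\mathrm{H}^{\bullet}(B,M)$ and $\mathrm{H}^{\bullet}(A,M)$. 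By (\ref{form:PartialChi}) each $\chi^{h}_{\bullet}(M)$ is a cochain endomorphism of $C^{\bullet}(B,M)$, and $h\mapsto\chi^{h}_{\bullet}(M)$ is an $H$-action (inducing on cohomology the right $H$-module structure recalled before the statement); thus $C^{\bullet}(B,M)^{H}$ is a subcomplex. The argument splits into (i) $\mathrm{H}^{n}\bigl(C^{\bullet}(B,M)^{H}\bigr)=\mathrm{H}^{n}(B,M)^{H}$, and (ii) $\mathrm{H}^{n}(A,M)\cong\mathrm{H}^{n}\bigl(C^{\bullet}(B,M)^{H}\bigr)$.

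Part (i) is the only place semisimplicity is genuinely used, and it is short. Since $H$ is semisimple it is unimodular and carries a two-sided integral $\Lambda$ with $\varepsilon(\Lambda)=1$. Then $e:=\chi^{\Lambda}_{\bullet}(M)$ is a morphism of complexes with $e^{2}=e$ (because $\Lambda^{2}=\varepsilon(\Lambda)\Lambda$) and with image exactly $C^{\bullet}(B,M)^{H}$ (because $h\cdot(\Lambda\cdot f)=\varepsilon(h)\,\Lambda\cdot f$, while $\Lambda\cdot g=g$ for $H$-invariant $g$). Hence $C^{\bullet}(B,M)=C^{\bullet}(B,M)^{H}\oplus(\mathrm{id}-e)C^{\bullet}(B,M)$ as complexes; passing to cohomology, $e$ induces the averaging projection of $\mathrm{H}^{n}(B,M)$ onto its $H$-invariants, which gives (i).

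For Part (ii) I would use the two-sided bar resolution $\beta_{\bullet}B$ of $B$ by $B$-bimodules, $\beta_{n}B=B^{\otimes(n+2)}$. The diagonal $H$-action on $B^{\otimes(n+2)}$ commutes with the outer $B$-bimodule structure and with the bar differential, so $\beta_{\bullet}B$ is a complex of $(B\otimes B^{\mathrm{op}})\#H$-modules; inducing it canonically to a complex of $A$-bimodules yields a resolution of $A$ whose terms are projective relative to the subalgebra $H\otimes H^{\mathrm{op}}$. As $H$ is semisimple it is separable (\cite[Corollary 3.7]{St}, \cite[Theorem 2.34]{AMS}), so $H\otimes H^{\mathrm{op}}$ is a separable subalgebra of $A\otimes A^{\mathrm{op}}$ and this relatively projective resolution computes the absolute Hochschild cohomology $\mathrm{H}^{\bullet}(A,M)$; applying $\mathrm{Hom}_{A,A}(-,M)$ and carrying out the canonical identifications turns it into the complex $\mathrm{Hom}_{B,B}(\beta_{\bullet}B,M)^{H}=C^{\bullet}(B,M)^{H}$, which is (ii). Alternatively one argues by explicit cochain maps: the algebra inclusion $b\mapsto b\#1$ gives a restriction cochain map $C^{\bullet}(A,M)\to C^{\bullet}(B,M)$ whose composite with $e$ is a cochain map $\rho\colon C^{\bullet}(A,M)\to C^{\bullet}(B,M)^{H}$; one then writes down an extension map $\lambda$ in the opposite direction, built from the $H$-module-algebra structure of $B$ and the iterated coproduct of $H$, and checks $\rho\lambda=\mathrm{id}$ and $\lambda\rho\simeq\mathrm{id}$, the homotopy coming from $\Lambda$ and the contracting homotopy of the bar complex of $A$.

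I expect Part (ii) to be the main obstacle: the bimodule bookkeeping that identifies $\mathrm{Hom}_{A,A}(-,M)$ of the induced resolution with $C^{\bullet}(B,M)^{H}$ — or, in the hands-on version, the verification that $\rho$ and $\lambda$ are well defined and mutually homotopy-inverse. Part (i) and the assembly are routine: combining (i) and (ii) gives $\mathrm{H}^{n}(B\#H,M)\cong\mathrm{H}^{n}\bigl(C^{\bullet}(B,M)^{H}\bigr)=\mathrm{H}^{n}(B,M)^{H}$ for every $n\in\N_{0}$ and every $A$-bimodule $M$, and naturality is clear from the construction.
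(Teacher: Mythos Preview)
Your approach is correct but takes a genuinely different route from the paper. The paper does not work at the cochain level at all: instead it verifies that the inclusion $B\hookrightarrow A=B\#H$ is an $H$-Galois extension (via cleftness, using the obvious section $\gamma(h)=1_B\#h$) and that $A$ is flat as a left and right $B$-module (the right flatness via the explicit isomorphism $H\otimes B\to A$, $h\otimes b\mapsto h_1b\otimes h_2$, which uses bijectivity of the antipode). With these hypotheses in hand, the paper simply invokes \cite[Equation (3.6.1)]{St}, which gives $\mathrm{H}^n(A,M)\cong\mathrm{Hom}_{-,H}(\Bbbk,\mathrm{H}^n(B,M))\cong\mathrm{H}^n(B,M)^H$ directly; the semisimplicity of $H$ enters through the degeneration of \c{S}tefan's spectral sequence.

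By contrast, you split the statement into (i) commuting $(-)^H$ with cohomology via the integral idempotent, and (ii) identifying $\mathrm{H}^n(A,M)$ with the cohomology of the $H$-invariant subcomplex of $C^\bullet(B,M)$. Your Part (i) is clean and complete. Your Part (ii) is correct in outline---both the relative-resolution route and the explicit $\rho,\lambda$ route are standard and do work---but, as you yourself flag, the bimodule bookkeeping in the ``induction'' step (or the explicit formulas for $\lambda$ and the homotopy) is where the real content lies and is only sketched here. The paper's approach trades this bookkeeping for a citation; yours is more self-contained and makes the mechanism (averaging by the integral) visible, at the cost of having to carry out Part (ii) in detail.
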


\begin{proof}
We will apply \cite[Equation (3.6.1)]{St}. To this aim we have to prove
first that $A/B$ is an $H$-Galois extension such that $A$ is flat as left
and right $B$-module. Now, $A=B\#_{\xi }H$ for $\xi :H\otimes H\rightarrow B$
defined by $\xi \left( x,y\right) =\varepsilon _{H}\left( x\right)
\varepsilon _{H}\left( y\right) 1_{A},$ cf. \cite[Definition 7.1.1]{Mo}.
Moreover a direct computation shows that $\iota :B\rightarrow A:b\mapsto
b\#1_{H}$ is a right $H$-extension where $A$ is regarded as a right $H$%
-comodule via $\rho :A\rightarrow A\otimes H:b\#h\mapsto \left(
b\#h_{1}\right) \otimes h_{2}.$ Thus, by \cite[Proposition 7.2.7]{Mo}, we
know that $\iota :B\rightarrow A$ is $H$-cleft and hence, by \cite[Theorem
8.2.4]{Mo}, it is $H$-Galois. The $B$-bimodule structure of $A$ is induced
by $\iota $ so that, explicitly, we have
\begin{eqnarray*}
b^{\prime }\left( b\#h\right) &=&\left( b^{\prime }\#1_{H}\right) \left(
b\#h\right) =b^{\prime }b\#h, \\
\left( b\#h\right) b^{\prime } &=&\left( b\#h\right) \left( b^{\prime
}\#1_{H}\right) =b\left( h_{1}b^{\prime }\right) \#h_{2}.
\end{eqnarray*}%
Note that $A=B\#H$ is flat as a left $B$-module as $H$ is a free $\Bbbk $%
-module ($\Bbbk $ is a field). Now consider the map $\alpha :H\otimes
B\rightarrow A$ defined by setting $\alpha \left( h\otimes b\right)
:=h_{1}b\otimes h_{2}$ (note that it is defined as the braiding in ${_{H}^{H}%
\mathcal{YD}}$). We have%
\begin{equation*}
\alpha \left( h\otimes bb^{\prime }\right) =h_{1}\left( bb^{\prime }\right)
\otimes h_{2}=\left( h_{1}b\right) \left( h_{2}b^{\prime }\right) \otimes
h_{3}=\left( h_{1}b\#h_{2}\right) b^{\prime }=\alpha \left( h\otimes
b\right) b^{\prime }
\end{equation*}%
so that $\alpha $ is right $B$-linear where $H\otimes B$ is regarded as a
right module via $\left( h\#b\right) b^{\prime }:=h\#bb^{\prime }.$ Now $H$
is semisimple and hence separable (see \cite[Corollary 3.7]{St}). Thus $H$
is finite-dimensional and hence it has bijective antipode $S_{H}$. Thus $%
\alpha $ is invertible with inverse given by $\alpha ^{-1}\left( b\#h\right)
:=h_{2}\otimes S_{H}^{-1}\left( h_{1}\right) b.$ Therefore $\alpha $ is an
isomorphism of right $B$-modules and hence $A$ is flat as a right $B$-module
as $H\otimes B$ is.

We have now the hypotheses necessary to apply \cite[Equation (3.6.1)]{St}
and obtain%
\begin{equation*}
\mathrm{H}^{n}\left( A,M\right) \cong \mathrm{Hom}_{-,H}\left( \Bbbk ,%
\mathrm{H}^{n}\left( B,M\right) \right) =\mathrm{Hom}_{\Bbbk }\left( \Bbbk ,%
\mathrm{H}^{n}\left( B,M\right) \right) ^{H}\cong \mathrm{H}^{n}\left(
B,M\right) ^{H}.
\end{equation*}

\begin{invisible}
Let us check for ourself that the structures of $\mathrm{H}^{n}\left(
B,M\right) $ are the ones claimed. First note that $\mathrm{H}^{n}\left(
B,M\right) $, through the isomorphism, is regarded as a left $H$-module via $%
\varepsilon _{H}$ i.e.
\begin{equation*}
\mathrm{H}^{n}\left( B,M\right) ^{H}=\left\{ z\in \mathrm{H}^{n}\left(
B,M\right) \mid \varepsilon _{H}\left( h\right) z=zh,\text{ for every }h\in
H\right\} .
\end{equation*}%
We would like to express explicitly the right $H$-module structure of $%
\mathrm{H}^{n}\left( B,M\right) $. It is the one of \cite[Proposition 2.4]%
{St}. First, following the results recalled above, the map making $A$ cleft
is $\gamma :H\rightarrow A$ given by $\gamma \left( h\right) =1_{B}\#h$
whose convolution inverse is defined by $\gamma ^{-1}\left( h\right) =\gamma
S\left( h\right) =1_{B}\#S\left( h\right) .$

Note that $\gamma $ is an algebra map as%
\begin{eqnarray*}
\gamma \left( h\right) \gamma \left( l\right) &=&\left( 1_{B}\#h\right)
\left( 1_{B}\#l\right) =\left( 1_{B}\left( h_{1}1_{B}\right) \right)
\#\left( h_{2}l\right) =\left( 1_{B}\left( \varepsilon _{H}\left(
h_{1}\right) 1_{B}\right) \right) \#\left( h_{2}l\right) =\gamma \left(
hl\right) , \\
\gamma \left( 1_{H}\right) &=&1_{B}\#1_{H}.
\end{eqnarray*}%
Now we have $\gamma ^{-1}\left( h\right) =\gamma S\left( h\right) $ so that
\begin{eqnarray*}
\gamma \left( h_{1}\right) \gamma ^{-1}\left( h_{2}\right) &=&\gamma \left(
h_{1}\right) \gamma \left( S\left( h_{2}\right) \right) =\gamma \left(
h_{1}S\left( h_{2}\right) \right) =\varepsilon _{H}\left( h\right) \gamma
\left( 1_{H}\right) =\varepsilon _{H}\left( h\right) 1_{B}\#1_{H}, \\
\gamma ^{-1}\left( h_{1}\right) \gamma \left( h_{2}\right) &=&\gamma \left(
S\left( h_{1}\right) \right) \gamma \left( h_{2}\right) =\gamma \left(
S\left( h_{1}\right) h_{2}\right) =\varepsilon _{H}\left( h\right)
1_{B}\#1_{H}.
\end{eqnarray*}%
Using $\gamma $ one has that the composition inverse of the canonical map
\begin{equation*}
\beta :A\otimes _{B}A\rightarrow A\#H:\left( b\#h\right) \otimes \left(
b^{\prime }\#h^{\prime }\right) \mapsto \left( b\#h\right) \rho \left(
b^{\prime }\#h^{\prime }\right) =\left( b\left( h_{1}b^{\prime }\right)
\#h_{2}h_{1}^{\prime }\right) \otimes h_{2}^{\prime }
\end{equation*}%
is given by%
\begin{eqnarray*}
\beta ^{-1}\left( \left( b\#h\right) \otimes l\right) &=&\left( b\#h\right)
\gamma ^{-1}\left( l_{1}\right) \otimes _{B}\gamma \left( l_{2}\right) \\
&=&\left( b\#h\right) \left( 1_{B}\#S\left( l_{1}\right) \right) \otimes
_{B}\left( 1_{B}\#l_{2}\right) \\
&=&\left( b\left( h_{1}1_{B}\right) \right) \#\left( h_{2}S\left(
l_{1}\right) \right) \otimes _{B}\left( 1_{B}\#l_{2}\right) \\
&=&\left( b\#hS\left( l_{1}\right) \right) \otimes _{B}\left(
1_{B}\#l_{2}\right) .
\end{eqnarray*}%
Hence
\begin{equation*}
r_{i}\left( h\right) \otimes _{B}l_{i}\left( h\right) =\beta ^{-1}\left(
\left( 1_{B}\#1_{H}\right) \otimes h\right) =\left( 1_{B}\#S\left(
h_{1}\right) \right) \otimes _{B}\left( 1_{B}\#h_{2}\right) .
\end{equation*}

Now $\mathrm{H}^{0}\left( B,M\right) =M^{B}$ and the right $H$-module
structure of the latter is given, for every $h\in H,m\in M^{B},$ by%
\begin{equation*}
mh=r_{i}\left( h\right) ml_{i}\left( h\right) =\left( 1_{B}\#S\left(
h_{1}\right) \right) m\left( 1_{B}\#h_{2}\right) .
\end{equation*}%
Consider a short exact sequence of $A$-bimodules
\begin{equation*}
0\rightarrow M\overset{s}{\rightarrow }I\overset{p}{\rightarrow }%
C\rightarrow 0
\end{equation*}%
with $I$ injective as an $A$-bimodule (then $\mathrm{H}^{n}\left( B,I\right)
=0\,$for every $n\geq 1$ by the proof of \cite[Lemma 2.2]{St}). This short
sequence induces a short exact sequence of chain complexes%
\begin{equation*}
\begin{array}{ccccccccc}
0 & \rightarrow & M^{B} & \overset{s^{B}}{\rightarrow } & I^{B} & \overset{%
p^{B}}{\rightarrow } & C^{B} &  &  \\
&  & \partial ^{-1}\downarrow &  & \partial ^{-1}\downarrow &  & \partial
^{-1}\downarrow &  &  \\
0 & \rightarrow & \mathrm{Hom}_{\Bbbk }\left( \Bbbk ,M\right) & \overset{%
\varphi ^{0}=\mathrm{Hom}_{\Bbbk }\left( \Bbbk ,s\right) }{\rightarrow } &
\mathrm{Hom}_{\Bbbk }\left( \Bbbk ,I\right) & \overset{\psi ^{0}=\mathrm{Hom}%
_{\Bbbk }\left( \Bbbk ,p\right) }{\rightarrow } & \mathrm{Hom}_{\Bbbk
}\left( \Bbbk ,C\right) & \rightarrow & 0 \\
&  & \partial ^{0}\downarrow &  & \partial ^{0}\downarrow &  & \partial
^{0}\downarrow &  &  \\
0 & \rightarrow & \mathrm{Hom}_{\Bbbk }\left( B,M\right) & \overset{\varphi
^{1}=\mathrm{Hom}_{\Bbbk }\left( B,s\right) }{\rightarrow } & \mathrm{Hom}%
_{\Bbbk }\left( B,I\right) & \overset{\psi ^{1}=\mathrm{Hom}_{\Bbbk }\left(
B,p\right) }{\rightarrow } & \mathrm{Hom}_{\Bbbk }\left( B,C\right) &
\rightarrow & 0 \\
&  & \partial ^{1}\downarrow &  & \partial ^{1}\downarrow &  & \partial
^{1}\downarrow &  &  \\
0 & \rightarrow & \mathrm{Hom}_{\Bbbk }\left( B^{\otimes 2},M\right) &
\overset{\varphi ^{2}=\mathrm{Hom}_{\Bbbk }\left( B^{\otimes 2},s\right) }{%
\rightarrow } & \mathrm{Hom}_{\Bbbk }\left( B^{\otimes 2},I\right) & \overset%
{\psi ^{2}=\mathrm{Hom}_{\Bbbk }\left( B^{\otimes 2},p\right) }{\rightarrow }
& \mathrm{Hom}_{\Bbbk }\left( B^{\otimes 2},C\right) & \rightarrow & 0 \\
&  & \partial ^{2}\downarrow &  & \partial ^{2}\downarrow &  & \partial
^{2}\downarrow &  &  \\
0 & \rightarrow & \mathrm{Hom}_{\Bbbk }\left( B^{\otimes 3},M\right) &
\overset{\varphi ^{3}=\mathrm{Hom}_{\Bbbk }\left( B^{\otimes 3},s\right) }{%
\rightarrow } & \mathrm{Hom}_{\Bbbk }\left( B^{\otimes 3},I\right) & \overset%
{\psi ^{3}=\mathrm{Hom}_{\Bbbk }\left( B^{\otimes 3},p\right) }{\rightarrow }
& \mathrm{Hom}_{\Bbbk }\left( B^{\otimes 3},C\right) & \rightarrow & 0 \\
&  & \partial ^{3}\downarrow &  & \partial ^{3}\downarrow &  & \partial
^{3}\downarrow &  &  \\
&  & \vdots &  & \vdots &  & \vdots &  &
\end{array}%
\end{equation*}%
It is straightforward to prove that the above squares commute.
\end{invisible}
\begin{invisible}
In fact, for $b_{1},\ldots ,b_{n+1}\in B,$%
\begin{eqnarray*}
&&\varphi ^{n+1}\partial ^{n}\left( f\right) \left( b_{1}\otimes
b_{2}\otimes \cdots \otimes b_{n+1}\right) \\
&=&s\left( \sum\limits_{i=0}^{n+1}\left( -1\right) ^{i}\partial
_{i}^{n}\left( f\right) \left( b_{1}\otimes b_{2}\otimes \cdots \otimes
b_{n+1}\right) \right) \\
&=&s\left(
\begin{array}{c}
f\left( b_{1}\otimes b_{2}\otimes \cdots \otimes b_{n}\right)
b_{n+1}-f\left( b_{1}\otimes b_{2}\cdots \otimes b_{n-1}\otimes
b_{n}b_{n+1}\right) \\
+f\left( b_{1}\otimes b_{2}\otimes \cdots \otimes b_{n-1}b_{n}\otimes
b_{n+1}\right) +\cdots +\left( -1\right) ^{n+1}b_{1}f\left( b_{2}\otimes
b_{3}\otimes \cdots \otimes b_{n}\otimes b_{n+1}\right)%
\end{array}%
\right) \\
&=&\left(
\begin{array}{c}
sf\left( b_{1}\otimes b_{2}\otimes \cdots \otimes b_{n}\right)
b_{n+1}-sf\left( b_{1}\otimes b_{2}\cdots \otimes b_{n-1}\otimes
b_{n}b_{n+1}\right) \\
+sf\left( b_{1}\otimes b_{2}\otimes \cdots \otimes b_{n-1}b_{n}\otimes
b_{n+1}\right) +\cdots +\left( -1\right) ^{n+1}b_{1}sf\left( b_{2}\otimes
b_{3}\otimes \cdots \otimes b_{n}\otimes b_{n+1}\right)%
\end{array}%
\right) \\
&=&\sum\limits_{i=0}^{n+1}\left( -1\right) ^{i}\partial _{i}^{n}\left(
sf\right) \left( b_{1}\otimes b_{2}\otimes \cdots \otimes b_{n+1}\right) \\
&=&\partial ^{n}\left( sf\right) \left( b_{1}\otimes b_{2}\otimes \cdots
\otimes b_{n+1}\right) =\partial ^{n}\varphi ^{n}\left( f\right) \left(
b_{1}\otimes b_{2}\otimes \cdots \otimes b_{n+1}\right) .
\end{eqnarray*}

Now consider the following commutative diagram%
\begin{equation*}
\begin{array}{ccccccccc}
0 & \rightarrow & \mathrm{Hom}_{\Bbbk }\left( B^{\otimes n},M\right) &
\overset{\varphi ^{n}=\mathrm{Hom}_{\Bbbk }\left( B^{\otimes n},s\right) }{%
\rightarrow } & \mathrm{Hom}_{\Bbbk }\left( B^{\otimes n},I\right) & \overset%
{\psi ^{n}=\mathrm{Hom}_{\Bbbk }\left( B^{\otimes n},p\right) }{\rightarrow }
& \mathrm{Hom}_{\Bbbk }\left( B^{\otimes n},C\right) & \rightarrow & 0 \\
&  & \chi _{n}^{h}\left( M\right) \downarrow &  & \chi _{n}^{h}\left(
I\right) \downarrow &  & \chi _{n}^{h}\left( C\right) \downarrow &  &  \\
0 & \rightarrow & \mathrm{Hom}_{\Bbbk }\left( B^{\otimes n},M\right) &
\overset{\varphi ^{n}=\mathrm{Hom}_{\Bbbk }\left( B^{\otimes n},s\right) }{%
\rightarrow } & \mathrm{Hom}_{\Bbbk }\left( B^{\otimes n},I\right) & \overset%
{\psi ^{n}=\mathrm{Hom}_{\Bbbk }\left( B^{\otimes n},p\right) }{\rightarrow }
& \mathrm{Hom}_{\Bbbk }\left( B^{\otimes n},C\right) & \rightarrow & 0%
\end{array}%
\end{equation*}

where, for every $h\in H,$ we set%
\begin{eqnarray*}
\chi _{0}^{h}\left( M\right) \left( f\right) \left( k\right) &:&=\left(
1_{B}\#S\left( h_{1}\right) \right) f\left( k\right) \left(
1_{B}\#h_{2}\right) , \\
\chi _{1}^{h}\left( M\right) \left( f\right) \left( b_{1}\right) &:&=\left(
1_{B}\#S\left( h_{1}\right) \right) f\left( h_{2}b_{1}\right) \left(
1_{B}\#h_{3}\right) \text{ and for }n>1 \\
\chi _{n}^{h}\left( M\right) \left( f\right) \left( b_{1}\otimes
b_{2}\otimes \cdots \otimes b_{n}\right) &:&=\left( 1_{B}\#S\left(
h_{1}\right) \right) f\left( h_{2}b_{1}\otimes h_{3}b_{2}\otimes \cdots
\otimes h_{n+1}b_{n}\right) \left( 1_{B}\#h_{n+2}\right) .
\end{eqnarray*}

Let us check the diagram above commutes for each $n\in \N_0$. We have%
\begin{eqnarray*}
&&\varphi ^{n}\chi _{n}^{h}\left( M\right) \left( f\right) \left(
b_{1}\otimes b_{2}\otimes \cdots \otimes b_{n}\right) \\
&=&s\left( \left( 1_{B}\#S\left( h_{1}\right) \right) f\left(
h_{2}b_{1}\otimes h_{3}b_{2}\otimes \cdots \otimes h_{n+1}b_{n}\right)
\left( 1_{B}\#h_{n+2}\right) \right) \\
&=&\left( 1_{B}\#S\left( h_{1}\right) \right) \left( sf\right) \left(
h_{2}b_{1}\otimes h_{3}b_{2}\otimes \cdots \otimes h_{n+1}b_{n}\right)
\left( 1_{B}\#h_{n+2}\right) \\
&=&\chi _{n}^{h}\left( I\right) \left( sf\right) \left( b_{1}\otimes
b_{2}\otimes \cdots \otimes b_{n}\right) =\chi _{n}^{h}\left( I\right)
\varphi ^{n}\left( f\right) \left( b_{1}\otimes b_{2}\otimes \cdots \otimes
b_{n}\right)
\end{eqnarray*}%
so that the left-hand square commutes and similarly the right-hand one does.

One also checks that $\chi _{n}^{h}\left( M\right) $ commutes with
differentials i.e. that the following diagram commutes%
\begin{equation*}
\begin{array}{ccccccccc}
0 & \rightarrow & M^{B} & \overset{\partial ^{-1}}{\longrightarrow } &
\mathrm{Hom}_{\Bbbk }\left( \Bbbk ,M\right) & \overset{\partial ^{0}}{%
\longrightarrow } & \mathrm{Hom}_{\Bbbk }\left( B,M\right) & \overset{%
\partial ^{1}}{\longrightarrow } & \cdots \\
&  & \chi _{-1}^{h}\left( M\right) =\varrho _{0}^{h}\left( M\right)
\downarrow &  & \chi _{0}^{h}\left( M\right) \downarrow &  & \chi
_{1}^{h}\left( M\right) \downarrow &  &  \\
0 & \rightarrow & M^{B} & \overset{\partial ^{-1}}{\longrightarrow } &
\mathrm{Hom}_{\Bbbk }\left( \Bbbk ,M\right) & \overset{\partial ^{0}}{%
\longrightarrow } & \mathrm{Hom}_{\Bbbk }\left( B,M\right) & \overset{%
\partial ^{1}}{\longrightarrow } & \cdots%
\end{array}%
\end{equation*}

We compute
\begin{eqnarray*}
\left( \partial ^{-1}\chi _{-1}^{h}\left( M\right) \left( m\right) \right)
\left( k\right) &=&\left( \partial ^{-1}\left( mh\right) \right) \left(
k\right) =k\left( mh\right) =\left( 1_{B}\#S\left( h_{1}\right) \right)
km\left( 1_{B}\#h_{2}\right) \\
&=&\left( 1_{B}\#S\left( h_{1}\right) \right) \partial ^{-1}\left( m\right)
\left( k\right) \left( 1_{B}\#h_{2}\right) =\left( \chi _{0}^{h}\left(
M\right) \partial ^{-1}\left( m\right) \right) \left( k\right) ,
\end{eqnarray*}%
\begin{eqnarray*}
\left( \partial ^{0}\chi _{0}^{h}\left( M\right) \left( f\right) \right)
\left( b_{1}\right) &=&\chi _{0}^{h}\left( M\right) \left( f\right) \left(
1\right) b_{1}-b_{1}\chi _{0}^{h}\left( M\right) \left( f\right) \left(
1\right) \\
&=&\left( 1_{B}\#S\left( h_{1}\right) \right) f\left( 1\right) \left(
1_{B}\#h_{2}\right) b_{1}-b_{1}\left( 1_{B}\#S\left( h_{1}\right) \right)
f\left( 1\right) \left( 1_{B}\#h_{2}\right) \\
&=&\left( 1_{B}\#S\left( h_{1}\right) \right) f\left( 1\right) \left(
1_{B}\#h_{2}\right) \left( b_{1}\#1_{H}\right) -\left( b_{1}\#1_{H}\right)
\left( 1_{B}\#S\left( h_{1}\right) \right) f\left( 1\right) \left(
1_{B}\#h_{2}\right) \\
&=&\left( 1_{B}\#S\left( h_{1}\right) \right) f\left( 1\right) \left(
h_{2}b_{1}\#h_{3}\right) -\left( b_{1}\#S\left( h_{1}\right) \right) f\left(
1\right) \left( 1_{B}\#h_{2}\right) \\
&=&\left( 1_{B}\#S\left( h_{1}\right) \right) \left( f\left( 1\right) \left(
h_{2}b_{1}\#1_{H}\right) \right) \left( 1_{B}\#h_{3}\right) -\left(
1_{B}\#S\left( h_{1}\right) \right) \left( \left( h_{2}b_{1}\#1_{H}\right)
f\left( 1\right) \right) \left( 1_{B}\#h_{3}\right) \\
&=&\left( 1_{B}\#S\left( h_{1}\right) \right) \left( f\left( 1\right) \left(
h_{2}b_{1}\right) \right) \left( 1_{B}\#h_{3}\right) -\left( 1_{B}\#S\left(
h_{1}\right) \right) \left( \left( h_{2}b_{1}\right) f\left( 1\right)
\right) \left( 1_{B}\#h_{3}\right) \\
&=&\left( 1_{B}\#S\left( h_{1}\right) \right) \partial ^{0}\left( f\right)
\left( h_{2}b_{1}\right) \left( 1_{B}\#h_{3}\right) =\left( \chi
_{1}^{h}\left( M\right) \partial ^{0}\left( f\right) \right) \left(
b_{1}\right)
\end{eqnarray*}%
\begin{eqnarray*}
&&\left( \partial ^{n}\chi _{n}^{h}\left( M\right) \left( f\right) \right)
\left( b_{1}\otimes b_{2}\otimes \cdots \otimes b_{n+1}\right) \\
&=&\sum\limits_{i=0}^{n+1}\left( -1\right) ^{i}\partial _{i}^{n}\chi
_{n}^{h}\left( M\right) \left( f\right) \left( b_{1}\otimes b_{2}\otimes
\cdots \otimes b_{n+1}\right) \\
&=&\left(
\begin{array}{c}
\chi _{n}^{h}\left( M\right) \left( f\right) \left( b_{1}\otimes
b_{2}\otimes \cdots \otimes b_{n}\right) b_{n+1}-\chi _{n}^{h}\left(
M\right) \left( f\right) \left( b_{1}\otimes b_{2}\cdots \otimes
b_{n-1}\otimes b_{n}b_{n+1}\right) \\
+\chi _{n}^{h}\left( M\right) \left( f\right) \left( b_{1}\otimes
b_{2}\otimes \cdots \otimes b_{n-1}b_{n}\otimes b_{n+1}\right) +\cdots
+\left( -1\right) ^{n+1}b_{1}\chi _{n}^{h}\left( M\right) \left( f\right)
\left( b_{2}\otimes b_{3}\otimes \cdots \otimes b_{n}\otimes b_{n+1}\right)%
\end{array}%
\right) \\
&=&\left(
\begin{array}{c}
\left( 1_{B}\#S\left( h_{1}\right) \right) f\left( h_{2}b_{1}\otimes
h_{3}b_{2}\otimes \cdots \otimes h_{n+1}b_{n}\right) \left(
1_{B}\#h_{n+2}\right) b_{n+1}+ \\
-\left( 1_{B}\#S\left( h_{1}\right) \right) f\left( h_{2}b_{1}\otimes
h_{3}b_{2}\otimes \cdots \otimes h_{n+1}\left( b_{n}b_{n+1}\right) \right)
\left( 1_{B}\#h_{n+2}\right) \\
+\left( 1_{B}\#S\left( h_{1}\right) \right) f\left( h_{2}b_{1}\otimes
h_{3}b_{2}\otimes \cdots \otimes h_{n}\left( b_{n-1}b_{n}\right) \otimes
h_{n+1}b_{n}\right) \left( 1_{B}\#h_{n+2}\right) +\cdots \\
+\left( -1\right) ^{n+1}b_{1}\left( 1_{B}\#S\left( h_{1}\right) \right)
f\left( h_{2}b_{2}\otimes h_{3}b_{3}\otimes \cdots \otimes
h_{n+1}b_{n+1}\right) \left( 1_{B}\#h_{n+2}\right)%
\end{array}%
\right) \\
&=&\left(
\begin{array}{c}
\left( 1_{B}\#S\left( h_{1}\right) \right) f\left( h_{2}b_{1}\otimes
h_{3}b_{2}\otimes \cdots \otimes h_{n+1}b_{n}\right) \left(
1_{B}\#h_{n+2}\right) \left( b_{n+1}\#1_{H}\right) + \\
-\left( 1_{B}\#S\left( h_{1}\right) \right) f\left( h_{2}b_{1}\otimes
h_{3}b_{2}\otimes \cdots \otimes \left( h_{n+1}b_{n}\right) \left(
h_{n+2}b_{n+1}\right) \right) \left( 1_{B}\#h_{n+3}\right) \\
+\left( 1_{B}\#S\left( h_{1}\right) \right) f\left( h_{2}b_{1}\otimes
h_{3}b_{2}\otimes \cdots \otimes \left( h_{n}b_{n-1}\right) \left(
h_{n+1}b_{n}\right) \otimes h_{n+2}b_{n}\right) \left( 1_{B}\#h_{n+3}\right)
+\cdots \\
+\left( -1\right) ^{n+1}\left( b_{1}\#1_{H}\right) \left( 1_{B}\#S\left(
h_{1}\right) \right) f\left( h_{2}b_{2}\otimes h_{3}b_{3}\otimes \cdots
\otimes h_{n+1}b_{n+1}\right) \left( 1_{B}\#h_{n+2}\right)%
\end{array}%
\right) \\
&=&\left(
\begin{array}{c}
\left( 1_{B}\#S\left( h_{1}\right) \right) f\left( h_{2}b_{1}\otimes
h_{3}b_{2}\otimes \cdots \otimes h_{n+1}b_{n}\right) \left(
h_{n+2}b_{n+1}\#h_{n+3}\right) + \\
-\left( 1_{B}\#S\left( h_{1}\right) \right) f\left( h_{2}b_{1}\otimes
h_{3}b_{2}\otimes \cdots \otimes \left( h_{n+1}b_{n}\right) \left(
h_{n+2}b_{n+1}\right) \right) \left( 1_{B}\#h_{n+3}\right) \\
+\left( 1_{B}\#S\left( h_{1}\right) \right) f\left( h_{2}b_{1}\otimes
h_{3}b_{2}\otimes \cdots \otimes \left( h_{n}b_{n-1}\right) \left(
h_{n+1}b_{n}\right) \otimes h_{n+2}b_{n}\right) \left( 1_{B}\#h_{n+3}\right)
+\cdots \\
+\left( -1\right) ^{n+1}\left( b_{1}\#S\left( h_{1}\right) \right) f\left(
h_{2}b_{2}\otimes h_{3}b_{3}\otimes \cdots \otimes h_{n+1}b_{n+1}\right)
\left( 1_{B}\#h_{n+2}\right)%
\end{array}%
\right) \\
&=&\left( 1_{B}\#S\left( h_{1}\right) \right) \left(
\begin{array}{c}
f\left( h_{2}b_{1}\otimes h_{3}b_{2}\otimes \cdots \otimes
h_{n+1}b_{n}\right) \left( h_{n+2}b_{n+1}\#1_{H}\right) \\
-f\left( h_{2}b_{1}\otimes h_{3}b_{2}\otimes \cdots \otimes \left(
h_{n+1}b_{n}\right) \left( h_{n+2}b_{n+1}\right) \right) + \\
+f\left( h_{2}b_{1}\otimes h_{3}b_{2}\otimes \cdots \otimes \left(
h_{n}b_{n-1}\right) \left( h_{n+1}b_{n}\right) \otimes h_{n+2}b_{n}\right)
+\cdots \\
+\left( -1\right) ^{n+1}\left( h_{2}b_{1}\#1_{H}\right) f\left(
h_{3}b_{2}\otimes h_{3}b_{3}\otimes \cdots \otimes h_{n+2}b_{n+1}\right)%
\end{array}%
\right) \left( 1_{B}\#h_{n+3}\right) \\
&=&\left( 1_{B}\#S\left( h_{1}\right) \right) \left(
\begin{array}{c}
f\left( h_{2}b_{1}\otimes h_{3}b_{2}\otimes \cdots \otimes
h_{n+1}b_{n}\right) h_{n+2}b_{n+1} \\
-f\left( h_{2}b_{1}\otimes h_{3}b_{2}\otimes \cdots \otimes \left(
h_{n+1}b_{n}\right) \left( h_{n+2}b_{n+1}\right) \right) + \\
+f\left( h_{2}b_{1}\otimes h_{3}b_{2}\otimes \cdots \otimes \left(
h_{n}b_{n-1}\right) \left( h_{n+1}b_{n}\right) \otimes h_{n+2}b_{n}\right)
+\cdots \\
+\left( -1\right) ^{n+1}h_{2}b_{1}f\left( h_{3}b_{2}\otimes
h_{3}b_{3}\otimes \cdots \otimes h_{n+2}b_{n+1}\right)%
\end{array}%
\right) \left( 1_{B}\#h_{n+3}\right) \\
&=&\left( 1_{B}\#S\left( h_{1}\right) \right) \partial ^{n}\left( f\right)
\left( h_{2}b_{1}\otimes h_{3}b_{2}\otimes \cdots \otimes
h_{n+2}b_{n+1}\right) \left( 1_{B}\#h_{n+3}\right) \\
&=&\left( \chi _{n+1}^{h}\left( M\right) \partial ^{n}\left( f\right)
\right) \left( b_{1}\otimes b_{2}\otimes \cdots \otimes b_{n+1}\right) .
\end{eqnarray*}

Therefore we got a commutative diagram with exact rows
\begin{equation*}
\begin{array}{ccccccccc}
0 & \rightarrow & \mathrm{Hom}_{\Bbbk }\left( B^{\otimes \ast },M\right) &
\overset{\varphi ^{\ast }}{\rightarrow } & \mathrm{Hom}_{\Bbbk }\left(
B^{\otimes \ast },I\right) & \overset{\psi ^{\ast }}{\rightarrow } & \mathrm{%
Hom}_{\Bbbk }\left( B^{\otimes \ast },C\right) & \rightarrow & 0 \\
&  & \chi _{\ast }^{h}\left( M\right) \downarrow &  & \chi _{\ast
}^{h}\left( I\right) \downarrow &  & \chi _{\ast }^{h}\left( C\right)
\downarrow &  &  \\
0 & \rightarrow & \mathrm{Hom}_{\Bbbk }\left( B^{\otimes \ast },M\right) &
\overset{\varphi ^{\ast }}{\rightarrow } & \mathrm{Hom}_{\Bbbk }\left(
B^{\otimes \ast },I\right) & \overset{\psi ^{\ast }}{\rightarrow } & \mathrm{%
Hom}_{\Bbbk }\left( B^{\otimes \ast },C\right) & \rightarrow & 0%
\end{array}%
\end{equation*}%
This yields a commutative square for each $n\in \N_0$%
\begin{equation*}
\begin{array}{ccc}
\mathrm{H}^{n-1}\left( B,C\right) & \overset{\delta ^{n-1}}{\rightarrow } &
\mathrm{H}^{n}\left( B,M\right) \\
\varrho _{n-1}^{h}\left( C\right) \downarrow &  & \downarrow \varrho
_{n}^{h}\left( M\right) \\
\mathrm{H}^{n-1}\left( B,C\right) & \overset{\delta ^{n-1}}{\rightarrow } &
\mathrm{H}^{n}\left( B,M\right)%
\end{array}%
\end{equation*}%
where%
\begin{equation*}
\varrho _{n}^{h}\left( M\right) \left( \left[ f\right] \right) :=\left[ \chi
_{n}^{h}\left( M\right) \left( f\right) \right]
\end{equation*}%
see e.g. \cite[Proposition 0.4, page 6]{Br} in its dual form. Coming back to
our short exact sequence
\begin{equation*}
0\rightarrow M\overset{s}{\rightarrow }I\overset{p}{\rightarrow }%
C\rightarrow 0,
\end{equation*}%
from $\mathrm{H}^{n}\left( B,I\right) =0$ for every $n\geq 1$ (as observed
above) we get a commutative diagram%
\begin{equation*}
\begin{array}{ccccccc}
\mathrm{H}^{n-1}\left( B,I\right) & \overset{\mathrm{H}^{n-1}\left(
B,p\right) }{\rightarrow } & \mathrm{H}^{n-1}\left( B,C\right) & \overset{%
\delta ^{n-1}}{\rightarrow } & \mathrm{H}^{n}\left( B,M\right) & \overset{%
\mathrm{H}^{n-1}\left( B,s\right) }{\rightarrow } & \mathrm{H}^{n}\left(
B,I\right) =0 \\
\varrho _{n-1}^{h}\left( I\right) \downarrow &  & \varrho _{n-1}^{h}\left(
C\right) \downarrow &  & \downarrow \varrho _{n}^{h}\left( M\right) &  &  \\
\mathrm{H}^{n-1}\left( B,I\right) & \overset{\mathrm{H}^{n-1}\left(
B,p\right) }{\rightarrow } & \mathrm{H}^{n-1}\left( B,C\right) & \overset{%
\delta ^{n-1}}{\rightarrow } & \mathrm{H}^{n}\left( B,M\right) &  &
\end{array}%
\end{equation*}%
so that the map $\varrho _{n}^{h}\left( M\right) $ is uniquely determined by
the universal property of the cokernel in the upper sequence. This map is
the one used in \cite[Proposition 2.4]{St} which is constructed using \cite[%
Theorem 7.5, page 78]{Br}. Summing up, the right $H$-module structure of $%
\mathrm{H}^{n}\left( B,M\right) $ is given by $\left[ f\right] h:=\varrho
_{n}^{h}\left( M\right) \left( \left[ f\right] \right) =\left[ \chi
_{n}^{h}\left( M\right) \left( f\right) \right] .$
\end{invisible}
\end{proof}

\begin{remark}\label{rem:DV}
 Proposition \ref{pro:Dragos} in the particular case when $M=\Bbbk$ and $B$ is finite-dimensional is \cite[Theorem 2.17]{SVay}. Note that in the notations therein, one has $E(B)=\oplus_{n \in \N_0}E_n(B,\Bbbk)$ where $E_n(B,\Bbbk)=\mathrm{Ext}^n_B(\Bbbk,\Bbbk)\cong\mathrm{H}^n(B,\Bbbk)$. The latter isomorphism is \cite[Corollary 4.4, page 170]{CE}.
\end{remark}

Let $H$ be a Hopf algebra and let $B$ be a bialgebra in the braided category
${_{H}^{H}\mathcal{YD}}$. Denote by $A:=B\#H$ the Radford-Majid bosonization
of $B$ by $H,$ see e.g. \cite[Theorem 1]{Ra-TheStruct}. Note that $A$ is
endowed with an algebra map $\varepsilon _{A}:A\rightarrow \Bbbk $ defined
by $\varepsilon _{A}\left( b\#h\right) =\varepsilon _{B}\left( b\right)
\varepsilon _{H}\left( h\right) $ so that we can regard $\Bbbk $ as an $A$%
-bimodule via $\varepsilon _{A}.$ Then we can consider $\mathrm{H}^{n}\left(
B,\Bbbk \right) $ as an $H$-bimodule as follows. Its structure of left $H$%
-module is given via $\varepsilon _{H}$ and its structure of right $H$%
-module is defined, for every $f\in \mathrm{Z}^{n}\left( B,\Bbbk \right) $
and $h\in H,$ by setting
\begin{equation*}
\left[ f\right] h:=\left[ fh\right] ,
\end{equation*}%
where $\left( fh\right) \left( z\right) =f\left( hz\right) ,$ for every $%
z\in B^{\otimes n}$. The latter is the usual right $H$-module structure of $%
\mathrm{Hom}_{\Bbbk }\left( B^{\otimes n},\Bbbk \right) .$ Indeed, for every
$n\geq -1$, the vector space $\mathrm{Hom}_{\Bbbk }\left( B^{\otimes
n},\Bbbk \right) $ is an $H$-bimodule with respect to this right $H$-module
structure and the left one induced by $\varepsilon _{H}.$

\begin{corollary}
\label{coro:K}Let $H$ be a semisimple Hopf algebra and let $B$ be a
bialgebra in the braided category ${_{H}^{H}\mathcal{YD}}$. Set $A:=B\#H.$
Then, for each $n\in \N_0$%
\begin{equation*}
\mathrm{H}^{n}\left( B\#H,\Bbbk \right) \cong \mathrm{H}^{n}\left( B,\Bbbk
\right) ^{H}
\end{equation*}%
and the differential $\partial ^{n}:\mathrm{Hom}_{\Bbbk }\left( B^{\otimes
n},\Bbbk \right) \rightarrow \mathrm{Hom}_{\Bbbk }\left( B^{\otimes \left(
n+1\right) },\Bbbk \right) $ of the usual Hochschild cohomology is $H$%
-bilinear.
\end{corollary}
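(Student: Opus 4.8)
The plan is to deduce everything from Proposition \ref{pro:Dragos}. Forgetting the $H$-comodule structure and the coalgebra structure of $B$, one is left with a left $H$-module algebra, and the underlying algebra of the Radford-Majid bosonization $A=B\#H$ is exactly the smash product algebra $B\#H$; moreover $\Bbbk$ is an $A$-bimodule via $\varepsilon_A$. So the first step is simply to invoke Proposition \ref{pro:Dragos} with $M=\Bbbk$, which gives $\mathrm{H}^n(B\#H,\Bbbk)\cong\mathrm{H}^n(B,\Bbbk)^H$ for every $n\in\N_0$, with the right $H$-action on $\mathrm{H}^n(B,\Bbbk)$ induced by the maps $\chi_n^h(\Bbbk)$ from the paragraph preceding that proposition.

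The second step, which is the only one requiring any care, is to identify this induced right $H$-module structure with the concrete one described just before the statement of the corollary. I would compute $\chi_n^h(\Bbbk)$ directly: since the $A$-bimodule structure of $\Bbbk$ factors through $\varepsilon_A$, the outer factors $1_B\#S(h_1)$ and $1_B\#h_{n+2}$ contribute only $\varepsilon_H(S(h_1))=\varepsilon_H(h_1)$ and $\varepsilon_H(h_{n+2})$, so the $(n+2)$-fold comultiplication of $h$ collapses to an $n$-fold one and $\chi_n^h(\Bbbk)(f)(b_1\otimes\cdots\otimes b_n)=f(h_1b_1\otimes\cdots\otimes h_nb_n)$. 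Recognizing the right-hand argument as $h$ acting diagonally on $B^{\otimes n}$, i.e. via the monoidal structure of ${_{H}^{H}\mathcal{YD}}$, this says $\chi_n^h(\Bbbk)(f)=fh$ with $(fh)(z)=f(hz)$; hence the induced structure on $\mathrm{H}^n(B,\Bbbk)$ is $[f]h=[fh]$, matching the one in the statement, and together with the left structure via $\varepsilon_H$ we get precisely the $H$-bimodule $\mathrm{H}^n(B,\Bbbk)$ introduced before the corollary.

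For the last assertion, I would argue that the Hochschild differential $\partial^n:\mathrm{Hom}_\Bbbk(B^{\otimes n},\Bbbk)\to\mathrm{Hom}_\Bbbk(B^{\otimes(n+1)},\Bbbk)$ is $H$-bilinear as follows. Left $H$-linearity is immediate because the left action on each cochain space is given by $\varepsilon_H$ and $\partial^n$ is $\Bbbk$-linear. Right $H$-linearity is obtained by specializing the identity (\ref{form:PartialChi}) to $M=\Bbbk$, namely $\partial^n\circ\chi_n^h(\Bbbk)=\chi_{n+1}^h(\Bbbk)\circ\partial^n$, and combining it with the formula $\chi_m^h(\Bbbk)(g)=gh$ established in the previous step (valid for all $m$, including $m=0$): this reads $\partial^n(fh)=(\partial^n f)h$ for all $f$ and all $h\in H$, which is exactly right $H$-linearity.

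There is no serious obstacle: everything is a consequence of Proposition \ref{pro:Dragos} and formula (\ref{form:PartialChi}). The one place to be careful, as noted, is checking that the $\varepsilon_A$-twist appearing in $\chi_n^h$ is precisely what reduces the abstract $H$-action on $\mathrm{H}^n(B,\Bbbk)$ to the diagonal action of $H$ on $B^{\otimes n}$ used in the braided category, so that the isomorphism supplied by Proposition \ref{pro:Dragos} carries the structure claimed in the corollary.
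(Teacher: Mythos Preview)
Your proposal is correct and follows essentially the same approach as the paper: both apply Proposition \ref{pro:Dragos} with $M=\Bbbk$, simplify $\chi_n^h(\Bbbk)$ using that the $A$-bimodule structure on $\Bbbk$ is via $\varepsilon_A$ so that $\chi_n^h(\Bbbk)(f)(z)=f(hz)$, and then invoke (\ref{form:PartialChi}) to deduce $H$-bilinearity of $\partial^n$.
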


\begin{proof}
In the particular case $M=\Bbbk ,$ the right module $H$-structure used in
Proposition \ref{pro:Dragos} simplifies as follows. It is defined, for every
$f\in \mathrm{Z}^{n}\left( B,\Bbbk \right) $ and $h\in H,$ by setting
\begin{equation*}
\left[ f\right] h:=\left[ \chi _{n}^{h}\left( \Bbbk \right) \left( f\right) %
\right]
\end{equation*}%
where, for every $k\in \Bbbk ,b_{1},\ldots ,b_{n}\in B,$ we set%
\begin{eqnarray*}
\chi _{0}^{h}\left( \Bbbk \right) \left( f\right) \left( k\right)
&:&=\varepsilon _{H}\left( h\right) f\left( k\right) \text{ for }n=0\text{
while and for }n\geq 1 \\
\chi _{n}^{h}\left( \Bbbk \right) \left( f\right) \left( b_{1}\otimes
b_{2}\otimes \cdots \otimes b_{n}\right) &:&=f\left( h_{1}b_{1}\otimes
h_{2}b_{2}\otimes \cdots \otimes h_{n}b_{n}\right) .
\end{eqnarray*}%
More concisely $\chi _{n}^{h}\left( \Bbbk \right) \left( f\right) \left(
z\right) =f\left( hz\right) $ for every $n\in \N_0$ and $z\in
B^{\otimes n}$ i.e. $\left[ f\right] h:=\left[ fh\right] $ where $fh:=\chi
_{n}^{h}\left( \Bbbk \right) \left( f\right) .$

Now consider the differential $\partial ^{n}:\mathrm{Hom}_{\Bbbk }\left(
B^{\otimes n},\Bbbk \right) \rightarrow \mathrm{Hom}_{\Bbbk }\left(
B^{\otimes \left( n+1\right) },\Bbbk \right) $ of the usual Hochschild
cohomology. Note that for each $n\in \N_0$, $\mathrm{Hom}_{\Bbbk
}\left( B^{\otimes n},\Bbbk \right) $ is regarded as a bimodule over $H$
using the left $H$-module structures of its arguments. By (\ref%
{form:PartialChi}), we have
\begin{equation*}
\partial ^{n}\chi _{n}^{h}\left( \Bbbk \right) \left( f\right) =\chi
_{n+1}^{h}\left( \Bbbk \right) \partial ^{n}\left( f\right)
\end{equation*}%
Since $\chi _{n}^{h}\left( \Bbbk \right) \left( f\right) =fh$, the last
displayed equality becomes $\partial ^{n}\left( fh\right) =\partial
^{n}\left( f\right) h\,$for every $n\in \N_0$. Thus $\partial ^{n}$ is
right $H$-linear. Since $hf=\varepsilon _{H}\left( h\right) f$ for every $%
f\in \mathrm{Hom}_{\Bbbk }\left( B^{\otimes n},\Bbbk \right) ,h\in H,$ we
get that $\partial ^{n}$ is also left $H$-linear whence $H$-bilinear.
\end{proof}

\begin{remark}
Note that, in the context of the proof of \cite[Proposition 5.1]{EG}, one
has
\begin{equation*}
\mathrm{H}^{3}\left( \mathcal{B}\left( V\right) \#\mathbb{C}\left[ \mathbb{%
\mathbb{Z}}_{p}\right] ,\mathbb{C}\right) \cong \mathrm{H}^{3}\left(
\mathcal{B}\left( V\right) ,\mathbb{C}\right) ^{\mathbb{\mathbb{Z}}_{p}}.
\end{equation*}
This is a particular case of Corollary \ref{coro:K} where $H=\mathbb{C}\left[
\mathbb{\mathbb{Z}}_{p}\right] ,$\thinspace $V\in {_{H}^{H}\mathcal{YD}}$
and $B=\mathcal{B}\left( V\right) $.
\end{remark}

\begin{proposition}
\label{pro:Prerad}Let $\mathcal{C}$ and $\mathcal{D}$ be abelian categories.
Let $r,\omega :\mathcal{C}\rightarrow \mathcal{D}$ be exact functors such
that $r$ is a subfunctor of $\omega $ i.e. there is a natural transformation
$\eta :r\rightarrow \omega $ which is a monomorphism when evaluated on
objects. If $X$ is a subobject of $Y$ then $r\left( X\right) =\omega \left(
X\right) \cap r\left( Y\right) .$ Moreover, for every morphism $%
f:X\rightarrow Y$ in $\mathcal{C}$ one has%
\begin{eqnarray*}
\mathrm{ker}\left( r\left( f\right) \right) &=&r\left( \mathrm{ker}\left(
f\right) \right) =\omega \left( \mathrm{ker}\left( f\right) \right) \cap
r\left( X\right) =\mathrm{ker}\left( \omega \left( f\right) \right) \cap
r\left( X\right) , \\
\mathrm{Im}\left( r\left( f\right) \right) &=&\mathrm{Im}\left( \omega
\left( f\right) \right) \cap r\left( Y\right) =r\left( \mathrm{Im}\left(
f\right) \right) .
\end{eqnarray*}
\end{proposition}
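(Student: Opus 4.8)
The plan is to reduce both displayed chains of equalities to the first assertion, and to prove that first assertion by a single short diagram chase exploiting the two hypotheses: $\eta$ is objectwise monic, and $r,\omega$ are exact. Throughout I would use $\eta$ to identify, for every object $Z$, $r(Z)$ with a subobject of $\omega(Z)$; then for a subobject $i\colon X\hookrightarrow Y$ the naturality square $\eta_Y\circ r(i)=\omega(i)\circ\eta_X$ exhibits $r(X)$, $\omega(X)$ and $r(Y)$ simultaneously as subobjects of $\omega(Y)$ (each structural map into $\omega(Y)$ is monic, being a composite of monos -- the monicity of $r(i)$ and of $\omega(i)$ following from exactness).

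For the first assertion I would start from the short exact sequence $0\to X\xrightarrow{i}Y\xrightarrow{p}Y/X\to 0$. Applying $\omega$, exactness gives a canonical identification $\mathrm{coker}(\omega(i))\cong\omega(Y/X)$ under which the cokernel projection becomes $\omega(p)$. By definition the subobject $\omega(X)\cap r(Y)$ of $\omega(Y)$ is the pullback of $\omega(i)$ along $\eta_Y$; since $\omega(i)$ is monic this pullback is simply $\mathrm{ker}\big(\omega(p)\circ\eta_Y\big)$ viewed as a subobject of $r(Y)$ (the preimage of $\omega(X)$ under $\eta_Y$). Now naturality of $\eta$ rewrites $\omega(p)\circ\eta_Y=\eta_{Y/X}\circ r(p)$, and since $\eta_{Y/X}$ is monic, $\mathrm{ker}(\eta_{Y/X}\circ r(p))=\mathrm{ker}(r(p))$. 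Finally, applying the exact functor $r$ to the same short exact sequence identifies $\mathrm{ker}(r(p))$ with $r(X)$, the structural map being $r(i)$. Tracking these identifications shows that the canonical comparison $r(X)\to\omega(X)\cap r(Y)$, which exists because the naturality square commutes, is an isomorphism, i.e.\ $r(X)=\omega(X)\cap r(Y)$.

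The second assertion is then formal. For $f\colon X\to Y$, exactness of $r$ gives $\mathrm{ker}(r(f))=r(\mathrm{ker}(f))$ and $\mathrm{Im}(r(f))=r(\mathrm{Im}(f))$, since an exact functor preserves kernels and images. Feeding the subobject $\mathrm{ker}(f)\hookrightarrow X$ into the first assertion yields $r(\mathrm{ker}(f))=\omega(\mathrm{ker}(f))\cap r(X)$, and exactness of $\omega$ turns $\omega(\mathrm{ker}(f))$ into $\mathrm{ker}(\omega(f))$, giving the kernel chain; symmetrically, feeding $\mathrm{Im}(f)\hookrightarrow Y$ into the first assertion and using exactness of $\omega$ gives $r(\mathrm{Im}(f))=\omega(\mathrm{Im}(f))\cap r(Y)=\mathrm{Im}(\omega(f))\cap r(Y)$.

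I expect the only real work to lie in the first assertion -- more precisely, in checking that the natural comparison morphism $r(X)\to\omega(X)\cap r(Y)$ is actually invertible and not merely a map. I would handle this by the element-free argument above, using the two formal facts that the pullback of a monomorphism is computed as the kernel of the composite with the corresponding cokernel, and that $\mathrm{ker}(m\circ g)=\mathrm{ker}(g)$ whenever $m$ is monic; alternatively, one could invoke the Freyd--Mitchell embedding theorem and reduce to an honest element chase in a module category. Everything else is bookkeeping with ``exact functors preserve kernels, cokernels and images'' together with the naturality of $\eta$.
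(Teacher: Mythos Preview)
Your proof is correct and follows essentially the same route as the paper's (hidden) argument and the reference to \cite[Proposition 1.7, page 138]{Sten}: both start from the short exact sequence $0\to X\to Y\to Y/X\to 0$, apply $r$ and $\omega$, and use naturality of $\eta$ together with the monicity of $\eta_{Y/X}$ to identify $r(X)$ with the intersection, then derive the kernel/image chains by feeding $\mathrm{ker}(f)$ and $\mathrm{Im}(f)$ into the first assertion and using exactness. Your reformulation via ``pullback along a mono $=$ kernel of the composite with its cokernel'' is a slightly cleaner packaging of the same diagram chase the paper carries out by verifying the universal property directly.
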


\begin{proof}
The proof is similar to \cite[Proposition 1.7, page 138]{Sten}.

\begin{invisible}
Consider an exact sequence $0\rightarrow X\overset{s}{\rightarrow }Y\overset{%
p}{\rightarrow }Y/X\rightarrow 0.$ By exactness we get a commutative diagram
as follows
\begin{equation*}
\begin{array}{ccccccccc}
&  &  &  & 0 &  &  &  &  \\
&  &  &  & \downarrow &  &  &  &  \\
0 & \rightarrow & r\left( X\right) & \overset{r\left( s\right) }{\rightarrow
} & r\left( Y\right) & \overset{r\left( p\right) }{\rightarrow } & r\left(
Y/X\right) & \rightarrow & 0 \\
&  & \downarrow \eta X &  & \downarrow \eta Y &  & \downarrow \eta \left(
Y/X\right) &  &  \\
0 & \rightarrow & \omega \left( X\right) & \overset{\omega \left( s\right) }{%
\rightarrow } & \omega \left( Y\right) & \overset{\omega \left( p\right) }{%
\rightarrow } & \omega \left( Y\right) /\omega \left( X\right) \cong \omega
\left( Y/X\right) & \rightarrow & 0 \\
&  &  &  & \downarrow \tau &  &  &  &  \\
&  &  &  & \omega \left( Y\right) /r\left( Y\right) &  &  &  &  \\
&  &  &  & \downarrow &  &  &  &  \\
&  &  &  & 0 &  &  &  &
\end{array}%
\end{equation*}%
where all rows and columns are exact sequences. Let us prove that $\left(
r\left( X\right) ,\eta Y\circ r\left( s\right) \right) =\omega \left(
X\right) \cap r\left( Y\right) .$ We have%
\begin{equation*}
\omega \left( p\right) \circ \left( \eta Y\circ r\left( s\right) \right)
=\eta \left( Y/X\right) \circ r\left( p\right) \circ r\left( s\right)
=0=\tau \circ \left( \eta Y\circ r\left( s\right) \right) .
\end{equation*}%
Let $\alpha :Z\rightarrow \omega \left( Y\right) $ be a morphism such that $%
\tau \circ \alpha =0=\omega \left( p\right) \circ \alpha .$ The first
equality entails that there exists a morphism $\alpha ^{\prime
}:Z\rightarrow r\left( Y\right) $ such that $\eta Y\circ \alpha ^{\prime
}=\alpha .$ We have%
\begin{equation*}
\eta \left( Y/X\right) \circ r\left( p\right) \circ \alpha ^{\prime }=\omega
\left( p\right) \circ \eta Y\circ \alpha ^{\prime }=\omega \left( p\right)
\circ \alpha =0.
\end{equation*}%
Since $\eta \left( Y/X\right) $ is a monomorphism, we deduce that $r\left(
p\right) \circ \alpha ^{\prime }=0$ so that there is a morphism $\alpha
^{\prime \prime }:Z\rightarrow r\left( X\right) $ such that

$r\left( s\right) \circ \alpha ^{\prime \prime }=\alpha ^{\prime }.$ Thus%
\begin{equation*}
\left( \eta Y\circ r\left( s\right) \right) \circ \alpha ^{\prime \prime
}=\eta Y\circ \alpha ^{\prime }=\alpha .
\end{equation*}%
Since $\eta Y\circ r\left( s\right) $ is a monomorphism, we deduce that $%
\alpha ^{\prime \prime }$ is uniquely determined by the equality above. This
proves that $\left( r\left( X\right) ,\eta Y\circ r\left( s\right) \right) =%
\mathrm{ker}\left( \Delta \left( \tau ,\omega \left( p\right) \right)
\right) =\omega \left( X\right) \cap r\left( Y\right) $ where $\Delta \left(
\tau ,\omega \left( p\right) \right) $ denotes the diagonal morphism of $%
\left( \tau ,\omega \left( p\right) \right) $. Let $f:X\rightarrow Y$ be a
morphism in $\mathcal{C}$. We have that $r\left( \mathrm{ker}\left( f\right)
\right) =\omega \left( \mathrm{ker}\left( f\right) \right) \cap r\left(
X\right) .$ By exactness we get $r\left( \mathrm{ker}\left( f\right) \right)
=\mathrm{ker}\left( r\left( f\right) \right) $ and $\omega \left( \mathrm{ker%
}\left( f\right) \right) =\mathrm{ker}\left( \omega \left( f\right) \right)
. $ Thus
\begin{eqnarray*}
\mathrm{Im}\left( r\left( f\right) \right) &=&\mathrm{kerCo\mathrm{ker}}%
\left( r\left( f\right) \right) =\mathrm{ker}\left( r\left( \mathrm{Co%
\mathrm{ker}}f\right) \right) \\
&=&r\left( \mathrm{ker}\left( \mathrm{Co\mathrm{ker}}f\right) \right)
=\omega \left( \mathrm{ker}\left( \mathrm{Co\mathrm{ker}}f\right) \right)
\cap r\left( Y\right) \\
&=&\mathrm{ker}\left( \mathrm{Co\mathrm{ker}}\left( \omega \left( f\right)
\right) \right) \cap r\left( Y\right) =\mathrm{Im}\left( \omega \left(
f\right) \right) \cap r\left( Y\right) .
\end{eqnarray*}%
Note that from the above computation one gets $\mathrm{Im}\left( r\left(
f\right) \right) =r\left( \mathrm{ker}\left( \mathrm{Co\mathrm{ker}}f\right)
\right) =r\left( \mathrm{Im}\left( f\right) \right) $.
\end{invisible}
\end{proof}

\begin{remark}
\label{rem:ExactInv}From Corollary \ref{coro:K}, we have%
\begin{eqnarray*}
\mathrm{H}^{n}\left( B,\Bbbk \right) ^{H} &=&\left\{ \left[ f\right] \mid
f\in \mathrm{Z}^{n}\left( B,\Bbbk \right) ,\varepsilon _{H}\left( h\right) %
\left[ f\right] =\left[ f\right] h,\text{ for every }h\in H\right\} \\
&=&\left\{ \left[ f\right] \mid f\in \mathrm{Z}^{n}\left( B,\Bbbk \right) ,%
\left[ \varepsilon _{H}\left( h\right) f\right] =\left[ fh\right] ,\text{
for every }h\in H\right\}
\end{eqnarray*}%
where, for every $z\in B^{\otimes n}$, we have%
\begin{equation*}
\left( fh\right) \left( z\right) =f\left( hz\right) .
\end{equation*}%
Note that, for any $H$-bimodule $M$ one has
\begin{equation*}
\mathrm{Hom}_{H,H}\left( H,M\right) \cong M^{H}=\left\{ m\in M\mid hm=mh,%
\text{ for every }h\in H\right\} .
\end{equation*}%
Note also that $H$ is a separable $\Bbbk $-algebra whence it is projective
in the category of $H$-bimodules. As a consequence $\mathrm{Hom}_{H,H}\left(
H,{-}\right) \cong \left( {-}\right) ^{H}:{_{H}}\mathfrak{M}_{H}\rightarrow
\mathfrak{M}$ is an exact functor (here ${_{H}}\mathfrak{M}_{H}$ is the
category of $H$-bimodules and $\mathfrak{M}$ the category of $\Bbbk $-vector
spaces). By Proposition \ref{pro:Prerad} applied to the case when $r:=\left(
{-}\right) ^{H}:{_{H}}\mathfrak{M}_{H}\rightarrow \mathfrak{M}$ and $\omega $
is the forgetful functor, for every morphism $f:X\rightarrow Y$ of $H$%
-bimodules one has%
\begin{equation*}
\mathrm{ker}\left( f^{H}\right) =\mathrm{ker}\left( f\right) \cap
X^{H}=\left( \mathrm{ker}\left( f\right) \right) ^{H}\qquad \text{and}\qquad
\mathrm{Im}\left( f^{H}\right) =\mathrm{Im}\left( f\right) \cap Y^{H}=\left(
\mathrm{Im}\left( f\right) \right) ^{H}.
\end{equation*}%
Still by Corollary \ref{coro:K}, we know that the differential $\partial
^{n}:\mathrm{Hom}_{\Bbbk }\left( B^{\otimes n},\Bbbk \right) \longrightarrow
\mathrm{Hom}_{\Bbbk }\left( B^{\otimes \left( n+1\right) },\Bbbk \right) $
of the usual Hochschild cohomology is $H$-bilinear. Thus we can apply the
argument above to get%
\begin{eqnarray*}
\mathrm{ker}\left( \left( \partial ^{n}\right) ^{H}\right) &=&\mathrm{ker}%
\left( \partial ^{n}\right) \cap \mathrm{Hom}_{\Bbbk }\left( B^{\otimes
n},\Bbbk \right) ^{H}=\left( \mathrm{ker}\left( \partial ^{n}\right) \right)
^{H}\qquad \text{and}\qquad \\
\mathrm{Im}\left( \left( \partial ^{n-1}\right) ^{H}\right) &=&\mathrm{Im}%
\left( \partial ^{n-1}\right) \cap \mathrm{Hom}_{\Bbbk }\left( B^{\otimes
n},\Bbbk \right) ^{H}=\left( \mathrm{Im}\left( \partial ^{n-1}\right)
\right) ^{H}.
\end{eqnarray*}%
Now $\mathrm{Hom}_{\Bbbk }\left( B^{\otimes n},\Bbbk \right) ^{H}=\mathrm{Hom%
}_{H,-}\left( B^{\otimes n},\Bbbk \right) $ so that we get%
\begin{eqnarray*}
\mathrm{Z}_{H\text{-}\mathrm{Mod}}^{n}\left( B,\Bbbk \right) &=&\mathrm{Z}%
^{n}\left( B,\Bbbk \right) \cap \mathrm{Hom}_{H,-}\left( B^{\otimes n},\Bbbk
\right) =\mathrm{Z}^{n}\left( B,\Bbbk \right) ^{H}\qquad \text{and}\qquad \\
\mathrm{B}_{H\text{-}\mathrm{Mod}}^{n}\left( B,\Bbbk \right) &=&\mathrm{B}%
^{n}\left( B,\Bbbk \right) \cap \mathrm{Hom}_{H,-}\left( B^{\otimes n},\Bbbk
\right) =\mathrm{B}^{n}\left( B,\Bbbk \right) ^{H}.
\end{eqnarray*}%
where $\mathrm{Z}_{H\text{-}\mathrm{Mod}}^{n}\left( B,\Bbbk \right) $ and $%
\mathrm{B}_{H\text{-}\mathrm{Mod}}^{n}\left( B,\Bbbk \right) $ denotes the
the abelian groups of $n$-cocycles, of $n$-coboundaries for the cohomology
of the algebra $B$ with coefficients in $\Bbbk $ computed in the monoidal
category $H$-$\mathrm{Mod}$ of left $H$-modules. The corresponding $n$-th
Hochschild cohomology group is
\begin{equation*}
\mathrm{H}_{H\text{-}\mathrm{Mod}}^{n}\left( B,\Bbbk \right) :=\frac{\mathrm{%
Z}_{H\text{-}\mathrm{Mod}}^{n}\left( B,\Bbbk \right) }{\mathrm{B}_{H\text{-}%
\mathrm{Mod}}^{n}\left( B,\Bbbk \right) }=\frac{\mathrm{Z}^{n}\left( B,\Bbbk
\right) ^{H}}{\mathrm{B}^{n}\left( B,\Bbbk \right) ^{H}}\cong \left( \frac{%
\mathrm{Z}^{n}\left( B,\Bbbk \right) }{\mathrm{B}^{n}\left( B,\Bbbk \right) }%
\right) ^{H}=\mathrm{H}^{n}\left( B,\Bbbk \right) ^{H}.
\end{equation*}
\end{remark}

Denote by $D\left( H\right) $ the Drinfeld double, see e.g. the first
structure of \cite[Theorem 7.1.1]{Maj}.

\begin{proposition}
\label{pro:D(H)}In the setting of Corollary \ref{coro:K} assume that $H$ is
also cosemisimple. Then, for $n\in \N_0$
\begin{equation*}
\mathrm{Z}_{{\mathcal{YD}}}^{n}\left( B,\Bbbk \right) =\mathrm{Z}^{n}\left(
B,\Bbbk \right) ^{D(H)},\quad \mathrm{B}_{{\mathcal{YD}}}^{n}\left( B,\Bbbk
\right) =\mathrm{B}^{n}\left( B,\Bbbk \right) ^{D(H)}\quad \text{and}\quad
\mathrm{H}_{{\mathcal{YD}}}^{n}\left( B,\Bbbk \right) \cong \mathrm{H}%
^{n}\left( B,\Bbbk \right) ^{D(H)}.
\end{equation*}%
where $\mathrm{Z}^{n}\left( B,\Bbbk \right) $ and $\mathrm{B}^{n}\left(
B,\Bbbk \right) $ are regarded as $D\left( H\right) $-subbimodules of $%
\mathrm{Hom}_{\Bbbk }\left( B^{\otimes n},\Bbbk \right) $ whose structure is
induced by the left $D\left( H\right) $-module structures of its arguments.

Moreover $\mathrm{H}^{n}\left( B,\Bbbk \right) ^{D(H)}$ is a subspace of $%
\mathrm{H}^{n}\left( B,\Bbbk \right) ^{H}.$
\end{proposition}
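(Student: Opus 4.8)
The plan is to exhibit the Yetter-Drinfeld Hochschild complex of $B$ with coefficients in $\Bbbk$ as the subcomplex of $D(H)$-invariant cochains inside the ordinary (bar) Hochschild complex of the underlying $\Bbbk$-algebra $B$, and then to transport cocycles, coboundaries and cohomology across the invariants functor using its exactness. In spirit this is the $D(H)$-refinement of Corollary \ref{coro:K} and Remark \ref{rem:ExactInv}.

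First I would invoke the braided monoidal equivalence ${_{H}^{H}\mathcal{YD}}\cong {_{D(H)}}\mathfrak{M}$ (Majid, \cite[Proposition 10.6.16]{Mo} and \cite[Proposition 6]{RT}); this applies because $H$, being semisimple and cosemisimple, is separable (\cite[Corollary 3.7]{St}, \cite[Theorem 2.34]{AMS}), hence finite-dimensional. Under it each object of ${_{H}^{H}\mathcal{YD}}$ is a left $D(H)$-module, the tensor product is the underlying $\Bbbk$-tensor product, and ${_{H}^{H}\mathcal{YD}}\left( V,W\right) =\mathrm{Hom}_{D(H)}\left( V,W\right)$. Hence, regarding $\mathrm{Hom}_{\Bbbk }\left( B^{\otimes n},\Bbbk \right)$ as a $D(H)$-bimodule with left action via $\varepsilon _{D(H)}$ and right action $\left( fd\right) \left( z\right) :=f\left( dz\right)$ induced by the left $D(H)$-module structures of the two arguments, one has
\begin{equation*}
{_{H}^{H}\mathcal{YD}}\left( B^{\otimes n},\Bbbk \right) =\mathrm{Hom}_{D(H)}\left( B^{\otimes n},\Bbbk \right) =\mathrm{Hom}_{\Bbbk }\left( B^{\otimes n},\Bbbk \right) ^{D(H)}.
\end{equation*}
Then I would note that the differentials $\partial ^{n}$ of Remark \ref{rem:HochYD} are precisely the bar differentials of the associative $\Bbbk$-algebra $\left( B,m_{B},u_{B}\right)$: they are built only out of $m_{B}$, $\varepsilon _{B}$ and tensor factors, with no occurrence of the braiding. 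Since $m_{B}$ and $\varepsilon _{B}$ are morphisms in ${_{H}^{H}\mathcal{YD}}$, hence $D(H)$-linear, the ordinary $\partial ^{n}$ is $D(H)$-bilinear for the bimodule structures just described (this is the $D(H)$-analogue of the $H$-bilinearity of Corollary \ref{coro:K}). Consequently $\mathrm{Z}^{n}\left( B,\Bbbk \right)$ and $\mathrm{B}^{n}\left( B,\Bbbk \right)$ are $D(H)$-subbimodules of $\mathrm{Hom}_{\Bbbk }\left( B^{\otimes n},\Bbbk \right)$, and since the Yetter-Drinfeld differential is just the restriction of the ordinary one to invariants, $\mathrm{Z}_{{\mathcal{YD}}}^{n}\left( B,\Bbbk \right) =\mathrm{Z}^{n}\left( B,\Bbbk \right) \cap \mathrm{Hom}_{\Bbbk }\left( B^{\otimes n},\Bbbk \right) ^{D(H)}$ and $\mathrm{B}_{{\mathcal{YD}}}^{n}\left( B,\Bbbk \right) =\partial ^{n-1}\big( \mathrm{Hom}_{\Bbbk }\left( B^{\otimes (n-1)},\Bbbk \right) ^{D(H)}\big)$.

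The remaining ingredient is exactness of invariants. Since $H$ is semisimple and cosemisimple, so is $D(H)$ (\cite[Proposition 7]{Ra} and its dual counterpart), whence $D(H)$ is separable (\cite[Corollary 3.7]{St}, \cite[Theorem 2.34]{AMS}) and therefore projective as a $D(H)$-bimodule, so that $\left( {-}\right) ^{D(H)}\cong \mathrm{Hom}_{D(H),D(H)}\left( D(H),{-}\right) :{_{D(H)}}\mathfrak{M}_{D(H)}\rightarrow \mathfrak{M}$ is exact. Applying Proposition \ref{pro:Prerad} with $r=\left( {-}\right) ^{D(H)}$ and $\omega$ the forgetful functor to the $D(H)$-bilinear morphism $\partial ^{n}$ gives $\mathrm{Z}^{n}\left( B,\Bbbk \right) \cap \mathrm{Hom}_{\Bbbk }\left( B^{\otimes n},\Bbbk \right) ^{D(H)}=\mathrm{Z}^{n}\left( B,\Bbbk \right) ^{D(H)}$, and likewise $\mathrm{B}_{{\mathcal{YD}}}^{n}\left( B,\Bbbk \right) =\mathrm{B}^{n}\left( B,\Bbbk \right) ^{D(H)}$; these are the first two claimed equalities. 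Feeding the short exact sequence $0\rightarrow \mathrm{B}^{n}\left( B,\Bbbk \right) \rightarrow \mathrm{Z}^{n}\left( B,\Bbbk \right) \rightarrow \mathrm{H}^{n}\left( B,\Bbbk \right) \rightarrow 0$ of $D(H)$-bimodules into the exact functor $\left( {-}\right) ^{D(H)}$ yields
\begin{equation*}
\mathrm{H}_{{\mathcal{YD}}}^{n}\left( B,\Bbbk \right) =\frac{\mathrm{Z}^{n}\left( B,\Bbbk \right) ^{D(H)}}{\mathrm{B}^{n}\left( B,\Bbbk \right) ^{D(H)}}\cong \left( \frac{\mathrm{Z}^{n}\left( B,\Bbbk \right) }{\mathrm{B}^{n}\left( B,\Bbbk \right) }\right) ^{D(H)}=\mathrm{H}^{n}\left( B,\Bbbk \right) ^{D(H)},
\end{equation*}
exactly as in the last display of Remark \ref{rem:ExactInv}.

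For the ``moreover'' clause I would use that $H$ is a Hopf subalgebra of $D(H)$. Restriction of scalars along $H\hookrightarrow D(H)$ carries the left $D(H)$-module structure on each $B^{\otimes n}$ to the left $H$-module structure arising from $B$ being an $H$-module algebra, and therefore carries the $D(H)$-bimodule structure on $\mathrm{H}^{n}\left( B,\Bbbk \right)$ to the $H$-bimodule structure used in Corollary \ref{coro:K} (left via the counit, right by $\left[ f\right] h=\left[ fh\right]$ with $\left( fh\right) \left( z\right) =f\left( hz\right)$), the left actions being compatible since the counit of $D(H)$ restricts to that of $H$. Hence every $D(H)$-invariant class is $H$-invariant, i.e. $\mathrm{H}^{n}\left( B,\Bbbk \right) ^{D(H)}\subseteq \mathrm{H}^{n}\left( B,\Bbbk \right) ^{H}\cong \mathrm{H}^{n}\left( B\#H,\Bbbk \right)$. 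The points requiring genuine care are the matching of these module structures (modelled on the $H$-case already handled in Corollary \ref{coro:K} and Remark \ref{rem:ExactInv}) and the verification that $D(H)$ is separable; beyond that the argument is routine.
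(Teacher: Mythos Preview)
Your proof is correct and follows essentially the same route as the paper's: identify ${_{H}^{H}\mathcal{YD}}\left(B^{\otimes n},\Bbbk\right)$ with $\mathrm{Hom}_{\Bbbk}\left(B^{\otimes n},\Bbbk\right)^{D(H)}$ via the equivalence ${_{H}^{H}\mathcal{YD}}\cong{_{D(H)}}\mathfrak{M}$, use semisimplicity of $D(H)$ (from \cite[Proposition 7]{Ra}) to make $(-)^{D(H)}$ exact, apply Proposition~\ref{pro:Prerad} to pass invariants through kernels and images, and deduce the ``moreover'' from $H\hookrightarrow D(H)$ as a Hopf subalgebra. The only cosmetic difference is that the paper explicitly writes out the action $(f\otimes h)\rhd v=f\big((h\rhd v)_{-1}\big)(h\rhd v)_{0}$ to verify that the restricted $D(H)$-action on $\mathrm{H}^{n}(B,\Bbbk)$ agrees with the $H$-action of Corollary~\ref{coro:K}, whereas you assert this compatibility; and the paper only invokes semisimplicity of $D(H)$ (not cosemisimplicity) to obtain separability, which is all that is needed.
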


\begin{proof}
For shortness, in this proof, we denote $D(H)$ by $D$. Consider the analogue
of the standard complex as in Remark \ref{rem:HochYD}
\begin{equation*}
\xymatrixrowsep{25pt}\xymatrixcolsep{1cm}\xymatrix{\yd( \Bbbk ,\Bbbk)
\ar[r]^{\partial ^0}& \yd(B ,\Bbbk) \ar[r]^{\partial ^1}&\yd( B^{\otimes2}
,\Bbbk) \ar[r]^-{\partial ^2}&\cdots}
\end{equation*}%
where $\partial ^{n}$ is induced by the differential $\partial ^{n}:\mathrm{%
Hom}_{\Bbbk }\left( B^{\otimes n},\Bbbk \right) \longrightarrow \mathrm{Hom}%
_{\Bbbk }\left( B^{\otimes \left( n+1\right) },\Bbbk \right) $ of the
ordinary Hochschild cohomology. Now, since $H$ is semisimple, it is
finite-dimensional (whence it has bijective antipode) so that, by a result
essentially due to Majid (see \cite[Proposition 10.6.16]{Mo}) and by \cite[%
Proposition 6]{RT}, we get a category isomorphism ${_{H}^{H}\mathcal{YD}}%
\cong {{_{D}}}\mathfrak{M}$. Thus the complex above can be rewritten as
follows%
\begin{equation*}
\xymatrixrowsep{25pt}\xymatrixcolsep{0,5cm}\xymatrix{\mathrm{Hom}_{D,-}(
\Bbbk ,\Bbbk) \ar[r]^{\partial ^0}& \mathrm{Hom}_{D,-}(B ,\Bbbk)
\ar[r]^{\partial ^1}& \mathrm{Hom}_{D,-}( B^{\otimes2} ,\Bbbk)
\ar[r]^-{\partial ^2}&\cdots}
\end{equation*}%
Now, since, for each $n\in \N_0$, we have $\mathrm{Hom}_{D,-}\left(
B^{\otimes n},\Bbbk \right) =\mathrm{Hom}_{\Bbbk }\left( B^{\otimes n},\Bbbk
\right) ^{D},$ we obtain the complex%
\begin{equation*}
\xymatrixrowsep{25pt}\xymatrixcolsep{0,5cm}\xymatrix{\mathrm{Hom}_\Bbbk(
\Bbbk ,\Bbbk)^{D} \ar[r]^{\partial ^0}& \mathrm{Hom}_\Bbbk(B ,\Bbbk)^{D}
\ar[r]^{\partial ^1}& \mathrm{Hom}_\Bbbk( B^{\otimes2} ,\Bbbk)^{D}
\ar[r]^-{\partial ^2}&\cdots}
\end{equation*}%
We will write $\left( \partial ^{n}\right) ^{D}$ instead of $\partial ^{n}$
when we would like to stress that the map considered is the one induced on
invariants. Thus we will write equivalently%
\begin{equation*}
\xymatrixrowsep{25pt}\xymatrixcolsep{45pt}\xymatrix{\mathrm{Hom}_\Bbbk(
\Bbbk ,\Bbbk)^{D} \ar[r]^{(\partial ^0)^{D}}& \mathrm{Hom}_\Bbbk(B
,\Bbbk)^{D} \ar[r]^{(\partial ^1)^{D}}& \mathrm{Hom}_\Bbbk( B^{\otimes2}
,\Bbbk)^{D} \ar[r]^-{(\partial ^2)^{D}}&\cdots}
\end{equation*}

Now, assume $H$ is also cosemisimple. Since $H$ is both semisimple and
cosemisimple, by \cite[Proposition 7]{Ra} the Hopf algebra $D$ is semisimple
as an algebra. Thus, as in Remark \ref{rem:ExactInv} in case of $H$, the
functor $\left( -\right) ^{D}:{_{D}}\mathfrak{M}_{D}\rightarrow \mathfrak{M}$
is exact (here ${_{D}}\mathfrak{M}_{D}$ is the category of $D$-bimodules and
$\mathfrak{M}$ the category of $\Bbbk $-vector spaces). By Proposition \ref%
{pro:Prerad} applied to the case when $r:=\left( {-}\right) ^{D}:{_{D}}%
\mathfrak{M}_{D}\rightarrow \mathfrak{M}$ and $\omega $ is the forgetful
functor, for every morphism $f:X\rightarrow Y$ of $D$-bimodules one has%
\begin{equation*}
\mathrm{ker}\left( f^{D}\right) =\mathrm{ker}\left( f\right) \cap
X^{D}=\left( \mathrm{ker}\left( f\right) \right) ^{D}\qquad \text{and}\qquad
\mathrm{Im}\left( f^{D}\right) =\mathrm{Im}\left( f\right) \cap Y^{D}=\left(
\mathrm{Im}\left( f\right) \right) ^{D}.
\end{equation*}%
In particular we get%
\begin{eqnarray*}
\mathrm{ker}\left( \left( \partial ^{n}\right) ^{D}\right) &=&\mathrm{ker}%
\left( \partial ^{n}\right) \cap \mathrm{Hom}_{\Bbbk }\left( B^{\otimes
n},\Bbbk \right) ^{D}=\mathrm{ker}\left( \partial ^{n}\right) ^{D}\qquad
\text{and}\qquad \\
\mathrm{Im}\left( \left( \partial ^{n-1}\right) ^{D}\right) &=&\mathrm{Im}%
\left( \partial ^{n-1}\right) \cap \mathrm{Hom}_{\Bbbk }\left( B^{\otimes
n},\Bbbk \right) ^{D}=\mathrm{Im}\left( \partial ^{n-1}\right) ^{D}
\end{eqnarray*}%
and hence%
\begin{eqnarray*}
\mathrm{Z}_{{\mathcal{YD}}}^{n}\left( B,\Bbbk \right) &=&\mathrm{Z}%
^{n}\left( B,\Bbbk \right) \cap \mathrm{Hom}_{D,-}\left( B^{\otimes n},\Bbbk
\right) =\mathrm{Z}^{n}\left( B,\Bbbk \right) ^{D}\qquad \text{and}\qquad \\
\mathrm{B}_{{\mathcal{YD}}}^{n}\left( B,\Bbbk \right) &=&\mathrm{B}%
^{n}\left( B,\Bbbk \right) \cap \mathrm{Hom}_{D,-}\left( B^{\otimes n},\Bbbk
\right) =\mathrm{B}^{n}\left( B,\Bbbk \right) ^{D}
\end{eqnarray*}%
Then we obtain%
\begin{equation*}
\mathrm{H}_{{\mathcal{YD}}}^{n}\left( B,\Bbbk \right) =\frac{\mathrm{Z}_{{%
\mathcal{YD}}}^{n}\left( B,\Bbbk \right) }{\mathrm{B}_{{\mathcal{YD}}%
}^{n}\left( B,\Bbbk \right) }=\frac{\mathrm{Z}^{n}\left( B,\Bbbk \right) ^{D}%
}{\mathrm{B}^{n}\left( B,\Bbbk \right) ^{D}}\cong \mathrm{H}^{n}\left(
B,\Bbbk \right) ^{D}.
\end{equation*}%
Let us prove the last part of the statement. The correspondence between the
left $D$-module structure and the structure of Yetter-Drinfeld module over $%
H $ is written explicitly in \cite[Proposition 7.1.6]{Maj}. In particular $%
D=H^{\ast }\otimes H$ and given $V\in {_{H}^{H}\mathcal{YD}}$, the two
structures are related by the following equality $\left( f\otimes h\right)
\rhd v=f\left( \left( h\rhd v\right) _{-1}\right) \left( h\rhd v\right) _{0}$
for every $f\in H^{\ast },h\in H,v\in V.$ Thus $\left( \varepsilon
_{H}\otimes h\right) \rhd v=h\rhd v.$ Moreover $H$ is a Hopf subalgebra of $%
D $ via $h\mapsto \varepsilon _{H}\otimes h,$ where $D$ is considered with
the first structure of \cite[Theorem 7.1.1]{Maj}. Since the $D$-bimodule
structure of $\mathrm{H}^{n}\left( B,\Bbbk \right) $ is induced by the one
of $\mathrm{Hom}_{\Bbbk }\left( B^{\otimes n},\Bbbk \right) $ which comes
from the left $D$-module structures of its arguments and similarly for the $%
H $-bimodule structure of $\mathrm{H}^{n}\left( B,\Bbbk \right) ,$ we deduce
that $\mathrm{H}^{n}\left( B,\Bbbk \right) ^{D}$ is a subspace of $\mathrm{H}%
^{n}\left( B,\Bbbk \right) ^{H}.$
\end{proof}

\begin{example}
In the setting of the proof of \cite[Theorem 4.1.3]{An-Basic}, a Nichols
algebra $\mathcal{B}\left( V\right) $ such that $\mathrm{H}^{3}\left(
\mathcal{B}\left( V\right) ,\Bbbk \right) ^{\mathbb{Z}_{m}}=0$ is considered
where $\Bbbk $ is a field of characteristic zero. By Proposition \ref%
{pro:D(H)} applied in the case $H=\Bbbk \mathbb{Z}_{m}$ and $B=\mathcal{B}%
\left( V\right) ,$ we have that $\mathrm{H}_{{\mathcal{YD}}}^{3}\left(
\mathcal{B}\left( V\right) ,\Bbbk \right) \cong \mathrm{H}^{3}\left(
\mathcal{B}\left( V\right) ,\Bbbk \right) ^{D(H)}$ is a subspace of $\mathrm{%
H}^{3}\left( \mathcal{B}\left( V\right) ,\Bbbk \right) ^{H}=\mathrm{H}%
^{3}\left( \mathcal{B}\left( V\right) ,\Bbbk \right) ^{\mathbb{Z}_{m}}=0.$
Thus we get $\mathrm{H}_{{\mathcal{YD}}}^{3}\left( \mathcal{B}\left(
V\right) ,\Bbbk \right) =0.$ Therefore, in view of Theorem \ref{teo:GelakiYD}%
, if $\left( Q,m,u,\Delta ,\varepsilon ,\omega \right) $ is a f.d. connected
coquasi-bialgebra in ${_{H}^{H}\mathcal{YD}}$ such that $\mathrm{gr}Q\cong
\mathcal{B}\left( V\right) $ (as above) as augmented algebras in ${_{H}^{H}%
\mathcal{YD}}$ (the counit must be the same in order to have the same
Yetter-Drinfeld module structure on $\Bbbk $), then we can conclude that $Q$
is gauge equivalent to a connected bialgebra in ${_{H}^{H}\mathcal{YD}}$.
\end{example}

\begin{remark}
Let $A$ be a finite-dimensional coquasi-bialgebra with the dual Chevalley
property i.e. the coradical $H$ of $A$ is a coquasi-subbialgebra of $A$ (in
particular $H$ is cosemisimple). Assume the coquasi-bialgebra structure of $%
H $ has trivial reassociator (i.e. it is an ordinary bialgebra) and also
assume it has an antipode (i.e. it is a Hopf algebra). Then, by \cite[%
Corollary 6.4]{AP}, $\mathrm{gr}A$ is isomorphic to $R\#H$ as a
coquasi-bialgebra, where $R$ is a suitable connected bialgebra in ${_{H}^{H}%
\mathcal{YD}}$. Note that $R\#H$ is the usual Radford-Majid bosonization as $%
H$ has trivial reassociator, see \cite[Definition 5.4]{AP}. Hence we can
compute
\begin{equation*}
\mathrm{H}^{3}\left( \mathrm{gr}A,\Bbbk \right) =\mathrm{H}^{3}\left(
R\#H,\Bbbk \right) .
\end{equation*}%
Assume further that $H$ is semisimple. Then, by Corollary \ref{coro:K}, we
have
\begin{equation*}
\mathrm{H}^{n}\left( R\#H,\Bbbk \right) \cong \mathrm{H}^{n}\left( R,\Bbbk
\right) ^{H}
\end{equation*}%
so that $\mathrm{H}^{3}\left( \mathrm{gr}A,\Bbbk \right) \cong \mathrm{H}%
^{3}\left( R,\Bbbk \right) ^{H}.$ Thus, if $\mathrm{H}^{3}\left( R,\Bbbk
\right) ^{H}=0,$ one gets $\mathrm{H}^{3}\left( \mathrm{gr}A,\Bbbk \right)
=0 $ which is the analogue of the condition \cite[Proposition 2.3]{EG} (note
that our $A$ is the dual of the one considered therein) which guarantees
that $A$ is gauge equivalent to an ordinary Hopf algebra, if $A$ has an a
quasi-antipode and $\Bbbk =\mathbb{C}$. Next we will give another approach
to arrive at the same conclusion but just requiring $\mathrm{H}_{{\mathcal{YD%
}}}^{3}\left( R,\Bbbk \right) =0$. Note that a priori $\mathrm{H}_{{\mathcal{%
YD}}}^{3}\left( R,\Bbbk \right) \cong \mathrm{H}^{3}\left( R,\Bbbk \right)
^{D\left( H\right) }$ is smaller than $\mathrm{H}^{3}\left( R,\Bbbk \right)
^{H}$.
\end{remark}

\section{Dual Chevalley}\label{sec:5}

The main aim of this section is to prove Theorem \ref{teo:main}. Let $A$ be
a Hopf algebra over a field $\Bbbk $ of characteristic zero such that the
coradical $H$ of $A$ is a sub-Hopf algebra (i.e. $A$ has the dual Chevalley
Property). Assume $H$ is finite-dimensional so that $H$ is semisimple. By
\cite[Theorem I]{ABM}, there is a gauge transformation $\zeta :A\otimes
A\rightarrow \Bbbk $ such that $A^{\zeta }$ is isomorphic, as a
coquasi-bialgebra, to the bosonization $Q\#H$ of a connected
coquasi-bialgebra $Q$ in ${_{H}^{H}\mathcal{YD}}$ by $H.$ By construction $%
\zeta $ is $H$-bilinear and $H$-balanced: this follows from \cite[%
Proposition 5.7]{ABM} (note that gauge transformation $v_{B}:B\otimes
B\rightarrow \Bbbk $, used therein for $B:=R\#_{\xi }H$, is $H$-bilinear and
$H$-balanced, as observed in the proof) and the fact that there is an $H$%
-bilinear Hopf algebra isomorphism $\psi :B\rightarrow A$ (see \cite[Proof
of Theorem I, page 36 and Theorem 6.1]{ABM} which is a consequence of \cite[%
Theorem 3.64]{AMS}) where $\left( R,\xi \right) $ is a suitable connected
pre-bialgebra with cocycle in ${_{H}^{H}\mathcal{YD}}$ (note that $\zeta
=v_{B}\circ \left( \psi ^{-1}\otimes \psi ^{-1}\right) $): here by connected
pre-bialgebra we mean that the coradical $R_{0}$ of $R$ is $\Bbbk 1_{R}$ (by
the properties of $1_{R}$ this implies that $R_{0}$ is a subcoalgebra in ${%
_{H}^{H}\mathcal{YD}}$ of $R$). Assume that $A$ is finite-dimensional. Then $%
Q\#H$ and hence $Q$ is finite dimensional.

Thus, by Theorem \ref{teo:GelakiYD}, if $\mathrm{H}_{{\mathcal{YD}}%
}^{3}\left( \mathrm{gr}Q,\Bbbk \right) =0$, then $Q$ is gauge equivalent to
a connected bialgebra in ${_{H}^{H}\mathcal{YD}}$.

First let us check which condition on $A$ guarantee that $\mathrm{H}_{{%
\mathcal{YD}}}^{3}\left( \mathrm{gr}Q,\Bbbk \right) =0.$ Note that by
construction $Q=R^{v}$ (see \cite[Proposition 5.7]{ABM}) where $v:=\left(
\lambda \xi \right) ^{-1}$, the convolution inverse of $\lambda \xi $ and $%
\lambda :H\rightarrow \Bbbk $ denotes the total integral on $H$.$\ $Thus we
can rewrite $\mathrm{gr}\left( Q\right) $ as $\mathrm{gr}\left( R^{v}\right)
.$

Moreover $v_{B}$ is given by $v_{B}\left( \left( r\#h\right) \otimes \left(
r^{\prime }\#h^{\prime }\right) \right) =v\left( r\otimes hr^{\prime
}\right) \varepsilon _{H}\left( h^{\prime }\right) $ for every $r,r^{\prime
}\in R,h,h^{\prime }\in H.$ By \cite[Proposition 2.5]{AMStu-Small}, $\mathrm{%
gr}\left( R\right) $ inherits the pre-bialgebra structure in ${_{H}^{H}%
\mathcal{YD}}$ of $R$. This is proved by checking that $R_{i}\cdot
R_{j}\subseteq R_{i+j}$ for every $i,j\in \N_0$, where $R_{i}$ denotes
the $i$-th term of the coradical filtration of $R$. Moreover $R_{i}$ is a
subcoalgebra of $R$ in ${_{H}^{H}\mathcal{YD}}$.

\begin{lemma}
\label{lem:ciccio}Keep the above hypotheses and notations. Then $\mathrm{gr}%
\left( R^{v}\right) $ and $\mathrm{gr}\left( R\right) $ coincide as
bialgebras in ${_{H}^{H}\mathcal{YD}}$ where the structures of $\mathrm{gr}%
\left( R\right) $ are induced by the ones of $\left( R,\xi \right) .$
\end{lemma}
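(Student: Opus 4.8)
The statement asserts that the bialgebra structures $\mathrm{gr}(R^v)$ and $\mathrm{gr}(R)$ coincide, where $R^v$ is the gauge deformation of the pre-bialgebra $(R,\xi)$ by $v=(\lambda\xi)^{-1}$. The natural strategy is to mimic the proof of Proposition~\ref{pro:grgaugeYD}: there we showed that passing to the associated graded coalgebra kills the effect of a gauge transformation because the deforming cochain becomes trivial in the top degree. Here the situation is slightly different since $R$ carries a pre-bialgebra structure with cocycle $\xi$ rather than a coquasi-bialgebra structure, but the mechanism is the same. First I would recall that $R^v$ has the same underlying coalgebra in ${_{H}^{H}\mathcal{YD}}$ as $R$ (the gauge deformation only changes the multiplication and the cocycle), so in particular $R^v$ is again connected with the same coradical filtration $(R_i)_{i\in\N_0}$, and by \cite[Proposition 2.5]{AMStu-Small} applied to $R^v$ one gets that $\mathrm{gr}(R^v)$ is a bialgebra in ${_{H}^{H}\mathcal{YD}}$ with $R_i\cdot_v R_j\subseteq R_{i+j}$. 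So both $\mathrm{gr}(R)$ and $\mathrm{gr}(R^v)$ are bona fide connected bialgebras in ${_{H}^{H}\mathcal{YD}}$ sharing the same underlying graded coalgebra; it remains to compare their multiplications (the units agree trivially, since $R$ and $R^v$ have the same unit $1_R$).

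The key computation is to show that, for $x\in R_a$ and $y\in R_b$, one has $x\cdot_v y + R_{a+b-1} = x\cdot y + R_{a+b-1}$. Writing the deformed multiplication as $m^v = v\ast m\ast v^{-1}$ (with convolution taken in ${_{H}^{H}\mathcal{YD}}$ with respect to the braided tensor coalgebra $R\otimes R$), I would first establish the analogue of formula~(\ref{form:gr1}): for $n>0$ and $w\in C_{(n)}=\sum_{i+j\le n}R_i\otimes R_j$,
\begin{equation*}
w_1\otimes\bigl(m(w_2)+R_{n-1}\bigr)\otimes w_3 = (1_R)^{\otimes 2}\otimes\bigl(m(w)+R_{n-1}\bigr)\otimes(1_R)^{\otimes 2},
\end{equation*}
which follows exactly as in Proposition~\ref{pro:grgaugeYD} from \cite[Lemma 3.69]{AMS} (using that the coradical of $C$ is one-dimensional, that $\Delta_C(w)-w\otimes 1^{\otimes2}-1^{\otimes2}\otimes w\in C_{(n-1)}\otimes C_{(n-1)}$, and that $m(C_{(n-1)})\subseteq R_{n-1}$). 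Then, evaluating $m^v$ on $x\otimes y$ and factoring out $R_{a+b-1}$, the cochains $v$ and $v^{-1}$ get evaluated on $(1_R)^{\otimes2}$ by the displayed identity; since $v$ (being $(\lambda\xi)^{-1}$) is unital — equivalently trivial on $\Bbbk 1_R\otimes 1_R$ — and $v^{-1}$ is then trivial there as well (by Remark~\ref{rem:gamma-1gauge}, or directly), the factors collapse to $\varepsilon\otimes\varepsilon$ and one is left with $m(x\otimes y)+R_{a+b-1}$. This gives $\overline{x}\cdot_v\overline{y}=\overline{x}\cdot\overline{y}$ on the basis, hence $m_{\mathrm{gr}(R^v)}=m_{\mathrm{gr}(R)}$.

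The one point that needs care — and which I expect to be the main obstacle — is the bookkeeping for the cocycle $\xi$: in the pre-bialgebra setting the gauge transformation deforms not only $m$ but also $\xi$ (to some $\xi^v$), and one should check that this extra datum does not interfere with the graded multiplication. The resolution is that the graded bialgebra structure on $\mathrm{gr}(R)$ in \cite[Proposition 2.5]{AMStu-Small} is built purely from the multiplication $m$ and the coalgebra structure — the cocycle $\xi$ is precisely what measures the failure of $R\#_\xi H$ to be the ordinary bosonization, and it evaporates in the associated graded because $\xi$ is unital (trivial in degree $0$ in both entries after normalization) and hence its homogeneous components of positive degree cannot contribute to the top-degree term. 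So the comparison really only involves $m$ versus $m^v$, which is handled by the computation above. I would also remark, for consistency with the surrounding text, that under the identification $Q=R^v$ this lemma is exactly what is needed to replace $\mathrm{gr}(Q)=\mathrm{gr}(R^v)$ by $\mathrm{gr}(R)$ — whose Yetter–Drinfeld cohomology is computable via $\mathrm{gr}(R)\#H$ and Corollary~\ref{coro:K} — in the hypothesis $\mathrm{H}_{{\mathcal{YD}}}^{3}(\mathrm{gr}Q,\Bbbk)=0$ of Theorem~\ref{teo:GelakiYD}.
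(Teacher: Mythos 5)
Your proposal is correct and follows essentially the same route as the paper: both reduce the claim to comparing the multiplications on the common underlying graded coalgebra, establish the identity $w_1\otimes\bigl(m(w_2)+R_{n-1}\bigr)\otimes w_3=(1_R)^{\otimes 2}\otimes\bigl(m(w)+R_{n-1}\bigr)\otimes(1_R)^{\otimes 2}$ via \cite[Lemma 3.69]{AMS}, and then use unitality of $v$ and $v^{-1}$ (the paper deduces this from unitality of $\xi$ via \cite[Proposition 4.8]{AMS}) to collapse $m^v$ to $m$ modulo $R_{a+b-1}$. The only cosmetic difference is that the paper invokes Theorem \ref{teo:grHopf} (via $\mathrm{gr}(R^v)=\mathrm{gr}(Q)$) rather than \cite[Proposition 2.5]{AMStu-Small} to know $\mathrm{gr}(R^v)$ is a bialgebra, which does not affect the substance.
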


\begin{proof}
By Theorem \ref{teo:grHopf}, $\mathrm{gr}\left( R^{v}\right) =\mathrm{gr}%
\left( Q\right) $ is a connected bialgebras in ${_{H}^{H}\mathcal{YD}}$.

Note that $R^{v}$ and $R$ coincide as coalgebras in ${_{H}^{H}\mathcal{YD}}$
so that $\mathrm{gr}\left( R^{v}\right) $ and $\mathrm{gr}\left( R\right) $
coincide as coalgebras in ${_{H}^{H}\mathcal{YD}}$. They also have the same
unit. It remains to check that their two multiplications coincide too.

Since $\xi $ is unital, by \cite[Proposition 4.8]{AMS}, we have that $v$ is
unital and this is equivalent to $v^{-1}$ unital (see the proof therein).

Let $C:=R\otimes R$. Let $n>0$ and let $w\in C_{\left( n\right)
}=\sum_{i+j\leq n}R_{i}\otimes R_{j}.$ By \cite[Lemma 3.69]{AMS}, we have
that%
\begin{equation*}
\Delta _{C}\left( w\right) -w\otimes \left( 1_{R}\right) ^{\otimes 2}-\left(
1_{R}\right) ^{\otimes 2}\otimes w\in C_{\left( n-1\right) }\otimes
C_{\left( n-1\right) }.
\end{equation*}%
Thus we get%
\begin{equation*}
w_{1}\otimes w_{2}\otimes w_{3}-\Delta _{C}\left( w\right) \otimes \left(
1_{R}\right) ^{\otimes 2}-\Delta _{C}\left( \left( 1_{R}\right) ^{\otimes
2}\right) \otimes w\in \Delta _{C}\left( C_{\left( n-1\right) }\right)
\otimes C_{\left( n-1\right) }
\end{equation*}%
and hence%
\begin{equation*}
w_{1}\otimes w_{2}\otimes w_{3}-w\otimes \left( 1_{R}\right) ^{\otimes
2}\otimes \left( 1_{R}\right) ^{\otimes 2}-\left( 1_{R}\right) ^{\otimes
2}\otimes w\otimes \left( 1_{R}\right) ^{\otimes 2}-\left( 1_{R}\right)
^{\otimes 4}\otimes w\in C_{\left( n-1\right) }\otimes C_{\left( n-1\right)
}\otimes C_{\left( n-1\right) }.
\end{equation*}%
Since $m\left( C_{\left( n-1\right) }\right) \subseteq \sum_{i+j\leq
n}m\left( R_{i}\otimes R_{j}\right) \subseteq R_{n-1}$ we get%
\begin{equation*}
w_{1}\otimes m\left( w_{2}\right) \otimes w_{3}-w\otimes 1_{R}\otimes \left(
1_{R}\right) ^{\otimes 2}-\left( 1_{R}\right) ^{\otimes 2}\otimes m\left(
w\right) \otimes \left( 1_{R}\right) ^{\otimes 2}-\left( 1_{R}\right)
^{\otimes 3}\otimes w\in C_{\left( n-1\right) }\otimes R_{n-1}\otimes
C_{\left( n-1\right) }
\end{equation*}

and hence%
\begin{equation}
w_{1}\otimes \left( m\left( w_{2}\right) +R_{n-1}\right) \otimes
w_{3}=\left( 1_{R}\right) ^{\otimes 2}\otimes \left( m\left( w\right)
+R_{n-1}\right) \otimes \left( 1_{R}\right) ^{\otimes 2}.  \label{form:gr2}
\end{equation}

Let $x,y\in R$. We compute%
\begin{eqnarray*}
\overline{x}\cdot _{v}\overline{y} &=&\left( x+R_{\left\vert x\right\vert
-1}\right) \cdot _{v}\left( y+R_{\left\vert y\right\vert -1}\right) \\
&=&\left( x\cdot _{v}y\right) +R_{\left\vert x\right\vert +\left\vert
y\right\vert -1}=m^{v}\left( x\otimes y\right) +R_{\left\vert x\right\vert
+\left\vert y\right\vert -1} \\
&=&v\left( \left( x\otimes y\right) _{1}\right) m\left( \left( x\otimes
y\right) _{2}\right) v^{-1}\left( \left( x\otimes y\right) _{3}\right)
+R_{\left\vert x\right\vert +\left\vert y\right\vert -1} \\
&=&v\left( \left( x\otimes y\right) _{1}\right) \left( m\left( \left(
x\otimes y\right) _{2}\right) +R_{\left\vert x\right\vert +\left\vert
y\right\vert -1}\right) v^{-1}\left( \left( x\otimes y\right) _{3}\right) \\
&\overset{(\ref{form:gr2})}{=}&v\left( \left( 1_{R}\right) ^{\otimes
2}\right) \left( m\left( x\otimes y\right) +R_{\left\vert x\right\vert
+\left\vert y\right\vert -1}\right) v^{-1}\left( \left( 1_{R}\right)
^{\otimes 2}\right) \\
&=&m\left( x\otimes y\right) +R_{\left\vert x\right\vert +\left\vert
y\right\vert -1}=\left( x\cdot y\right) +R_{\left\vert x\right\vert
+\left\vert y\right\vert -1}=\overline{x}\cdot \overline{y}.
\end{eqnarray*}
\end{proof}

The following result is inspired by \cite[Theorem
3.71]{AMS}.

\begin{lemma}
\label{lem:CoradSmash}Let $H$ be a cosemisimple Hopf algebra. Let $C$ be a
left $H$-comodule coalgebra such that $C_{0}$ is a one-dimensional left $H$%
-comodule subcoalgebra of $C$. Let $B=C\#H$ be the smash coproduct of $C$ by
$H$ i.e. the coalgebra defined by
\begin{eqnarray}
\Delta _{B}\left( c\#h\right) &=&\sum \left( c_{1}\#\left( c_{2}\right)
_{\left\langle -1\right\rangle }h_{1}\right) \otimes \left( \left(
c_{2}\right) _{\left\langle 0\right\rangle }\#h_{2}\right) ,
\label{eq:DeltaCosmash} \\
\varepsilon _{B}\left( c\#h\right) &=&\varepsilon _{C}\left( c\right)
\varepsilon _{H}\left( h\right) .  \notag
\end{eqnarray}%
Then, for every $n\in \N_0$ we have $B_{n}=C_{n}\#H.$
\end{lemma}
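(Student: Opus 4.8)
The plan is to prove $B_n = C_n \# H$ by induction on $n$, using the characterisation $B_n = B_{n-1}\wedge_B B_0$ together with the fact that $B_0$ is $C_0\#H$. First I would establish the base case: since $C_0$ is a one-dimensional left $H$-comodule subcoalgebra, it is spanned by a grouplike element $g$ with trivial coaction $g_{\langle -1\rangle}\otimes g_{\langle 0\rangle} = 1_H\otimes g$; then $C_0\#H$ is a subcoalgebra of $B$ (by \eqref{eq:DeltaCosmash} the comultiplication sends $g\#h$ into $(C_0\#H)\otimes(C_0\#H)$) which is isomorphic as a coalgebra to $H$ via $g\#h\mapsto h$. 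Since $H$ is cosemisimple, $H$ is cosemisimple as a coalgebra, hence so is $C_0\#H$, and therefore $B_0 = (C_0\#H)_0 = C_0\#H$; but one must also check $B_0 \supseteq C_0\#H$ cannot be beaten, i.e. that $C_0\#H$ is actually the whole coradical, which follows because $C_0\#H$ is a cosemisimple subcoalgebra and $B/(C_0\#H)$ has no... — more carefully, the coradical of $B$ is the sum of all simple subcoalgebras, and I would argue $C_0\#H$ is cosemisimple so contained in $B_0$, while conversely the projection argument below forces equality. Actually the cleanest route for the base case is to observe that $C \cong \Bbbk\otimes_H B$ in a suitable sense is irrelevant; instead use that the coalgebra map $\pi\colon B = C\#H \to \Bbbk\#H \cong H$ induced by $\varepsilon_C$ is a coalgebra surjection, and the coalgebra injection $\iota\colon H\cong C_0\#H \hookrightarrow B$ splits it, so $B_0$ contains $\iota(H_0) = \iota(H)= C_0\#H$; for the reverse inclusion one uses that $C$ being a comodule coalgebra with $C_0$ one-dimensional makes $C$ connected as a coalgebra, hence $B = C\#H$ has coradical inside $C_0\#H$ by a wedge computation.

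For the inductive step, assume $B_{n-1} = C_{n-1}\#H$. The key identity is $B_n = B_{n-1}\wedge_B B_0 = (C_{n-1}\#H)\wedge_B (C_0\#H)$. On the other side, since $C_0$ is a one-dimensional subcoalgebra, $C_n = C_{n-1}\wedge_C C_0$. I would compute directly, using the explicit coproduct \eqref{eq:DeltaCosmash}, that $\Delta_B^{-1}\big(B_{n-1}\otimes B + B\otimes B_0\big)$ — the definition of the wedge — pulls back to exactly $C_n\#H$. Concretely, for $c\#h\in B$, the condition $\Delta_B(c\#h) \in (C_{n-1}\#H)\otimes B + B\otimes (C_0\#H)$ unwinds, via the formula $\Delta_B(c\#h) = \sum (c_1\#(c_2)_{\langle-1\rangle}h_1)\otimes((c_2)_{\langle 0\rangle}\#h_2)$, into a condition on $\Delta_C(c)$ alone — namely $\Delta_C(c)\in C_{n-1}\otimes C + C\otimes C_0$ — because $H$ acts ``freely'' in the second tensor slot and the coaction is a coalgebra map in the relevant sense. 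That condition is precisely $c\in C_{n-1}\wedge_C C_0 = C_n$. Hence $B_n = C_n\#H$.

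The step I expect to be the main obstacle is the precise bookkeeping in disentangling the mixed tensor $\Delta_B(c\#h)$: one must be careful that the coaction legs $(c_2)_{\langle -1\rangle}$ are absorbed correctly so that membership in $B_{n-1}\otimes B + B\otimes B_0 = (C_{n-1}\#H)\otimes(C\#H) + (C\#H)\otimes(C_0\#H)$ really is equivalent to $\Delta_C(c)\in C_{n-1}\otimes C + C\otimes C_0$ and not merely implied by it. Here cosemisimplicity of $H$ is used to guarantee that $H$ (and the relevant $H$-comodules) decompose nicely, so that the ``second slot'' $\#H$ can be stripped off without losing information — this is where the hypothesis is genuinely needed, and it is the analogue of the role it plays in \cite[Theorem 3.71]{AMS}. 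Once this equivalence is in hand, the wedge computation and the induction close immediately, and the base case $n=0$ reduces, as above, to the statement that a smash coproduct of a connected coalgebra by a cosemisimple Hopf algebra has coradical equal to $C_0\#H$.
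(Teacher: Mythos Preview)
Your approach is essentially the same as the paper's: induction on $n$, with the base case established via cosemisimplicity of $H$ (so that $C_0\#H\cong H$ is a cosemisimple subcoalgebra, hence contained in $B_0$), and the inductive step via the wedge characterisation $B_n=B_{n-1}\wedge_B B_0$ unwound through the explicit formula \eqref{eq:DeltaCosmash}.

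Two small corrections are worth noting. First, in the inductive step you should work with a general element $\sum_i c_i\#h_i\in B_n$, not a simple tensor $c\#h$; the paper does this and then applies the map $p_{n-1}\otimes\varepsilon_H\otimes p_0\otimes H$ to $\Delta_B$ to extract the condition $\sum_i c_i\#h_i\in C_n\otimes H$ directly---no decomposition of $H$-comodules is needed, only the counit $\varepsilon_H$. Second, and relatedly, cosemisimplicity of $H$ is used \emph{only} in the base case (to conclude $C_0\#H$ is cosemisimple), not in the inductive step as you suggest; the ``stripping off'' of the $\#H$ factor in the wedge computation is pure linear algebra via $\varepsilon_H$. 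For the reverse inclusion $B_0\subseteq C_0\#H$ in the base case, the cleanest argument (and the one the paper uses) is to observe that $(C_m\#H)_{m\in\N_0}$ is a coalgebra filtration of $B$, whence $B_0\subseteq C_0\#H$ by \cite[Proposition 11.1.1]{Sw}; your ``wedge computation'' gesture is in the right direction but this filtration argument is what actually pins it down.
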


\begin{proof}
Since $C_{0}$ is a subcoalgebra of $C$ in ${^{H}}\mathfrak{M}\ $and, for $%
n\geq 1$, one has $C_{n}=C_{n-1}\wedge _{C}C_{0},$ then inductively one
proves that $C_{n}$ is a subcoalgebra of $C$ in ${^{H}}\mathfrak{M}$. Set $%
B_{\left( n\right) }:=C_{n}\#H$ for every $n\in \N_0$. Let us check
that $B_{\left( n\right) }=B_{n}$ by induction on $n\in \N_0.$

Let $n=0.$ First note $B=\cup _{m\in \N_0}B_{\left( m\right) }$ and,
since $\Delta _{C}\left( C_{m}\right) \subseteq \sum_{0\leq i\leq
m}C_{i}\otimes C_{m-i}$, we also have
\begin{eqnarray*}
\Delta _{B}\left( B_{\left( m\right) }\right) &=&\Delta _{B}\left(
C_{m}\#H\right) \subseteq \sum_{0\leq i\leq m}\sum \left( C_{i}\#\left(
C_{m-i}\right) _{\left\langle -1\right\rangle }\left( H\right) _{1}\right)
\otimes \left( \left( C_{m-i}\right) _{\left\langle 0\right\rangle }\#\left(
H\right) _{2}\right) \\
&\subseteq &\sum_{0\leq i\leq m}\left( C_{i}\#H\right) \otimes \left(
C_{m-i}\#\left( H\right) \right) =\sum_{0\leq i\leq m}B_{\left( i\right)
}\otimes B_{\left( m-i\right) }.
\end{eqnarray*}%
Therefore $\left( B_{\left( m\right) }\right) _{m\in \N_0}$ is a
coalgebra filtration for $B$ and hence, by \cite[Proposition 11.1.1]{Sw}, we
get that $B_{\left( 0\right) }\supseteq B_{0}.$ Since $C_{0}$ is
one-dimensional, there is a grouplike element $1_{C}\in C_{0}$ such that $%
C_{0}=\Bbbk 1_{C}.$ Moreover one checks that $C_{0}$ is a subcoalgebra of $C$
in ${^{H}}\mathfrak{M}$ implies $\sum \left( 1_{C}\right) _{\left\langle
-1\right\rangle }\otimes \left( 1_{C}\right) _{\left\langle 0\right\rangle
}=1_{H}\otimes 1_{C}.$

\begin{invisible}
Since $\rho \left( C_{0}\right) \subseteq H\otimes C_{0},$ we get that $\rho
\left( 1_{C}\right) =x\otimes 1_{C}$ for some $x\in H.$ Since $C_{0}$ is a
subcoalgebra of $C$ in ${^{H}}\mathfrak{M,}$ the counit $\varepsilon
:C_{0}\rightarrow \Bbbk $ is left $H$-colinear i.e.
\begin{equation*}
1_{H}\otimes 1_{\Bbbk }=\rho _{\Bbbk }\left( 1_{\Bbbk }\right) =\rho _{\Bbbk
}\left( \varepsilon \left( 1_{C}\right) \right) =\left( H\otimes \varepsilon
\right) \rho \left( 1_{C}\right) =x\otimes \varepsilon \left( 1_{C}\right)
=x\otimes 1_{\Bbbk }
\end{equation*}%
and hence $x=1_{H}.$
\end{invisible}

Let $\sigma :H\rightarrow C\otimes H:h\mapsto 1_{C}\otimes h$ be the
canonical injection. We have%
\begin{eqnarray*}
\Delta _{B}\sigma \left( h\right) &=&\Delta _{B}\left( 1_{C}\otimes h\right)
=\sum \left( 1_{C}\#\left( 1_{C}\right) _{\left\langle -1\right\rangle
}h_{1}\right) \otimes \left( \left( 1_{C}\right) _{\left\langle
0\right\rangle }\#h_{2}\right) \\
&=&\sum \left( 1_{C}\#1_{H}h_{1}\right) \otimes \left( 1_{C}\#h_{2}\right)
=\sum \sigma \left( h_{1}\right) \otimes \sigma \left( h_{2}\right) =\left(
\sigma \otimes \sigma \right) \Delta _{H}\left( h\right) , \\
\varepsilon _{B}\sigma \left( h\right) &=&\varepsilon _{B}\left(
1_{C}\otimes h\right) =\varepsilon _{C}\left( 1_{C}\right) \varepsilon
_{H}\left( h\right) =\varepsilon _{H}\left( h\right)
\end{eqnarray*}%
so that $\sigma $ is a coalgebra map. Since $H$ is cosemisimple and $\sigma $
an injective coalgebra map we deduce that also $\sigma \left( H\right)
=C_{0}\otimes H=B_{\left( 0\right) }$ is a cosemisimple subcoalgebra of $B$
whence $B_{\left( 0\right) }\subseteq B_{0}.$

Let $n>0$ and assume that $B_{i}=B_{\left( i\right) }$ for $0\leq i\leq n-1.$
Let $\sum\limits_{i\in I}c_{i}\#h_{i}\in B_{n}.$ Then
\begin{equation*}
\Delta _{B}\left( \sum\limits_{i\in I}c_{i}\#h_{i}\right) \in B_{n-1}\otimes
B+B\otimes B_{0}=C_{n-1}\otimes H\otimes C\otimes H+C\otimes H\otimes
C_{0}\otimes H.
\end{equation*}%
Let $p_{n}:C\rightarrow \frac{C}{C_{n}}$ be the canonical projection. If we
apply $\left( p_{n-1}\otimes \varepsilon _{H}\otimes p_{0}\otimes H\right) $
we get%
\begin{eqnarray*}
0 &=&\left( p_{n-1}\otimes \varepsilon _{H}\otimes p_{0}\otimes H\right)
\Delta _{B}\left( \sum\limits_{i\in I}c_{i}\#h_{i}\right) \\
&=&\left( p_{n-1}\otimes \varepsilon _{H}\otimes p_{0}\otimes H\right)
\left( \sum\limits_{i\in I}\left( \left( c_{i}\right) _{1}\#\left( \left(
c_{i}\right) _{2}\right) _{\left\langle -1\right\rangle }\left( h_{i}\right)
_{1}\right) \otimes \left( \left( \left( c_{i}\right) _{2}\right)
_{\left\langle 0\right\rangle }\#\left( h_{i}\right) _{2}\right) \right) \\
&=&\left( p_{n-1}\otimes p_{0}\otimes H\right) \left( \sum\limits_{i\in
I}\left( c_{i}\right) _{1}\otimes \left( c_{i}\right) _{2}\otimes
h_{i}\right) =\left( \left( p_{n-1}\otimes p_{0}\right) \Delta _{C}\otimes
H\right) \left( \sum\limits_{i\in I}c_{i}\#h_{i}\right) .
\end{eqnarray*}%
Thus $\sum\limits_{i\in I}c_{i}\#h_{i}\in \mathrm{ke}$\textrm{$r$}$\left(
\left( p_{n-1}\otimes p_{0}\right) \Delta _{C}\otimes H\right) =\left[
\mathrm{ker}\left( \left( p_{n-1}\otimes p_{0}\right) \Delta _{C}\right) %
\right] \otimes H=C_{n}\otimes H=B_{\left( n\right) }.$ Thus $B_{n}\subseteq
B_{\left( n\right) }.$ On the other hand, form $\Delta _{C}\left(
C_{n}\right) \subseteq C_{n-1}\otimes C+C\otimes C_{0}$ we deduce
\begin{eqnarray*}
\Delta _{B}\left( B_{\left( n\right) }\right) &=&\Delta _{B}\left(
C_{n}\otimes H\right) \\
&\subseteq &\sum \left( \left( C_{n}\right) _{1}\#\left( \left( C_{n}\right)
_{2}\right) _{\left\langle -1\right\rangle }\left( H\right) _{1}\right)
\otimes \left( \left( \left( C_{n}\right) _{2}\right) _{\left\langle
0\right\rangle }\#\left( H\right) _{2}\right) \\
&\subseteq &\sum \left( C_{n-1}\#\left( C\right) _{\left\langle
-1\right\rangle }H\right) \otimes \left( \left( C\right) _{\left\langle
0\right\rangle }\#H\right) +\sum \left( C\#\left( C_{0}\right)
_{\left\langle -1\right\rangle }H\right) \otimes \left( \left( C_{0}\right)
_{\left\langle 0\right\rangle }\#H\right) \\
&\subseteq &\left( C_{n-1}\#H\right) \otimes \left( C\#H\right) +\left(
C\#H\right) \otimes \left( C_{0}\#H\right) \\
&=&B_{\left( n-1\right) }\otimes B+B\otimes B_{\left( 0\right)
}=B_{n-1}\otimes B+B\otimes B_{0}
\end{eqnarray*}

and hence $B_{\left( n\right) }\subseteq B_{n}.$
\end{proof}

\begin{definition}
Let $A$ be a Hopf algebra over a field $\Bbbk $ such that the coradical $H$
of $A$ is a sub-Hopf algebra (i.e. $A$ has the dual Chevalley Property). Set
$G:=\mathrm{gr}\left( A\right) .$ There are two canonical Hopf algebra maps%
\begin{eqnarray*}
\sigma _{G} &:&H\rightarrow \mathrm{gr}\left( A\right) :h\mapsto h+A_{-1}, \\
\pi _{G} &:&\mathrm{gr}\left( A\right) \rightarrow H:a+A_{n-1}\mapsto
a\delta _{n,0},\qquad n\in \N_0\text{.}
\end{eqnarray*}%
The diagram of $A$ (see \cite[page 659]{AS-Lifting}) is the vector space%
\begin{equation*}
\mathcal{D}\left( A\right) :=\left\{ d\in \mathrm{gr}\left( A\right) \mid
\sum d_{1}\otimes \pi _{G}\left( d_{2}\right) =d\otimes 1_{H}\right\} .
\end{equation*}%
It is a bialgebra in ${_{H}^{H}\mathcal{YD}}$ as follows. $\mathcal{D}\left(
A\right) $ is a subalgebra of $G.$ The left $H$-action, the left $H$%
-coaction of $\mathcal{D}\left( A\right) ,$ the comultiplication and counit
are given respectively by%
\begin{gather*}
h\vartriangleright d:=\sum \sigma _{G}\left( h_{1}\right) d\sigma
_{G}S\left( h_{2}\right) ,\qquad \rho \left( d\right) =\sum \pi _{G}\left(
d_{1}\right) \otimes d_{2}, \\
\Delta _{D\left( A\right) }\left( d\right) :=\sum d_{1}\sigma _{G}S_{H}\pi
_{G}\left( d_{2}\right) \otimes d_{3},\qquad \varepsilon _{D\left( A\right)
}\left( d\right) =\varepsilon _{G}\left( d\right) .
\end{gather*}
\end{definition}

Although the following result seems to be folklore, we include here its statement for future
references.

\begin{proposition}
\label{pro:D(f)}Let $A$ be a Hopf algebra over a field $\Bbbk $ such that
the coradical $H$ of $A$ is a sub-Hopf algebra. Let $A^{\prime }$ be a Hopf
algebra over a field $\Bbbk $. Let $f:A^{\prime }\rightarrow A$ be an
isomorphism of Hopf algebras. Then $H^{\prime }:=f^{-1}\left( H\right) \cong
H$ is the coradical of $A^{\prime }$ and it is a sub-Hopf algebra of $%
A^{\prime }$. Thus we can identify $H^{\prime }$ with $H.$ Moreover $f$
induces an isomorphism $\mathcal{D}\left( f\right) :\mathcal{D}\left(
A^{\prime }\right) \rightarrow \mathcal{D}\left( A\right) $ of bialgebras in
${_{H}^{H}\mathcal{YD}}$.
\end{proposition}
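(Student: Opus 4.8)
The plan is to show that the graded map $\mathrm{gr}(f)$ restricts to the desired isomorphism on diagrams, so that the whole argument rests on the functoriality of the associated-graded construction together with that of the diagram.

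First I would observe that, being in particular a coalgebra isomorphism, $f$ carries the coradical of $A'$ onto the coradical $H$ of $A$; hence $H':=A'_0=f^{-1}(H)$, and since $f$ is moreover a morphism of Hopf algebras, $H'$ is a sub-Hopf algebra of $A'$ and $\bar f:=f|_{H'}:H'\to H$ is an isomorphism of Hopf algebras. From now on I identify $H'$ with $H$ along $\bar f$, so that $\mathcal{D}(A')$ is a bialgebra in ${_H^H}\mathcal{YD}$ as well. Because $f$ respects the coradical filtration and is bijective on each of its terms, it induces a graded Hopf algebra isomorphism $\mathrm{gr}(f):\mathrm{gr}(A')\to\mathrm{gr}(A)$, $a+A'_{n-1}\mapsto f(a)+A_{n-1}$, whose inverse is $\mathrm{gr}(f^{-1})$. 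A direct check on elements gives, writing $G'=\mathrm{gr}(A')$ and $G=\mathrm{gr}(A)$, the compatibilities
\[
\mathrm{gr}(f)\circ\sigma_{G'}=\sigma_{G}\circ\bar f,\qquad \bar f\circ\pi_{G'}=\pi_{G}\circ\mathrm{gr}(f).
\]

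Next I would use the second identity to see that $\mathrm{gr}(f)$ maps $\mathcal{D}(A')$ into $\mathcal{D}(A)$: for $d\in\mathcal{D}(A')$, applying $\mathrm{gr}(f)\otimes\bar f$ to the defining relation $\sum d_1\otimes\pi_{G'}(d_2)=d\otimes 1_{H'}$ and using that $\mathrm{gr}(f)$ is comultiplicative and $\bar f(1_{H'})=1_H$ yields $\sum \mathrm{gr}(f)(d)_1\otimes\pi_G(\mathrm{gr}(f)(d)_2)=\mathrm{gr}(f)(d)\otimes 1_H$, i.e. $\mathrm{gr}(f)(d)\in\mathcal{D}(A)$. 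Running the same computation for $f^{-1}$ shows that $\mathrm{gr}(f^{-1})$ maps $\mathcal{D}(A)$ into $\mathcal{D}(A')$, so $\mathcal{D}(f):=\mathrm{gr}(f)|_{\mathcal{D}(A')}$ is a bijection $\mathcal{D}(A')\to\mathcal{D}(A)$.

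It then remains to verify that $\mathcal{D}(f)$ is a morphism of bialgebras in ${_H^H}\mathcal{YD}$, which amounts to routine checks against the formulas defining the structure of the diagram: it is an algebra map because $\mathrm{gr}(f)$ is one and $\mathcal{D}(A'),\mathcal{D}(A)$ are subalgebras; it is $H$-linear and $H$-colinear because (after the identification along $\bar f$) $\mathrm{gr}(f)$ intertwines $\sigma_{G'},\pi_{G'}$ with $\sigma_{G},\pi_{G}$ and commutes with the antipodes, hence carries $h\vartriangleright d=\sum\sigma_{G'}(h_1)\,d\,\sigma_{G'}S(h_2)$ and $\rho(d)=\sum\pi_{G'}(d_1)\otimes d_2$ to the analogous expressions for $\mathcal{D}(A)$; and it is a coalgebra map since $\varepsilon_{D(A)}=\varepsilon_G$, $\mathrm{gr}(f)$ preserves $\varepsilon_G$, and $\Delta_{D(A)}(d)=\sum d_1\,\sigma_G S_H\pi_G(d_2)\otimes d_3$ is assembled from the comultiplication of $\mathrm{gr}$, $\sigma_G$, $\pi_G$ and $S_H$, all compatible with $\mathrm{gr}(f)$. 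No step is conceptually difficult; the only thing requiring care, and the place where a slip is most likely, is keeping the identification $H'=H$ along $\bar f$ consistent throughout — that is, remembering that before that identification $\mathcal{D}(f)$ is genuinely $\bar f$-semilinear and $\bar f$-semicolinear, and only becomes $H$-linear/colinear once $\bar f$ is used to identify the two copies of $H$.
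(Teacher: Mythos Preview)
Your proposal is correct and follows essentially the same route as the paper's (hidden) proof: both pass to $\mathrm{gr}(f)$, establish the compatibilities $\mathrm{gr}(f)\circ\sigma_{G'}=\sigma_G$ and $\pi_G\circ\mathrm{gr}(f)=\pi_{G'}$ (after the identification along $\bar f$), use them to see that $\mathrm{gr}(f)$ restricts to a bijection between the diagrams, and then verify the bialgebra-in-${_H^H}\mathcal{YD}$ structure componentwise from the defining formulas. Your explicit bookkeeping of $\bar f$ before identifying $H'$ with $H$ is a nice touch that the paper simply absorbs into its rewriting of $\sigma_{G'}$ and $\pi_{G'}$.
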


\begin{invisible}
\begin{proof}
Set $G:=\mathrm{gr}\left( A\right) $ and set $G^{\prime }:=\mathrm{gr}\left(
A^{\prime }\right) .$ Using the identification $H^{\prime }\rightarrow
H:h^{\prime }\mapsto f\left( h^{\prime }\right) $ we can rewrite the
canonical bialgebra maps%
\begin{eqnarray*}
\sigma _{G^{\prime }} &:&H^{\prime }\rightarrow \mathrm{gr}\left( A^{\prime
}\right) :h^{\prime }\mapsto h^{\prime }+A_{-1}^{\prime }, \\
\pi _{G^{\prime }} &:&\mathrm{gr}\left( A^{\prime }\right) \rightarrow
H^{\prime }:a^{\prime }+A_{n-1}^{\prime }\mapsto a^{\prime }\delta
_{n,0},\qquad n\in \N_0
\end{eqnarray*}%
as follows%
\begin{eqnarray*}
\sigma _{G^{\prime }} &:&H\rightarrow \mathrm{gr}\left( A^{\prime }\right)
:h\mapsto f^{-1}\left( h\right) +A_{-1}^{\prime }, \\
\pi _{G^{\prime }} &:&\mathrm{gr}\left( A^{\prime }\right) \rightarrow
H:a^{\prime }+A_{n-1}^{\prime }\mapsto f\left( a^{\prime }\delta
_{n,0}\right) ,\qquad n\in \N_0\text{.}
\end{eqnarray*}%
Since $f$ is an isomorphism it induces an isomorphism $A_{n}^{\prime }\cong
A_{n}$ and hence an isomorphism of graded Hopf algebras $\mathrm{gr}\left(
f\right) :G^{\prime }=\mathrm{gr}\left( A^{\prime }\right) \rightarrow
\mathrm{gr}\left( A\right) =G:a^{\prime }+A_{n-1}^{\prime }\mapsto f\left(
a^{\prime }\right) +A_{n-1}$ for every $n\in \N_0,a^{\prime }\in
A_{n}^{\prime }\backslash A_{n-1}^{\prime }.$ We have%
\begin{eqnarray*}
\pi _{G}\mathrm{gr}\left( f\right) \left( a^{\prime }+A_{n-1}^{\prime
}\right) &=&\pi _{G}\left( f\left( a^{\prime }\right) +A_{n-1}\right)
=f\left( a^{\prime }\right) \delta _{n,0}=f\left( a^{\prime }\delta
_{n,0}\right) =\pi _{G^{\prime }}\left( a^{\prime }+A_{n-1}^{\prime }\right)
, \\
\mathrm{gr}\left( f\right) \sigma _{G^{\prime }}\left( h\right) &=&\mathrm{gr%
}\left( f\right) \left( f^{-1}\left( h\right) +A_{-1}^{\prime }\right)
=h+A_{-1}=\sigma _{G}\left( h\right)
\end{eqnarray*}%
so that%
\begin{equation*}
\pi _{G}\circ \mathrm{gr}\left( f\right) =\pi _{G^{\prime }}\qquad \text{and}%
\qquad \mathrm{gr}\left( f\right) \circ \sigma _{G^{\prime }}=\sigma _{G}.
\end{equation*}%
Let $a^{\prime }+A_{n-1}^{\prime }\in \mathcal{D}\left( A^{\prime }\right) $
and consider $d:=\mathrm{gr}\left( f\right) \left( a^{\prime
}+A_{n-1}^{\prime }\right) =f\left( a^{\prime }\right) +A_{n-1}.$ Then
\begin{eqnarray*}
\sum d_{1}\otimes \pi _{G}\left( d_{2}\right) &=&\sum \left( \mathrm{gr}%
\left( f\right) \left( a^{\prime }+A_{n-1}^{\prime }\right) \right)
_{1}\otimes \pi _{G}\left( \left( \mathrm{gr}\left( f\right) \left(
a^{\prime }+A_{n-1}^{\prime }\right) \right) _{2}\right) \\
&=&\sum \mathrm{gr}\left( f\right) \left( \left( a^{\prime }+A_{n-1}^{\prime
}\right) _{1}\right) \otimes \pi _{G}\mathrm{gr}\left( f\right) \left(
\left( a^{\prime }+A_{n-1}^{\prime }\right) _{2}\right) \\
&=&\sum \mathrm{gr}\left( f\right) \left( \left( a^{\prime }+A_{n-1}^{\prime
}\right) _{1}\right) \otimes \pi _{G^{\prime }}\left( \left( a^{\prime
}+A_{n-1}^{\prime }\right) _{2}\right) \\
&=&\mathrm{gr}\left( f\right) \left( a^{\prime }+A_{n-1}^{\prime }\right)
\otimes 1_{H^{\prime }}=d\otimes 1_{H^{\prime }}.
\end{eqnarray*}%
Therefore $\mathrm{gr}\left( f\right) $ induces an isomorphism of vector
spaces $\mathcal{D}\left( f\right) :D\left( A^{\prime }\right) \rightarrow
D\left( A\right) :d^{\prime }\mapsto \mathrm{gr}\left( f\right) \left(
d^{\prime }\right) .$ Let us check $\mathcal{D}\left( f\right) $ is a
morphism in ${_{H}^{H}\mathcal{YD}}$. For every $d^{\prime }\in \mathcal{D}%
\left( A^{\prime }\right) ,$ we have%
\begin{eqnarray*}
\mathcal{D}\left( f\right) \left( h\vartriangleright d^{\prime }\right) &=&%
\mathcal{D}\left( f\right) \left( \sum \sigma _{G^{\prime }}\left(
h_{1}\right) d^{\prime }\sigma _{G^{\prime }}S\left( h_{2}\right) \right) \\
&=&\mathrm{gr}\left( f\right) \left( \sum \sigma _{G^{\prime }}\left(
h_{1}\right) d^{\prime }\sigma _{G^{\prime }}S\left( h_{2}\right) \right) \\
&=&\sum \mathrm{gr}\left( f\right) \sigma _{G^{\prime }}\left( h_{1}\right)
\cdot \mathrm{gr}\left( f\right) \left( d^{\prime }\right) \cdot \mathrm{gr}%
\left( f\right) \sigma _{G^{\prime }}S\left( h_{2}\right) \\
&=&\sum \sigma _{G}\left( h_{1}\right) \cdot \mathcal{D}\left( f\right)
\left( d^{\prime }\right) \cdot \sigma _{G}S\left( h_{2}\right)
=h\vartriangleright \mathcal{D}\left( f\right) \left( d^{\prime }\right)
\end{eqnarray*}%
and also
\begin{eqnarray*}
\rho \mathcal{D}\left( f\right) \left( d^{\prime }\right) &=&\rho \mathrm{gr}%
\left( f\right) \left( d^{\prime }\right) =\sum \pi _{G}\left( \left(
\mathrm{gr}\left( f\right) \left( d^{\prime }\right) \right) _{1}\right)
\otimes \left( \left( \mathrm{gr}\left( f\right) \left( d^{\prime }\right)
\right) \right) _{2} \\
&=&\sum \pi _{G}\left( \mathrm{gr}\left( f\right) \left( d_{1}^{\prime
}\right) \right) \otimes \mathrm{gr}\left( f\right) \left( d_{2}^{\prime
}\right) \\
&=&\sum \pi _{G^{\prime }}\left( d_{1}^{\prime }\right) \otimes \mathrm{gr}%
\left( f\right) \left( d_{2}^{\prime }\right) =\left( H\otimes \mathrm{gr}%
\left( f\right) \right) \rho \left( d^{\prime }\right) =\left( H\otimes
\mathcal{D}\left( f\right) \right) \rho \left( d^{\prime }\right) .
\end{eqnarray*}

Now $\mathcal{D}\left( A^{\prime }\right) $ is a subalgebra of $G^{\prime }$
and $\mathcal{D}\left( A\right) $ is a subalgebra of $G.$ Since $\mathrm{gr}%
\left( f\right) :G^{\prime }\rightarrow G$ is multiplicative and unitary so
that also $\mathcal{D}\left( f\right) :\mathcal{D}\left( A^{\prime }\right)
\rightarrow \mathcal{D}\left( A\right) $ is multiplicative and unitary.
Moreover we have%
\begin{eqnarray*}
\Delta _{\mathcal{D}\left( A\right) }\mathcal{D}\left( f\right) \left(
d^{\prime }\right) &=&\sum \left( \mathcal{D}\left( f\right) \left(
d^{\prime }\right) \right) _{1}\sigma _{G}S_{H}\pi _{G}\left( \left(
\mathcal{D}\left( f\right) \left( d^{\prime }\right) \right) _{2}\right)
\otimes \left( \mathcal{D}\left( f\right) \left( d^{\prime }\right) \right)
_{3} \\
&=&\sum \left( \mathrm{gr}\left( f\right) \left( d^{\prime }\right) \right)
_{1}\sigma _{G}S_{H}\pi _{G}\left( \left( \mathrm{gr}\left( f\right) \left(
d^{\prime }\right) \right) _{2}\right) \otimes \left( \mathrm{gr}\left(
f\right) \left( d^{\prime }\right) \right) _{3} \\
&=&\sum \mathrm{gr}\left( f\right) \left( d_{1}^{\prime }\right) \cdot
\sigma _{G}S_{H}\pi _{G}\mathrm{gr}\left( f\right) \left( d_{2}^{\prime
}\right) \otimes \mathrm{gr}\left( f\right) \left( d_{3}^{\prime }\right) \\
&=&\sum \mathrm{gr}\left( f\right) \left( d_{1}^{\prime }\right) \cdot
\mathrm{gr}\left( f\right) \sigma _{G^{\prime }}S_{H}\pi _{G^{\prime
}}\left( d_{2}^{\prime }\right) \otimes \mathrm{gr}\left( f\right) \left(
d_{3}^{\prime }\right) \\
&=&\left( \mathrm{gr}\left( f\right) \otimes \mathrm{gr}\left( f\right)
\right) \left( \sum d_{1}^{\prime }\sigma _{G^{\prime }}S_{H}\pi _{G^{\prime
}}\left( d_{2}^{\prime }\right) \otimes d_{3}^{\prime }\right) \\
&=&\left( \mathrm{gr}\left( f\right) \otimes \mathrm{gr}\left( f\right)
\right) \Delta _{\mathcal{D}\left( A^{\prime }\right) }\left( d^{\prime
}\right) =\left( \mathcal{D}\left( f\right) \otimes \mathcal{D}\left(
f\right) \right) \Delta _{\mathcal{D}\left( A^{\prime }\right) }\left(
d^{\prime }\right)
\end{eqnarray*}%
and%
\begin{equation*}
\varepsilon _{\mathcal{D}\left( A\right) }\mathcal{D}\left( f\right) \left(
d^{\prime }\right) =\varepsilon _{G}\mathrm{gr}\left( f\right) \left(
d^{\prime }\right) =\varepsilon _{G^{\prime }}\left( d^{\prime }\right)
=\varepsilon _{\mathcal{D}\left( A^{\prime }\right) }\left( d^{\prime
}\right) .
\end{equation*}
\end{proof}
\end{invisible}

\begin{proposition}
\label{pro:ciccio}Keep the hypotheses and notations of the beginning of the
section. Then $\mathcal{D}\left( A\right) \cong \mathcal{D}\left( R\#_{\xi
}H\right) \cong \mathrm{gr}\left( R\right) $ as bialgebras in ${_{H}^{H}%
\mathcal{YD}}$.
\end{proposition}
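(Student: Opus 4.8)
The plan is to split the statement into the two isomorphisms $\mathcal{D}\left( A\right) \cong \mathcal{D}\left( R\#_{\xi }H\right) $ and $\mathcal{D}\left( R\#_{\xi }H\right) \cong \mathrm{gr}\left( R\right) $, and to obtain the second one by recognizing $\mathrm{gr}\left( R\#_{\xi }H\right) $ as the Radford--Majid bosonization of the bialgebra $\mathrm{gr}\left( R\right) $ in ${_{H}^{H}\mathcal{YD}}$.

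First I would dispose of $\mathcal{D}\left( A\right) \cong \mathcal{D}\left( B\right) $, where $B:=R\#_{\xi }H$. Since the coalgebra underlying $B$ is the smash coproduct $R\#H$ and $R_{0}=\Bbbk 1_{R}$ is a one-dimensional left $H$-comodule subcoalgebra of $R$, Lemma \ref{lem:CoradSmash} gives $B_{n}=R_{n}\#H$ for every $n\in \N_0$; in particular the coradical of $B$ is $B_{0}=R_{0}\#H\cong H$. The $H$-bilinear Hopf algebra isomorphism $\psi :B\rightarrow A$ from the beginning of the section therefore identifies this copy of $H$ with the coradical of $A$, so Proposition \ref{pro:D(f)} applies and yields an isomorphism $\mathcal{D}\left( \psi \right) :\mathcal{D}\left( B\right) \rightarrow \mathcal{D}\left( A\right) $ of bialgebras in ${_{H}^{H}\mathcal{YD}}$.

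Next I would analyze $G:=\mathrm{gr}\left( B\right) $. From $B_{n}=R_{n}\#H$ one has $G=\bigoplus_{n}\frac{R_{n}\#H}{R_{n-1}\#H}$, and I would introduce the obvious $\Bbbk $-linear map $\Phi :\mathrm{gr}\left( R\right) \#H\rightarrow G$ sending, on the homogeneous component of degree $n$, the element $\left( r+R_{n-1}\right) \#h$ to $\left( r\#h\right) +R_{n-1}\#H$; it is clearly bijective. Comparing the smash coproduct of $\mathrm{gr}\left( R\right) \#H$ with (\ref{eq:DeltaCosmash}), and using that the left $H$-coaction of $\mathrm{gr}\left( R\right) $ is the one induced by that of $R$, shows that $\Phi $ is a coalgebra map. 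For the multiplication I would use $R_{i}\cdot R_{j}\subseteq R_{i+j}$ from \cite[Proposition 2.5]{AMStu-Small}, so that $m_{B}$ sends $\left( R_{i}\#H\right) \otimes \left( R_{j}\#H\right) $ into $R_{i+j}\#H$ and induces a multiplication on $G$, and then I would check, by a computation patterned on the proofs of Proposition \ref{pro:CMU} and Lemma \ref{lem:ciccio}, that the contributions of the cross-product cocycle $\xi $ land in $R_{i+j-1}\#H$ and hence die in the associated graded; since, by Lemma \ref{lem:ciccio} and Theorem \ref{teo:grHopf}, $\mathrm{gr}\left( R\right) =\mathrm{gr}\left( R^{v}\right) =\mathrm{gr}\left( Q\right) $ is a genuine bialgebra in ${_{H}^{H}\mathcal{YD}}$, the surviving term is exactly the bosonization multiplication $m_{\mathrm{gr}\left( R\right) \#H}$ of Proposition \ref{pro:deformSmash}. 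Thus $\Phi $ is an isomorphism of Hopf algebras, and it intertwines the bosonization structure maps $h\mapsto 1_{\mathrm{gr}\left( R\right) }\#h$ and $r\#h\mapsto \varepsilon _{\mathrm{gr}\left( R\right) }\left( r\right) h$ with the canonical maps $\sigma _{G}$ and $\pi _{G}$ of the diagram, because all four act as the identity on the degree-zero part $1_{R}\#H\cong H$ and annihilate every homogeneous component of positive degree.

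Finally I would conclude: $\mathcal{D}\left( B\right) =\{d\in G\mid \sum d_{1}\otimes \pi _{G}\left( d_{2}\right) =d\otimes 1_{H}\}$ is carried by $\Phi $ onto the space of coinvariants of the bosonization $\mathrm{gr}\left( R\right) \#H$ under its canonical projection onto $H$, namely $\mathrm{gr}\left( R\right) \#1_{R}$; so $\Phi $ restricts to a $\Bbbk $-linear isomorphism $\mathrm{gr}\left( R\right) \cong \mathcal{D}\left( B\right) $, and by the standard Radford--Majid correspondence this identification transports the ${_{H}^{H}\mathcal{YD}}$-bialgebra structure of the diagram $\mathcal{D}\left( B\right) $ (action, coaction, comultiplication, counit and product induced from $G$) onto the original one of $\mathrm{gr}\left( R\right) $. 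Combining with the first isomorphism gives $\mathcal{D}\left( A\right) \cong \mathcal{D}\left( R\#_{\xi }H\right) \cong \mathrm{gr}\left( R\right) $ as bialgebras in ${_{H}^{H}\mathcal{YD}}$. The main obstacle I anticipate is the multiplication check in the third paragraph: one must verify that the cocycle $\xi $ of the cross product $R\#_{\xi }H$ produces only strictly-lower-degree corrections, so that the associated graded is an honest bosonization; this is precisely where the particular structure of $\left( R,\xi \right) $ and the computations behind Lemma \ref{lem:ciccio} are needed.
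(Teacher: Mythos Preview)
Your proposal is correct and takes a genuinely different route from the paper. Both proofs handle the first isomorphism $\mathcal{D}(A)\cong\mathcal{D}(R\#_\xi H)$ identically, via Proposition~\ref{pro:D(f)} and Lemma~\ref{lem:CoradSmash}. For the second isomorphism, the paper works directly: it defines $\varphi_n:\frac{R_n}{R_{n-1}}\to\mathcal{D}^n$ by $\overline r\mapsto\overline{r\otimes 1_H}$, proves surjectivity by analyzing the coinvariant condition explicitly, and then verifies by hand that the resulting $\varphi:\mathrm{gr}(R)\to\mathcal{D}(B)$ is $H$-linear, $H$-colinear, comultiplicative, counitary, multiplicative, and unitary. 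Your approach is more structural: you build a Hopf algebra isomorphism $\Phi:\mathrm{gr}(R)\#H\to\mathrm{gr}(R\#_\xi H)$ (the bosonization of the associated graded versus the associated graded of the cross product), check that $\Phi$ intertwines $\sigma_G,\pi_G$ with the canonical bosonization maps, and then invoke the standard Radford--Majid fact that the diagram of $E\#H$ is $E$ itself. Both approaches hinge on the same key computation---that the cocycle $\xi$ contributes only to strictly lower filtration degree, so that the associated graded multiplication loses it---which the paper carries out in detail for $(s\#1_H)(r\#1_H)$ and you would need in essentially the same form. Your route buys a cleaner logical decomposition and makes transparent why the Yetter--Drinfeld bialgebra structure comes for free; the paper's route is more self-contained and avoids having to verify multiplicativity of $\Phi$ on all of $\mathrm{gr}(R)\#H$ rather than just on $\mathrm{gr}(R)$. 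One small misattribution: the bosonization multiplication you invoke is the one recalled before Proposition~\ref{pro:deformSmash}, not the content of that proposition itself.
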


\begin{proof}
Apply Proposition \ref{pro:D(f)} to the canonical isomorphism $\psi
:B:=R\#_{\xi }H\rightarrow A$ that we recalled at the beginning of the
section to get that $\mathcal{D}\left( R\#_{\xi }H\right) \cong \mathcal{D}%
\left( A\right) .$ Note that, by $H$-linearity we have
\begin{equation*}
\psi \left( 1_{R}\#h\right) =\psi \left( \left( 1_{R}\#1_{H}\right) \left(
1_{R}\#h\right) \right) =\psi \left( \left( 1_{R}\#1_{H}\right) h\right)
=\psi \left( 1_{R}\#1_{H}\right) h=h
\end{equation*}%
so that $\psi \left( \Bbbk 1_{R}\otimes H\right) =H$ and hence $H^{\prime
}=\psi ^{-1}\left( H\right) =\Bbbk 1_{R}\otimes H$ with the notation of
Proposition \ref{pro:D(f)}. Thus $B_{0}=\Bbbk 1_{R}\otimes H=R_{0}\otimes H$
so that we can identify $B_{0}$ with $H$ via the canonical isomorphism $%
H\rightarrow R_{0}\otimes H:h\mapsto 1_{R}\otimes h$. Its inverse is $%
R_{0}\otimes H\rightarrow H:r\otimes h\mapsto \varepsilon _{R}\left(
r\right) h.$ With this identification and by setting $G:=\mathrm{gr}\left(
B\right) ,$ we can consider the canonical bialgebra maps%
\begin{eqnarray*}
\sigma _{G} &:&H\rightarrow \mathrm{gr}\left( B\right) :h\mapsto
1_{R}\#h+\left( R\#_{\xi }H\right) _{-1}, \\
\pi _{G} &:&\mathrm{gr}\left( B\right) \rightarrow H:r\#h+\left( R\#_{\xi
}H\right) _{n-1}\mapsto \varepsilon _{R}\left( r\right) h\delta _{n,0},\text{
where }r\#h\in \left( R\#_{\xi }H\right) _{n},n\in \N_0\text{.}
\end{eqnarray*}

Since the underlying coalgebra of $B$ is exactly the smash coproduct of $R$
by $H$ and $\left( R,\xi \right) $ is a connected pre-bialgebra with cocycle
in ${_{H}^{H}\mathcal{YD}}$, by Lemma \ref{lem:CoradSmash}, we have that $%
B_{n}=R_{n}\otimes H.$ Let us compute $\mathcal{D}:=\mathcal{D}\left(
B\right) .$ As a vector space it is%
\begin{equation*}
\mathcal{D}:=\left\{ d\in G\mid \sum d_{1}\otimes \pi _{G}\left(
d_{2}\right) =d\otimes 1_{H}\right\} .
\end{equation*}%
By \cite[Lemma 2.1]{AS-Lifting}, we have that $\mathcal{D}=\oplus _{n\in
\N_0}\mathcal{D}^{n}$ where $\mathcal{D}^{n}=\mathcal{D}\cap G^{n}=%
\mathcal{D}\cap \frac{B_{n}}{B_{n-1}}.$ Let $d:=\overline{\sum\limits_{i\in
I}r_{i}\#h_{i}}\in \mathcal{D}^{n}$ where we can assume $\sum\limits_{i\in
I}r_{i}\#h_{i}\in B_{n}\backslash B_{n-1}$ and, for every $i\in I$, $%
r_{i}\#h_{i}\in B_{n}\backslash B_{n-1}$. Then $\overline{\sum\limits_{i\in
I}r_{i}\#h_{i}}=\sum\limits_{i\in I}\overline{r_{i}\#h_{i}}$ and hence the
fact that $d\mathcal{\ }$is coinvariant rewrites as%
\begin{equation}
\sum\limits_{i\in I}\left( \overline{r_{i}\#h_{i}}\right) _{1}\otimes \pi
_{G}\left( \left( \overline{r_{i}\#h_{i}}\right) _{2}\right)
=\sum\limits_{i\in I}\overline{r_{i}\#h_{i}}\otimes 1_{H}.  \label{eq: pig1}
\end{equation}%
By definition of $\pi _{G}$ and (\ref{form:DeltaGr}), the left-hand side
becomes%
\begin{equation*}
\sum\limits_{i\in I}\left( \overline{r_{i}\#h_{i}}\right) _{1}\otimes \pi
_{G}\left( \left( \overline{r_{i}\#h_{i}}\right) _{2}\right)
=\sum\limits_{i\in I}\left( \left( r_{i}\#\left( h_{i}\right) _{1}\right)
+B_{n-1}\right) \otimes \left( h_{i}\right) _{2}
\end{equation*}

\begin{invisible}
Here is the computation
\begin{eqnarray*}
&&\sum\limits_{i\in I}\left( \overline{r_{i}\#h_{i}}\right) _{1}\otimes \pi
_{G}\left( \left( \overline{r_{i}\#h_{i}}\right) _{2}\right) \overset{(\ref%
{form:DeltaGr})}{=}\sum\limits_{i\in I}\sum_{0\leq t\leq n}\left( \left(
r_{i}\#h_{i}\right) _{1}+B_{t-1}\right) \otimes \pi _{G}\left( \left(
r_{i}\#h_{i}\right) _{2}+B_{n-t-1}\right) \\
&=&\sum\limits_{i\in I}\sum_{0\leq t\leq n}\left( \left( r_{i}^{\left(
1\right) }\#\left( r_{i}^{\left( 2\right) }\right) _{\left\langle
-1\right\rangle }\left( h_{i}\right) _{1}\right) +B_{t-1}\right) \otimes \pi
_{G}\left( \left( \left( r_{i}^{\left( 2\right) }\right) _{\left\langle
0\right\rangle }\#\left( h_{i}\right) _{2}\right) +B_{n-t-1}\right) \\
&=&\sum\limits_{i\in I}\sum_{0\leq t\leq n}\left( \left( r_{i}^{\left(
1\right) }\#\left( r_{i}^{\left( 2\right) }\right) _{\left\langle
-1\right\rangle }\left( h_{i}\right) _{1}\right) +B_{t-1}\right) \otimes
\varepsilon _{R}\left( \left( r_{i}^{\left( 2\right) }\right) _{\left\langle
0\right\rangle }\right) \left( h_{i}\right) _{2}\delta _{n-t,0} \\
&=&\sum\limits_{i\in I}\left( \left( r_{i}^{\left( 1\right) }\#\left(
h_{i}\right) _{1}\right) +B_{n-1}\right) \otimes \varepsilon _{R}\left(
r_{i}^{\left( 2\right) }\right) \left( h_{i}\right) _{2} \\
&=&\sum\limits_{i\in I}\left( \left( r_{i}\#\left( h_{i}\right) _{1}\right)
+B_{n-1}\right) \otimes \left( h_{i}\right) _{2}
\end{eqnarray*}
\end{invisible}

so that (\ref{eq: pig1}) becomes%
\begin{equation*}
\sum\limits_{i\in I}\left( \left( r_{i}\#\left( h_{i}\right) _{1}\right)
+B_{n-1}\right) \otimes \left( h_{i}\right) _{2}=\sum\limits_{i\in I}%
\overline{r_{i}\#h_{i}}\otimes 1_{H}=\sum\limits_{i\in I}\left(
r_{i}\#h_{i}+B_{n-1}\right) \otimes 1_{H}
\end{equation*}%
i.e.%
\begin{equation*}
\sum\limits_{i\in I}\left( r_{i}\#\left( h_{i}\right) _{1}\right) \otimes
\left( h_{i}\right) _{2}-\sum\limits_{i\in I}r_{i}\#h_{i}\otimes 1_{H}\in
B_{n-1}\otimes H=R_{n-1}\otimes H\otimes H.
\end{equation*}%
If we apply $R\otimes \varepsilon _{H}\otimes H$, we get%
\begin{equation*}
\sum\limits_{i\in I}r_{i}\otimes h_{i}-\sum\limits_{i\in I}r_{i}\varepsilon
_{H}\left( h_{i}\right) \otimes 1_{H}\in R_{n-1}\otimes H=B_{n-1}.
\end{equation*}%
Thus $\overline{\sum\limits_{i\in I}r_{i}\#h_{i}}=\sum\limits_{i\in I}%
\overline{r_{i}\#h_{i}}=\sum\limits_{i\in I}\left(
r_{i}\#h_{i}+B_{n-1}\right) =\sum\limits_{i\in I}\left( r_{i}\varepsilon
_{H}\left( h_{i}\right) \otimes 1_{H}\right) +B_{n-1}.$

Since $\sum\limits_{i\in I}r_{i}\#h_{i}\in B_{n}\backslash B_{n-1}$ we get
that $\left( \sum\limits_{i\in I}r_{i}\varepsilon _{H}\left( h_{i}\right)
\right) \otimes 1_{H}\notin B_{n-1}$ and hence $\sum\limits_{i\in
I}r_{i}\varepsilon _{H}\left( h_{i}\right) \notin R_{n-1}$ and we can write%
\begin{equation*}
\overline{\sum\limits_{i\in I}r_{i}\#h_{i}}=\overline{\left(
\sum\limits_{i\in I}r_{i}\varepsilon _{H}\left( h_{i}\right) \right) \otimes
1_{H}}.
\end{equation*}%
Therefore we have proved that the map
\begin{equation*}
\varphi _{n}:\frac{R_{n}}{R_{n-1}}\rightarrow \mathcal{D}^{n}:\overline{r}%
\mapsto \overline{r\otimes 1_{H}},
\end{equation*}%
which is well-defined as $\mathcal{D}^{n}=\mathcal{D}\cap G^{n}=\mathcal{D}%
\cap \frac{B_{n}}{B_{n-1}}=\mathcal{D}\cap \frac{R_{n}\otimes H}{%
R_{n-1}\otimes H},$ is also surjective.

It is also injective as $\varphi _{n}\left( \overline{r}\right) =\varphi
_{n}\left( \overline{s}\right) $ implies $r\otimes 1_{H}-s\otimes 1_{H}\in
B_{n-1}=R_{n-1}\otimes H$ and hence, by applying $R\otimes \varepsilon _{H},$
we get $r-s\in R_{n-1}$ i.e. $\overline{r}=\overline{s}.$ Therefore $\varphi
_{n}$ is an isomorphism such that $\overline{\sum\limits_{i\in I}r_{i}\#h_{i}%
}=\varphi _{n}\left( \overline{\sum\limits_{i\in I}r_{i}\varepsilon
_{H}\left( h_{i}\right) }\right) $ and hence
\begin{equation*}
\varphi _{n}^{-1}\left( \overline{\sum\limits_{i\in I}r_{i}\#h_{i}}\right) =%
\overline{\sum\limits_{i\in I}r_{i}\varepsilon _{H}\left( h_{i}\right) }.
\end{equation*}%
Clearly this extends to a graded $\Bbbk $-linear isomorphism
\begin{equation*}
\varphi :\mathrm{gr}\left( R\right) \rightarrow \mathcal{D}.
\end{equation*}%
Let us check that $\varphi $ is a morphism in ${_{H}^{H}\mathcal{YD}}$.
First note that, for every $r\in R_{n}$, we have%
\begin{eqnarray*}
\varphi \left( r+R_{n-1}\right) &=&\delta _{\left\vert r\right\vert
,n}\varphi \left( r+R_{n-1}\right) =\delta _{\left\vert r\right\vert
,n}\varphi _{n}\left( r+R_{n-1}\right) =\delta _{\left\vert r\right\vert
,n}\varphi _{n}\left( \overline{r}\right) \\
&=&\delta _{\left\vert r\right\vert ,n}\overline{r\otimes 1_{H}}=\delta
_{\left\vert r\right\vert ,n}\left( r\otimes 1_{H}+\left( R\#_{\xi }H\right)
_{n-1}\right) =r\otimes 1_{H}+\left( R\#_{\xi }H\right) _{n-1}.
\end{eqnarray*}%
Thus%
\begin{equation}
\varphi \left( r+R_{n-1}\right) =r\otimes 1_{H}+\left( R\#_{\xi }H\right)
_{n-1},\text{ for every }r\in R_{n}\text{.}  \label{form:phiTrick}
\end{equation}%
For every $r\in R_{n}\backslash R_{n-1},$ by using (\ref{form:phiTrick}), it
is straighforward to prove that $h\vartriangleright \varphi \left( \overline{%
r}\right) =\varphi \left( h\overline{r}\right) .$

\begin{invisible}
In fact we have%
\begin{eqnarray*}
h\vartriangleright \varphi \left( \overline{r}\right) &=&\sum \sigma
_{G}\left( h_{1}\right) \varphi \left( \overline{r}\right) \sigma
_{G}S\left( h_{2}\right) \\
&&\overset{(\ref{form:phiTrick})}{=}\sum \left( 1_{R}\#h_{1}+\left( R\#_{\xi
}H\right) _{-1}\right) \left( r\#1_{H}+\left( R\#_{\xi }H\right)
_{n-1}\right) \left( 1_{R}\#S\left( h_{2}\right) +\left( R\#_{\xi }H\right)
_{-1}\right) \\
&=&\sum \left( 1_{R}\#h_{1}\right) \left( r\#1_{H}\right) \left(
1_{R}\#S\left( h_{2}\right) \right) +\left( R\#_{\xi }H\right) _{n-1} \\
&=&\sum \left( h_{1}r\#h_{2}S\left( h_{3}\right) \right) +\left( R\#_{\xi
}H\right) _{n-1} \\
&=&\left( hr\#1_{H}\right) +\left( R\#_{\xi }H\right) _{n-1}\overset{(\ref%
{form:phiTrick})}{=}\varphi \left( hr+R_{n-1}\right) \\
&=&\varphi \left( h\left( r+R_{n-1}\right) \right) =\varphi \left( h%
\overline{r}\right) .
\end{eqnarray*}
\end{invisible}

Moreover, by applying (\ref{form:DeltaGr}), (\ref{eq:DeltaCosmash}), the
definition of $\pi _{G}$ and (\ref{form:phiTrick}), we get that $\rho
\varphi \left( \overline{r}\right) =\left( H\otimes \varphi \right) \rho
\left( \overline{r}\right) .$

\begin{invisible}
In fact we have%
\begin{eqnarray*}
\rho \varphi \left( \overline{r}\right) &=&\sum \pi _{G}\left( \varphi
\left( \overline{r}\right) _{1}\right) \otimes \varphi \left( \overline{r}%
\right) _{2} \\
&=&\sum \pi _{G}\left( \left( \overline{r\otimes 1_{H}}\right) _{1}\right)
\otimes \left( \overline{r\otimes 1_{H}}\right) _{2} \\
&&\overset{(\ref{form:DeltaGr})}{=}\sum_{0\leq i\leq n}\pi _{G}\left( \left(
r\otimes 1_{H}\right) _{1}+\left( R\#_{\xi }H\right) _{i-1}\right) \otimes
\left( \left( r\otimes 1_{H}\right) _{2}+\left( R\#_{\xi }H\right)
_{n-i-1}\right) .
\end{eqnarray*}%
We compute%
\begin{equation}
\Delta _{B}\left( r\otimes 1_{H}\right) =\sum r^{\left( 1\right) }\otimes
\left( r^{\left( 2\right) }\right) _{\left\langle -1\right\rangle }\otimes
\left( r^{\left( 2\right) }\right) _{\left\langle 0\right\rangle }\otimes
1_{H}  \label{form:DeltaSmash1}
\end{equation}%
so that%
\begin{eqnarray*}
&&\rho \varphi \left( \overline{r}\right) \overset{(\ref{form:DeltaSmash1})}{%
=}\sum_{0\leq i\leq n}\sum \pi _{G}\left( r^{\left( 1\right) }\otimes \left(
r^{\left( 2\right) }\right) _{\left\langle -1\right\rangle }+\left( R\#_{\xi
}H\right) _{i-1}\right) \otimes \left( \left( r^{\left( 2\right) }\right)
_{\left\langle 0\right\rangle }\otimes 1_{H}+\left( R\#_{\xi }H\right)
_{n-i-1}\right) \\
&=&\sum_{0\leq i\leq n}\sum \left( \varepsilon _{R}\left( r^{\left( 1\right)
}\right) \left( r^{\left( 2\right) }\right) _{\left\langle -1\right\rangle
}\delta _{i,0}\right) \otimes \left( \left( r^{\left( 2\right) }\right)
_{\left\langle 0\right\rangle }\otimes 1_{H}+\left( R\#_{\xi }H\right)
_{n-i-1}\right) \\
&=&\sum r_{\left\langle -1\right\rangle }\otimes \left( r_{\left\langle
0\right\rangle }\otimes 1_{H}+\left( R\#_{\xi }H\right) _{n-1}\right) \\
&&\overset{(\ref{form:phiTrick})}{=}r_{\left\langle -1\right\rangle }\otimes
\varphi \left( r_{\left\langle 0\right\rangle }+R_{n}\right) \\
&=&\left( H\otimes \varphi \right) \left( r_{\left\langle -1\right\rangle
}\otimes \left( r_{\left\langle 0\right\rangle }+R_{n}\right) \right)
=\left( H\otimes \varphi \right) \rho \left( r+R_{n}\right) =\left( H\otimes
\varphi \right) \rho \left( \overline{r}\right) .
\end{eqnarray*}
\end{invisible}

Let us check that $\varphi $ is a morphism of bialgebras in ${_{H}^{H}%
\mathcal{YD}}$. Fix $r\in R_{n}\backslash R_{n-1}.$

Using the definition of $\Delta _{\mathcal{D}}$, (\ref{form:DeltaGr}), (\ref%
{eq:DeltaCosmash}), the definition of $\pi _{G}$, the definition of $\sigma
_{G}$, (\ref{form:phiTrick}) and (\ref{form:DeltaGr}) again, we obtain $%
\Delta _{\mathcal{D}}\varphi \left( \overline{r}\right) =\left( \varphi
\otimes \varphi \right) \Delta _{\mathrm{gr}\left( R\right) }\left(
\overline{r}\right) .$

\begin{invisible}
Let us check $\varphi $ is comultiplicative.
\begin{eqnarray*}
\Delta _{\mathcal{D}}\varphi \left( \overline{r}\right) &=&\Delta _{\mathcal{%
D}}\left( \overline{r\otimes 1_{H}}\right) \\
&=&\sum \left( \overline{r\otimes 1_{H}}\right) _{1}\sigma _{G}S_{H}\pi
_{G}\left( \left( \overline{r\otimes 1_{H}}\right) _{2}\right) \otimes
\left( \overline{r\otimes 1_{H}}\right) _{3} \\
&&\overset{(\ref{form:DeltaGr})}{=}\sum \sum_{a+b+c=n}\left( \left( r\otimes
1_{H}\right) _{1}+B_{a-1}\right) \sigma _{G}S_{H}\pi _{G}\left( \left(
r\otimes 1_{H}\right) _{2}+B_{b-1}\right) \otimes \left( \left( r\otimes
1_{H}\right) _{3}+B_{c-1}\right) .
\end{eqnarray*}%
We compute%
\begin{eqnarray*}
&&\left( B\otimes \Delta _{B}\right) \Delta _{B}\left( r\otimes 1_{H}\right)
\\
&&\overset{(\ref{form:DeltaSmash1})}{=}\sum r^{\left( 1\right) }\otimes
\left( r^{\left( 2\right) }\right) _{\left\langle -1\right\rangle }\otimes
\left( \left( r^{\left( 2\right) }\right) _{\left\langle 0\right\rangle
}\right) ^{\left( 1\right) }\otimes \left( \left( \left( r^{\left( 2\right)
}\right) _{\left\langle 0\right\rangle }\right) ^{\left( 2\right) }\right)
_{\left\langle -1\right\rangle }\otimes \left( \left( \left( r^{\left(
2\right) }\right) _{\left\langle 0\right\rangle }\right) ^{\left( 2\right)
}\right) _{\left\langle 0\right\rangle }\otimes 1_{H} \\
&=&\sum r^{\left( 1\right) }\otimes \left( r^{\left( 2\right) }\right)
_{\left\langle -1\right\rangle }\left( r^{\left( 3\right) }\right)
_{\left\langle -1\right\rangle }\otimes \left( r^{\left( 2\right) }\right)
_{\left\langle 0\right\rangle }\otimes \left( \left( \left( r^{\left(
3\right) }\right) _{\left\langle 0\right\rangle }\right) \right)
_{\left\langle -1\right\rangle }\otimes \left( \left( \left( r^{\left(
3\right) }\right) _{\left\langle 0\right\rangle }\right) \right)
_{\left\langle 0\right\rangle }\otimes 1_{H} \\
&=&\sum \left( r^{\left( 1\right) }\otimes \left( r^{\left( 2\right)
}\right) _{\left\langle -1\right\rangle }\left( r^{\left( 3\right) }\right)
_{\left\langle -2\right\rangle }\right) \otimes \left( \left( r^{\left(
2\right) }\right) _{\left\langle 0\right\rangle }\otimes \left( r^{\left(
3\right) }\right) _{\left\langle -1\right\rangle }\right) \otimes \left(
\left( r^{\left( 3\right) }\right) _{\left\langle 0\right\rangle }\otimes
1_{H}\right)
\end{eqnarray*}%
so that%
\begin{eqnarray*}
\Delta _{\mathcal{D}}\varphi \left( \overline{r}\right) &=&\sum
\sum_{a+b+c=n}\left( \left( r\otimes 1_{H}\right) _{1}+B_{a-1}\right) \sigma
_{G}S_{H}\pi _{G}\left( \left( r\otimes 1_{H}\right) _{2}+B_{b-1}\right)
\otimes \left( \left( r\otimes 1_{H}\right) _{3}+B_{c-1}\right) \\
&=&\left[
\begin{array}{c}
\sum \sum_{a+b+c=n}\left( \left( r^{\left( 1\right) }\otimes \left(
r^{\left( 2\right) }\right) _{\left\langle -1\right\rangle }\left( r^{\left(
3\right) }\right) _{\left\langle -2\right\rangle }\right) +B_{a-1}\right)
\sigma _{G}S_{H}\pi _{G}\left( \left( \left( r^{\left( 2\right) }\right)
_{\left\langle 0\right\rangle }\otimes \left( r^{\left( 3\right) }\right)
_{\left\langle -1\right\rangle }\right) +B_{b-1}\right) \\
\otimes \left( \left( \left( r^{\left( 3\right) }\right) _{\left\langle
0\right\rangle }\otimes 1_{H}\right) +B_{c-1}\right)%
\end{array}%
\right] \\
&=&\left[
\begin{array}{c}
\sum \sum_{a+b+c=n}\left( \left( r^{\left( 1\right) }\otimes \left(
r^{\left( 2\right) }\right) _{\left\langle -1\right\rangle }\left( r^{\left(
3\right) }\right) _{\left\langle -2\right\rangle }\right) +B_{a-1}\right)
\sigma _{G}S_{H}\left( \left( \varepsilon _{R}\left( \left( r^{\left(
2\right) }\right) _{\left\langle 0\right\rangle }\right) \left( r^{\left(
3\right) }\right) _{\left\langle -1\right\rangle }\right) \right) \delta
_{b,0} \\
\otimes \left( \left( \left( r^{\left( 3\right) }\right) _{\left\langle
0\right\rangle }\otimes 1_{H}\right) +B_{c-1}\right)%
\end{array}%
\right] \\
&=&\left[
\begin{array}{c}
\sum \sum_{a+c=n}\left( \left( r^{\left( 1\right) }\otimes \left( r^{\left(
3\right) }\right) _{\left\langle -2\right\rangle }\right) +B_{a-1}\right)
S_{H}\left( \varepsilon _{R}\left( r^{\left( 2\right) }\right) \left(
r^{\left( 3\right) }\right) _{\left\langle -1\right\rangle }\right) \\
\otimes \left( \left( \left( r^{\left( 3\right) }\right) _{\left\langle
0\right\rangle }\otimes 1_{H}\right) +B_{c-1}\right)%
\end{array}%
\right] \\
&=&\sum \sum_{a+c=n}\left( \left( r^{\left( 1\right) }\otimes \left(
r^{\left( 2\right) }\right) _{\left\langle -2\right\rangle }\right)
+B_{a-1}\right) \sigma _{G}S_{H}\left( \left( r^{\left( 2\right) }\right)
_{\left\langle -1\right\rangle }\right) \otimes \left( \left( \left(
r^{\left( 2\right) }\right) _{\left\langle 0\right\rangle }\otimes
1_{H}\right) +B_{c-1}\right) \\
&=&\sum \sum_{a+c=n}\left( \left( r^{\left( 1\right) }\otimes \left(
r^{\left( 2\right) }\right) _{\left\langle -2\right\rangle }\right)
+B_{a-1}\right) \left( 1_{R}\otimes S_{H}\left( \left( r^{\left( 2\right)
}\right) _{\left\langle -1\right\rangle }\right) +B_{-1}\right) \otimes
\left( \left( \left( r^{\left( 2\right) }\right) _{\left\langle
0\right\rangle }\otimes 1_{H}\right) +B_{c-1}\right) \\
&=&\sum \sum_{a+c=n}\left( \left( r^{\left( 1\right) }\otimes \left(
r^{\left( 2\right) }\right) _{\left\langle -2\right\rangle }\right) \left(
1_{R}\otimes S_{H}\left( \left( r^{\left( 2\right) }\right) _{\left\langle
-1\right\rangle }\right) \right) +B_{a-1}\right) \otimes \left( \left(
\left( r^{\left( 2\right) }\right) _{\left\langle 0\right\rangle }\otimes
1_{H}\right) +B_{c-1}\right) \\
&=&\sum \sum_{a+c=n}\left( \left( r^{\left( 1\right) }\otimes \left(
r^{\left( 2\right) }\right) _{\left\langle -2\right\rangle }S_{H}\left(
\left( r^{\left( 2\right) }\right) _{\left\langle -1\right\rangle }\right)
\right) +B_{a-1}\right) \otimes \left( \left( \left( r^{\left( 2\right)
}\right) _{\left\langle 0\right\rangle }\otimes 1_{H}\right) +B_{c-1}\right)
\\
&=&\sum \sum_{a+c=n}\left( \left( r^{\left( 1\right) }\otimes 1_{H}\right)
+B_{a-1}\right) \otimes \left( \left( r^{\left( 2\right) }\otimes
1_{H}\right) +B_{c-1}\right) \\
&&\overset{(\ref{form:phiTrick})}{=}\left( \varphi \otimes \varphi \right)
\left( \sum_{a+c=n}\left( r^{\left( 1\right) }+R_{a-1}\right) \otimes \left(
r^{\left( 2\right) }+R_{c-1}\right) \right) \\
&&\overset{(\ref{form:DeltaGr})}{=}\left( \varphi \otimes \varphi \right)
\Delta _{\mathrm{gr}\left( R\right) }\left( \overline{r}\right) .
\end{eqnarray*}
\end{invisible}

Let us check $\varphi $ is counitary:%
\begin{eqnarray*}
\varepsilon _{\mathcal{D}}\varphi \left( \overline{r}\right) &=&\varepsilon
_{G}\varphi \left( \overline{r}\right) =\varepsilon _{G}\left( \overline{%
r\otimes 1_{H}}\right) \overset{(\ref{form:EpsGr})}{=}\delta
_{n,0}\varepsilon _{B}\left( r\otimes 1_{H}\right) \\
&=&\delta _{n,0}\varepsilon _{R}\left( r\right) \overset{(\ref{form:EpsGr})}{%
=}\varepsilon _{\mathrm{gr}\left( R\right) }\left( \overline{r}\right) .
\end{eqnarray*}%
Let us check $\varphi $ is multiplicative. Let $s\in R_{m}\backslash
R_{m-1}. $ Then, by definition of $\varphi ,$ of $m_{\mathcal{D}}$ and of
the multiplication of $R\#_{\xi }H,$ we have that%
\begin{equation*}
m_{\mathcal{D}}\left( \varphi \otimes \varphi \right) \left( \overline{s}%
\otimes \overline{r}\right) =\sum \left( s^{\left( 1\right) }\left( \left(
s^{\left( 2\right) }\right) _{\left\langle -1\right\rangle }r^{\left(
1\right) }\right) \#\xi \left( \left( s^{\left( 2\right) }\right)
_{\left\langle 0\right\rangle }\otimes r^{\left( 2\right) }\right) \right)
+\left( R\#_{\xi }H\right) _{m+n-1}.
\end{equation*}

\begin{invisible}
Explicitly we have%
\begin{eqnarray*}
m_{\mathcal{D}}\left( \varphi \otimes \varphi \right) \left( \overline{s}%
\otimes \overline{r}\right) &=&m_{\mathcal{D}}\left( \overline{s\otimes 1_{H}%
}\otimes \overline{r\otimes 1_{H}}\right) \\
&=&m_{G}\left( \left( s\otimes 1_{H}+\left( R\#_{\xi }H\right) _{m-1}\right)
\otimes \left( r\otimes 1_{H}+\left( R\#_{\xi }H\right) _{n-1}\right) \right)
\\
&=&\left( s\#1_{H}\right) \left( r\#1_{H}\right) +\left( R\#_{\xi }H\right)
_{m+n-1} \\
&=&\sum \left( s^{\left( 1\right) }\left( \left( s^{\left( 2\right) }\right)
_{\left\langle -1\right\rangle }r^{\left( 1\right) }\right) \#\xi \left(
\left( s^{\left( 2\right) }\right) _{\left\langle 0\right\rangle }\otimes
r^{\left( 2\right) }\right) \right) +\left( R\#_{\xi }H\right) _{m+n-1}.
\end{eqnarray*}
\end{invisible}

Now write $\sum s^{\left( 1\right) }\otimes s^{\left( 2\right) }=\sum_{0\leq
i\leq m}s_{i}\otimes s_{m-i}^{\prime }$ for some $s_{i},s_{i}^{\prime }\in
R_{i}$ and similarly $\sum r^{\left( 1\right) }\otimes r^{\left( 2\right)
}=\sum_{0\leq j\leq n}r_{j}\otimes r_{n-j}^{\prime }$ for some $%
r_{j},r_{j}^{\prime }\in R_{j}.$ Then%
\begin{eqnarray*}
m_{\mathcal{D}}\left( \varphi \otimes \varphi \right) \left( \overline{s}%
\otimes \overline{r}\right) &=&\sum_{\substack{ 0\leq i\leq m  \\ 0\leq
j\leq n}}\left( s_{i}\left( \left( s_{m-i}^{\prime }\right) _{\left\langle
-1\right\rangle }r_{j}\right) \#\xi \left( \left( s_{m-i}^{\prime }\right)
_{\left\langle 0\right\rangle }\otimes r_{n-j}^{\prime }\right) \right)
+\left( R\#_{\xi }H\right) _{m+n-1} \\
&=&\sum_{\substack{ 0\leq i\leq m  \\ 0\leq j\leq n}}\delta _{i,m}\delta
_{j,n}\left( s_{i}\left( \left( s_{m-i}^{\prime }\right) _{\left\langle
-1\right\rangle }r_{j}\right) \#\xi \left( \left( s_{m-i}^{\prime }\right)
_{\left\langle 0\right\rangle }\otimes r_{n-j}^{\prime }\right) \right)
+\left( R\#_{\xi }H\right) _{m+n-1} \\
&=&\sum \left( s_{m}\left( \left( s_{0}^{\prime }\right) _{\left\langle
-1\right\rangle }r_{n}\right) \#\xi \left( \left( s_{0}^{\prime }\right)
_{\left\langle 0\right\rangle }\otimes r_{0}^{\prime }\right) \right)
+\left( R\#_{\xi }H\right) _{m+n-1}\text{ } \\
&&\overset{R_{0}=\Bbbk 1_{R}}{=}\sum s_{m}\left( \left( s_{0}^{\prime
}\right) _{\left\langle -1\right\rangle }r_{n}\right) \#\varepsilon
_{R}\left( \left( s_{0}^{\prime }\right) _{\left\langle 0\right\rangle
}\right) \varepsilon _{R}\left( r_{0}^{\prime }\right) 1_{H}+\left( R\#_{\xi
}H\right) _{m+n-1} \\
&=&\sum s_{m}\varepsilon _{R}\left( s_{0}^{\prime }\right) r_{n}\varepsilon
_{R}\left( r_{0}^{\prime }\right) \#1_{H}+\left( R\#_{\xi }H\right) _{m+n-1}
\\
&=&\sum_{\substack{ 0\leq i\leq m  \\ 0\leq j\leq n}}\delta _{i,m}\delta
_{j,n}\left( s_{i}\varepsilon _{R}\left( s_{m-i}^{\prime }\right)
r_{j}\varepsilon _{R}\left( r_{m-j}^{\prime }\right) \#1_{H}\right) +\left(
R\#_{\xi }H\right) _{m+n-1} \\
&=&\sum_{\substack{ 0\leq i\leq m  \\ 0\leq j\leq n}}\left( s_{i}\varepsilon
_{R}\left( s_{m-i}^{\prime }\right) r_{j}\varepsilon _{R}\left(
r_{m-j}^{\prime }\right) \#1_{H}\right) +\left( R\#_{\xi }H\right) _{m+n-1}
\\
&=&\sum \left( s^{\left( 1\right) }\varepsilon _{R}\left( s^{\left( 2\right)
}\right) r^{\left( 1\right) }\varepsilon _{R}\left( r^{\left( 2\right)
}\right) \#1_{H}\right) +\left( R\#_{\xi }H\right) _{m+n-1} \\
&=&\left( sr\#1_{H}\right) +\left( R\#_{\xi }H\right) _{m+n-1}\overset{(\ref%
{form:phiTrick})}{=}\varphi \left( sr+R_{m+n-1}\right) \\
&=&\varphi \left( \left( s+R_{m-1}\right) \left( r+R_{n-1}\right) \right)
=\varphi m_{\mathrm{gr}\left( R\right) }\left( \overline{s}\otimes \overline{%
r}\right) .
\end{eqnarray*}%
Let us check $\varphi $ is unitary. We have%
\begin{equation*}
\varphi \left( 1_{\mathrm{gr}\left( R\right) }\right) =\varphi \left(
1_{R}+R_{-1}\right) =\varphi \left( \overline{1_{R}}\right) =\overline{%
1_{R}\otimes 1_{H}}=\left( 1_{R}\otimes 1_{H}\right) +\left( R\#_{\xi
}H\right) _{-1}=1_{B}+B_{-1}=1_{G}.
\end{equation*}
\end{proof}

Summing up we have proved that

\begin{equation*}
\mathrm{gr}\left( Q\right) \overset{Q=R^{v}}{=}\mathrm{gr}\left(
R^{v}\right) \overset{\text{Lem. \ref{lem:ciccio}}}{\cong }\mathrm{gr}\left(
R\right) \overset{\text{Pro. \ref{pro:ciccio}}}{\cong }\mathcal{D}\left(
R\#_{\xi }H\right) \overset{\text{Pro. \ref{pro:D(f)}}}{\cong }\mathcal{D}%
\left( A\right)
\end{equation*}%
as bialgebras in ${_{H}^{H}\mathcal{YD}}$. Therefore $\mathrm{H}_{{\mathcal{%
YD}}}^{3}\left( \mathcal{D}\left( A\right) ,\Bbbk \right) =0$ (the
Hochschild cohomology in ${_{H}^{H}\mathcal{YD}}$ of the algebra $\mathcal{D}%
\left( A\right) $ with values in $\Bbbk $) if, and only if, $\mathrm{H}_{{%
\mathcal{YD}}}^{3}\left( \mathrm{gr}Q,\Bbbk \right) =0$. In this case, by
the foregoing, we get that $Q$ is gauge equivalent to a connected bialgebra
in ${_{H}^{H}\mathcal{YD}}$.

Now let $E$ be a connected bialgebra in ${_{H}^{H}\mathcal{YD}}$ and let $%
\gamma :E\otimes E\rightarrow \Bbbk $ be a gauge transformation in ${_{H}^{H}%
\mathcal{YD}}$ such that $Q=E^{\gamma }.$ We proved that $A^{\zeta }\cong
Q\#H\cong E^{\gamma }\#H$ as coquasi-bialgebras. By Proposition \ref%
{pro:deformSmash}, we have that $\left( E\#H\right) ^{\Gamma }=E^{\gamma
}\#H $ as an ordinary coquasi-bialgebras. Recall that two coquasi-bialgebras
$A$ and $A^{\prime }$ are called \textbf{gauge equivalent} or \textbf{%
quasi-isomorphic }whenever there is some gauge transformation $\gamma
:Q\otimes Q\rightarrow \Bbbk $ in $\mathbf{Vec}_{\Bbbk }$ such that $%
A^{\gamma }\cong A^{\prime }$ as coquasi-bialgebras. We point out that, if $A$ and $A'$ are ordinary bialgebras and $%
A^{\gamma }\cong A^{\prime }$, then $\gamma$ comes out to be a unitary cocycle. This is encoded in the triviality of the reassociators of $A$ and $A'$.

\begin{theorem}
\label{teo:main}Let $A$ be a finite-dimensional Hopf algebra over a field $%
\Bbbk $ of characteristic zero such that the coradical $H$ of $A$ is a
sub-Hopf algebra (i.e. $A$ has the dual Chevalley Property). If $\mathrm{H}_{%
{\mathcal{YD}}}^{3}\left( \mathcal{D}\left( A\right) ,\Bbbk \right) =0$,
then $A$ is quasi-isomorphic to the Radford-Majid bosonization $E\#H$ of
some connected bialgebra $E$ in ${_{H}^{H}\mathcal{YD}}$ by $H$. Moreover $%
\mathrm{gr}\left( E\right) \cong \mathcal{D}\left( A\right) $ as bialgebras
in ${_{H}^{H}\mathcal{YD}}$.
\end{theorem}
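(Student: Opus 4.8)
The plan is simply to assemble the pieces developed in this section. First I would recall the setup: since $A$ is finite-dimensional with the dual Chevalley property, its coradical $H$ is finite-dimensional, hence semisimple (as $\mathrm{char}\,\Bbbk=0$) and cosemisimple, so that Theorem \ref{teo:GelakiYD} and the chain of identifications established above all apply. By \cite[Theorem I]{ABM} there is a gauge transformation $\zeta:A\otimes A\to\Bbbk$, which is $H$-bilinear and $H$-balanced, such that $A^{\zeta}\cong Q\#H$ as coquasi-bialgebras, where $Q=R^{v}$ is a finite-dimensional connected coquasi-bialgebra in ${_{H}^{H}\mathcal{YD}}$. The computations recorded above --- Lemma \ref{lem:ciccio}, Lemma \ref{lem:CoradSmash}, Proposition \ref{pro:ciccio} and Proposition \ref{pro:D(f)} --- yield the identification $\mathrm{gr}\left( Q\right) \cong \mathcal{D}\left( A\right)$ as bialgebras in ${_{H}^{H}\mathcal{YD}}$.

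Next I would feed in the cohomological hypothesis. Because $\mathrm{gr}\left( Q\right) \cong \mathcal{D}\left( A\right)$ and $\mathrm{H}_{{\mathcal{YD}}}^{3}\left( \mathcal{D}\left( A\right) ,\Bbbk \right)=0$ by assumption, and since the Hochschild cohomology in ${_{H}^{H}\mathcal{YD}}$ of Remark \ref{rem:HochYD} depends only on the isomorphism class of the augmented algebra in the category, we obtain $\mathrm{H}_{{\mathcal{YD}}}^{3}\left( \mathrm{gr}Q,\Bbbk \right)=0$. Then Theorem \ref{teo:GelakiYD}, applied to the finite-dimensional connected coquasi-bialgebra $Q$, produces a connected bialgebra $E$ in ${_{H}^{H}\mathcal{YD}}$ together with a gauge transformation $\gamma:E\otimes E\to\Bbbk$ in ${_{H}^{H}\mathcal{YD}}$ with $Q=E^{\gamma}$; note that $E$ is automatically finite-dimensional and connected, having the same underlying coalgebra as $Q$.

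Now I would glue the deformations together. From $A^{\zeta}\cong Q\#H=E^{\gamma}\#H$ and Proposition \ref{pro:deformSmash}, with $\Gamma$ the gauge transformation on $E\#H$ built from $\gamma$, one has $\left( E\#H\right) ^{\Gamma }=E^{\gamma }\#H$ as ordinary coquasi-bialgebras, whence $A^{\zeta}\cong\left( E\#H\right)^{\Gamma}$. Since gauge equivalence of coquasi-bialgebras is an equivalence relation --- $A$ is gauge equivalent to $A^{\zeta}$ via $\zeta$, and $\left( E\#H\right)^{\Gamma}$ is gauge equivalent to $E\#H$ via $\Gamma$, and gauge transformations in $\mathbf{Vec}_{\Bbbk}$ may be composed and inverted, dually to \cite[Proposition XV.3.2]{Kassel} --- it follows that $A$ is quasi-isomorphic to $E\#H$. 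As $E$ is a connected bialgebra in ${_{H}^{H}\mathcal{YD}}$ and $H$ is a Hopf algebra, $E\#H$ is the Radford-Majid bosonization, and since both $A$ and $E\#H$ are ordinary bialgebras the linking gauge transformation is a unitary $2$-cocycle (cf. the remark preceding the statement).

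Finally, for the last assertion I would invoke Proposition \ref{pro:grgaugeYD}: applied with $E$ in the role of $Q$ and the gauge transformation $\gamma$, it gives $\mathrm{gr}\left( E^{\gamma}\right) =\mathrm{gr}\left( E\right)$ as bialgebras in ${_{H}^{H}\mathcal{YD}}$, that is $\mathrm{gr}\left( Q\right) =\mathrm{gr}\left( E\right)$, and combining with $\mathrm{gr}\left( Q\right) \cong \mathcal{D}\left( A\right)$ we conclude $\mathrm{gr}\left( E\right) \cong \mathcal{D}\left( A\right)$. The only genuinely delicate point is bookkeeping --- keeping track of which gauge transformation lives in ${_{H}^{H}\mathcal{YD}}$ versus $\mathbf{Vec}_{\Bbbk}$, and checking that the transitivity/inversion step for quasi-isomorphism is legitimate; everything substantive has already been proved in the machinery above, so I do not expect a serious obstacle here.
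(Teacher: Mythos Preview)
Your proposal is correct and follows essentially the same approach as the paper: you assemble the identification $\mathrm{gr}(Q)\cong\mathcal{D}(A)$ from Lemma~\ref{lem:ciccio} and Propositions~\ref{pro:ciccio}, \ref{pro:D(f)}, feed the cohomological hypothesis into Theorem~\ref{teo:GelakiYD} to obtain $Q=E^{\gamma}$, and then use Proposition~\ref{pro:deformSmash} and Proposition~\ref{pro:grgaugeYD} to conclude, exactly as the paper does. Your added remarks on the transitivity of gauge equivalence and on which category each gauge transformation lives in are helpful elaborations of points the paper leaves implicit (or relegates to an invisible verification).
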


\begin{proof}
By the foregoing $A^{\zeta }\cong Q\#H\cong E^{\gamma }\#H=\left(
E\#H\right) ^{\Gamma }$ as coquasi-bialgebras. Now $A$ is quasi-isomorphic
to $A^{\zeta }$ which is quasi-isomorphic to $E\#H$ so that $A$ is
quasi-isomorphic to $E\#H.$ Moreover%
\begin{equation*}
\mathrm{gr}\left( E\right) =\mathrm{gr}\left( E^{\gamma }\right) =\mathrm{gr}%
\left( Q\right) \cong \mathcal{D}\left( A\right) .
\end{equation*}%
where the first equality holds by Proposition \ref{pro:grgaugeYD}.

\begin{invisible}
Let us check for our sake that the quasi-isomorphism is an equivalence
relation.

First recall that two coquasi-bialgebras $A$ and $B$ are called
quasi-isomorphism whenever $A^{\alpha }\cong B$ as coquasi-bialgebras for
some gauge transformation $\alpha :A\otimes A\rightarrow \Bbbk $ for $A$.
Write $A\sim B$ in this case.

Clearly $A\sim A$ taking $\alpha :=\varepsilon _{A}\otimes \varepsilon _{A}.$

If $A\sim B$, then there is $\alpha $ and an isomorphism of
coquasi-bialgebras $\sigma :B\rightarrow A^{\alpha }.$ Apply \cite[%
Proposition 2.5]{ABM} to this morphism and $v:=\alpha ^{-1}$ which is a
gauge transformation for $A^{\alpha },$ see \cite[Remark 2.2(ii)]{ABM}. Then
$\beta :=v\circ \left( \sigma \otimes \sigma \right) $ is a gauge
transformation for $B$ and $\sigma :B^{\beta }\rightarrow \left(
A^{v}\right) ^{v^{-1}}$ is also an isomorphism of coquasi-bialgebras. By
\cite[Remark 2.2(ii)]{ABM} we have $\left( A^{v}\right) ^{v^{-1}}\cong A$
and hence $B^{\beta }\cong A$ which means $B\sim A.$

Is $A\sim B$ and $B\sim C$ then there are $\alpha $ and $\beta $ such that $%
A^{\alpha }\cong B$ and $B^{\beta }\cong C.$

Call $\sigma :A^{\alpha }\rightarrow B$ the first isomorphism. Apply again
\cite[Proposition 2.5]{ABM} with $v:=\beta .$ Then $\alpha ^{\prime
}:=v\circ \left( \sigma \otimes \sigma \right) $ is a gauge transformation
for $A^{\alpha }$ and $\sigma :\left( A^{\alpha }\right) ^{\alpha ^{\prime
}}\rightarrow B^{\beta }$ is an isomorphism of coquasi-bialgebras. Thus $%
\left( A^{\alpha }\right) ^{\alpha ^{\prime }}\cong B^{\beta }\cong C.$
Since $A$ and $A^{\alpha }$ have the same coalgebra structure, we have that
the monoids $\left( \mathrm{Hom}_{\Bbbk }\left( A^{\alpha }\otimes A^{\alpha
},\Bbbk \right) ,\ast ,\varepsilon _{A^{\alpha }\otimes A^{\alpha }}\right) $
and $\left( \mathrm{Hom}_{\Bbbk }\left( A\otimes A,\Bbbk \right) \ast
,\varepsilon _{A\otimes A}\right) $ are the same. Thus we can regard $\alpha
^{\prime }$ and its inverse as elements in $\mathrm{Hom}_{\Bbbk }\left(
A\otimes A,\Bbbk \right) $ and we get that $\alpha ^{\prime }\ast \alpha
:A\otimes A\rightarrow \Bbbk $ is convolution invertible with convolution
inverse $\alpha ^{-1}\ast \left( \alpha ^{\prime }\right) ^{-1}.$ Since $A$
and $A^{\alpha }$ have also the same unit, we get that $\alpha ^{\prime
}\ast \alpha $ is unitary too and hence a gauge transformation. Let us check
that
\begin{equation*}
\left( A^{\alpha }\right) ^{\alpha ^{\prime }}=A^{\alpha ^{\prime }\ast
\alpha }.
\end{equation*}%
Note that%
\begin{eqnarray*}
&&\omega _{A^{\alpha ^{\prime }\ast \alpha }} \\
&=&\left( \varepsilon _{A}\otimes \left( \alpha ^{\prime }\ast \alpha
\right) \right) \ast \left( \alpha ^{\prime }\ast \alpha \right) \left(
A\otimes m_{A}\right) \ast \omega _{A}\ast \left( \alpha ^{\prime }\ast
\alpha \right) ^{-1}\left( m_{A}\otimes A\right) \ast \left( \left( \alpha
^{\prime }\ast \alpha \right) ^{-1}\otimes \varepsilon _{A}\right) \\
&=&\left(
\begin{array}{c}
\left( \varepsilon _{A}\otimes \alpha ^{\prime }\right) \ast \left(
\varepsilon _{A}\otimes \alpha \right) \ast \alpha ^{\prime }\left( A\otimes
m_{A}\right) \ast \alpha \left( A\otimes m_{A}\right) \\
\ast \omega _{A}\ast \left( \alpha ^{-1}\ast \left( \alpha ^{\prime }\right)
^{-1}\right) \left( m_{A}\otimes A\right) \ast \left( \left( \alpha
^{-1}\ast \left( \alpha ^{\prime }\right) ^{-1}\right) \otimes \varepsilon
_{A}\right)%
\end{array}%
\right) \\
&=&\left(
\begin{array}{c}
\left( \varepsilon _{A}\otimes \alpha ^{\prime }\right) \ast \left(
\varepsilon _{A}\otimes \alpha \right) \ast \alpha ^{\prime }\left( A\otimes
m_{A}\right) \ast \alpha \left( A\otimes m_{A}\right) \\
\ast \omega _{A}\ast \alpha ^{-1}\left( m_{A}\otimes A\right) \ast \left(
\alpha ^{\prime }\right) ^{-1}\left( m_{A}\otimes A\right) \ast \left(
\alpha ^{-1}\otimes \varepsilon _{A}\right) \ast \left( \left( \alpha
^{\prime }\right) ^{-1}\otimes \varepsilon _{A}\right)%
\end{array}%
\right) \\
&=&\left(
\begin{array}{c}
\left( \varepsilon _{A}\otimes \alpha ^{\prime }\right) \ast \alpha ^{\prime
}\left( A\otimes \left( \alpha \ast m_{A}\right) \right) \ast \alpha \left(
A\otimes m_{A}\right) \\
\ast \omega _{A}\ast \alpha ^{-1}\left( m_{A}\otimes A\right) \ast \left(
\alpha ^{\prime }\right) ^{-1}\left( \left( m_{A}\ast \alpha ^{-1}\right)
\otimes A\right) \ast \left( \left( \alpha ^{\prime }\right) ^{-1}\otimes
\varepsilon _{A}\right)%
\end{array}%
\right) \\
&=&\left(
\begin{array}{c}
\left( \varepsilon _{A}\otimes \alpha ^{\prime }\right) \ast \alpha ^{\prime
}\left( A\otimes \left( m_{A^{\alpha }}\ast \alpha \right) \right) \ast
\alpha \left( A\otimes m_{A}\right) \\
\ast \omega _{A}\ast \alpha ^{-1}\left( m_{A}\otimes A\right) \ast \left(
\alpha ^{\prime }\right) ^{-1}\left( \left( \alpha ^{-1}\ast m_{A^{\alpha
}}\right) \otimes A\right) \ast \left( \left( \alpha ^{\prime }\right)
^{-1}\otimes \varepsilon _{A}\right)%
\end{array}%
\right) \\
&=&\left(
\begin{array}{c}
\left( \varepsilon _{A}\otimes \alpha ^{\prime }\right) \ast \alpha ^{\prime
}\left( A\otimes m_{A^{\alpha }}\right) \ast \left( \varepsilon _{A}\otimes
\alpha \right) \ast \alpha \left( A\otimes m_{A}\right) \\
\ast \omega _{A}\ast \alpha ^{-1}\left( m_{A}\otimes A\right) \ast \left(
\alpha ^{-1}\otimes \varepsilon _{A}\right) \ast \left( \alpha ^{\prime
}\right) ^{-1}\left( m_{A^{\alpha }}\otimes A\right) \ast \left( \left(
\alpha ^{\prime }\right) ^{-1}\otimes \varepsilon _{A}\right)%
\end{array}%
\right) \\
&=&\left( \varepsilon _{A^{\alpha }}\otimes \alpha ^{\prime }\right) \ast
\alpha ^{\prime }\left( A^{\alpha }\otimes m_{A^{\alpha }}\right) \ast
\omega _{A^{\alpha }}\ast \left( \alpha ^{\prime }\right) ^{-1}\left(
m_{A^{\alpha }}\otimes A^{\alpha }\right) \ast \left( \left( \alpha ^{\prime
}\right) ^{-1}\otimes \varepsilon _{A^{\alpha }}\right) \\
&=&\omega _{\left( A^{\alpha }\right) ^{\alpha ^{\prime }}}.
\end{eqnarray*}%
Moreover%
\begin{eqnarray*}
m_{A^{\alpha ^{\prime }\ast \alpha }} &=&\left( \alpha ^{\prime }\ast \alpha
\right) \ast m_{A}\ast \left( \alpha ^{\prime }\ast \alpha \right) ^{-1} \\
&=&\alpha ^{\prime }\ast \left( \alpha \ast m_{A}\ast \alpha ^{-1}\right)
\ast \left( \alpha ^{\prime }\right) ^{-1} \\
&=&\alpha ^{\prime }\ast m_{A^{\alpha ^{\prime }}}\ast \left( \alpha
^{\prime }\right) ^{-1}=m_{\left( A^{\alpha }\right) ^{\alpha ^{\prime }}}.
\end{eqnarray*}%
Thus $\left( A^{\alpha }\right) ^{\alpha ^{\prime }}=A^{\alpha ^{\prime
}\ast \alpha }$ as coquasi-bialgebras. We conclude that $A^{\alpha ^{\prime
}\ast \alpha }\cong C$ and hence $A\sim C.$
\end{invisible}
\end{proof}

More generally, given $A$ a (finite-dimensional) Hopf algebra whose coradical $H$ is a sub-Hopf algebra, then if $H$ is also semisimple, we expect that $A$ is quasi-isomorphic to the Radford-Majid bosonization $E\#H$ of
some connected bialgebra $E$ in ${_{H}^{H}\mathcal{YD}}$ by $H$. See e.g. \cite[Corollary 3.4 and the proof therein]{Grunenfelder-Mastnak} and \cite{AAGMV,AAG} for a further clue in this direction.

\section{Examples} \label{sec:6}

We notice that the Hochschild cohomology of a finite-dimensional Nichols algebras has been computed in few examples.
We consider here those Nichols algebras to compute $\mathrm{H}_{{\mathcal{YD}}}^{3}\left( \mathcal{B}\left( V\right)
,\Bbbk \right)$.

\subsection{Braidings of Cartan type}

Let $A=(a_{ij})_{1\le i,j\le\theta}$ be a finite Cartan matrix, $\Delta$ the corresponding root system, $(\alpha_i)_{1\le i\le\theta}$
a set of simple roots and $W$ its Weyl group.
Let $w_0=s_{i_1}\cdots s_{i_M}$ be a reduced expression of the element $w_0\in W$ of maximal length as a product of simple
reflections, $\beta_j=s_{i_1}\cdots s_{i_{j-1}}(\alpha_{i_j})$, $1\le j\le M$. Then $\beta_j\neq\beta_k$ if $j\neq k$ and
$\Delta^+=\{\beta_j|1\le j\le M\}$,  see \cite[page 25 and Proposition 3.6]{Hiller}.

\begin{invisible}
This can be deduced as follows. By \cite[page 25]{Hiller}, there is $w_0\in W$ such that $w_0(\Delta^+)=\Delta^-$ and $M=l(w_0)=|\Delta^+|$. Since $\Delta^+=-\Delta^-$, we also have $w_0(\Delta^-)=\Delta^+$. By \cite[Proposition 3.6]{Hiller} we have $\Delta^+=\Delta^+\cap w_0(\Delta^-)=\{\beta_1,\ldots, \beta_M\}$. Since $M=l(w_0)=|\Delta^+|$, then $\beta_1,\ldots, \beta_M$ are distinct.
\end{invisible}

Let $\Gamma$ be a finite abelian group, $\widehat{\Gamma}$ its group of characters.
$\cD=(\Gamma,(g_i)_{1\le i\le\theta},(\chi_i)_{1\le i\le\theta},A)$ is a \emph{datum of finite Cartan type} \cite{AS-Classif} associated to
$\Gamma$ and $A$ if $g_i\in\Gamma$, $\chi_j\in\widehat{\Gamma}$, $1\le i,j\le\theta$, satisfy $\chi_i(g_i)\neq 1$,
$\chi_i(g_j)\chi_j(g_i)=\chi_i(g_i)^{a_{ij}}$ for all $i,j$. Set $\bq=(q_{ij})_{1\le i,j\le\theta}$, where $q_{ij}=\chi_j(g_i)$.

In what follows $V$ denotes the Yetter-Drinfeld module over $\Bbbk\Gamma$, $\dim V=\theta$, with a fixed basis $x_1,\ldots, x_\theta$, where the action and the coaction over each $x_i$ is given by $\chi_i$ and $g_i$, respectively. Then the associated braiding is $c(x_i\otimes x_j)=q_{ij}x_j\otimes x_i$ for all $i,j$.
Let $\cB_\bq=\cB(V)$. The tensor algebra $T(V)$ is $\N_0^\theta$-graded with grading $\alpha_i$ for each $x_i$.  For $\beta=\sum_{i=1}^\theta a_i\alpha_i\in\Delta^+$, set
\begin{align*}
g_\beta &= g_1^{a_1}\cdots g_\theta^{a_\theta} , & \chi_\beta &= \chi_1^{a_1}\cdots \chi_\theta^{a_\theta}, & q_\beta&=\chi_\beta(g_\beta).
\end{align*}
Given $\alpha,\beta\in\Delta^+$, we denote $q_{\alpha\beta}=\chi_\beta(g_\alpha)$.

We assume as in \cite{AS-Classif,MPSW} that \emph{the order of $q_{ii}$ is odd for all $i$, and not divisible by 3 for each
connected component of the Dynkin diagram of $A$ of type $G_2$}. Therefore the order of $q_{ii}$ is the same for all the $i$
in the same connected component $J$. Given $\beta\in J$, we denote by $N_\beta$ the order of the corresponding $q_{ii}$ in $J$,
which is also the order of $q_\beta$.

By \cite{L} there exist homogeneous elements $x_{\beta}$ of degree $\beta$, $\beta\in\Delta^+$, such that the Nichols algebra $\cB_\bq$ of $V$
is presented by generators $x_1,\dots,x_\theta$ and relations
\begin{align*}
(\ad_c x_i)^{1-a_{ij}}x_j&=0, & &1\le i\neq j\le\theta; \\
x_\beta^{N_\beta}&=0, &  &\beta\in \Delta_+.
\end{align*}
Moreover $\{x_{\beta_1}^{n_1}\dots x_{\beta_M}^{n_M}| 0\le n_i<N_{\beta_i}\}$ is a basis of $\cB_\bq$.

We shall prove that $\mathrm{H}_{{\mathcal{YD}}}^{3}\left( \mathcal{B}_\bq,\Bbbk \right)=0$. We need first some technical results.

\begin{lemma}\label{lemma:non trivial pair 1}
Let $\alpha,\beta\in\Delta_+$. Then either $g_\alpha g_\beta^{N_\beta}\neq e$, or else $\chi_\alpha \chi_\beta^{N_\beta}\neq \epsilon$.
\end{lemma}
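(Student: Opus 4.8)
The plan is to argue by contradiction: suppose both $g_\alpha g_\beta^{N_\beta}=e$ and $\chi_\alpha\chi_\beta^{N_\beta}=\epsilon$. Evaluating the second identity on $g_\alpha$ and using the first (so that $\chi_\alpha(g_\alpha)=\chi_\beta^{N_\beta}(g_\alpha)^{-1}=\chi_\beta(g_\alpha)^{-N_\beta}$), and similarly evaluating on $g_\beta$, I would translate the pair of hypotheses into multiplicative relations among the scalars $q_{\alpha\alpha}=\chi_\alpha(g_\alpha)$, $q_{\alpha\beta}=\chi_\beta(g_\alpha)$, $q_{\beta\alpha}=\chi_\alpha(g_\beta)$ and $q_{\beta\beta}=\chi_\beta(g_\beta)$. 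Concretely one gets $q_{\alpha\alpha}=q_{\alpha\beta}^{-N_\beta}$ and $q_{\beta\alpha}^{-1}=q_{\beta\beta}^{N_\beta}=q_\beta^{N_\beta}=1$ (the last because $N_\beta$ is the order of $q_\beta$), so in particular $q_{\beta\alpha}=1$. Combining $q_{\beta\alpha}=1$ with the Cartan compatibility $q_{\alpha\beta}q_{\beta\alpha}=q_{\alpha\alpha}^{\langle\text{something}\rangle}$ type relations inherited from the datum of finite Cartan type forces $q_{\alpha\beta}$ to be a power of $q_{\alpha\alpha}$, and then $q_{\alpha\alpha}=q_{\alpha\beta}^{-N_\beta}$ becomes an equation saying that a fixed power of $q_{\alpha\alpha}$ equals $1$; since the order $N_\alpha$ of $q_{\alpha\alpha}$ is odd (and, in type $G_2$ components, prime to $3$), I expect this to be incompatible unless $q_{\alpha\alpha}=1$, contradicting $\chi_i(g_i)\ne 1$.

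The technical heart is bookkeeping with the bilinear form $\mathfrak{q}$ on the root lattice: I would first record that for $\alpha,\beta\in\Delta^+$ the scalars $q_{\alpha\beta}$ extend the $q_{ij}$ bilinearly, so that the Cartan condition $\chi_i(g_j)\chi_j(g_i)=\chi_i(g_i)^{a_{ij}}$ propagates to $q_{\alpha\beta}q_{\beta\alpha}=q_{\alpha\alpha}^{(\beta,\alpha^\vee)}$ (using Weyl-invariance of the symmetrized Cartan form and the fact that $W$ acts by the Weyl group on both the $g$'s and $\chi$'s). This is where I would invoke the standard facts about root systems recalled just before the lemma (the $\beta_j$ enumeration of $\Delta^+$) only insofar as they guarantee $\alpha,\beta$ are genuine positive roots lying in some connected component; the case $\alpha,\beta$ in different components is immediate since then $q_{\alpha\beta}=q_{\beta\alpha}=1$ and the argument degenerates to $q_{\alpha\alpha}=1$ at once.

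First I would treat the same-component case. Here $N_\alpha=N_\beta=:N$ is the common odd order of the $q_{ii}$ in that component. The derived relations become $q_{\alpha\alpha}=q_{\alpha\beta}^{-N}$ and $q_{\beta\alpha}=1$; feeding $q_{\beta\alpha}=1$ into $q_{\alpha\beta}q_{\beta\alpha}=q_{\alpha\alpha}^{(\beta,\alpha^\vee)}$ gives $q_{\alpha\beta}=q_{\alpha\alpha}^{(\beta,\alpha^\vee)}$, hence $q_{\alpha\alpha}=q_{\alpha\alpha}^{-N(\beta,\alpha^\vee)}$, i.e. $q_{\alpha\alpha}^{1+N(\beta,\alpha^\vee)}=1$. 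Since $\mathrm{ord}(q_{\alpha\alpha})=N$ divides $1+N(\beta,\alpha^\vee)$, it divides $1$, forcing $N=1$ and $q_{\alpha\alpha}=1$ — contradiction. The symmetric argument (swapping the roles, evaluating on $g_\beta$ to get $q_{\alpha\beta}=1$ instead) covers the remaining sub-case, or else one notes $(\beta,\alpha^\vee)$ and $(\alpha,\beta^\vee)$ cannot both vanish for roots in the same component unless the component is rank one, which is handled directly.

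\textbf{Main obstacle.} The delicate point is making the passage from the group-theoretic hypotheses on $g_\alpha, \chi_\alpha$ to the scalar relations fully rigorous, in particular confirming that $q_{\alpha\beta}$ really is bilinear in $\alpha,\beta$ over $\Delta^+$ (not merely over simple roots) and that the Cartan relation survives the Lusztig/Weyl-group action $\beta_j=s_{i_1}\cdots s_{i_{j-1}}(\alpha_{i_j})$ — i.e. that $g_{w(\alpha_i)}$ and $\chi_{w(\alpha_i)}$ are the "expected" ones. I expect this to follow from the construction in \cite{AS-Classif} (the datum of finite Cartan type is set up precisely so that $\chi_j(g_i)=q_{ij}$ is a genuine bicharacter-type datum on $\N_0^\theta$), but one must be careful that $N_\beta=\mathrm{ord}(q_\beta)$ equals the common component order $N$ — which is exactly the stated hypothesis that the order of $q_{ii}$ is constant on connected components (and odd, not divisible by $3$ in type $G_2$). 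With that in hand the divisibility contradiction is immediate; without it the argument collapses.
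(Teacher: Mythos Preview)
Your approach is correct in principle but vastly more complicated than necessary. The paper's proof is a one-line computation that uses \emph{both} hypotheses simultaneously rather than evaluating them separately: assuming $g_\alpha g_\beta^{N_\beta}=e$ and $\chi_\alpha\chi_\beta^{N_\beta}=\epsilon$, one substitutes directly into $q_\alpha=\chi_\alpha(g_\alpha)$ to obtain
\[
q_\alpha=\chi_\alpha(g_\alpha)=\chi_\beta^{-N_\beta}\bigl(g_\beta^{-N_\beta}\bigr)=q_\beta^{N_\beta^2}=1,
\]
contradicting $q_\alpha\neq 1$. That is the entire argument.

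By contrast, you evaluate $\chi_\alpha\chi_\beta^{N_\beta}=\epsilon$ at $g_\alpha$ and $g_\beta$ separately, which throws away the information in the $g$-hypothesis and forces you to recover it via the extended Cartan relation $q_{\alpha\beta}q_{\beta\alpha}=q_\alpha^{\langle\beta,\alpha^\vee\rangle}$ for arbitrary positive roots. That relation is true (the paper states and uses the equivalent form $q_{\alpha\gamma}q_{\gamma\alpha}=q^{(\alpha,\gamma)}$ in the proof of the \emph{next} lemma), so your argument can be completed, and you are right to flag its justification as the main obstacle. But it is a detour: it requires the existence of the common parameter $q$, the same-component reduction, and the divisibility argument on $N$, none of which are needed once you notice that combining the two hypotheses lets you compute $q_\alpha$ outright as a power of $q_\beta^{N_\beta}=1$. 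The moral is that when you have matching hypotheses on $g_\alpha$ and $\chi_\alpha$, evaluate the character hypothesis at the group element given by the other hypothesis, rather than at generic $g_\alpha$, $g_\beta$.
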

\begin{proof}
Suppose on the contrary that $g_\alpha g_\beta^{N_\beta}=e$, $\chi_\alpha \chi_\beta^{N_\beta}=\epsilon$. Then
$$ q_\alpha=\chi_\alpha^{-1}(g_\alpha^{-1})= \chi_\beta^{N_\beta}(g_\beta^{N_\beta})=q_\beta^{N_\beta^2}=1, $$
since $q_\beta$ is a root of unity of order $N_\beta$. But this is a contradiction, since $q_\alpha\neq 1$.
\end{proof}

\begin{lemma}\label{lemma:non trivial pair 2}
Let $\alpha,\beta,\gamma\in\Delta^+$ be pairwise different. Then either $g_\alpha g_\beta g_\gamma \neq e$, or else
$\chi_\alpha \chi_\beta \chi_\gamma \neq \epsilon$.
\end{lemma}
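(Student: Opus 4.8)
The plan is to argue by contradiction in the spirit of Lemma \ref{lemma:non trivial pair 1}. Suppose $g_\alpha g_\beta g_\gamma = e$ and $\chi_\alpha\chi_\beta\chi_\gamma=\epsilon$, so that $g_\alpha=(g_\beta g_\gamma)^{-1}$ and $\chi_\alpha=(\chi_\beta\chi_\gamma)^{-1}$, and cyclically. Evaluating $q_\alpha=\chi_\alpha(g_\alpha)$ first through $\chi_\alpha=\chi_\beta^{-1}\chi_\gamma^{-1}$ and then through $g_\alpha=g_\beta^{-1}g_\gamma^{-1}$ gives $q_\alpha=q_{\alpha\beta}^{-1}q_{\alpha\gamma}^{-1}=q_{\beta\alpha}^{-1}q_{\gamma\alpha}^{-1}$, whence
\[
q_\alpha^{2}=\bigl(q_{\alpha\beta}q_{\beta\alpha}\bigr)^{-1}\bigl(q_{\alpha\gamma}q_{\gamma\alpha}\bigr)^{-1},
\]
together with the two analogous identities for $q_\beta^{2}$ and $q_\gamma^{2}$.

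Next I would use the connected-component decomposition of the Dynkin diagram. If $\mu,\nu\in\Delta^{+}$ are supported on different connected components then $a_{ij}=0$ whenever $i\in\supp\mu$, $j\in\supp\nu$, so the Cartan-datum axioms give $q_{\mu\nu}q_{\nu\mu}=\prod q_{ii}^{a_{ij}m_in_j}=1$. Hence, if the component of $\alpha$ contains neither $\beta$ nor $\gamma$, the displayed identity forces $q_\alpha^{2}=1$; but $q_\alpha$ has order $N_\alpha\ge 3$ (odd, and $\ge 5$ when the component has type $G_2$), a contradiction. Running this with the roles permuted shows that each of $\alpha,\beta,\gamma$ shares its component with one of the other two; since a $3$-element set admits no partition into blocks of size $\ge 2$ other than the trivial one, all three roots lie in one component $J$. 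Put $N:=N_J$.

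It remains to exclude the case $\alpha,\beta,\gamma\in J$. Setting $\delta:=\alpha+\beta+\gamma$, the hypotheses give $\chi_\delta(g_i)=1=\chi_i(g_\delta)$ for all $i\in J$; multiplying these and using $q_{ij}q_{ji}=q_{ii}^{a_{ij}}$ yields $q_{ii}^{(A\delta)_i}=1$, so $N\mid (A\delta)_i$ for every $i$, where $A$ is the indecomposable finite-type Cartan matrix of $J$. Pairing with an arbitrary root $\mu=\sum_i m_i\alpha_i$ and using $(\alpha_i,\delta)=\epsilon_i (A\delta)_i$ for the $W$-invariant integral form $(\,,\,)$ (with $\epsilon_i$ the symmetrizers) forces $N\mid(\mu,\delta)$ for all $\mu\in\Delta^{+}$; in particular the row sums $r_i:=(\mu_i,\delta)$ of the Gram matrix $G=\bigl((\mu_i,\mu_j)\bigr)_{i,j}$ of $(\mu_1,\mu_2,\mu_3):=(\alpha,\beta,\gamma)$ are all $\equiv 0\pmod N$. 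Since $G_{ii}=(\mu_i,\mu_i)\in\{2,4,6\}$ and off-diagonal entries are inner products of distinct positive roots — hence bounded ($|(\mu_i,\mu_j)|\le 1$ in the simply-laced case, $\le 2$ for $B,C,F$, $\le 3$ for $G_2$) — each $r_i$ lies in an explicit bounded range, and $N\ge 3$ (resp.\ $\ge 5$ for $G_2$) leaves only finitely many triples $(r_1,r_2,r_3)$. Each is ruled out: if all $r_i=0$ then $G(1,1,1)^{\mathsf T}=0$, i.e.\ $\alpha+\beta+\gamma=0$ in the ambient Euclidean space, impossible for positive roots; the nonzero possibilities are incompatible with $G$ being a positive-semidefinite Gram matrix of three distinct positive roots, which one checks directly in the simply-laced case and by a short case analysis (or by inspecting the finitely many triples of positive roots of each component) in types $B,C,F,G$.

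The cross-component reductions in the second paragraph are routine; the real work — and the main obstacle — is this last step, where the divisibility $N\mid(A\delta)_i$ must be played against positive-definiteness of the Cartan form and the impossibility of positive roots summing to zero.
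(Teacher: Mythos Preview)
Your reduction to a single connected component is correct and parallels the paper's, and your derivation of the divisibility $N\mid (A\delta)_i$ (hence $N\mid(\mu,\delta)$ for every $\mu$ in the root lattice) is sound. However, from this point on your approach diverges from the paper's and, as you yourself flag, the last step remains a genuine gap.

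The paper does not work with the row sums $r_i=(\mu_i,\delta)$ and a positivity argument. Instead it uses the \emph{individual} identities
\[
q_\alpha=q_\beta q_\gamma q_{\beta\gamma}q_{\gamma\beta},\qquad
q_\beta=q_\alpha q_\gamma q_{\alpha\gamma}q_{\gamma\alpha},\qquad
q_\gamma=q_\alpha q_\beta q_{\alpha\beta}q_{\beta\alpha},
\]
together with $q_\mu=q^{(\mu,\mu)/2}$ and $q_{\mu\nu}q_{\nu\mu}=q^{(\mu,\nu)}$, to pin down each pairwise inner product $(\alpha,\beta),(\alpha,\gamma),(\beta,\gamma)$ modulo $N$ (not just their sums). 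Because these inner products live in the tiny range $\{-1,0,1\}$ (simply laced), $\{-2,\ldots,2\}$ ($B,C,F$), or $\{-3,\ldots,3\}$ ($G_2$), the congruences determine them exactly. In each case one finds that the inner products are negative, so $\alpha+\gamma$ is a root and then $\alpha+\beta+\gamma$ is a root; but the same identities force $q_{\alpha+\beta+\gamma}=1$, contradicting $q_\mu\neq 1$ for roots. (In the simply-laced case this even forces $(\alpha,\beta)=(\alpha,\gamma)=(\beta,\gamma)=-1$, whence $(\delta,\delta)=0$, giving the same contradiction you obtain.)

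By contrast, your route only records $N\mid r_i$, which loses information: you then have to rule out, for each non--simply-laced type and each odd $N$, all integer Gram matrices with bounded entries and prescribed row sums that could arise from three distinct positive roots. You carry this out only for the simply-laced case (and even there only implicitly); for $B_n,C_n,F_4,G_2$ you merely assert that ``a short case analysis'' or ``inspecting finitely many triples'' works. Note that for $B_n,C_n$ the number of triples grows with $n$, so the inspection is not literally finite; one really needs the Gram-matrix enumeration, and that enumeration (for $N=3$ in doubly-laced types, say) has more surviving cases than you suggest and requires further root-system input to eliminate. Until this is written out, the proof is incomplete. The paper's device of extracting each $(\mu_i,\mu_j)$ separately avoids this bookkeeping entirely.
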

\begin{proof}
Suppose on the contrary that $g_\alpha g_\beta g_\gamma=e$ and $\chi_\alpha \chi_\beta \chi_\gamma=\epsilon$. Then
\begin{align}\label{eq:conditions alpha,beta,gamma}
q_{\alpha}&= \chi_\alpha^{-1}(g_\alpha^{-1})= \chi_\beta\chi_\gamma(g_\beta g_\gamma)=q_\beta q_\gamma q_{\beta\gamma}q_{\gamma\beta}, &
q_\beta&=q_\alpha q_\gamma q_{\alpha\gamma}q_{\gamma\alpha}, & q_\gamma&=q_\alpha q_\beta q_{\alpha\beta}q_{\beta\alpha}.
\end{align}
Notice that $\alpha,\beta,\gamma$ belong to the same connected component. Indeed, if $\gamma$ belongs
to a different connected component, then $q_{\beta\gamma}q_{\gamma\beta}=q_{\alpha\gamma}q_{\gamma\alpha}=1$. Thus
$q_\beta=q_\alpha q_\gamma = q_\beta q_\gamma^2$, so $q_\gamma^2=1$, which is a contradiction.
Therefore we may assume that the Dynkin diagram is connected.

One can prove that $q_{s_i(\alpha)}=q_{\alpha}$ for every $\alpha\in\Delta$. As we observed that $\Delta^+=\{\beta_j|1\le j\le M\}$, we deduce that   for every $\beta\in\Delta^+$ there is some $j$ such that $q_{\beta}=q_{j}$.
One can prove that there is some $q\in\Bbbk$ such that $q_{\alpha}=q^{(\alpha,\alpha)/2}$ and $q_{\alpha\gamma}q_{\gamma\alpha}=q^{(\alpha,\gamma)}$, where $(\cdot,\cdot)$ is the invariant bilinear form on
the simple Lie algebra $\mathfrak{g}$ associated with the finite Cartan matrix  \cite[Ch. VI, $\S1$, Proposition 3 and Definition 3]{B} and the basis of the root systems given in \cite[Ch. VI, $\S 4$]{B} should be normalized in such a way that $q=q_\delta$, $(\delta,\delta)={ 2}$ for each short root $\delta\in\Delta$. Note that $q_{\alpha}=q^{(\alpha,\alpha){/2}}\neq 1$ for all $\alpha$ as $(\alpha,\alpha)\neq 0$. Thus
\begin{itemize}
  \item $q_\alpha=q_\beta=q_\gamma=q$ if the Dynkin diagram is simply laced,
  \item $q_\alpha,q_\beta,q_\gamma\in\{q,q^2\}$ if the Dynkin diagram has a double arrow,
  \item $q_\alpha,q_\beta,q_\gamma\in\{q,q^3\}$ if the Dynkin diagram is of type $G_2$.
\end{itemize}
 If the Dynkin diagram is simply laced, then, by \eqref{eq:conditions alpha,beta,gamma}, we have $q_{\beta\gamma}q_{\gamma\beta}=q_{\alpha\gamma}q_{\gamma\alpha}=q_{\alpha\beta}q_{\beta\alpha}=q^{-1}$. Then $q^{(\alpha,\gamma)}=q^{-1}$.  Now set $n(\alpha,\beta):=2(\alpha,\beta)/(\beta,\beta)=(\alpha,\beta)$. Then $n(\alpha,\beta)$ is symmetric whence, by \cite[Ch. VI, $\S1$, page 148]{B} we have $(\alpha,\gamma)=-1$ as the order of $q$ is odd,
 so $\alpha+\gamma\in\Delta^+$,  by \cite[VI, $\S1$, Corollary, page 149]{B}. Now  the same argument we used above shows that also $(\alpha,\beta)=-1=(\gamma,\beta)$ and hence $(\alpha+\gamma,\beta)=-2$, so $\alpha+\beta+\gamma\in\Delta^+$, since $\alpha+\gamma\neq -\beta$ (as $\alpha+\gamma$ and $\beta$ are both in $\Delta^+$). But
$ q_{\alpha+\beta+\gamma}= q_{\alpha}q_\beta q_\gamma q_{\beta\gamma}q_{\gamma\beta}q_{\alpha\gamma}q_{\gamma\alpha}
q_{\alpha\beta}q_{\beta\alpha}=1$, which is a contradiction.

If the Dynkin diagram has a double arrow, then $q_{\alpha}$, $q_\beta$, $q_\gamma\in\{q,q^2\}$.
If $q_{\alpha}=q_\beta=q_\gamma$, then the proof follows as for the simply-laced case because $n(u,v)=n(v,u)$ for $u,v\in\{\alpha,\beta,\gamma\}$. If $q_\alpha=q_\beta=q$ and $q_\gamma=q^2$,
then $q_{\beta\gamma}q_{\gamma\beta}=q_{\alpha\gamma}q_{\gamma\alpha}=q^{-2}$, and $q_{\alpha\beta}q_{\beta\alpha}=1$, by \eqref{eq:conditions alpha,beta,gamma}.
Then a simple calculation yields $(\beta,\gamma)=-2$ so that $\beta+\gamma\in\Delta^+$. One also gets $(\alpha,\beta)=0$ and $(\alpha,\gamma)=-2$ so that $(\alpha,\beta+\gamma)=(\alpha,\beta)+(\alpha,\gamma)=-2<0$ by the conditions on the order of $q$, so again $\alpha+\beta+\gamma\in\Delta^+$; but again we obtain
$q_{\alpha+\beta+\gamma}=1$, which is a contradiction. The proof for $q_\alpha=q_\beta=q^2$ and $q_\gamma=q$ follows analogously.

Finally, if the Dynkin diagram is of type $G_2$, then a similar analysis gives a contradiction.
\end{proof}

For each $1\le k\le M$, set $\cB_\bq(k)$ as the subspace of $\cB_\bq$ spanned by $\{x_{\beta_1}^{n_1}\dots x_{\beta_k}^{n_k}| 0\le n_i<N_{\beta_i}\}$.
By \cite{DP} this gives an algebra filtration, and the graded algebra $\Gr\cB_\bq$ associated to this filtration is presented by generators
$\bx_\beta$, $\beta\in\Delta^+$, and relations
\begin{align*}
\bx_\beta\bx_\gamma &= q_{\beta\gamma} \bx_\gamma\bx_\beta, & \bx_\beta^{N_\beta}&=0, & \beta & <\gamma\in\Delta_+.
\end{align*}
In \cite{MPSW} $\Gr\cB_\bq$ is viewed as an algebra in $\ydg$, which (as an algebra) is the Nichols algebra of Cartan type
$A_1\times\dots\times A_1$, $M$ copies,
with action and coaction on $\bx_\beta$ given by $\chi_\beta$, $g_\beta$, respectively. By \cite[Theorem 4.1]{MPSW}, $\mathrm{H}^\bullet(\Gr\cB_\bq,\Bbbk)$ is the algebra generated by $\xi_\beta$, $\eta_\beta$, $\beta\in\Delta^+$, where $\deg\xi_\beta=2$, $\deg\eta_\beta=1$, and relations
\begin{align*}
\xi_\beta\xi_\gamma &= q_{\beta\gamma}^{N_\beta N_\gamma} \xi_\gamma\xi_\beta, &
\eta_\beta\xi_\gamma &= q_{\beta\gamma}^{N_\gamma} \xi_\gamma\eta_\beta, &
\eta_\beta\eta_\gamma &= -q_{\beta\gamma} \eta_\gamma\eta_\beta, &
\beta,\gamma & \in\Delta^+.
\end{align*}
As we assume that all the $q_{ii}$ have odd order, we deduce in particular from the last equality that $\eta_\beta^2=0$ for all $\beta\in\Delta^+$.
As an algebra in $\ydg$,
the action and coaction on $\xi_\beta$ is given by $\chi_\beta^{-N_\beta}$, $g_\beta^{-N_\beta}$, while
the action and coaction on $\eta_\beta$ is given by $\chi_\beta^{-1}$, $g_\beta^{-1}$.

\begin{theorem}
$\mathrm{H}_{{\mathcal{YD}}}^{3}\left( \cB_\bq,\Bbbk \right)=0$.
\end{theorem}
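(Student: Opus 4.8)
The plan is to pass, via Proposition~\ref{pro:D(H)}, from the Hochschild cohomology in $\yd$ to ordinary Hochschild cohomology together with Drinfeld double invariants, then to reduce from $\cB_\bq$ to its associated graded algebra $\Gr\cB_\bq$ by a filtration spectral sequence, and finally to read off the answer from the explicit description of $\mathrm{H}^{\bullet}(\Gr\cB_\bq,\Bbbk)$ recalled above, using Lemmas~\ref{lemma:non trivial pair 1} and~\ref{lemma:non trivial pair 2}. Concretely: write $H=\Bbbk\Gamma$, a semisimple and cosemisimple Hopf algebra, and $D=D(H)$; recall $D$ is semisimple as an algebra by~\cite[Proposition~7]{Ra} and $\yd\cong{}_{D}\mathfrak{M}$. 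Proposition~\ref{pro:D(H)} applied to $B=\cB_\bq$ gives $\mathrm{H}_{{\mathcal{YD}}}^{3}(\cB_\bq,\Bbbk)\cong\mathrm{H}^{3}(\cB_\bq,\Bbbk)^{D}$, so it suffices to prove $\mathrm{H}^{3}(\cB_\bq,\Bbbk)^{D}=0$.

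Next I would exploit the algebra filtration $\cB_\bq(1)\subseteq\cdots\subseteq\cB_\bq(M)=\cB_\bq$. Since each $x_\beta$ is homogeneous, its terms are $\Gamma$-submodules and $\Gamma$-subcomodules of $\cB_\bq$, hence objects of $\yd$, and the associated graded algebra is $\Gr\cB_\bq$. The induced filtration on the tensor powers $\cB_\bq^{\otimes n}$ is again by objects of $\yd$ (the braiding preserving the filtration degree), so dualizing it one obtains, as in~\cite{MPSW}, a $D$-equivariant spectral sequence, convergent because $\cB_\bq$ is finite-dimensional, with first page $\mathrm{H}^{\bullet}(\Gr\cB_\bq,\Bbbk)$ and abutment $\mathrm{H}^{\bullet}(\cB_\bq,\Bbbk)$. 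As $D$ is semisimple, the functor of $D$-invariants is exact and hence commutes with the formation of this spectral sequence; consequently $\mathrm{H}^{3}(\cB_\bq,\Bbbk)^{D}$ carries a filtration whose subquotients are subquotients of $\mathrm{H}^{3}(\Gr\cB_\bq,\Bbbk)^{D}$. Therefore it is enough to prove $\mathrm{H}^{3}(\Gr\cB_\bq,\Bbbk)^{D}=0$.

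For this last step I would invoke~\cite[Theorem~4.1]{MPSW} as recalled above. Since $\eta_\delta^{2}=0$ for every $\delta\in\Delta^{+}$, the cohomological degree-three component of $\mathrm{H}^{\bullet}(\Gr\cB_\bq,\Bbbk)$ has basis the monomials $\xi_\beta\eta_\gamma$ with $\beta,\gamma\in\Delta^{+}$ and $\eta_\alpha\eta_\beta\eta_\gamma$ with $\alpha<\beta<\gamma$ in $\Delta^{+}$. Because $\Gamma$ is abelian and each $\xi_\delta$, $\eta_\delta$ spans a one-dimensional Yetter-Drinfeld submodule, so does each of these monomials; thus $\mathrm{H}^{3}(\Gr\cB_\bq,\Bbbk)^{D}$ is spanned by those basis monomials of trivial $\Gamma$-degree on which $\Gamma$ acts trivially. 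Reading off the Yetter-Drinfeld data recalled above, $\xi_\beta\eta_\gamma$ has $\Gamma$-degree $g_\beta^{-N_\beta}g_\gamma^{-1}$ and $\Gamma$ acts on it through $\chi_\beta^{-N_\beta}\chi_\gamma^{-1}$, so its triviality would force $g_\gamma g_\beta^{N_\beta}=e$ and $\chi_\gamma\chi_\beta^{N_\beta}=\epsilon$, contradicting Lemma~\ref{lemma:non trivial pair 1}; likewise $\eta_\alpha\eta_\beta\eta_\gamma$ has $\Gamma$-degree $g_\alpha^{-1}g_\beta^{-1}g_\gamma^{-1}$ and $\Gamma$ acts through $\chi_\alpha^{-1}\chi_\beta^{-1}\chi_\gamma^{-1}$, so its triviality would force $g_\alpha g_\beta g_\gamma=e$ and $\chi_\alpha\chi_\beta\chi_\gamma=\epsilon$, contradicting Lemma~\ref{lemma:non trivial pair 2}. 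Hence no basis monomial is $D$-invariant, $\mathrm{H}^{3}(\Gr\cB_\bq,\Bbbk)^{D}=0$, and the theorem follows.

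The step I expect to be the main obstacle is the construction of the $D$-equivariant, convergent filtration spectral sequence: one must check with care that the filtration on the (normalized) bar-type cochain complex computing $\mathrm{H}^{\bullet}(\cB_\bq,\Bbbk)$ consists of $D$-submodules — so that its formation commutes with $D$-invariants — that its first page is $\mathrm{H}^{\bullet}(\Gr\cB_\bq,\Bbbk)$ equipped with the Yetter-Drinfeld structure inherited from $\Gr\cB_\bq$, and that it converges. Once this is in place, the rest is a routine bookkeeping of Yetter-Drinfeld degrees feeding into the two combinatorial lemmas.
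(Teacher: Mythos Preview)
Your proposal is correct and follows essentially the same approach as the paper: reduce to $D(\Bbbk\Gamma)$-invariants via Proposition~\ref{pro:D(H)}, pass to $\Gr\cB_\bq$ through the filtration spectral sequence of~\cite{MPSW} (using that the PBW basis elements are $D$-eigenvectors so the filtration is $D$-stable), and then eliminate the degree-three basis monomials $\xi_\beta\eta_\gamma$ and $\eta_\alpha\eta_\beta\eta_\gamma$ using Lemmas~\ref{lemma:non trivial pair 1} and~\ref{lemma:non trivial pair 2}. The paper is somewhat terser on the spectral-sequence step, simply invoking~\cite[Section~5 and Corollary~5.5]{MPSW}, while you spell out more of the equivariance and convergence reasoning.
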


\begin{proof}
First we will prove that $\mathrm{H}^{3}\left( \Gr\cB_\bq,\Bbbk \right)^{D} =0$ for $D:=D(\Bbbk\Gamma).$ Now, the invariants are with respect to the ${D}$-bimodule structure that $\mathrm{H}^{3}\left( \Gr\cB_\bq,\Bbbk \right)$ inherits from $\mathrm{Hom}\left( (\Gr\cB_\bq)^{\otimes 3},\Bbbk \right)$ (this is a ${D}$-bimodule as its arguments are left ${D}$-modules). Since the left $D$-module structure is induced by the one of $\Bbbk$, it is trivial. Thus the invariants of $\mathrm{H}^{3}\left( \Gr\cB_\bq,\Bbbk \right)$ as a $D$-bimodule reduce to the its invariants as a right $D$-module. Since right  $D$-modules are equivalent to left $D$-modules, via the antipode of $D$ which is invertible as $D$ is finite-dimensional, the right $D$-module structure of $\mathrm{H}^{3}\left( \Gr\cB_\bq,\Bbbk \right)$ becomes the structure of object in $\ydg$ described above. Thus, in order to prove that $\mathrm{H}^{3}\left( \Gr\cB_\bq,\Bbbk \right)^{D} =0$ we just have to check that the invariants of $\mathrm{H}^{3}\left( \Gr\cB_\bq,\Bbbk \right)$ as a left-left Yetter-Drinfeld modules are zero.

Now, by the defining relations of $\mathrm{H}^\bullet(\Gr\cB_\bq,\Bbbk)$, a basis $B$ of $\mathrm{H}^3(\Gr\cB_\bq,\Bbbk)$ is given by
$\{\xi_\alpha\eta_\beta\}\cup\{\eta_\alpha\eta_\beta\eta_\gamma|\alpha<\beta<\gamma\}$.
If $v\in\mathrm{H}^3(\Gr\cB_\bq,\Bbbk)$ is invariant, then $v$ is written as a linear combination of elements in the trivial component. Indeed, write $v=\sum_{b\in B} c_b \, b$ for some $c_b\in\Bbbk$, and let $g_b$, $\chi_b$ be the elements describing the component of $b\in B$. Then
\begin{align*}
v &= g\cdot v = \sum_{b\in B} c_b \, g\cdot b = \sum_{b\in B} c_b \chi_b(g) \, b, & \mbox{for all }& g\in \Gamma, \\
1\otimes v &= \rho(v) = \sum_{b\in B} c_b \, \rho\cdot b = \sum_{b\in B} c_b g_b \otimes b.
\end{align*}
If $c_b\neq 0$, then $\chi_b(g)=1$ for all $g\in \Gamma$ so $\chi_b=\epsilon$, and $g_b=1$. Thus $b$ is invariant. We have so proved that the existence of $v\neq 0$ invariant implies the existence of $b\in B$ invariant. Hence, if $B$ has no invariant element then there is no invariant element at all.
Note that, for all $h\in H$, we have $h\cdot(\xi_\alpha\eta_\beta)=(\chi_\alpha^{-N_\alpha}\chi_\beta^{-1})(h)\xi_\alpha\eta_\beta$ and $\rho(\xi_\alpha\eta_\beta)=g_\alpha^{-N_\alpha}g_\beta^{-1}\otimes\xi_\alpha\eta_\beta$ so that, by Lemma \ref{lemma:non trivial pair 1}, the element $\xi_\alpha\eta_\beta$ is not $D$-invariant. A similar argument, using Lemma \ref{lemma:non trivial pair 2}, shows that also $\eta_\alpha\eta_\beta\eta_\gamma$ is not $D$-invariant. Thus the elements in $B$ are not $D$-invariant, so  $\mathrm{H}^{3}\left( \Gr\cB_\bq,\Bbbk \right)^{D} =0$. Since the elements in $\{x_{\beta_1}^{n_1}\dots x_{\beta_k}^{n_k}| 0\le n_i<N_{\beta_i}\}$ are eigenvectors for $D$, we can mimic the argument in \cite[Section 5]{MPSW} by taking into account the spectral sequence associated to the filtration of algebras therein; see for example \cite[Corollary 5.5]{MPSW} for a similar argument.  Thus $\mathrm{H}_{{\mathcal{YD}}}^{3}\left( \cB_\bq,\Bbbk \right)\cong\mathrm{H}^{3}\left( \cB_\bq,\Bbbk \right)^{D} =0$.
\end{proof}

\begin{remark}
Notice that $\mathrm{H}_{{\mathcal{YD}}}^{3}\left( \cB_\bq,\Bbbk \right)\cong \mathrm{H}^{3}\left( \cB_\bq,\Bbbk \right)^{D(\Bbbk\Gamma)} =0$
although $\mathrm{H}^3\left( \cB_\bq\#\Bbbk\Gamma,\Bbbk \right)\cong \mathrm{H}^{3}\left( \cB_\bq,\Bbbk \right)^{\Gamma}$ can be non-trivial, see for example \cite[Example 5.8]{MPSW}.
\end{remark}

\subsection{Braidings of non-diagonal type}

For $n\geq3$, $\mathcal{FK}_{n}$ denotes the quadratic algebra \cite{FK} with a presentation by generators $x_{(ij)}$, $1\leq i<j\leq n$,
and relations
\begin{align*}
x_{(ij)}^{2}&=0,&  &1\leq i<j\leq n,\\
x_{(ij)}x_{(jk)}&=x_{(jk)}x_{(ik)}+x_{(ik)}x_{(ij)},&  &1\leq i<j<k\leq n,\\
x_{(jk)}x_{(ij)}&=x_{(ik)}x_{(jk)}+x_{(ij)}x_{(ik)},&  &1\leq i<j<k\leq n,\\
x_{(ij)}x_{(kl)}&=x_{(kl)}x_{(ij)},&  & \# \{i,j,k,l\}=4.
\end{align*}
According to \cite{MiS} each $\mathcal{FK}_n$ is a graded bialgebra in the category of Yetter-Drinfeld modules over the symmetric group $S_n$,
generated as an algebra by the vector space $V_n$ with basis $\{x_{(ij)}\mid 1\leq i<j\leq n\}$. The action is described by identifying $(ij)$
with the corresponding transposition in $S_n$ and then consider the conjugation twisted by the sign, while the coaction is given by
declaring $x_\sigma$ a homogeneous element of degree $\sigma$. Then the braiding on $V_n$ becomes
\[ c(x_\sigma\otimes x_\tau)=\chi(\sigma,\tau)x_{\sigma\tau\sigma^{-1}}\otimes x_\sigma,\quad\quad
 \chi(\sigma,\tau)=
 \begin{cases}
 	1 & \sigma(i)<\sigma(j), \tau=(ij), \, i<j,\\
  -1 & \text{otherwise,}
\end{cases}
\]
where $\sigma$ and $\tau$ are transpositions. Moreover $\mathcal{FK}_n$ projects onto the Nichols algebra $\cB(V_n)$. For $n=3,4,5$, it is known that $\mathcal{FK}_n=\cB(V_n)$ and
has dimension, respectively, $12$, $576$ and $8294400$.

The Hochschild cohomology of $\mathcal{FK}_3$ is a consequence of the results in \cite{SVay} as follows.

\begin{theorem}
$\mathrm{H}_{\Bbbk S_3\text{-}\mathrm{Mod}}^{\bullet}\left( \mathcal{FK}_3 ,\Bbbk \right)$ is isomorphic to the graded algebra
\begin{align*}
&\Bbbk[X,U,V]/(U^2V-VU^2), & \mbox{where }\deg U=\deg V=2, & \, \deg X=4.
\end{align*}
\end{theorem}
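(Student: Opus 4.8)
The plan is to deduce the statement from the general transfer results of Sections \ref{sec:3}--\ref{sec:4} together with the explicit computation of the Yoneda algebra of $\mathcal{FK}_3$ carried out in \cite{SVay}. Since $\mathrm{char}\,\Bbbk=0$, the group algebra $H:=\Bbbk S_3$ is semisimple and cosemisimple, and $\mathcal{FK}_3=\cB(V_3)$ is a finite-dimensional (braided) bialgebra in ${_{H}^{H}\mathcal{YD}}$ by \cite{MiS}; thus Remark \ref{rem:ExactInv} applies and gives, for every $n\in\N_0$, an isomorphism $\mathrm{H}_{\Bbbk S_3\text{-}\mathrm{Mod}}^{n}(\mathcal{FK}_3,\Bbbk)\cong \mathrm{H}^{n}(\mathcal{FK}_3,\Bbbk)^{S_3}$, and Corollary \ref{coro:K} identifies the right-hand side with $\mathrm{H}^{n}(\mathcal{FK}_3\#\Bbbk S_3,\Bbbk)$. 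The first thing I would check is that this chain is an isomorphism of graded \emph{algebras}: the cup product defining $\mathrm{H}_{\Bbbk S_3\text{-}\mathrm{Mod}}^{\bullet}$ is the restriction to $H$-linear cochains of the ordinary cup product, so $\mathrm{H}_{\Bbbk S_3\text{-}\mathrm{Mod}}^{\bullet}(\mathcal{FK}_3,\Bbbk)$ is literally the invariant subalgebra $\mathrm{H}^{\bullet}(\mathcal{FK}_3,\Bbbk)^{S_3}$, and the isomorphism $\mathrm{H}^{\bullet}(\mathcal{FK}_3\#\Bbbk S_3,\Bbbk)\cong \mathrm{H}^{\bullet}(\mathcal{FK}_3,\Bbbk)^{S_3}$ of Proposition \ref{pro:Dragos} is multiplicative — this is exactly \cite[Theorem 2.17]{SVay} in the case $M=\Bbbk$, see Remark \ref{rem:DV}. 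Finally, by \cite[Corollary 4.4, page 170]{CE} (again Remark \ref{rem:DV}), $\mathrm{H}^{n}(\mathcal{FK}_3,\Bbbk)\cong \mathrm{Ext}^n_{\mathcal{FK}_3}(\Bbbk,\Bbbk)=E_n(\mathcal{FK}_3,\Bbbk)$ compatibly with cup/Yoneda products, so the target may be rewritten as $E(\mathcal{FK}_3)^{S_3}$.

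It therefore suffices to prove $E(\mathcal{FK}_3)^{S_3}\cong \Bbbk[X,U,V]/(U^2V-VU^2)$ with $\deg U=\deg V=2$ and $\deg X=4$. For this I would take from \cite{SVay} the explicit presentation of $E(\mathcal{FK}_3)$ as a graded algebra in ${_{H}^{H}\mathcal{YD}}$, i.e. with the $S_3$-action on a chosen system of homogeneous generators made explicit, together with the decomposition of each (bi)homogeneous component into irreducible $S_3$-modules, the internal grading coming from $\cB(V_3)$ being used only to separate generators. Since $\Bbbk S_3$ is semisimple, passing to $S_3$-invariants is exact, so $E(\mathcal{FK}_3)^{S_3}$ is computed componentwise; one reads off that the invariants vanish in cohomological degree $1$ (note $\mathrm{Ext}^1_{\mathcal{FK}_3}(\Bbbk,\Bbbk)\cong V_3^{*}$ has no trivial $S_3$-summand), are spanned in degree $2$ by two classes $U,V$, acquire a new class $X$ in degree $4$, and that $X,U,V$ generate $E(\mathcal{FK}_3)^{S_3}$ as a $\Bbbk$-algebra. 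Matching the multiplication table of these classes (in particular $U^2$ commuting with $V$ while $U$ and $V$ do not commute) against $\Bbbk[X,U,V]/(U^2V-VU^2)$, and checking that the relation ideal of $E(\mathcal{FK}_3)^{S_3}$ is generated by the single element $U^2V-VU^2$, then finishes the argument.

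The main obstacle is this last step: one must transport the $S_3$-module structure through the identification $\mathrm{H}^{\bullet}(\mathcal{FK}_3,\Bbbk)\cong E(\mathcal{FK}_3)$ and verify that the invariant subalgebra is generated \emph{exactly} by the asserted $X,U,V$ — that no further invariant classes contribute new generators in higher degree and that there is no relation among $X,U,V$ beyond $U^2V-VU^2$. Concretely this reduces to a finite bookkeeping of the $S_3$-isotypic components of the bigraded pieces of $E(\mathcal{FK}_3)$ and of the structure constants recorded in \cite{SVay}; the degree count ($\deg U=\deg V=2$, $\deg X=4$ being cohomological degrees) is then automatic, since every functor and comparison map used in the reduction is graded.
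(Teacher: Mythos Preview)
Your reduction chain is exactly the paper's: Remark \ref{rem:ExactInv} gives $\mathrm{H}_{\Bbbk S_3\text{-}\mathrm{Mod}}^{\bullet}(\mathcal{FK}_3,\Bbbk)\cong \mathrm{H}^{\bullet}(\mathcal{FK}_3,\Bbbk)^{\Bbbk S_3}$, Remark \ref{rem:DV} identifies this with $E(\mathcal{FK}_3)^{\Bbbk S_3}$, and \cite[Theorem 2.17]{SVay} then gives $E(\mathcal{FK}_3)^{\Bbbk S_3}\cong E(\mathcal{FK}_3\#\Bbbk S_3)$ as graded algebras.

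The difference is only in the last step. You propose to extract from \cite{SVay} the presentation of $E(\mathcal{FK}_3)$ together with its $S_3$-isotypic decomposition, and then carry out by hand the ``finite bookkeeping'' of invariants, generators, and relations that you flag as the main obstacle. The paper bypasses this entirely: it simply quotes \cite[Theorem 4.19]{SVay}, which already computes $E(\mathcal{FK}_3\#\Bbbk S_3)$ and presents it as $\Bbbk[X,U,V]/(U^2V-VU^2)$ with the stated degrees. So the work you anticipate is precisely what that theorem packages; there is no need to redo it, and the paper's proof is just a three-line citation chain.
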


\begin{proof}
By \cite[Theorem 4.19]{SVay}, we have that $E(B\#\Bbbk S_3)$ is isomorphic to the algebra in the claim, where $B=\mathcal{FK}_3$. By \cite[Theorem 2.17]{SVay}, we know that $E(B\#\Bbbk S_3)\cong E(B)^{\Bbbk S_3}$ as graded algebras. As observed in Remark \ref{rem:DV}, we have that $E(B)\cong \mathrm{H}^{\bullet}\left( B ,\Bbbk \right)$. By Remark \ref{rem:ExactInv}, we have $\mathrm{H}^{\bullet}\left( B ,\Bbbk \right)^{\Bbbk S_3}\cong\mathrm{H}_{\Bbbk S_3\text{-}\mathrm{Mod}}^{\bullet}\left( \mathcal{FK}_3 ,\Bbbk \right)$.
\end{proof}

From this result we get $\mathrm{H}_{\Bbbk S_3\text{-}\mathrm{Mod}}^{3}\left( \mathcal{FK}_3 ,\Bbbk \right)=0$ so that, by Proposition \ref{pro:D(H)} we conclude that

\begin{corollary}
$\mathrm{H}_{{\mathcal{YD}}}^{3}\left( \mathcal{FK}_3,\Bbbk \right)=0$.
\end{corollary}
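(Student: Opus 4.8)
The corollary is a formal consequence of the theorem just proved together with the invariance results of Section~\ref{sec:4}, applied with $H=\Bbbk S_{3}$ and $B=\mathcal{FK}_{3}$. Before starting, I would record that $\Bbbk S_{3}$ is semisimple (as we are in characteristic zero, or more generally in characteristic prime to $6$) and cosemisimple, since its coradical, being spanned by grouplikes, is all of $\Bbbk S_{3}$; and that $\mathcal{FK}_{3}=\cB(V_{3})$ is genuinely a bialgebra in ${_{H}^{H}\mathcal{YD}}$ by \cite{MiS}, as recalled above. So Corollary~\ref{coro:K}, Remark~\ref{rem:ExactInv} and Proposition~\ref{pro:D(H)} are all applicable here.

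The first step is to read off from the theorem that $\mathrm{H}_{\Bbbk S_{3}\text{-}\mathrm{Mod}}^{3}\left( \mathcal{FK}_{3},\Bbbk \right)=0$. This is a degree count: in the graded algebra $\Bbbk[X,U,V]/(U^{2}V-VU^{2})$ every generator has even degree ($\deg U=\deg V=2$, $\deg X=4$), hence every monomial has even degree, and so the degree-$3$ component is zero. The second step is to translate this into ordinary Hochschild cohomology with invariants: by Remark~\ref{rem:ExactInv}, which rests on Corollary~\ref{coro:K}, one has $\mathrm{H}_{\Bbbk S_{3}\text{-}\mathrm{Mod}}^{3}\left( \mathcal{FK}_{3},\Bbbk \right)\cong \mathrm{H}^{3}\left( \mathcal{FK}_{3},\Bbbk \right)^{\Bbbk S_{3}}$, so that $\mathrm{H}^{3}\left( \mathcal{FK}_{3},\Bbbk \right)^{\Bbbk S_{3}}=0$.

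Finally, I would invoke Proposition~\ref{pro:D(H)}, which gives $\mathrm{H}_{{\mathcal{YD}}}^{3}\left( \mathcal{FK}_{3},\Bbbk \right)\cong \mathrm{H}^{3}\left( \mathcal{FK}_{3},\Bbbk \right)^{D(\Bbbk S_{3})}$ and, in its last assertion, that $\mathrm{H}^{3}\left( \mathcal{FK}_{3},\Bbbk \right)^{D(\Bbbk S_{3})}$ is a subspace of $\mathrm{H}^{3}\left( \mathcal{FK}_{3},\Bbbk \right)^{\Bbbk S_{3}}$. Since the latter vanishes by the previous step, so does the former, and therefore $\mathrm{H}_{{\mathcal{YD}}}^{3}\left( \mathcal{FK}_{3},\Bbbk \right)=0$. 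I do not expect any real obstacle: the substance lives in the quoted theorem (which imports \cite{SVay}) and in Proposition~\ref{pro:D(H)}; the only things needing a moment's attention are the parity bookkeeping for $\Bbbk[X,U,V]/(U^{2}V-VU^{2})$ and the verification that the hypotheses of Proposition~\ref{pro:D(H)} indeed hold for the non-abelian group $S_{3}$.
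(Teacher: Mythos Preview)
Your proof is correct and follows essentially the same route as the paper, which simply notes that the degree-$3$ part of $\Bbbk[X,U,V]/(U^{2}V-VU^{2})$ vanishes and then invokes Proposition~\ref{pro:D(H)}. You have made explicit the intermediate identification $\mathrm{H}_{\Bbbk S_{3}\text{-}\mathrm{Mod}}^{3}(\mathcal{FK}_{3},\Bbbk)\cong \mathrm{H}^{3}(\mathcal{FK}_{3},\Bbbk)^{\Bbbk S_{3}}$ via Remark~\ref{rem:ExactInv}, which the paper leaves implicit but is indeed what is needed to apply the subspace assertion of Proposition~\ref{pro:D(H)}.
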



\begin{thebibliography}{AMStu}
\bibitem[ABM1]{ABM-cocycleproj} A. Ardizzoni, M. Beattie and C. Menini,
\emph{Cocycle Deformations for Hopf Algebras with a Coalgebra Projection},
J. Algebra, Vol. \textbf{324}(4) (2010) 673--705.

\bibitem[ABM]{ABM} A. Ardizzoni, M. Beattie and C. Menini, \emph{Gauge
Deformations for Hopf Algebras with the Dual Chevalley Property}, J. Algebra
Appl., Vol. \textbf{11}(3) (2012) 1250051.

\bibitem[AAG]{AAG} N. Andruskiewitsch, I. E. Angiono, A. Garc\'ia Iglesias, \emph{Liftings of Nichols algebras of diagonal type I. Cartan type $A$}.  (\url{arXiv:1509.01622})

\bibitem[AAGMV]{AAGMV} N. Andruskiewitsch, I. E. Angiono, A. Garc\'ia Iglesias, A. Masuoka and C. Vay, \emph{Liftings via cocycle deformations}. J. Pure Appl. Alg. \textbf{218} (2014), 684-703.

\bibitem[AM]{AM} A. Ardizzoni and C. Menini, \emph{Associated Graded
Algebras and Coalgebras}, Comm. Algebra, Vol. \textbf{30}(3) (2012), 862-896.

\bibitem[AMS1]{AMS} A. Ardizzoni, C. Menini and D. \c{S}tefan, \emph{A
Monoidal Approach to Splitting Morphisms of Bialgebras}, Trans. Amer. Math.
Soc., \textbf{359} (2007), 991-1044.

\bibitem[AMS2]{AMS-Hoch} A. Ardizzoni, C. Menini and D. \c{S}tefan, \emph{%
Hochschild Cohomology And `Smoothness' In Monoidal Categories}, J. Pure
Appl. Algebra, Vol. \textbf{208} (2007), 297-330.

\bibitem[AMStu]{AMStu-Small} A. Ardizzoni, C. Menini and F. Stumbo, \emph{%
Small Bialgebras with a Projection}, J. Algebra Vol. \textbf{314}(2) (2007),
613-663.

\bibitem[An]{An-Basic} I. E. Angiono, \emph{Basic quasi-Hopf algebras over
cyclic groups}. Adv. Math. \textbf{225} (2010), no. 6, 3545--3575.

\bibitem[AP]{AP} A. Ardizzoni, A. Pavarin, \emph{Bosonization for Dual
Quasi-Bialgebras and Preantipode}, J. Algebra, Vol. \textbf{390} (2013),
Pages 126--159.

\bibitem[AS1]{AS-Lifting} N. Andruskiewitsch, H.-J. Schneider, \emph{Lifting
of quantum linear spaces and pointed Hopf algebras of order} $p^{3}$. J.
Algebra \textbf{209} (1998), no. 2, 658--691.

\bibitem[AS2]{AS-Classif} N. Andruskiewitsch, H-J. Schneider, \emph{On the classification of finite-dimensional pointed Hopf algebras},
Ann. of Math. \textbf{171} (2010), no. 1, 375--417.

\bibitem[Bo]{B} {\sc N. Bourbaki}, \emph{Groupes et alg$\grave{e}$bres de Lie},
Chapitres 4, 5 et 6, Hermann, Paris, 1968.

\bibitem[Br]{Br} K. S. Brown, \emph{Cohomology of groups}. Graduate Texts in
Mathematics, \textbf{87}. Springer-Verlag, New York-Berlin, 1982.

\bibitem[CE]{CE} H. Cartan, S. Eilenberg, \emph{Homological algebra}. Princeton University Press, Princeton, N. \textbf{J.}, 1956.

\bibitem[DNR]{DNR} S. D\u{a}sc\u{a}lescu, C. N\u{a}st\u{a}sescu, S. Raianu,
\emph{Hopf algebras. An introduction}. Monographs and Textbooks in Pure and
Applied Mathematics, \textbf{235}. Marcel Dekker, Inc., New York, 2001.

\bibitem[DP]{DP} C. De Concini, C. Procesi, \emph{Quantum groups}. D-modules, representation theory, and quantum groups, 31--
140, Lecture Notes in Math. \textbf{1565}, Springer, 1993.

\bibitem[EG]{EG} P. Etingof, S. Gelaki, \emph{Liftings of graded quasi-Hopf
algebras with radical of prime codimension}. J. Pure Appl. Algebra \textbf{%
205} (2006), no. 2, 310--322.

\bibitem[FK]{FK} S. Fomin, A. N. Kirillov, \textsl{Quadratic algebras, Dunkl elements and Schubert calculus},
Progr. Math. \textbf{172} (1999), 146--182.

\bibitem[GiK]{GiK} V. Ginzburg, S. Kumar, \emph{Cohomology of quantum groups at roots of unity}.
Duke Math. J. \textbf{69} (1993) 179--198.

\bibitem[GM]{Grunenfelder-Mastnak} L. Grunenfelder; M. Mastnak, \emph{Pointed Hopf Algebras As Cocycle Deformations}, preprint. (\url{arXiv:1010.4976v1})

\bibitem[H]{Hiller} H. Hiller, \emph{Geometry of Coxeter groups}. Research Notes in Mathematics, \textbf{54}. Pitman (Advanced Publishing Program), Boston, Mass.-London, 1982.

\bibitem[L]{L} G. Lusztig, \emph{Quantum groups at roots of 1}.
Geom. Dedicata \textbf{35} (1990),  89--113.

\bibitem[K]{Kassel} C. Kassel, \emph{Quantum groups}. Graduate Texts in
Mathematics, \textbf{155}. Springer-Verlag, New York, 1995.

\bibitem[Maj]{Maj} S. Majid, \emph{Foundations of quantum group theory}.
Cambridge University Press, Cambridge, 1995.

\bibitem[MPSW]{MPSW} M. Mastnak, J. Pevtsova, P. Schauenburg, S. Witherspoon,
\emph{Cohomology of finite-dimensional pointed Hopf algebras}. Proc. Lond. Math. Soc. \textbf{100} (2010), 377--404.

\bibitem[MiS]{MiS} A. Milinski, H.J. Schneider, \emph{Pointed indecomposable Hopf algebras over Coxeter groups},
Contemp. Math. \textbf{267} (2000), 215--236.

\bibitem[Mo]{Mo} S. Montgomery, \emph{Hopf Algebras and their actions on
rings, }CMBS Regional Conference Series in Mathematics \textbf{82}, 1993.

\bibitem[Ra1]{Ra-HopfAlg} D. E. Radford, \emph{Hopf algebras}. Series on
Knots and Everything, \textbf{49}. World Scientific Publishing Co. Pte.
Ltd., Hackensack, NJ, 2012.

\bibitem[Ra2]{Ra} D. E. Radford, \emph{Minimal quasitriangular Hopf algebras}%
, J. Algebra \textbf{157} (1993), 285--315.

\bibitem[Ra3]{Ra-TheStruct} D. E. Radford, \emph{The structure of Hopf
algebras with a projection}. J. Algebra \textbf{92} (1985), no. 2, 322--347.

\bibitem[RT]{RT} D. E. Radford, J. Towber, \emph{Yetter-Drinfel'd categories
associated to an arbitrary bialgebra}. J. Pure Appl. Algebra \textbf{87}
(1993), no. 3, 259--279.

\bibitem[Sc]{Sc} P. Schauenburg, \emph{Faithful flatness over Hopf
subalgebras: counterexamples}. Interactions between ring theory and
representations of algebras (Murcia), 331--344, Lecture Notes in Pure and
Appl. Math., \textbf{210}, Dekker, New York, 2000.

\bibitem[Stf]{St} D. \c{S}tefan, \emph{Hochschild cohomology on Hopf Galois
extensions}. J. Pure Appl. Algebra \textbf{103} (1995), no. 2, 221--233.

\bibitem[Stn]{Sten} B. Stenstr\"{o}m, \emph{Rings of quotients Die
Grundlehren der Mathematischen Wissenschaften, Band 217. An introduction to
methods of ring theory}. Springer-Verlag, New York-Heidelberg, 1975.

\bibitem[SV]{SVay} D. \c{S}tefan, C. Vay, \emph{The cohomology ring of the 12-dimensional Fomin-Kirillov algebra}.
arXiv:1404.5101.

\bibitem[Sw]{Sw} M. E. Sweedler, \emph{Hopf algebras}. Mathematics Lecture
Note Series W. A. Benjamin, Inc., New York 1969.

\bibitem[W]{Wa} X. Wang, \emph{Semisimple connected Hopf Algebras}. (%
\url{arXiv:1212.0622v1})
\end{thebibliography}
\end{document}